\documentclass[11pt, twoside]{article}
\usepackage[english]{babel}
\usepackage{amssymb}
\usepackage{mathrsfs}
\usepackage{amsmath}
\usepackage{amsthm}
\usepackage{amsfonts}
\usepackage{latexsym}
\usepackage{indentfirst}
\usepackage{color}
\usepackage{txfonts}
\usepackage{enumerate}

\usepackage[colorlinks=true,
linkcolor=blue,
citecolor=red,
urlcolor=magenta, 
]
{hyperref}

\usepackage{txfonts}

\usepackage{anysize}

\allowdisplaybreaks

\newtheorem{theorem}{Theorem}[section]
\newtheorem{lemma}[theorem]{Lemma}

\newtheorem{proposition}[theorem]{Proposition}

\theoremstyle{definition}
\newtheorem{remark}[theorem]{Remark}

\newtheorem{definition}[theorem]{Definition}
\newcounter{assum}

\newtheorem{assumption}[assum]{Assumption}
\renewcommand{\appendix}{\par
\setcounter{section}{0}%
\setcounter{subsection}{0}%
\setcounter{subsubsection}{0}%
\gdef\thesection{\@Alph\c@section}%
\gdef\thesubsection{\@Alph\c@section.\@arabic\c@subsection}%
\gdef\theHsection{\@Alph\c@section.}%
\gdef\theHsubsection{\@Alph\c@section.\@arabic\c@subsection}%
\csname appendixmore\endcsname
}

\numberwithin{equation}{section}

\begin{document}
\title{\bf\Large
Local Hardy Spaces Associated with  Operators
and Ball Quasi-Banach Function Spaces on Doubling Metric Measure Spaces and Their Applications
\footnotetext{\hspace{-0.35cm} 2020 {\it
Mathematics Subject Classification}.
Primary 42B30; Secondary 42B25, 42B35, 46E30.
\endgraf {\it Key words and phrases.}
ball quasi-Banach function space, local Hardy space, atom, molecule,
maximal function, Gaussian upper bound
\endgraf This project is  supported by the
Natural Science Foundation of Fujian Province (Grant No. 2026J008197).}}
\author{Xiaosheng Lin\footnote{Corresponding
author, E-mail: \texttt{xslin@jmu.edu.cn}/{\color{red} \today}/Final version.}
\ and Xiaotian Zhu}
\date{}
\maketitle

\vspace{-0.7cm}

\begin{center}
\begin{minipage}{13cm}
{\small {\bf Abstract.}\quad
Let $(\mathcal{X},d,\mu)$ be a doubling metric measure space, $X$ a ball quasi-Banach
function space on $\mathcal{X}$, and $L$ a non-negative self-adjoint operator on
$L^2(\mathcal{X})$ whose heat kernel satisfies a Gaussian upper bound. In this article,
we study the local Hardy space $h_{X,L}(\mathcal{X})$ associated with both $X$ and $L$.
We first establish the atomic and molecular characterizations of
$h_{X,L}(\mathcal{X})$. As applications of these characterizations, we obtain the
relations between $h_{X,L}(\mathcal{X})$ and the global Hardy spaces
$H_{X,L}(\mathcal{X})$ and $H_{X,L+mI}(\mathcal{X})$. We also establish the radial and
non-tangential maximal function characterizations of $h_{X,L}(\mathcal{X})$. Using
these maximal function characterizations, under the additional assumptions that the
heat kernel satisfies the conservation property and a H\"older regularity estimate,
we further show that $h_{X,L}(\mathcal{X})$ coincides with the local atomic Hardy space
$h_{X,\mathrm{at}}^p(\mathcal{X})$ with equivalent quasi-norms in the corresponding
range of indices. Finally, we apply the above results to Lebesgue spaces, Orlicz spaces,
weighted Lebesgue spaces, and variable Lebesgue spaces.}
\end{minipage}
\end{center}

\section{Introduction}
Stein and Weiss \cite{sw60} introduced the real-variable theory of the classical Hardy
spaces $H^p(\mathbb{R}^n)$, $p\in(0,1]$, and Fefferman and Stein \cite{fs72} developed
this theory further. The spaces $H^p(\mathbb{R}^n)$ provide useful substitutes for
$L^p(\mathbb{R}^n)$ when $p\leq 1$, but their global cancellation is not suitable for
some local questions. Goldberg \cite{g79} pointed out, in particular, that
$H^p(\mathbb{R}^n)$ does not contain all Schwartz functions and that
pseudo-differential operators need not be bounded on $H^p(\mathbb{R}^n)$. To deal with
such problems, he introduced the local Hardy space $h^p(\mathbb{R}^n)$ by restricting
the maximal function to small scales. In the corresponding atomic decomposition,
cancellation is imposed on atoms supported on small balls, while it is not required
for atoms supported on large balls. This local theory is better suited to many
problems involving partial differential equations and pseudo-differential operators;
see \cite{g79}.

Another development of Hardy space theory comes from operators which are not classical
Calder\'on--Zygmund operators. For example, let
$L:=-\operatorname{div}(A\nabla)$ be a second-order divergence form elliptic operator
on $\mathbb{R}^n$ with complex bounded measurable coefficients. The Riesz transform
$\nabla L^{-1/2}$ may fail to be bounded from $H^1(\mathbb{R}^n)$ to
$L^1(\mathbb{R}^n)$; see, for example, \cite{hmm11}. Such phenomena motivated the
development of Hardy spaces associated with operators. Auscher et al. \cite{adm05}
introduced $H_L^1(\mathbb{R}^n)$ for operators satisfying suitable kernel estimates
and established its molecular characterization. Duong and Yan
\cite{dy05jams,dy05cpam} studied the corresponding BMO spaces and duality, while
Hofmann et al. \cite{hlmmy11} developed the real-variable theory under the
Davies--Gaffney estimate. We also refer to \cite{y08} and the references therein for further developments of
global Hardy spaces associated with operators.
Local Hardy spaces associated with operators have also been studied in several settings.
Gong et al. \cite{gly13} defined $h_L^1$ on a doubling metric measure space by an area
function and proved its atomic characterization; under a Moser-type local boundedness
condition, they also obtained a Littlewood--Paley characterization. Cao et al.
\cite{cmy17} studied local Hardy spaces associated with inhomogeneous higher order
elliptic operators and obtained molecular, square function, and maximal function
characterizations, together with interpolation and duality results and a relation with
global Hardy spaces associated with a shifted operator. Bui et al. \cite{bdl18}
established local radial and non-tangential maximal function characterizations on
spaces of homogeneous type and also considered local Hardy spaces associated with a
critical function. In the weighted setting, Gong et al. \cite{gsx} obtained an atomic
characterization of $h_{L,w}^1(\mathbb{R}^n)$ and identified its dual weighted local BMO
space. Almeida et al. \cite{abdr20} considered variable-exponent local Hardy spaces
associated with non-negative self-adjoint operators, established a molecular
characterization, and used it to obtain relations with the corresponding global Hardy
spaces.

Motivated by different applications, Hardy spaces built on various function spaces have also attracted much attention.
For example, Calder\'on et al. \cite{c77,ccfjr78} used weighted Hardy spaces in the study of the Cauchy integral on Lipschitz curves,
while Kenig \cite{k80} investigated weighted Hardy spaces on Lipschitz domains.
We refer to \cite{h13,h15,h17scm} for Hardy--Morrey spaces, to
\cite{ns12,s13,s14,h17tmj,inns23,yzz18,yz16} for variable Hardy spaces
(associated with operators), to
\cite{ns14,ans21,ins22,bckyy13,bl11,jy10,jy11,yy14} for
(Musielak--)Orlicz--Hardy spaces (associated with operators), and to
\cite{c77,ccfjr78,k80,b14,bckyy131,ls13} for weighted Hardy spaces
(associated with operators).
To treat these Hardy-type spaces in a unified framework, it is useful to work with a
general class of underlying function spaces. Based on the notion of quasi-Banach
function spaces in \cite{bs88}, Sawano et al. \cite{shyy17} introduced ball
quasi-Banach function spaces and developed the Hardy space $H_X(\mathbb{R}^n)$
associated with such a space $X$. This class contains, among others, Lebesgue spaces,
weighted Lebesgue spaces, Orlicz spaces, variable Lebesgue spaces, and Morrey spaces;
some of these examples need not be quasi-Banach function spaces in the usual sense.

The aim of this article is to develop further the local Hardy space theory associated
with both an operator and a ball quasi-Banach function space on a doubling metric
measure space. Let $(\mathcal{X},d,\mu)$ be a doubling metric measure space in the
sense of Coifman and Weiss \cite{cw77}, let $X$ be a ball quasi-Banach function space
on $\mathcal{X}$, and let $L$ be a non-negative self-adjoint operator on
$L^2(\mathcal{X})$ whose heat kernel satisfies a Gaussian upper bound. We define
$h_{X,L}$ by using the local Lusin area function associated with $L$ together with
$S_I(e^{-L}f)$. Our first result gives the atomic and molecular characterizations of
$h_{X,L}$ (see Theorem \ref{local-mo-at}). Using these characterizations, we then
establish the relations between $h_{X,L}$ and the global Hardy spaces. More precisely,
$h_{X,L}=H_{X,L}$ with equivalent quasi-norms when the spectrum of $L$ is bounded away
from zero, and $h_{X,L}=H_{X,L+mI}$ with equivalent quasi-norms for every
$m\in\mathbb{N}$; we also obtain a further comparison with $X$ under an additional
ball Banach function space assumption (see Theorem \ref{hH}).

We next establish radial and non-tangential maximal function characterizations for the
local Hardy spaces associated with $L$ (see Theorem \ref{thm-maxi}). These
characterizations also provide the link with local atomic Hardy spaces which are not
adapted to the operator. More precisely, if the heat kernel satisfies the conservation
property
$\int_{\mathcal{X}}K_t(x,y)\,d\mu(y)=1$ and a H\"older regularity estimate in the space
variable, then the local maximal Hardy space associated with $L$ coincides with the
local atomic Hardy space $h_{X,\mathrm{at}}^p$ with equivalent quasi-norms (see
Theorem \ref{key}). Together with Theorem \ref{thm-maxi}, this yields the corresponding
identification of $h_{X,L}$ and $h_{X,\mathrm{at}}^p$ in the common range of their
assumptions. Finally, we apply the above results to Lebesgue spaces, Orlicz spaces,
weighted Lebesgue spaces, and variable Lebesgue spaces.

The remainder of this article is organized as follows. In Section \ref{main}, we
recall the needed facts about doubling metric measure spaces, ball quasi-Banach
function spaces, and the operator $L$. We then introduce the local Hardy, atomic,
molecular, and maximal spaces and state the main results. Section \ref{proof of mr}
is devoted to their proofs. In particular, Subsection \ref{fenzikehua} proves the
atomic and molecular characterizations, Subsection \ref{guanxi} proves the relations
between local and global Hardy spaces, and Subsection \ref{jida} proves the maximal
function characterizations and the comparison with the local atomic space. In
Section \ref{apply}, we apply the general results to Lebesgue, Orlicz, weighted
Lebesgue, and variable Lebesgue spaces.

Finally, we make some conventions on notation. Let $\mathbb{N}:=\{1,2,\ldots\}$,
$\mathbb{Z}_+:=\mathbb{N}\cup\{0\}$, and let $\mathbb{Z}$ be the set of all integers.
For any $p\in[1,\infty]$, $p'$ denotes its conjugate index, namely,
$1/p+1/p'=1$. The symbol $C$ denotes a positive constant independent of the main
parameters, and its value may change from line to line. We use
$C_{(\alpha,\beta,\ldots)}$ or $c_{(\alpha,\beta,\ldots)}$ when the dependence on the
indicated parameters needs to be shown. If $f\leq Cg$, we write $f\lesssim g$ or
$g\gtrsim f$; if $f\lesssim g\lesssim f$, we write $f\sim g$. For any set
$E\subset\mathcal{X}$, $\mathbf{1}_E$ denotes its characteristic function and
$E^\complement$ its complement in $\mathcal{X}$. For any $x\in\mathcal{X}$ and
nonempty sets $E_1,E_2\subset\mathcal{X}$, set
$$
d(x,E_1):=\inf_{y\in E_1}d(x,y)
\quad\text{and}\quad
d(E_1,E_2):=\inf_{y\in E_1,\,z\in E_2}d(y,z).
$$
For any $x\in\mathcal{X}$ and $r\in(0,\infty)$, $B(x,r)$ denotes the ball centered at
$x$ with radius $r$. If $B$ is a ball and $\alpha\in(0,\infty)$, then $\alpha B$
denotes the ball with the same center and radius $\alpha$ times that of $B$. For any
$j\in\mathbb{N}$, let $U_j(B):=(2^jB)\setminus(2^{j-1}B)$ and $U_0(B):=B$. The symbol
$\mathscr{M}$ denotes the set of all $\mu$-measurable functions on $\mathcal{X}$. For
any $\mu$-measurable set $E\subset\mathcal{X}$ and $p\in(0,\infty)$, let
$$
L^p(E):=\left\{f\in\mathscr{M}:\
\|f\|_{L^p(E)}:=\left[\int_E|f(z)|^p\,d\mu(z)\right]^{\frac1p}<\infty\right\}.
$$
Throughout this article, $\mathcal{X}$ always denotes the fixed underlying doubling
metric measure space. When no confusion arises, we write $L^p$ instead of
$L^p(\mathcal{X})$. In the proofs, we keep the notation used in the corresponding
statements.
\section{Main Results}\label{main}
In this section, 
we state our main results.
We  first recall the concepts of doubling metric measure spaces $(\mathcal{X},d,\mu)$,  the ball quasi-Banach function $X$ on $\mathcal{X}$, and  some fundamental assumptions on the operator $L$ under consideration.
\begin{definition}
A \emph{quasi-metric space} $(\mathcal{X},d)$ is a non-empty set $\mathcal{X}$ equipped with a
\emph{quasi-metric} $d$, namely a non-negative function defined on $\mathcal{X}\times\mathcal{X}$
satisfying that, for any $x,y,z\in\mathcal{X}$,
\begin{enumerate}
\item[{\rm(i)}] $d(x,y)=0$ if and only if $x=y$;
\item[{\rm(ii)}] $d(x,y)=d(y,x)$;
\item[{\rm(iii)}] there exists a constant $A_0\in[1,\infty)$, independent of $x,y$, and $z$, such that
\begin{align*}
d(x,z)\leq A_0[d(x,y)+d(y,z)].
\end{align*}
\end{enumerate}
\end{definition}

\begin{definition}
Let $(\mathcal{X},d)$ be a quasi-metric space and $\mu$ a non-negative measure on $\mathcal{X}$.
The triple $(\mathcal{X},d,\mu)$ is called a \emph{space of homogeneous type} if $\mu$ satisfies the
following \emph{doubling condition}: there exists a constant $C_{(\mu)}\in[1,\infty)$ such that, for any
ball $B\subset\mathcal{X}$,
\begin{equation*}
\mu(2B)\leq C_{(\mu)}\mu(B).
\end{equation*}
If $A_0:=1$, then $(\mathcal{X},d,\mu)$ is called a \emph{metric measure space of homogeneous type} or,
simply, a \emph{doubling metric measure space}.
\end{definition}
In what follows, we \emph{always} assume that
$(\mathcal{X},d,\mu)$ is a  doubling metric measure space. Then
the above doubling condition further implies that, for any ball $B\subset\mathcal{X}$ and any
$\lambda\in[1,\infty)$,
\begin{equation}\label{d1}
\mu(\lambda B)\leq C_{(\mu)}\lambda^{n}\mu(B),
\end{equation}
where $n:= \log_2 C_{(\mu)}$ is called the \emph{upper dimension} of $\mathcal{X}$. Moreover,
for any $x,y\in\mathcal{X}$ and $r\in(0,\infty)$,
\begin{align}\label{d2}
V(y,r)\leq C_{(\mu)}\left[1+\frac{d(x,y)}{r}\right]^{n}V(x,r),
\end{align}
here, and thereafter, $V(x,r):=\mu(B(x,r)).$
Throughout this article, according to \cite[pp.\,587-588]{cw77}, we \emph{always} make the following assumptions
on $(\mathcal{X},d,\mu)$:
\begin{enumerate}
\item[\rm(i)] for any point $x\in\mathcal{X}$, the balls $\{B(x,r)\}_{r\in(0,\infty)}$ form a basis of
open neighborhoods of $x$;
\item[{\rm(ii)}] $\mu$ is \emph{Borel regular} which means that all open sets are $\mu$-measurable
and every set $A\subset\mathcal{X}$ is contained in a Borel set $E$ such that $\mu(A)=\mu(E)$;
\item[{\rm(iii)}] for any $x\in\mathcal{X}$ and $r\in(0,\infty)$, $\mu(B(x,r))\in(0,\infty)$;
\item[{\rm(iv)}] $\mathrm{diam}\, \mathcal{X}=\infty$ and $(\mathcal{X},d,\mu)$ is \emph{non-atomic}
which means $\mu(\{x\})=0$ for any $x\in\mathcal{X}$. Here, and thereafter, $\mathrm{diam}\, \mathcal{X}
:=\sup\{d(x,y):\ x,y\in\mathcal{X}\}$.
\end{enumerate}
Note that $\mathrm{diam}\, \mathcal{X}=\infty$ implies that $\mu(\mathcal{X})=\infty$ (see 
\cite[Lemma 5.1]{ny97}).

The following concept of ball (quasi-)Banach function spaces on $\mathcal{X}$ was introduced in \cite{syy22jga}.

\begin{definition}\label{debb}
A quasi-normed linear space $X\subset\mathscr{M}$, equipped with a \emph{quasi-norm}
$\|\cdot\|_{X}$ which makes sense for all functions in $\mathscr{M}$, is called
a \emph{ball quasi-Banach function space} (for short, $\mathrm{BQBF}$ \emph{space}) on $\mathcal{X}$ if it satisfies
the following conditions:
\begin{enumerate}
\item [(i)] for any $f\in\mathscr{M}$, $\|f\|_{X}=0$ if and only if
$f=0$ $\mu$-almost everywhere;
\item [(ii)] for any $f,g\in\mathscr{M}$, $|g|\le|f|$ $\mu$-almost everywhere
implies that $\|g\|_{X}\le \|f\|_{X}$;
\item [(iii)] for any $\{f_k\}_{n\in\mathbb{N}}\subset\mathscr{M}$ and
$f\in\mathscr{M}$, $0\le f_k\uparrow f$ $\mu$-almost everywhere as $k\to\infty$
implies that $\|f_k\|_{X}\uparrow \|f\|_{X}$ as $k\to\infty$;
\item [(iv)] for any ball $B\subset\mathcal{X}$, $\mathbf{1}_B\in X$.
\end{enumerate}
A normed linear space $X\subset\mathscr{M}$ is called a
\emph{ball Banach function space} (for short, $\mathrm{BBF}$ \emph{space}) on $\mathcal{X}$ if
it satisfies (i)-(iv) and
\begin{enumerate}
\item [(v)] for any ball $B$ of $\mathcal{X}$, there exists a positive constant $C_{(B)}$, depending
only on $B$, such that, for any $f\in X$,
$\int_{B}|f(x)|\,d\mu(x)\leq C_{(B)}\|f\|_{X}.$
\end{enumerate}
\end{definition}

\begin{remark}
\begin{itemize}
\item[\rm{(i)}]
Let $X$ be a $\mathrm{BQBF}$ space.
By \cite[Theorem 2]{dfmn21}, we find that both
(ii) and (iii) of Definition \ref{debb} imply that $X$ is complete.

\item[\rm{(ii)}]
In Definition \ref{debb},
replacing  any ball $B$ by any bounded $\mu$-measurable
set $E\subset \mathcal{X}$, then we obtain an equivalent definition of the ball (quasi-)Banach function space
on $\mathcal{X}$.

\item[\rm{(iii)}]
Also, in Definition \ref{debb}, if any ball $B$ is replaced by any $\mu$-measurable set
$E\subset\mathcal{X}$ with $\mu(E)<\infty$, then we obtain the definition of the (quasi-)Banach function
space on $\mathcal{X}$; see, for example, \cite[p.\,3, Definition 1.3]{bs88} and \cite[Definition 2.4]{yhyy23}. Obviously,
any (quasi-)Banach function space  is a ball (quasi-)Banach function space.
However, the inverse is  not true. For example, the weight Lebesgue space and the Morrey space are not quasi-Banach
function spaces in general, but they are ball quasi-Banach function spaces (see \cite[Section 7]{shyy17} for more details).

\item[\rm(iv)]
In Definition \ref{debb},
if condition (iv) is replaced
by the following \emph{saturation property} that
\begin{enumerate}
\item[\rm(a)]
for any $\mu$-measurable set $E\subset\mathcal{X}$
with $\mu(E)\in(0,\infty)$, there exists a $\mu$-measurable set $F\subset E$
with $\mu(F)\in(0,\infty)$ satisfying that $\mathbf{1}_F\in X$,
\end{enumerate}
we then obtain the concept of quasi-Banach function spaces
in \cite{ln20231}.
\end{itemize}
\end{remark}

Let us recall the concept of the associate space and the convexification of a $\mathrm{BQBF}$ space as follows
(see, for example, \cite[p.\,8, Definition 2.1]{bs88}, \cite[p.\,53]{lt79}, and \cite[Definition 2.6]{shyy17}).
\begin{definition}
Let $X$ be a $\mathrm{BBF}$ space. The \emph{associate space} $X'$ of a $\mathrm{BBF}$ space $X$ is defined by setting
$X':=\left\{f\in\mathscr{M}:\ \|f\|_{X'}<\infty\right\},$
where, for any $f\in \mathscr{M}$,
\begin{align*}
\|f\|_{X'}:=\sup\left\{\|fg\|_{L^1}: \ g\in X,\ \|g\|_{X}=1\right\}.
\end{align*}
\end{definition}
\begin{definition}
Let $p\in(0,\infty)$ and $X$ be a $\mathrm{BQBF}$ space. The \emph{p-convexification} $X^p$ of $X$ is defined by setting
\begin{equation*}
X^p:=\left\{f\in\mathscr{M}:\ \|f\|_{X^p}:=\left\||f|^p\right\|_{X}^{\frac{1}{p}}<\infty\right\}.
\end{equation*}
\end{definition}
Recall that the \emph{Hardy--Littlewood maximal operator} $\mathcal{M}$ on $\mathcal{X}$ is defined by setting, for any
$f\in\mathscr{M}$ and $x\in\mathcal{X}$,
\begin{align*}
\mathcal{M}(f)(x):=\sup_{B\ni x}\frac{1}{\mu(B)}\int_B|f(z)|\,d\mu(z),
\end{align*}
where the supremum is taken over all  balls $B$ containing $x$. The following two assumptions play
key roles throughout this article.
\begin{assumption}\label{vector1}
Let $X$ be a $\mathrm{BQBF}$ space. Assume that there exists 
$p_-\in(0,\infty)$ such that, for any given
$p\in(0,p_-)$ and $r\in(1,\infty)$,
there exists a positive constant $C$ such
that, for any $\{f_j\}_{j\in\mathbb{N}}\subset
\mathscr{M},$
\begin{align*}
\left\|\left\{\sum_{j\in\mathbb{N}}\left[\mathcal{M}(f_j)\right]^r\right\}^{\frac{1}{r}}
\right\|_{X^{\frac{1}{p}}}\leq C\left\|\left(\sum_{j\in\mathbb{N}}|f_j|^r\right)^{\frac{1}{r}}\right\|_{X^{\frac{1}{p}}}.
\end{align*}
\end{assumption}
\begin{assumption}\label{vector2}
Let $X$ be a $\mathrm{BQBF}$ space. Assume that there exist constants $s\in(0,\infty)$
and $q\in(s,\infty)$ such that $X^{\frac{1}{s}}$ is a $\mathrm{BBF}$ space and the
Hardy--Littlewood maximal operator $\mathcal{M}$ is bounded on the $\frac{1}{(q/s)'}$-convexification
of the associate space $(X^{1/s})'$, where $\frac{1}{(q/s)'}+\frac{1}{q/s}=1$.
\end{assumption}
In this article, we \emph{always} make the following assumptions on the operator $L$.
\begin{assumption}\label{l1}
$L$ is a non-negative and self-adjoint operator on $L^2$.
\end{assumption}
\begin{assumption}\label{l2}
The kernels of the semigroup $\{e^{-tL}\}_{t\in(0,\infty)}$, denoted by
$\{K_t\}_{t\in(0,\infty)}$, are measurable functions on $\mathcal{X}\times\mathcal{X}$ and satisfy the Gaussian
upper bound estimate; that is, there exist positive constants $C$ and $c$ such that,
for any $t\in(0,\infty)$ and $x,y\in\mathcal{X}$,
\begin{align*}
|K_t(x,y)|\leq  \frac{C}{V(x,\sqrt{t})}\exp\left\{-\frac{d(x,y)^2}{ct}\right\}.
\end{align*}
\end{assumption}
\begin{remark}
\begin{itemize}
\item  [$\mathrm{(i)}$] 
Let $L$ be a non-negative and self-adjoint operator on $L^2$. Then
by the functional
calculus of $L$, we find that $-L$ 
generates the $C_0$-semigroup $\{e^{-tL}\}_{t\in(0,\infty)}$. 
\item [$\mathrm{(ii)}$] 
By \cite[Theorem 6.17]{O05}, we find that, for any $k\in\mathbb{N},$ there exists a positive constant $C_{(k)}$, depending only on both $k$ and $L$, such that,
for any $x,y\in\mathcal{X}$,
\begin{align}\label{lzy}
\left|t^k\frac{\partial^k K_t(x,y)}{\partial t^k}\right|\leq
\frac{C_{k}}{V(x,\sqrt{t})}\exp\left\{-c\frac{d(x,y)^2}{t}\right\},
\end{align}
where $c$ is a positive constant depending only on $L$.
\item [$\mathrm{(iii)}$]
The typical examples of operators $ L $ satisfying Assumption \ref{l2} include the Schr\"{o}dinger operator $ L := -\Delta + V $ with $ 0 \le V \in L^1_{\mathrm{loc}}(\mathcal{X}) $\,(the set of all locally integrable functions on $\mathcal{X}$) and the second-order divergence form elliptic operator $ L := -\mathrm{div}(A \nabla) $ with $ A := \{a_{ij}\}_{i,j=1}^n $ satisfying certain conditions .We refer the reader to \cite[Section 6]{bdl18} for more examples.

\end{itemize}
\end{remark}
For any $f\in L^2,$
the \emph{local Lusin area function} $S_{L,\mathrm{loc}}(f),$ associated with $L$, and the \emph{Lusin area function} $S_{I}(f)$, associated with 
the identity operator $I$,
are  defined by setting, for any $x\in\mathcal{X},$
\begin{align*}
S_{L,\mathrm{loc}}(f)(x):=\left[\int_0^1\int_{B(x,t)}\left|t^2Le^{-t^2L}(f)(y)\right|^2\,\frac{d\mu(y)\,dt}{V(x,t)t}\right]^{\frac{1}{2}},
\end{align*}
and
\begin{align*}
S_I(f)(x):
=\left[\int_0^\infty\int_{B(x,t)}\left|t^2e^{-t^2}f(y)\right|^2\,\frac{d\mu(y)\,dt}{V(x,t)t}\right]^{\frac{1}{2}}.
\end{align*}
\begin{definition}
Let $X$ be a $\mathrm{BQBF}$ space. Then the \emph{local Hardy space} $h_{X,L}$, associated with both $X$ and
$L$, is defined as the completion of the set
$$
h_{X,L}\cap L^2:=\left\{f \in L^2:\ \|f\|_{h_{X,L}}<\infty\right\}
$$
with respect to the \emph{quasi-norm}
\begin{align*}
\|f\|_{h_{X,L}}:=\left\|S_{L,\mathrm{loc}}(f)\right\|_{X}+\left\|S_{I}(e^{-L}f)\right\|_{X}.
\end{align*}
\end{definition}
We introduce the \emph{local molecular Hardy space} $h_{X,L,\rm{mol}}^{M,p,\epsilon}$, and the \emph{local atomic Hardy space} $h_{X,L,\rm{at}}^{M,p}$ as follows.
\begin{definition}
Let $p\in[1,\infty),$
$M\in\mathbb{N}$, $\epsilon\in(0,\infty)$, and
$X$ be a $\mathrm{BQBF}$ space.
Denote  the domain of $L^M$ by $\mathrm{Dom}\,(L^M)$. 
A function $m\in L^p$ is called a \emph{local $(X,M,p,\epsilon)_{L}$-molecule}
, associated with  the ball $B\subset\mathcal{X}$, if the following 
conditions are satisfied:
\begin{itemize}
\item  [$\mathrm{(i)}$]
When $r_B\in [1,\infty)$, then,  for any $i\in\mathbb{Z}_+$,
\begin{align}\label{pmol1}
\|m\|_{L^p(U_i(B))}
\leq2^{-i(\epsilon-\frac{n}{p})}\left[\mu(B)\right]^{\frac{1}{p}}\|\mathbf{1}_{B}\|_{X}^{-1}.
\end{align}
\item  [$\mathrm{(ii)}$] When $r_B\in (0,1)$, then there exists a function $b\in \mathrm{Dom}\,(L^M)$ such that $m=L^M(b)$ and
\begin{align}\label{pmol2}
\left\|\left(r_B^{2}L\right)^j(b)\right\|_{L^p(U_i(B))}
\leq2^{-i(\epsilon-\frac{n}{p})}r_B^{2M}\left[\mu(B)\right]^{\frac{1}{p}}\|\mathbf{1}_{B}\|_{X}^{-1}
\end{align}
for any $j\in\{0,\ldots,M\}$ and $i\in\mathbb{Z}_+$.
\end{itemize}
\end{definition}
\begin{definition}
Let $p\in[1,\infty),$
$M\in\mathbb{N}$, $\epsilon\in(0,\infty)$, and $X$ be a $\mathrm{BQBF}$ space satisfying Assumption
\ref{vector2} for some $s\in(0,\infty)$ and $q\in(s,\infty)$. 
For any $f\in L^2$, $f=\sum_{j\in\mathbb{N}} \lambda_jm_j$ is called a
\emph{local molecular $(X,M,p,\epsilon)_L$-representation} of $f$ if, for any $j\in\mathbb{N}$, $m_j$ is
a local $(X,M,p,\epsilon)_L$-molecule, associated with the ball $B_j\subset\mathcal{X}$, the summation
converges in $L^2$, and $\{\lambda_j\}_{j\in\mathbb{N}}\subset[0,\infty)$ satisfies
\begin{align*}
\Lambda\left(\left\{\lambda_jm_j\right\}_{j\in\mathbb{N}}\right)
:=\left\|\left\{\sum_{j\in\mathbb{N}}
\left[\frac{\lambda_j}{\|\mathbf{1}_{B_j}\|_{X}}\right]^{s}
\mathbf{1}_{B_j}\right\}^{\frac{1}{s}}\right\|_{X}<\infty.
\end{align*}
Let
\begin{align*}
&\widetilde{h}^{M,p,\epsilon}_{X,L,\rm{mol}}\\
&\quad:=\left\{f\in L^2:\ f \
\text{has a local molecular}\ (X,M,p,\epsilon)_L\text{-representation}\right\}
\end{align*}
equipped with the \emph{quasi-norm} $\|\cdot\|_{\widetilde{h}^{M,p,\epsilon}_{X,L,\rm{mol}}}$
defined by setting, for any $f\in\widetilde{h}^{M,p,\epsilon}_{X,L,\rm{mol}}$,
\begin{align*}
\|f\|_{\widetilde{h}^{M,p,\epsilon}_{X,L,\rm{mol}}}
&:=\inf\Bigg\{\Lambda\left(\left\{\lambda_jm_j\right\}_{j\in\mathbb{N}}\right):\
f=\sum_{j\in\mathbb{N}} \lambda_jm_j \\
&\quad\quad\quad\quad \ \text{is a local molecular}\ (X,M,p,\epsilon)_L\text{-representation}\Bigg\},
\end{align*}
where the infimum is taken over all the local molecular $(X,M,p,\epsilon)_L$-representations of $f$ as above.
The \emph{local molecular Hardy  space} $h^{M,p,\epsilon}_{X,L,\rm{mol}}$ is
defined as the completion of $\widetilde{h}^{M,p,\epsilon}_{X,L,\rm{mol}}$ with
respect to the quasi-norm $\|\cdot\|_{h^{M,p,\epsilon}_{X,L,\rm{mol}}}$.
\end{definition}
\begin{definition}
Let $M\in\mathbb{N}$, $p\in[1,\infty]$, and
$X$ be a $\mathrm{BQBF}$ space.
A function $a\in L^p$ is called a \emph{local $(X,M,p)_{L}$-atom}
,associated with the ball $B\subset\mathcal{X}$, if 
$\mathrm{supp}\,(a)\subset B$ and 
the following 
conditions are satisfied:
\begin{itemize}
\item  [$\mathrm{(i)}$]
When $r_B\in [1,\infty)$, then
\begin{align*}
\|a\|_{L^p}\leq [\mu(B)]^{\frac{1}{p}}\|\mathbf{1}_{B}\|_{X}^{-1}.
\end{align*}
\item  [$\mathrm{(ii)}$] When $r_B\in (0,1)$, then there exists a function $b\in \mathrm{Dom}\,(L^M)$ such that $a=L^M(b)$, $\mathrm{supp}\,(L^i(b))\subset B$, and
\begin{align}\label{atom2}
\left\|\left(r_B^{2}L\right)^i(b)\right\|_{L^p}
\leq
r_B^{2M}\left[\mu(B)\right]^{\frac{1}{p}}\|\mathbf{1}_{B}\|_{X}^{-1}
\end{align}
for any $i\in\{0,\ldots,M\}$.
\end{itemize}
In a similar way, the set $\widetilde{h}^{M,p}_{X,L,\rm{at}}$
and the \emph{local atomic Hardy space} $h^{M,p}_{X,L,\rm{at}}$ are defined
in the same way, respectively, as $\widetilde{h}^{M,p,\epsilon}_{X,L,\rm{mol}}$ and
$h^{M,p,\epsilon}_{X,L,\rm{mol}}$ with local $(X,M,p,\epsilon)_L$-molecules replaced by local $(X,M,p)_L$-atoms.
\end{definition}

Our first main result is the following atomic and molecular characterizations of $h_{X,L}.$
\begin{theorem}\label{local-mo-at}
Let $X$ be a $\mathrm{BQBF}$ space satisfying Assumptions \ref{vector1} and \ref{vector2} for some $p_-\in (0,\infty)$, $s\in (0,\min\{p_-,1\}]$, and $q \in (s,2]$. Assume that $p\in (1,\infty)\cap [q,\infty)$,  $M\in(\frac{n}{2}(\frac{1}{s}-\frac{1}{p}),\infty)\cap\mathbb{N}$, and $\epsilon\in
(\frac{n}{s},\infty)$.
Then the spaces $h_{X,L}$, $h^{M,p,\epsilon}_{X,L,\rm{mol}},$
and $h^{M,p}_{X,L,\rm{at}}$ coincide with
equivalent quasi-norms.
\end{theorem}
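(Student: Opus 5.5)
We argue through the chain of inclusions
\[
h_{X,L}\cap L^2\subset \widetilde{h}^{M,p}_{X,L,\rm at}\subset \widetilde{h}^{M,p,\epsilon}_{X,L,\rm mol}\subset h_{X,L}\cap L^2,
\]
each with the corresponding quasi-norm estimate, and then pass to completions by density. The middle inclusion is immediate: a local $(X,M,p)_L$-atom supported in $B$ is, up to a harmless multiplicative constant, a local $(X,M,p,\epsilon)_L$-molecule, since only $U_0(B)$ and $U_1(B)$ contribute and the factor $2^{-i(\epsilon-n/p)}$ is then bounded below.

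\textbf{Step 1: molecules lie in $h_{X,L}$.} Let $f=\sum_j\lambda_jm_j$ converge in $L^2$. Since $S_{L,\mathrm{loc}}$ and $S_I(e^{-L}\cdot)$ are sublinear and bounded on $L^2$, we get $S(f)\le\sum_j\lambda_jS(m_j)$ almost everywhere for either operator $S$. The key tool is a lemma of the following type, which follows from Assumption \ref{vector2} by duality in $(X^{1/s})'$ together with the boundedness of $\mathcal M$ on the $\frac{1}{(q/s)'}$-convexification. If $\|g_j\|_{L^q}\lesssim 2^{-k\theta}[\mu(2^kB_j)]^{1/q}$ with $\operatorname{supp}g_j\subset 2^kB_j$ and $\theta>n/s$, then
\[
\Bigl\|\Bigl(\sum_j|\lambda_j g_j|^s\Bigr)^{1/s}\Bigr\|_X\lesssim 2^{-k(\theta-n/s)}\,\Lambda(\{\lambda_jm_j\}),
\]
where Assumption \ref{vector1} is used to replace $\mathbf 1_{2^kB_j}$ by $\mathcal M(\mathbf 1_{B_j})^{\cdot}$. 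Since $s\le1$, we have $(\sum_j\lambda_jS(m_j))\le(\sum_j[\lambda_jS(m_j)]^s)^{1/s}$. We split $S(m_j)=\sum_k S(m_j)\mathbf 1_{U_k(B_j)}$ and need $L^q$, and hence $L^p$ (since $p\ge q$ and by H\"older), bounds on the annuli with decay $2^{-k\theta}$, $\theta>n/s$.

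When $r_{B_j}\ge1$, no cancellation is needed. For $S_{L,\mathrm{loc}}$ the truncation $t<1\le r_B$ together with the Gaussian bound gives off-diagonal decay on $U_k(B)$, far beyond $\epsilon$. For $S_I(e^{-L}m)$ we use that $e^{-L}$ has a Gaussian kernel at scale $1\lesssim r_B$ and that $S_I$ is, up to a constant, a local average at all scales, so it is controlled by $\mathcal M$-type bounds with decay $2^{-k\epsilon}$; here $\epsilon>n/s$ is what is needed.

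When $r_{B_j}<1$, we write $m=L^Mb$. For $S_{L,\mathrm{loc}}$ we use the standard Hofmann et al. argument: split $t<r_B$ versus $t\ge r_B$, and for large $t$ write $t^2Le^{-t^2L}L^Mb=t^{-2M}(t^2L)^{M+1}e^{-t^2L}b$, gaining $(r_B/t)^{2M}$. This produces decay $2^{-k(2M+n/p)}$ after $L^p$ estimates of $(t^2L)^{M+1}e^{-t^2L}$ with Gaussian kernels from \eqref{lzy}, and $M>\frac n2(\frac1s-\frac1p)$ is exactly what makes this exceed $n/s$. For $S_I(e^{-L}L^Mb)=S_I(L^Me^{-L}b)$ we use $\|L^Me^{-L}\|$ with kernel bound \eqref{lzy} at scale $1$ and $r_B^{2M}\le1$.

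\textbf{Step 2: $h_{X,L}\cap L^2$ admits an atomic decomposition.} We use a Calder\'on reproducing formula
\[
f=C_M\int_0^1(t^2L)^{M+1}e^{-2t^2L}f\,\frac{dt}{t}+\Phi(L)e^{-L}f,
\]
where $\Phi(L)$ is a bounded function of $L$ with a Gaussian-type kernel at scale $1$ (a polynomial in $L$ times $e^{-L}$), obtained from $\int_1^\infty$. For the first term we run the tent-space atomic decomposition of $F(y,t)=t^2Le^{-t^2L}f(y)\mathbf 1_{t<1}$, using Whitney cubes of the level sets $\{S_{L,\mathrm{loc}}f>2^k\}$, truncated to $t<1$. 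These cubes are split by size: those with radius below $1$ give atoms with cancellation $a=L^Mb$ with $b=\int (t^2L)^Me^{-t^2L}(\ldots)t^{2M}\,dt/t$, while larger ones are cut into balls of radius comparable to $1$ and yield atoms of type (i), since supports propagate by finite speed. \emph{The obstacle} is that we only have Gaussian bounds, not finite propagation speed. We therefore intend to replace $e^{-t^2L}$ by $\cos(t\sqrt L)$-based functions $\Psi(t\sqrt L)$ with compactly supported Fourier transform, using self-adjointness and the Davies--Gaffney estimate implied by Assumption \ref{l2}. Then $\|S_{\Psi,\mathrm{loc}}f\|_X\lesssim\|S_{L,\mathrm{loc}}f\|_X$ must be proved by a change-of-angle and aperture argument in $X$ based on Assumption \ref{vector1}; this comparison is the step I expect to be hardest. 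The coefficient estimate $\Lambda\lesssim\|S_{L,\mathrm{loc}}f\|_X$ is then the usual consequence of $\sum_k2^{k}\mathbf 1_{\{S>2^k\}}$-type bounds in $X^{1/s}$ via Assumption \ref{vector1}. For the second term $\Phi(L)e^{-L}f$, which contains no $L^M$ factor, we decompose using $S_I(e^{-L}f)$ as a classical tent-space decomposition of $t^2e^{-t^2}e^{-L}f$ over all $t$. Balls of radius below $1$ are then grouped into radius-$1$ balls, or alternatively we note that all pieces need only type (i) atoms once we enlarge radii to at least $1$, with $\mu$-doubling and Assumption \ref{vector1} controlling the enlargement. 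Convergence in $L^2$ follows from the $L^2$ convergence of the reproducing formula.

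Finally, by density of $L^2$-elements in each completion and the two-sided estimates, the three spaces coincide with equivalent quasi-norms.
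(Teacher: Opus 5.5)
\textbf{Step 1 is essentially the paper's argument.} Your Step 1 matches the paper's proof of Theorem \ref{recon}: annular $L^p$ decay of $S_{L,\mathrm{loc}}(m)$ and $S_I(e^{-L}m)$ as in Lemmas \ref{smol} and \ref{smol2}, summed via Lemma \ref{beike}. One correction: the molecular bounds are normalized by $[\mu(B)]^{1/p}$, not $[\mu(2^kB)]^{1/p}$. Your $\mu(2^kB_j)$-normalized lemma therefore only yields decay $\epsilon-\frac np$, unless $\mu(2^kB)\gtrsim 2^{kn}\mu(B)$, which fails in a general doubling space. You should sum with Proposition \ref{pharve} instead, whose threshold $\eta>n(\frac1s-\frac1p)$ is exactly what $\epsilon>\frac ns$ and $M>\frac n2(\frac1s-\frac1p)$ give.

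\textbf{The genuine gap is Step 2.} The inclusion at stake is $h_{X,L}\cap L^2\subset\widetilde{h}^{M,p}_{X,L,\mathrm{at}}$. In your formula $f=C_M\int_0^1(t^2L)^{M+1}e^{-2t^2L}f\,\frac{dt}{t}+\Phi(L)e^{-L}f$, every synthesis operator has only Gaussian bounds. This applies to $(t^2L)^Me^{-t^2L}$ as well as to $\Phi(L)=P(L)e^{-L}$. So the pieces you build are molecules, not functions supported in a ball, and compact support is exactly what the atomic space requires.

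Your proposed repair is to change the analyzing family to some $\Psi(t\sqrt L)$ and prove $\|S_{\Psi,\mathrm{loc}}(f)\|_X\lesssim\|S_{L,\mathrm{loc}}(f)\|_X$. This is not carried out, and a change of aperture cannot give it. A change of aperture compares cones of different openings for one fixed $F(y,t)$, whereas here $F$ itself changes. Even granting the comparison, any Calder\'on formula built from $\Psi$ leaves a remainder $\int_1^\infty\cdots\frac{dt}{t}$ that no longer has finite propagation speed.

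\textbf{The missing idea is the reproducing formula \eqref{yindu} from \cite[Lemma 3.9]{dhmmy13}.} In it, the compactly Fourier-supported functions occur only on the synthesis side, and the remainder consists of finitely many terms at scale $1$. Schematically, $f$ is a finite combination of terms $\int_0^1(t^2L)^M\Psi^{(\ell)}_{j}(t\sqrt L)[t^2Le^{-t^2L}f]\,\frac{dt}{t}$ and terms $\Psi^{(\ell)}_{k}(\sqrt L)e^{-L}f$. This has three consequences:
\begin{enumerate}
\item[(i)] The tent function stays $F(y,t)=\mathbf{1}_{(0,1)}(t)\,t^2Le^{-t^2L}f(y)$, with $\|F\|_{t_X}=\|S_{L,\mathrm{loc}}(f)\|_X$, so no comparison of square functions is needed.
\item[(ii)] $\Pi_{M,L}$ maps each $(t_X,\infty)$-atom on $B$ to a local $(X,M,p)_L$-atom on $B$, by Lemma \ref{gj1} and $L^p$-duality (Lemma \ref{gj2-2}). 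Large balls need no cutting, since for $r_B\ge1$ only support and size are required.
\item[(iii)] The remainder is handled at unit scale. Split $e^{-L}f$ along a unit cube structure. The kernel of $\Psi^{(\ell)}_k(\sqrt L)$ is supported in $\{d(x,y)\le1\}$ and bounded by $C/V(x,1)$, which gives $L^p$-normalized atoms on the radius-$2$ balls $2B_j$ for every $p$. The coefficients are then bounded by H\"older's inequality and Theorem \ref{dec-HI}.
\end{enumerate}

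\textbf{Why the unit-scale reduction matters for $p>2$.} Your alternative is to enlarge $L^2$-normalized $H_{X,I}$-atoms to radius at least $1$ and then apply $\Phi(L)$. This gives no $L^p$ control on a large ball $B$. An $L^2$-normalized function concentrated on a unit ball $B'\subset B$ exceeds the $L^p$ normalization by the factor $[\mu(B)/\mu(B')]^{\frac12-\frac1p}$, and smoothing at unit scale by $\Phi(L)$ does not remove this loss.
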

The proof of Theorem \ref{local-mo-at} is given in Subsection \ref{fenzikehua}.
As an application of Theorem \ref{local-mo-at}, we 
establish the relationship theorem of local Hardy space $h_{X,L}$  and global Hardy space $H_{X,L}.$
\begin{definition}
Let $X$ be a $\mathrm{BQBF}$ space. The \emph{ Hardy space} $H_{X,L}$, associated with both $X$ and
$L$, is defined as the completion of the set
$$
H_{X,L}\cap L^2:=\left\{f \in L^2:\ \|f\|_{H_{X,L}}<\infty\right\}
$$
with respect to the \emph{quasi-norm}
\begin{align*}
\|f\|_{H_{X,L}}:=\left\|S_{L}(f)\right\|_{X},
\end{align*}
where, the \emph{Lusin area function} $S_L(f)$, associated with $L$, is defined by setting, for any $x \in \mathcal{X}$,
\begin{align*}
S_{L}(f)(x):=\left[\int_0^\infty\int_{B(x,t)}\left|t^2Le^{-t^2L}(f)(y)\right|^2\,\frac{d\mu(y)\,dt}{V(x,t)t}\right]^{\frac{1}{2}}.
\end{align*}
\end{definition}
By the  assumption that 
the operator $L$  satisfies Assumptions \ref{l1} and \ref{l2}, we  conclude that
the operator $L+I$ also satisfies Assumptions \ref{l1} and \ref{l2}. Thus, the  space 
$H_{X,L+I}$ is defined in the same way  as $H_{X,L}$ with $L$
replaced by $X+L.$ 
As a consequence  of Theorem \ref{local-mo-at},
We have the following theorem, which is proved in Subsection \ref{guanxi}.
\begin{theorem}\label{hH}
Let $X$ be a $\mathrm{BQBF}$ space
satisfying Assumptions \ref{vector1} and \ref{vector2}
for some $p_-\in(0,\infty)$, $s\in (0,\mathrm{min}\{p_-,1\}]$, and $q\in(s,2]$. 
\begin{itemize}
\item[\rm(i)] Assume that $\mathrm{inf}\,\sigma(L)\in (0,\infty)$, where $\sigma(L)$ denotes the spectrum of $L$. Then the spaces $h_{X,L}$ and $H_{X,L}$ coincide with equivalent quasi-norms.
\item[\rm(ii)] For any given $m\in\mathbb{N}$, the spaces $h_{X,L}$ and $H_{X,L+mI}$ coincide with equivalent quasi-norms.
\item[\rm(iii)] Assume that $p_0\in(1,\infty)$, $X^{\frac{1}{p_0}}$ is a $\mathrm{BBF}$ space, and the \emph{Hardy-Littlewood} maximal operator $\mathcal{M}$ is bounded on $(X^{\frac{1}{p_0}})'$. Then the spaces $h_{X,L}\cap L^2$, $H_{X,L}\cap L^2$, and $X\cap L^2$ coincide with equivalent quasi-norms.
\end{itemize}
\end{theorem}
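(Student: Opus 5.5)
We will prove all three parts by working on the dense subspaces $h_{X,L}\cap L^2$, $H_{X,L}\cap L^2$ and $H_{X,L+mI}\cap L^2$, and passing to completions at the end. The main tool is Theorem \ref{local-mo-at}, combined with the known atomic and molecular characterization of the global spaces $H_{X,L}$ (the analogue of Theorem \ref{local-mo-at} for $S_L$, as in the theory of Hardy spaces associated with $X$ and $L$). In that characterization, atoms and molecules of every size have the form $L^M b$.

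For (ii), fix $m\in\mathbb{N}$ and $p,M,\epsilon$ as in Theorem \ref{local-mo-at}. First we show $H_{X,L+mI}\cap L^2\subset h_{X,L}$. Take an $(X,M,p)_{L+mI}$-atom $a=(L+mI)^M b$ supported in a ball $B$.

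If $r_B<1$, expand $(L+mI)^M b=L^M\tilde b$ with $\tilde b=\sum_{k=0}^M\binom Mk m^{M-k}L^{k-M}\dots$. This is not literal, so we argue differently: write $a=L^M(\tilde b)$ where $\tilde b:=(I+mL^{-1})^M b$ is avoided. Instead we treat $a$ directly as a local molecule via the estimate
\begin{align*}
\left\|S_{L,\mathrm{loc}}(a)\right\|_{L^p(U_i(B))}+\left\|S_I(e^{-L}a)\right\|_{L^p(U_i(B))}\lesssim 2^{-i\epsilon'}[\mu(B)]^{1/p}\|\mathbf 1_B\|_X^{-1}.
\end{align*}
This is the standard atomic-to-Lusin estimate from the proof of Theorem \ref{local-mo-at}, and it only uses cancellation at scales $t\le1$. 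Cancellation with respect to $L+mI$ at scale $r_B<1$ gives the required decay, because
\begin{align*}
t^2Le^{-t^2L}=e^{mt^2}\,t^2(L+mI)e^{-t^2(L+mI)}-mt^2e^{-t^2L},
\end{align*}
and for $t\le1$ the error term $mt^2e^{-t^2L}$ carries the factor $t^2$, which is enough to integrate.

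If $r_B\ge1$, $a$ is already a local atom, since no cancellation is needed.

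Conversely, take a local $(X,M,p)_L$-atom $a$. If $r_B\ge1$, we show that $a=(L+mI)^M[(L+mI)^{-M}a]$ is an $(X,M,p,\epsilon)_{L+mI}$-molecule adapted to $B$ (normalized, with $r_B\ge1$). This uses the Gaussian bounds of $(L+mI)^{-k}=\frac1{\Gamma(k)}\int_0^\infty t^{k-1}e^{-mt}e^{-tL}\,dt$, which have exponential decay $e^{-c\,d(x,y)}$ thanks to $e^{-mt}$. That decay gives the off-diagonal estimates on $U_i(B)$, and the factor $r_B^{2M}\ge1$ absorbs the constants. If $r_B<1$, write $L^M b=\sum_k\binom Mk(-m)^{M-k}(L+mI)^k b$. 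Each term $(L+mI)^k b=(L+mI)^M[(L+mI)^{k-M}b]$ is, after multiplying by $r_B^{2(M-k)}\le1$, a multiple of an $(L+mI)$-molecule. The resulting norms are then controlled through $\Lambda$ and Assumption \ref{vector2}.

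For (i), the same scheme applies with $m=0$: when $\inf\sigma(L)=\delta>0$, $L^{-1}$ is bounded, and $e^{-tL}$ has the improved decay $e^{-\delta t}$ in the $L^2$ norm. This lets us rewrite large local atoms as $L^M(L^{-M}a)$, i.e.\ as global molecules, and recover the large-scale part of $S_L$ from $S_I(e^{-L}f)$. The main obstacle is that with only $L^2$ spectral decay we must produce $L^p$ off-diagonal molecular decay of $L^{-M}a$. We would first try to interpolate the $L^2$ exponential bound $\|e^{-tL}\|_{2\to2}\le e^{-\delta t}$ with the Gaussian bound to obtain a kernel bound of the form $V^{-1}e^{-\delta' t}e^{-d^2/ct}$.

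For (iii), under the extra $p_0$ hypothesis, $\mathcal M$ bounded on $(X^{1/p_0})'$ gives weighted $L^{p_0}$ extrapolation: $\|g\|_X\sim\|S_L g\|_X$ for $g\in L^2$, via a duality argument against $(X^{1/p_0})'$ with Rubio de Francia weights in $A_1$, together with the weighted $L^{p_0}(w)$ boundedness of $S_L$ and its reverse. Combined with (ii) this gives $h_{X,L}\cap L^2=X\cap L^2$, since the same argument applies to $L+I$.
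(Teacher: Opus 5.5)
\textbf{Where it fails.} Your argument for (ii) breaks down in the inclusion $h_{X,L}\subset H_{X,L+mI}$ for local atoms with $r_B<1$.

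Since $L+mI$ is injective, writing $(L+mI)^kb=(L+mI)^M\tilde b_k$ forces $\tilde b_k=(L+mI)^{-(M-k)}b$. This is a Bessel-type potential of $b$. For $k<M$ it spreads out to scale $m^{-1/2}\sim 1$, not $r_B$. So it cannot have the molecular decay $2^{-i(\epsilon-n/p)}$ relative to $B$ on the annuli $U_i(B)$ with $2^ir_B\le 1$.

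Here is a concrete counterexample. Take $\mathcal X=\mathbb R^n$ with $n\ge3$, $L=-\Delta$, $m=M=1$, and $b:=c\,r_B^2\phi((\cdot-x_B)/r_B)$, where $0\le\phi\in C_c^\infty(B(0,1))$, $\phi\not\equiv0$, and $c$ is a small multiple of $\|\mathbf 1_B\|_X^{-1}$. Then $a=-\Delta b$ is a local $(X,1,p)_L$-atom.

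\begin{itemize}
\item The $k=0$ piece is $-b=(L+I)[-(I-\Delta)^{-1}b]$.
\item For $2r_B<|x-x_B|<1$ one has $(I-\Delta)^{-1}b(x)\sim|x-x_B|^{2-n}\int b$.
\item Hence for $2^ir_B\sim1$, $\|(I-\Delta)^{-1}b\|_{L^p(U_i(B))}\sim r_B^{n+2}\|\mathbf 1_B\|_X^{-1}$.
\item The $j=0$ condition in \eqref{pmol2} for $L+I$ only allows $r_B^{\epsilon+2}\|\mathbf 1_B\|_X^{-1}$ there.
\end{itemize}

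So any way of writing $-b$ as $\lambda$ times an $(X,1,p,\epsilon)_{L+I}$-molecule adapted to $B$ forces $\lambda\gtrsim r_B^{n-\epsilon}$. This is unbounded as $r_B\to0$, because $\epsilon>n/s\ge n$. The same happens for $a$ itself, since $(L+I)^{-1}a=b-(L+I)^{-1}b$.

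Reattaching such pieces to $B(x_B,1)$, as in Proposition \ref{945}, does not rescue this. It gains only $r_B^{2(M-k)}$, i.e.\ $r_B^2$ for $k=M-1$. Already for $X=L^{r_0}$ the renormalization costs $r_B^{-n(1/r_0-1/p)}$, so the gain fails to compensate whenever $n(1/r_0-1/p)>2$. A genuinely multiscale decomposition would be needed, and your sketch does not supply one.

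\textbf{How the paper avoids this.} The paper does not re-expand atoms; it estimates the square function directly (Lemma \ref{smo22}(ii)).

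\begin{itemize}
\item One has $t^2(L+mI)e^{-t^2(L+mI)}=e^{-mt^2}[t^2Le^{-t^2L}+m\,t^2e^{-t^2L}]$ and $t^2e^{-t^2L}L^Mb=t^2Le^{-t^2L}(L^{M-1}b)$. Hence $S_{L+mI}(a)\le S_L(a)+m\,S_L(L^{M-1}b)$ pointwise.
\item Both $a$ and $L^{M-1}b$ are, up to constants, global $L$-atoms adapted to $B$. The second has order $M-1$, which is why $M$ is taken one larger there.
\item So the known $L^2(U_i(B))$ decay of $S_L$ applies, and Lemma \ref{beike} with $T:=S_{L+mI}$ concludes.
\end{itemize}

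Your handling of large local atoms via the exponential decay of $(L+mI)^{-1}$ is fine. The paper proves the case $m=1$ and then iterates using (i) for $L+I$.

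\textbf{Consequence for (iii).} Your (iii) inherits the gap, because you get $h_{X,L}\cap L^2\subset X\cap L^2$ only through $h_{X,L}=H_{X,L+I}$. The paper obtains this inclusion directly from Lemma \ref{beike} with $T:=I$. Its hypothesis \eqref{T} is exactly \eqref{pmol1}, or \eqref{pmol2} with $j=M$. This also spares you the reverse weighted inequality $\|g\|_{L^{p_0}(w)}\lesssim\|S_Lg\|_{L^{p_0}(w)}$, which you would otherwise still have to justify.

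\textbf{Two smaller points.}
\begin{itemize}
\item In the converse direction of (ii), the factor $t^2$ alone gives no decay in $i$ for the error term $m\,t^2e^{-t^2L}a$ when $t\in(2^{i\theta}r_B,1)$. You need the cancellation $\|t^2e^{-t^2L}(L+mI)^Mb\|_{L^2}\lesssim t^{2-2M}\|b\|_{L^2}$, together with the fact that this range is nonempty only if $r_B<2^{-i\theta}$.
\item In (i), the kernel bound you hope for does hold. Factor $e^{-tL}=(e^{-tL/3})^3$ to get $|K_t(x,y)|\lesssim e^{-\sigma_0t/3}[V(x,\sqrt t)V(y,\sqrt t)]^{-1/2}$, where $\sigma_0:=\inf\sigma(L)$. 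A geometric mean with the Gaussian bound then yields $V(x,\sqrt t)^{-1}e^{-\sigma' t}e^{-d(x,y)^2/(c't)}$. The paper instead inserts $\|t^2Le^{-t^2L}g\|_{L^2}\lesssim e^{-ct^2}\|g\|_{L^2}$ directly into the large-$t$ part of $S_L$, which needs no kernel estimate.
\end{itemize}
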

Next, we establish several maximal function characterizations of the local  Hardy space $h_{X,L}.$ Denote the space of all Schwartz functions on $\mathbb{R}$
by $\mathcal{S}(\mathbb{R})$  equipped with the well-known topology determined by a countable family of norms.
\begin{definition}
Let $X$ be a $\mathrm{BQBF}$ space.
\begin{itemize}
\item  [$\mathrm{(i)}$]Let $\phi\in\mathcal{S}(\mathbb{R})$ be an even function with $\phi(0)=1$. 
For any given $\alpha\in(0,\infty)$ and  any $f\in L^2$, the \emph{local non-tangential maximal function} $\phi_{L,\alpha}^\ast(f)$
and the \emph{local radial maximal function} $\phi^+_L(f)$
are, respectively,  
defined by setting, for any $x\in\mathcal{X}$,
\begin{equation*}
\phi_{L,\,\alpha}^\ast(f)(x):=\sup_{t\in(0,1)}
\sup_{y\in B(x,\alpha t)}
\left|\phi\left(t\sqrt{L}\right)(f)(y)\right|
\end{equation*}
and
\begin{align*}
\phi_L^+(f)(x):=\sup_{t\in(0,1)}\left|\phi\left(t\sqrt L\right)(f)(x)\right|.
\end{align*}
For the simplicity of the presentation, in the case where $\phi(x):=e^{-|x|^2}$ for any $x\in\mathbb{R}$,
we denote
$\phi_{L,\,1}^\ast(f)$ 
and $\phi^+_L(f)$,
respectively, by $f_{L}^\ast$ and
$f_{L}^+$.

\item  [$\mathrm{(ii)}$]
The \emph{local Hardy  space} $h_{X,L,\max}^{\phi,\alpha},$ associated with both $X$ and $L$,
is defined as the completion of the set
$$h_{X,L,\max}^{\phi,\alpha}\cap L^2:=\left\{f\in L^2:\
\|f\|_{h_{X,L,\max}^{\phi,\alpha}}:=\|\phi_{L,\alpha}^\ast(f)
\|_X<\infty\right\}$$
with respect to the quasi-norm $\|\cdot\|_{h_{X,L,\max}^{\phi,\alpha}}$.
The \emph{local Hardy  space} $h_{X,L,\mathrm{rad}}^\phi$ is defined in the same way
as $h_{X,L,\max}^{\phi,\alpha}$ with $\phi_{L,\alpha}^\ast$ replaced by
$\phi_L^+$. In the case  where 
$\alpha:=1$ and $\phi(x):=e^{-|x|^2}$ for any $x\in\mathbb{R}$,
we denote the spaces $h_{X,L,\max}^{\phi,\alpha}$ and $h_{X,L,\mathrm{rad}}^\phi$,
respectively, by 
$h_{X,L,\max}$ and $h_{X,L,\mathrm{rad}}$.
\end{itemize}
\end{definition}
The following theorem is the second main result of this article, whose proof is given in Subsection \ref{jida}.
\begin{theorem}\label{thm-maxi}
Let $X$ be a $\mathrm{BQBF}$ space satisfying Assumptions \ref{vector1} and \ref{vector2} for some $p_-\in (0,\infty)$, $s\in (0,1]$, and $q\in (s,\infty)$. 
Let  $M\in\mathbb{N}\cap (\frac{n}{2\min\{s,p_-\}},\infty)$, $p\in (1,\infty]\cap[q,\infty],$
$\alpha\in(0,\infty),$
and   $\varphi\in\mathcal{S}(\mathbb{R})$ be an even function satisfying $\varphi(0)=1$.
Then the spaces
$h_{X,L,\mathrm{at}}^{M,p}$,
$h_{X,L,\mathrm{max}}^{\varphi,\alpha}$,
$h_{X,L,\mathrm{rad}}^\varphi$,
$h_{X,L,\mathrm{max}}$, and $h_{X,L,\mathrm{rad}}$ coincide with equivalent quasi-norms.
\end{theorem}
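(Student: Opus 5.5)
We will prove Theorem \ref{thm-maxi} through a cycle of continuous inclusions on the dense subspaces $\cdot\cap L^2$, and then pass to completions. The cycle is
$$
h_{X,L,\mathrm{at}}^{M,p}\subset h_{X,L,\max}^{\varphi,\alpha}\subset h_{X,L,\mathrm{rad}}^{\varphi}\subset h_{X,L,\mathrm{rad}}\subset h_{X,L,\max}\subset h_{X,L,\mathrm{at}}^{M,p}.
$$

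\textbf{Step 1: atomic $\to$ maximal.} Take a local $(X,M,p)_L$-atom $a$ supported in a ball $B$. We will show the pointwise bound
$$
\varphi^\ast_{L,\alpha}(a)\lesssim \varphi^\ast_{L,\alpha}(a)\mathbf 1_{4B}+\|\mathbf 1_B\|_X^{-1}\left[\mathcal M(\mathbf 1_B)\right]^{\theta},
$$
with $\theta>1/\min\{s,p_-\}$. On $4B$ we use the $L^p$-boundedness of $\varphi^\ast_{L,\alpha}$, which follows from the Gaussian bound and the finite propagation/Schwartz kernel estimates for $\varphi(t\sqrt L)$ (see \cite{bdl18}).

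Off $4B$ we split according to the size of $r_B$.
\begin{itemize}
\item If $r_B\ge 1$, then $t<1\le r_B$. The kernel decay of $\varphi(t\sqrt L)$ at distance $d(x,B)\gg t$ gives decay $(r_B/d(x,x_B))^{N}$ for any $N$, with no cancellation needed. This is exactly why big balls carry no moment condition.
\item If $r_B<1$, write $\varphi(t\sqrt L)a=(t^2L)^M\varphi(t\sqrt L)b\, t^{-2M}$. The function $\lambda^{2M}\varphi(\lambda)$ has kernel bounds of Schwartz type, which yields the factor $(r_B/d)^{2M}$. The condition $M>n/(2\min\{s,p_-\})$ makes this factor summable into $[\mathcal M(\mathbf 1_B)]^\theta$.
\end{itemize}
Summing over atoms, we apply Assumption \ref{vector1} (the Fefferman--Stein inequality in $X^{1/p}$) to the tails. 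For the local parts we use Assumption \ref{vector2} through the standard lemma
$$
\Big\|\big(\sum|\lambda_j g_j|^s\big)^{1/s}\Big\|_X\lesssim\Big\|\big(\sum(\lambda_j/\|\mathbf 1_{B_j}\|_X)^s\mathbf 1_{B_j}\big)^{1/s}\Big\|_X,
$$
valid when $\mathrm{supp}\, g_j\subset B_j$ and $\|g_j\|_{L^q}\le \mu(B_j)^{1/q}\|\mathbf 1_{B_j}\|_X^{-1}$. We expect this lemma to be available from the proof of Theorem \ref{local-mo-at}.

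\textbf{Step 2: trivial inclusions and change of $\varphi$.} The inclusion $\varphi^+_L\le\varphi^\ast_{L,\alpha}$ is pointwise. To pass from a general $\varphi$ to the Gaussian, and back to non-tangential from radial, we follow the approach of \cite{bdl18}. First we establish the local version of the Peetre-type inequality
$$
\varphi^\ast_{L,\alpha}(f)\lesssim\left[\mathcal M\big(|\varphi^+_L f|^{r}\big)\right]^{1/r}+\text{(tail terms)},
$$
using a Calder\'on reproducing formula at scales $t<1$. It has the form
$$
f=\psi(t\sqrt L)f+\sum_{k}\eta(2^{-k}t\sqrt L)\ldots,
$$
with an inhomogeneous piece $\Phi(\sqrt L)f$ that absorbs large scales. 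Choosing $r<\min\{s,p_-\}$ and applying Assumption \ref{vector1} then controls $\|\varphi^\ast_{L,\alpha}f\|_X$ by $\|\varphi^+_Lf\|_X$. The same argument compares two different $\varphi$'s, as well as different apertures $\alpha$.

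\textbf{Step 3: $h_{X,L,\max}\to$ atomic.} This is the main obstacle. Given $f\in L^2$ with $f^\ast_L\in X$, we perform a Calder\'on--Zygmund-type decomposition via tent spaces, as in the proof of Theorem \ref{local-mo-at}. We use the local reproducing formula
$$
f=C\int_0^1 (t^2L)^{M+1}e^{-2t^2L}f\,\frac{dt}{t}+\Phi_M(L)e^{-L}f.
$$
The first term is decomposed on the level sets $\{\mathcal M(\mathbf 1_{\{f^\ast_L>2^k\}})>1-\gamma\}$ with Whitney balls of radius $<1$, producing atoms of the small-ball type with $b=\int (t^2)^{M}\ldots$. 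The second term lives at scale $1$. We cover $\mathcal X$ by a lattice of unit balls and cut the function with a partition of unity, producing large-ball atoms without cancellation. Their coefficients are controlled by $\sup_{y\in B(x,1)}|e^{-L}f(y)|\le f^\ast_L(x)$.

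The hard part is controlling the tent-space term with the maximal function rather than the area function. For this, we first prove $\|S_{L,\mathrm{loc}}f\|_X+\|S_I(e^{-L}f)\|_X\lesssim\|f^\ast_L\|_X$ by a good-$\lambda$ argument in the local setting, using the heat-kernel time derivative bounds \eqref{lzy}, and localized to $t<1$ with $e^{-L}$ handling $t\ge1$. We then invoke Theorem \ref{local-mo-at}, which gives $h_{X,L}=h_{X,L,\mathrm{at}}^{M,p}$ when $p\ge q$ and $q\le2$. For the range $q>2$ or $p=\infty$ permitted here, we instead run the atomic decomposition directly from $f^\ast_L$ as above, producing $L^\infty$ atoms, which are in particular $(X,M,p)_L$-atoms for all $p$. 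Finally, a density argument extends all the inequalities from $L^2$ to the completions.
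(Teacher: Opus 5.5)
Your Steps 1 and 2 are sound and follow the paper's route: Propositions \ref{oneside} and \ref{kenan1}--\ref{kenan4}, with the Peetre-type bounds taken from \cite{bdl18}. Bounding the off-ball part by $\|\mathbf 1_B\|_X^{-1}[\mathcal M(\mathbf 1_B)]^{\theta}$ with $\theta=2M/n>1/\min\{s,p_-\}$ and then applying Assumption \ref{vector1} is equivalent to the paper's annular sum with Proposition \ref{prfs}. For $\alpha>1$ you should take the local part on $4\alpha B$ instead of $4B$.

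The gap is in Step 3, the inclusion $h_{X,L,\max}\cap L^2\subset\widetilde h^{M,p}_{X,L,\mathrm{at}}$. Your main route, a good-$\lambda$ inequality followed by Theorem \ref{local-mo-at}, reaches only $q\le 2$ and $p<\infty$. Theorem \ref{th1-1} requires $q\le 2$ and yields $(X,M,p)_L$-atoms only for finite $p$. You also do not say how a good-$\lambda$ inequality, which is a weighted $L^r$ statement, is transferred to a general $X$ (e.g.\ via an extrapolation argument like Lemma \ref{1616}). So for $p=\infty$, and for $q>2$, both of which the statement allows, everything rests on the ``direct'' decomposition, and that is only asserted.

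Two things go wrong in that direct decomposition.

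\textbf{No compact support.} With the reproducing formula built from $(t^2L)^{M+1}e^{-2t^2L}$, the tent pieces and the associated functions $b$ are not supported in any ball, because the heat kernel has no finite propagation speed. You obtain molecules, not atoms. The paper instead uses Proposition \ref{gj3}, whose building blocks have kernels supported in $\{d(x,y)\le t\}$ (Lemma \ref{gj1}), so each piece lives in $3B_{i,k}$.

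\textbf{No size bound.} This is exactly the step you yourself call the hard part, and you give no mechanism for bounding the pieces by the maximal function. On $T_i=\widehat O_i\setminus\widehat O_{i+1}$ one only knows that the integrand is $\lesssim 2^i$ pointwise. Since $T_i$ contains points with arbitrarily small $t$, integrating such a bound against $\frac{dt}{t}$ diverges.

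The paper, following \cite[(26)]{bdl18}, takes level sets of an auxiliary maximal function $N_L^\ast$ with aperture $5$. Besides $\Psi^{(2M-2\ell+2)}(t\sqrt L)f$, it also contains $\eta_\ell(t\sqrt L)f$, where $\eta_\ell(\xi)=\int_0^1(t^2\xi^2)^M\Phi^{(2\ell)}_1(t\xi)\Psi^{(2M-2\ell+2)}_1(t\xi)\,\frac{dt}{t}$. Since $\eta_\ell(\tau\xi)$ is the primitive in $\tau$ of the symbol of the integrand, the $t$-integral over the part of $T_{i,k}$ whose $y$-slice contains the kernel's support reduces, roughly, to differences of values of $\eta_\ell(\tau\sqrt L)f$. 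These are bounded by $2^{i+1}$ because $B(y,4t)\not\subset O_{i+1}$ gives a nearby point where $N_L^\ast(f)\le 2^{i+1}$. Proposition \ref{kenan3} then yields $\|N_L^\ast(f)\|_X\lesssim\|f\|_{h_{X,L,\max}}$.

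Without this ingredient, or an area-versus-maximal estimate valid in $X$ for all $q$, Step 3 does not produce $(X,M,\infty)_L$-atoms. Your cycle of inclusions is therefore not closed in the full range of the statement.
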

As a direct  corollary of
Theorems \ref{local-mo-at} and \ref{thm-maxi}, we immediately
have the following theorem.
\begin{theorem}
Let $X$ be a $\mathrm{BQBF}$ space satisfying Assumptions \ref{vector1} and \ref{vector2} for some $p_-\in (0,\infty)$, $s\in (0,\min\{p_-,1\}]$, and $q\in (s,2]$. 
Assume that $p\in (1,\infty]\cap[q,\infty],$
$M\in\mathbb{N}\cap (\frac{n}{2\min\{s,p_-\}},\infty)$, 
and $\epsilon\in(\frac{n}{s},\infty)$.
Then the spaces
$h_{X,L}$, $h_{X,L,\mathrm{mol}}^{M,p,\epsilon}$,
$h_{X,L,\mathrm{at}}^{M,p}$,
$h_{X,L,\mathrm{max}}$, and $h_{X,L,\mathrm{rad}}$ coincide with equivalent quasi-norms.
\end{theorem}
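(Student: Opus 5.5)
This statement is a direct combination of Theorems \ref{local-mo-at} and \ref{thm-maxi}. The plan is to check that the present hypotheses place us inside the admissible ranges of both theorems, and then to chain the resulting identifications through the common space $h^{M,p}_{X,L,\mathrm{at}}$.

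First, apply Theorem \ref{thm-maxi}. We have $s\in(0,\min\{p_-,1\}]\subset(0,1]$ and $q\in(s,2]\subset(s,\infty)$. Also $M\in\mathbb{N}\cap(\frac{n}{2\min\{s,p_-\}},\infty)$ and $p\in(1,\infty]\cap[q,\infty]$. Take $\alpha=1$ and $\varphi(x):=e^{-|x|^2}$, which is even, Schwartz, and satisfies $\varphi(0)=1$. Then Theorem \ref{thm-maxi} gives
\[
h^{M,p}_{X,L,\mathrm{at}}=h_{X,L,\max}=h_{X,L,\mathrm{rad}}
\]
with equivalent quasi-norms, for every $p$ in the stated range, including $p=\infty$.

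Second, apply Theorem \ref{local-mo-at}. Here I must check that $M>\frac{n}{2}(\frac1s-\frac1p)$. Since $s\le p_-$, we have $\min\{s,p_-\}=s$, so $M>\frac{n}{2s}\ge\frac n2(\frac1s-\frac1p)$. The condition $\epsilon>\frac ns$ is assumed directly. The only real obstacle is the endpoint $p=\infty$, which Theorem \ref{local-mo-at} does not cover (it requires $p<\infty$). For $p\in(1,\infty)\cap[q,\infty)$, Theorem \ref{local-mo-at} gives
\[
h_{X,L}=h^{M,p,\epsilon}_{X,L,\mathrm{mol}}=h^{M,p}_{X,L,\mathrm{at}},
\]
and combining this with the first step finishes that case.

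For $p=\infty$ I reduce to a finite exponent. Fix any finite $p_1\in(1,\infty)\cap[q,\infty)$, which exists since $q\le2$. By the first step, both $h^{M,\infty}_{X,L,\mathrm{at}}$ and $h^{M,p_1}_{X,L,\mathrm{at}}$ coincide with $h_{X,L,\max}$, so they coincide with each other with equivalent quasi-norms. The finite case applied with $p_1$ then identifies them with $h_{X,L}$.

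This leaves $h^{M,\infty,\epsilon}_{X,L,\mathrm{mol}}$, for which two routes are available:
\begin{itemize}
\item If molecules are only intended for finite $p$ (the definition of molecules requires $p\in[1,\infty)$), the statement is understood with $p<\infty$ for that space, and nothing more is needed.
\item Otherwise, note that each $(X,M,\infty,\epsilon)_L$-molecule is an $(X,M,p_1,\epsilon')_L$-molecule for some $\epsilon'>\frac ns$, by H\"older's inequality on each $U_i(B)$. This gives $h^{M,\infty,\epsilon}_{X,L,\mathrm{mol}}\subset h^{M,p_1,\epsilon'}_{X,L,\mathrm{mol}}=h_{X,L}$. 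For the reverse inclusion, every $(X,M,\infty)_L$-atom is an $(X,M,\infty,\epsilon)_L$-molecule, so $h^{M,\infty}_{X,L,\mathrm{at}}\subset h^{M,\infty,\epsilon}_{X,L,\mathrm{mol}}$.
\end{itemize}

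Finally, all identifications are first made on the dense $L^2$ subspaces, where the norm equivalences hold. They then pass to the completions by the standard density argument, and the conclusion follows.
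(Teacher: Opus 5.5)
Your proposal is correct and follows the same route as the paper, which states this theorem as an immediate corollary of Theorems \ref{local-mo-at} and \ref{thm-maxi}, chained through $h^{M,p}_{X,L,\mathrm{at}}$ and extended to the completions by density. You also handle the endpoint $p=\infty$, which the paper leaves implicit: you pass through a finite $p_1\in(1,\infty)\cap[q,\infty)$ and observe that the molecular space is only defined for $p<\infty$, or else embed $L^\infty$-molecules into $L^{p_1}$-molecules via H\"older's inequality.
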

As an application of Theorem \ref{thm-maxi}, we show the the local Hardy space $h_{X,L,\mathrm{max}}$,
associated with the operator $L$,
coincides with local atomic Hardy space $h_{X,\mathrm{at}}^p$ under some additional assumptions of $L.$
\begin{definition}
Let $p\in[1,\infty]$ and 
$X$ be a $\mathrm{BQBF}$ space. 
Then a function $a\in L^2$ is called a \emph{local $(X,p)$-atom} if there exists a ball $B\subset \mathcal{X}$ such that 
\begin{itemize}
\item [$\mathrm{(i)}$] 
$$\mathrm{supp}\, (a)\subset B,$$
\item [$\mathrm{(ii)}$] 
\begin{align*}
\|a\|_{L^p}\leq [\mu(B)]^{\frac{1}{p}}\|\mathbf{1}_B\|_{X}^{-1},
\end{align*}
\item  [$\mathrm{(ii)}$] when $r_B\in (0,1)$,
\begin{align*}
\int_{\mathcal{X}}a(x)\,d\mu(x)=0.
\end{align*}
\end{itemize}
In a similar way, the set $\widetilde{h}^{p}_{X,\rm{at}}$
and the \emph{local atomic Hardy space} $h^{p}_{X,\rm{at}}$ are defined
in the same way, respectively, as $\widetilde{h}^{M,p,\epsilon}_{X,L,\rm{mol}}$ and
$h^{M,p,\epsilon}_{X,L,\rm{mol}}$ with local $(X,M,p,\epsilon)_L$-molecules replaced by local $(X,p)$-atoms.
\end{definition}
Recall that, in this article, we always assume that $L$ satisfies Assumptions \ref{l1}
and \ref{l2} and $\{K_t\}_{t\in(0,\infty)}$ denotes the kernel of $\{e^{-tL}\}_{t\in(0,\infty)}$
. To obtain the relationship theorem of local Hardy spaces $h_{X,L,\mathrm{max}}$ and $h_{X,\mathrm{at}}^p$, we need some additional assumptions on $L$. 
Assume that there exist 
positive constants $c, C,$ and $\delta$ such that,
for any $x\in\mathcal{X}$ and $t\in(0,\infty)$,
\begin{align}\label{1914}
\int_{\mathcal{X}}K_t(x,y)\,d\mu(y)=1,
\end{align}
and, 
for any $t\in(0,\infty)$ and $x,x', y\in \mathcal{X}$ with $d(x,x')\leq[\sqrt{t}+d(x,y)]/2$,
\begin{align}\label{A3}
\left|K_t(x,y)-K_t(x',y)\right|\leq C\left[\frac{d(x,x')}{\sqrt{t}}\right]^{\delta}\frac{1}{V(x,\sqrt{t})}\mathrm{exp}\left(-\frac{d(x,y)^2}{ct}\right).
\end{align}
\begin{theorem}\label{key}
Let $L$ be an operator on $L^2$ satisfying
Assumptions \ref{l1} and \ref{l2} and  \eqref{1914} and  \eqref{A3} for some $\delta\in (0,\infty).$
Assume that  $X$ is a $\mathrm{BQBF}$ space
satisfying Assumptions \ref{vector1} and \ref{vector2}
for some $p_-\in(0,\infty)$, $s\in (\frac{n}{n+\delta},\min\{p_-,1\}]$, and $q\in(s,\infty)$. Let $p\in(1,\infty]\cap(q,\infty].$
Then the spaces  $h_{X,L,\mathrm{max}}$  and $h_{X,\mathrm{at}}^p$ coincide with equivalent quasi-norms.
\end{theorem}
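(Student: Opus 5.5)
By Theorem \ref{thm-maxi}, $h_{X,L,\max}$ coincides with $h^{M,p}_{X,L,\mathrm{at}}$ and with $h_{X,L,\mathrm{rad}}$, with equivalent quasi-norms, for a suitably large $M\in\mathbb{N}$. It therefore suffices to compare $h_{X,\mathrm{at}}^p$ with these spaces on the dense class $L^2$, and then pass to completions. I would prove two estimates. First, every local $(X,p)$-atom is controlled in $h_{X,L,\max}$ uniformly, in the vector-valued sense needed to sum over a representation. Second, every $f\in h_{X,L,\max}\cap L^2$ has a representation by local $(X,p)$-atoms whose coefficient functional $\Lambda$ is bounded by $\|f\|_{h_{X,L,\max}}$.

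\textbf{Inclusion $h^p_{X,\mathrm{at}}\subset h_{X,L,\max}$.} Let $a$ be a local $(X,p)$-atom supported in a ball $B$. I would prove a pointwise bound
$$
f^\ast_L(a)\lesssim \|\mathbf 1_B\|_X^{-1}\Bigl\{\mathcal{M}\bigl(|a|\,\mathbf 1_B\bigr)\|\mathbf 1_B\|_X\,\mathbf 1_{4B}+\bigl[\mathcal{M}(\mathbf 1_B)\bigr]^{\frac{n+\delta}{n}}\Bigr\},
$$
where the first term is used on $4B$ and the second off $4B$. The off-ball estimate splits by the size of $B$.
\begin{itemize}
\item If $r_B<1$, the atom has vanishing mean. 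I would write $e^{-t^2L}a(y)=\int[K_{t^2}(y,z)-K_{t^2}(y,x_B)]a(z)\,d\mu(z)$ and apply \eqref{A3} in the second variable, using the symmetry of $K_t$ that follows from self-adjointness. For $x\notin 4B$ and $t<1$ this gives decay $(r_B/d(x,x_B))^\delta\,\mu(B)^{-1}\cdot$ Gaussian, hence $\lesssim \|\mathbf 1_B\|_X^{-1}[\mathcal{M}(\mathbf 1_B)(x)]^{(n+\delta)/n}$.
\item If $r_B\ge1$, no cancellation is available. Here the restriction $t<1$ together with the Gaussian factor gives decay $e^{-c\,d(x,x_B)^2}$, which dominates any power of $\mathcal{M}(\mathbf 1_B)$ because $r_B\ge1$.
\end{itemize}
On $4B$ I would use the $L^p$ boundedness of $f^\ast_L$, which follows from the Gaussian bound and $p>1$.

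Summing over $j$ then follows the standard route. The local part is handled by Assumption \ref{vector2} via the known lemma bounding $\|\sum_j\lambda_j a_j\mathbf 1_{4B_j}\|_X$ by $\Lambda$ when $p>q$. The tail part is handled by Assumption \ref{vector1} applied with exponent $(n+\delta)/n$, and this step is exactly where $s>n/(n+\delta)$ is needed. Finally, $f^\ast_L(\sum\lambda_j a_j)\le\sum_j\lambda_j f^\ast_L(a_j)$, since the series converges in $L^2$.

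\textbf{Inclusion $h_{X,L,\max}\subset h^p_{X,\mathrm{at}}$.} Take $f\in h_{X,L,\max}\cap L^2$. Theorem \ref{thm-maxi} gives a decomposition $f=\sum\lambda_j a_j$ into local $(X,M,p)_L$-atoms with $\Lambda\lesssim\|f\|_{h_{X,L,\max}}$. Atoms with $r_{B_j}\ge1$ are already local $(X,p)$-atoms. For $r_{B_j}<1$ we have $a_j=L^M b_j$ with $\mathrm{supp}\,L^ib_j\subset B_j$, and I claim $\int a_j\,d\mu=0$. The conservation property \eqref{1914} together with self-adjointness gives $e^{-tL}\mathbf 1=\mathbf 1$, interpreted through the kernel. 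Hence
$$
\int e^{-tL}g\,d\mu=\int g\,d\mu
$$
for compactly supported $g\in L^2$. Writing $g:=L^{M-1}b_j$, we get
$$
\int Lg\,d\mu=\lim_{t\to0}t^{-1}\int(g-e^{-tL}g)\,d\mu=0,
$$
where the interchange of limit and integral is justified on the compact support plus a Gaussian tail, using $Lg\in L^2$ supported in $B_j$. The size condition \eqref{atom2} with $i=M$ gives $\|a_j\|_{L^p}\le[\mu(B_j)]^{1/p}\|\mathbf 1_{B_j}\|_X^{-1}$. So each $a_j$ is a local $(X,p)$-atom, and $\|f\|_{h^p_{X,\mathrm{at}}}\lesssim\|f\|_{h_{X,L,\max}}$.

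\textbf{Main obstacle.} The hardest part is the density and completion argument. I need every $f\in h^p_{X,\mathrm{at}}$ given by an $L^2$-convergent atomic sum to lie in $h_{X,L,\max}$, and the identification of the two completions to be consistent. I would handle this as in the proof of Theorem \ref{local-mo-at}, using that the relevant quasi-norms are equivalent on $L^2$ and that $L^2$ is dense in both spaces.
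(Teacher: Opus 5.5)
Your proposal is correct and follows essentially the paper's proof. As in the paper, you use the conservation property to show that local $(X,M,p)_L$-atoms with $r_B<1$ have mean zero: the paper cites \cite[Lemma 9.1]{hlmmy11}, while you give the direct limit argument. For the reverse inclusion you rerun Proposition~\ref{oneside}, controlling the small-ball tail by cancellation together with \eqref{A3} (moved to the second variable via self-adjointness), summing with Assumption~\ref{vector1} applied to a power greater than one of $\mathcal{M}(\mathbf{1}_{B_j})$, and finishing with Theorem~\ref{thm-maxi} and density. The only differences are cosmetic: you apply cancellation for all $t\in(0,1)$ rather than only for $t\in[r_B,1)$, and you write the tail as $[\mathcal{M}(\mathbf{1}_B)]^{(n+\delta)/n}$ instead of the paper's annular bound $2^{-\delta i}\mu(B_j)/\mu(2^iB_j)$.
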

The proof of Theorem \ref{key} is given in Subsection \ref{jida}.
\section{Proof of Main Results}\label{proof of mr}
In this section, we prove the main results in Section \ref{main}.
\subsection{Proof of Theorem \ref{local-mo-at}\label{fenzikehua}}
In this subsection, we prove Theorem \ref{local-mo-at}, which can be deduced from Theorems \ref{th1-1} and \ref{recon} below.
\begin{theorem}\label{th1-1}
Let  $X$ be a $\mathrm{BQBF}$ space satisfying Assumptions \ref{vector1} and \ref{vector2} for some $p_-\in(0,\infty)$, $s\in (0,\min\{p_-,1\}]$, and  $q\in(s,2]$. Assume that $M \in \mathbb{N}$ and $p\in (1,\infty)$. Then there exists a positive constant $C$ such that, for any $f \in h_{X,L} \cap L^2$,  $f \in \widetilde{h}_{X,L,\mathrm{at}}^{M,p}$ and $\|f\|_{\widetilde{h}_{X,L,\mathrm{at}}^{M,p}} \leq C\|f\|_{h_{X,L}}$.
\end{theorem}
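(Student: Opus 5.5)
We follow the tent space approach of \cite{hlmmy11} and \cite{gly13}, adapted to $X$ as in \cite{shyy17}. Fix $f\in h_{X,L}\cap L^2$ and split $f$ via the Calder\'on reproducing formula, valid in $L^2$ by the spectral theorem. Choose $\Phi$ even, smooth, supported in $[-1,1]$, with $\widehat\Phi(0)$ normalized, and set $\Psi(t\sqrt L):=(t^2L)^{M+1}\Phi(t\sqrt L)$. For a suitable constant $c_M$ we then have
\begin{align*}
f=c_M\int_0^1 \Psi(t\sqrt L)\,t^2Le^{-t^2L}f\,\frac{dt}{t}+ G(L)f,
\end{align*}
where $G(L)$ collects the contribution of scales $t\ge1$. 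We arrange $G(L)=\eta(L)e^{-L}$ with $\eta$ a bounded function decaying appropriately, so that the second term is controlled by $e^{-L}f$. Finite propagation speed for the wave equation holds, since the Gaussian bound in Assumption \ref{l2} implies the Davies--Gaffney estimate. Consequently $\Psi(t\sqrt L)$ and $\Phi(t\sqrt L)$ have kernels supported in $\{d(x,y)\le t\}$.

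\textbf{Local part.} Set $F(y,t):=t^2Le^{-t^2L}f(y)\mathbf 1_{(0,1)}(t)$. Then $\mathcal A(F)=S_{L,\mathrm{loc}}(f)\in X$, where $\mathcal A$ is the tent space area functional. We perform the atomic decomposition of the tent space $T_X$, in its version for ball quasi-Banach function spaces under Assumption \ref{vector1}, via Whitney decompositions of the level sets $O_k=\{\mathcal A(F)>2^k\}$. This gives $F=\sum_j\lambda_j A_j$, where each $A_j$ is supported in the tent $\widehat{B_j}$ and satisfies $\|A_j\|_{T^2_2}\le\mu(B_j)^{1/2}\|\mathbf 1_{B_j}\|_X^{-1}$, with $\|\{\sum_j(\lambda_j/\|\mathbf 1_{B_j}\|_X)^s\mathbf 1_{B_j}\}^{1/s}\|_X\lesssim\|\mathcal A(F)\|_X$. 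Since $t<1$, the tents may be taken of radius at most $1$, so the balls satisfy $r_{B_j}\le1$. Define $a_j:=\pi(A_j)$ with $\pi(A)=c_M\int_0^1\Psi(t\sqrt L)A(\cdot,t)\,dt/t$. Then $a_j=L^M b_j$ with $b_j=c_M\int t^{2M}(t^2L)\Phi(t\sqrt L)A_j\,dt/t$. Finite propagation gives $\mathrm{supp}\,L^ib_j\subset 2B_j$, and duality against $T^2_{p'}$ gives the $L^p$ bounds \eqref{atom2} for $p\le2$. For $p>2$ we instead use the $T^{\infty}$ atoms of the Whitney construction, which are $L^\infty$-normalized, and obtain the $L^p$ bound by duality from the boundedness of $\pi$ on $T^2_{p}$. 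Radii in $[1/2,1)$ are enlarged to $1$, which only affects constants via doubling. Thus these $a_j$ are local $(X,M,p)_L$-atoms. Convergence in $L^2$ follows from $F\in T^2_2$ and the $L^2$ convergence of the tent decomposition.

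\textbf{Global part.} Here $g:=G(L)f$. Write $g$ as a sum over the scale-$1$ Whitney cover. Take a bounded-overlap covering of $\mathcal X$ by balls $Q_k$ of radius $1$ with partition of unity $\{\chi_k\}$, and set $g=\sum_k\lambda_k a_k$ with $a_k=g\chi_k/\lambda_k$ and $\lambda_k:=\|\mathbf 1_{Q_k}\|_X\,\mu(Q_k)^{-1/p}\|g\|_{L^p(Q_k)}$. These are local atoms with $r_B=1$, which need no cancellation. To bound $\Lambda$, we show
\begin{align*}
\mu(Q_k)^{-1/p}\|g\|_{L^p(Q_k)}\lesssim \inf_{x\in Q_k}\mathcal M_{r}\big(S_I(e^{-L}f)\big)(x)
\end{align*}
for some $r<p_-$, plus Gaussian tails over the annuli $U_i(Q_k)$. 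This uses $g=\eta(L)e^{-L}f$, the Gaussian bounds, and the elementary estimate $|h(x)|\lesssim$ the average of $S_I(h)$ on $B(x,1)$. The latter holds because $S_I(h)$ at scales $t\sim1$ dominates local averages of $|h|$. Since the $Q_k$ have bounded overlap, Assumption \ref{vector1} then gives $\Lambda\lesssim\|S_I(e^{-L}f)\|_X$.

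\textbf{Main obstacle.} The key difficulty is the tent space atomic decomposition in $X$ with the quantitative $\Lambda$-bound. This needs the vector-valued maximal inequality of Assumption \ref{vector1} applied to $\mathbf 1_{O_k^*}\le[\mathcal M(\mathbf 1_{O_k})]^{1/\theta}$. We also need to ensure that the decomposition converges in $L^2$, so that $f$ equals the atomic sum in $L^2$ as required by the definition of $\widetilde h^{M,p}_{X,L,\mathrm{at}}$. The second most delicate point is the pointwise control of the global term by $S_I(e^{-L}f)$.
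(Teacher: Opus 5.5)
Your local part is the paper's argument. You decompose $F(y,t)=\mathbf 1_{(0,1)}(t)\,t^2Le^{-t^2L}f(y)$ in the local tent space into $(t_X,\infty)$-atoms (Theorem \ref{local-tent}) and map them by $\Pi_{M,L}$. Finite propagation speed gives the support, and duality with Lemma \ref{SL} gives \eqref{atom2} (Lemma \ref{gj2-2}). Two small corrections there:
\begin{itemize}
\item The kernel-support statement concerns $\widehat\Phi(t\sqrt L)$, not $\Phi(t\sqrt L)$.
\item The claim that $r_{B_j}\le 1$ ``since $t<1$'' is unjustified: Whitney balls of large level sets are large whatever the cut-off in $t$. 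It is also unnecessary. For $r_{B_j}\ge 1$ a local atom needs only support and size, and the top-order case of the same duality estimate gives the size bound.
\end{itemize}

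The genuine difference is in the global term. The paper takes the remainder from \cite[Lemma 3.9]{dhmmy13}, a finite sum of $\Psi^{(\ell)}_k(\sqrt L)e^{-L}f$ with compactly supported kernels. It localizes $e^{-L}f$ to unit cubes \emph{before} applying these operators, so support and size of the pieces are immediate. It then controls the coefficients by Theorem \ref{dec-HI}, proved through the atomic decomposition of $H_{X,I}$ (Theorem \ref{HI}) and Propositions \ref{945} and \ref{pras}, hence using Assumption \ref{vector2}. You instead cut $g=G(L)f$ itself by a partition of unity and dominate its local averages by $\mathcal M_r(S_I(e^{-L}f))$. This needs only the boundedness of $\mathcal M$ on $X^{1/r}$ for $r<p_-$, so it is more elementary. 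Your observation in fact makes Theorem \ref{dec-HI} a pointwise triviality: for $x\in Q_i$, keeping only $t\in[2,3]$ in $S_I(h)(x)$ gives $[\mu(Q_i)]^{-1/2}\|h\mathbf 1_{Q_i}\|_{L^2}\lesssim S_I(h)(x)$. So the detour through $H_{X,I}$-atoms can be dropped.

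Two statements in your global part must be repaired.
\begin{itemize}
\item The estimate ``$|h(x)|\lesssim$ the average of $S_I(h)$ on $B(x,1)$'' is false for general $h$. Nothing in the hypotheses gives it for $h=e^{-L}f$ either; it would be a Moser-type local boundedness property, which is not assumed. What is true, and enough, is $\bigl(\frac{1}{V(z,1)}\int_{B(z,1)}|h|^2\,d\mu\bigr)^{1/2}\lesssim S_I(h)(w)$ for all $w\in B(z,1)$. You apply this after integrating $|h|$ against the kernel of $\eta(L)$.
\item That kernel estimate follows neither from boundedness of $\eta$ nor from the Gaussian bound for $e^{-tL}$. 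You need $|K_{\eta(L)}(x,y)|\lesssim V(x,1)^{-1}[1+d(x,y)]^{-N}$ with $N$ large enough to absorb the doubling losses on the annuli. It does hold. With $\Psi(u)=u^{2M+2}\widehat\Phi(u)$ and $c_M\int_0^\infty u^{2M+3}\widehat\Phi(u)e^{-u^2}\,du=1$, one gets
\begin{equation*}
\eta(\xi^2)=c_M\int_\xi^\infty u^{2M+3}\widehat\Phi(u)e^{-(u^2-\xi^2)}\,du, \qquad \xi\ge0 .
\end{equation*}
This extends to an even Schwartz function of $\xi$. By finite propagation speed, $\phi(\sqrt L)$ has such a kernel for every even $\phi\in\mathcal S(\mathbb R)$.
\end{itemize}
Alternatively, use the paper's remainder, where no such estimate is needed, together with your pointwise observation.
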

\begin{theorem}\label{recon}
Let $X$ be a $\mathrm{BQBF}$ space
satisfying Assumption \ref{vector2}
for some $s\in (0,1]$. Let
$p\in (1,\infty)\cap [q,\infty)$,
$M\in(\frac{n}{2}(\frac{1}{s}-\frac{1}{p}),\infty)\cap\mathbb{N}$, and 
$\epsilon\in
(\frac{n}{s},\infty)$. Then there exists a positive constant $C$ such that, for any $f\in
\widetilde{h}^{M,p,\epsilon}_{X,L,\rm{mol}}$,
\begin{align*}
\|f\|_{h_{X,L}}\leq C\|f\|_{\widetilde{h}^{M,p,\epsilon}_{X,L,\rm{mol}}}.
\end{align*}	
\end{theorem}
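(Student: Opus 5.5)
Since $\Lambda$ is defined through the $s$-sum $\{\sum_j(\lambda_j/\|\mathbf 1_{B_j}\|_X)^s\mathbf 1_{B_j}\}^{1/s}$, the plan is to reduce everything to a single molecule and then sum. Let $f=\sum_j\lambda_jm_j$ be a local molecular $(X,M,p,\epsilon)_L$-representation with convergence in $L^2$. Both $S_{L,\mathrm{loc}}$ and $S_I(e^{-L}\cdot)$ are sublinear and bounded on $L^2$; this follows from the spectral theorem for $S_{L,\mathrm{loc}}$ and from the uniform boundedness of $e^{-L}$ on $L^2$ for the second term. Hence, almost everywhere,
\[
S_{L,\mathrm{loc}}(f)+S_I(e^{-L}f)\le\sum_j\lambda_j\left[S_{L,\mathrm{loc}}(m_j)+S_I(e^{-L}m_j)\right].
\]
Since $s\le1$, it is enough to control $\|\{\sum_j[\lambda_jT(m_j)]^s\}^{1/s}\|_X$, where $T$ denotes either operator. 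For each $j$ I split $T(m_j)=\sum_{k\ge0}T(m_j)\mathbf 1_{U_k(B_j)}$ (with $U_k$ as for $4B_j$ if convenient). The key tool is the standard lemma under Assumption \ref{vector2}: if $\|g_{j,k}\|_{L^q}\lesssim 2^{-k\theta}[\mu(2^kB_j)]^{1/q}\|\mathbf 1_{B_j}\|_X^{-1}$ with $\operatorname{supp}g_{j,k}\subset 2^kB_j$, then $\|\{\sum_j(\lambda_jg_{j,k})^s\}^{1/s}\|_X\lesssim 2^{-k(\theta-n/s)}\Lambda$. This is obtained by duality with $(X^{1/s})'$ and the boundedness of $\mathcal M$ on its $1/(q/s)'$-convexification, together with $\mathbf 1_{2^kB}\le 2^{kn}[\mathcal M(\mathbf 1_B)]$ raised to appropriate powers. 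Since $p\ge q$, the $L^p$ bounds pass to $L^q$ by Hölder. Summing in $k$, using the $s$-triangle inequality for $\|\cdot\|_{X^{1/s}}$, then requires $\theta>n/s$.

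The per-molecule estimates come next. On the near region $k\le2$ I use $L^p$ boundedness of $S_{L,\mathrm{loc}}$ and $S_I(e^{-L}\cdot)$ for $p\in(1,\infty)$, a consequence of the Gaussian bounds. Combined with $\|m\|_{L^p}\lesssim[\mu(B)]^{1/p}\|\mathbf 1_B\|_X^{-1}$, obtained by summing \eqref{pmol1} or \eqref{pmol2} with $j=M$, this gives $\theta$ arbitrary. For $k\ge3$, I treat two cases.

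\emph{Case $r_B\ge1$.} There is no cancellation, but locality does the work. In $S_{L,\mathrm{loc}}$ only $t<1\le r_B$ appears. The part of $m$ on $U_i(B)$ with $i\ge k-2$ is handled by $L^p$ boundedness and the decay $2^{-i(\epsilon-n/p)}$. The part with $i<k-2$ is at distance $\gtrsim2^kr_B\ge2^k$ from $U_k(B)$, so the Gaussian factor $\exp(-c4^kr_B^2/t^2)$ with $t<1$ gives decay of arbitrary order. The operator $S_I(e^{-L}\cdot)$ involves $t^2e^{-t^2}$, and so it is essentially an average of $e^{-L}m$ over all scales with weight $t^4e^{-2t^2}$; I expect a direct pointwise bound by $\mathcal M(e^{-L}m)$ and the Gaussian decay of $e^{-L}$ at scale $1\le r_B$. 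This yields $\theta=\epsilon-n/p+n/p$-type exponents, and since $\epsilon>n/s$ this suffices.

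\emph{Case $r_B<1$.} Here $S_{L,\mathrm{loc}}(m)\le S_L(m)$, and the usual global argument applies. Split the $t$-integral at $t\sim r_B$ (or at $2^kr_B$). For small $t$, use off-diagonal Gaffney bounds of $t^2Le^{-t^2L}$, derived from \eqref{lzy}. For large $t$, write $m=L^Mb$, so that $t^2Le^{-t^2L}m=t^{-2M}(t^2L)^{M+1}e^{-t^2L}b$; the bounds \eqref{pmol2} then give the factor $(r_B/t)^{2M}$. This produces $\theta\approx\min\{\epsilon,\,2M+n/p\}$, which exceeds $n/s$ exactly when $M>\frac n2(\frac1s-\frac1p)$. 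For $S_I(e^{-L}m)$, write $e^{-L}m=e^{-L}L^Mb=(L^Me^{-L})b$. The operator $L^Me^{-L}$ has a Gaussian kernel at scale $1$ by \eqref{lzy}, and $\|b\|_{L^p(U_i)}\lesssim 2^{-i(\epsilon-n/p)}r_B^{2M}[\mu(B)]^{1/p}\|\mathbf 1_B\|_X^{-1}$. So $e^{-L}m$ is small by the factor $r_B^{2M}$ but is spread at scale $1$. I would compare it with a ball of radius $1$ containing $B$ and use $\mu(B)$ versus $\mu(B(x_B,1))$ with $r_B^{2M}\lesssim r_B^{n(1/s-1/p)}$, via \eqref{d1} and the lower dimension implicit in doubling. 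This is the step I expect to be the main obstacle: the mass sits at scale $1$ rather than at scale $r_B$, so it must be charged to $\|\mathbf 1_{B}\|_X$ through $\|\mathbf 1_{2^kB}\|$ growth, using the lemma above with $2^kB\supset B(x_B,1)$ and absorbing $r_B^{-n/s}$ losses by $r_B^{2M}$.

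Finally, summing the $k$-series via the $s$-triangle inequality gives $\|f\|_{h_{X,L}}\lesssim\Lambda(\{\lambda_jm_j\})$. Taking the infimum over representations completes the proof.
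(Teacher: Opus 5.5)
Your architecture matches the paper's: sublinearity and $s\le 1$ reduce everything to annular $L^p$ bounds for single molecules, which are then summed by a convexity lemma under Assumption \ref{vector2} (compare Lemmas \ref{smol}, \ref{smol2}, \ref{beike} and Proposition \ref{pharve}). The gap is in the bookkeeping that connects these two halves, and it fails in a general doubling space.

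You normalize the summation lemma by $[\mu(2^kB_j)]^{1/q}$, so you need $\theta>n/s$. You then claim $\theta\approx\epsilon$ (``$\epsilon-n/p+n/p$'') and $\theta\approx 2M+n/p$. Both claims silently rewrite $[\mu(B)]^{1/p}$ as $2^{-kn/p}[\mu(2^kB)]^{1/p}$, that is, they use $\mu(2^kB)\gtrsim 2^{kn}\mu(B)$. This is the reverse of \eqref{d1}, which only gives $\mu(2^kB)\lesssim 2^{kn}\mu(B)$. There is no ``lower dimension implicit in doubling''. Even when some reverse doubling holds, its exponent is in general strictly smaller than the upper dimension $n$ in the hypotheses. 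For example, on $\mathbb{R}^d$ with $|x|^a\,dx$, $a>0$, one has $n\ge d+a$, while balls far from the origin grow only like $2^{kd}$.

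Converted honestly, your normalization yields only $\theta=\epsilon-n/p$ from the size condition and $\theta=2M$ from the cancellation. The argument as written therefore needs $\epsilon>\frac ns+\frac np$ and $M>\frac{n}{2s}$, which is strictly more than Theorem \ref{recon} assumes. The same mismatch is why the $S_I(e^{-L}m)$, $r_B<1$ step looked to you like the main obstacle.

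The fix is to measure all decay against $[\mu(B)]^{1/p}$, as the paper does. Proposition \ref{pras} with dilation factor $2^k$ (packaged as Proposition \ref{pharve}) costs only $2^{kn(\frac1s-\frac1p)}$. So it suffices to show $\|T(m)\|_{L^p(U_k(B))}\lesssim 2^{-\eta k}[\mu(B)]^{1/p}\|\mathbf 1_B\|_X^{-1}$ with $\eta>n(\frac1s-\frac1p)$. Both $\epsilon-\frac np$ and $2M$ (or $2M\delta$) exceed this threshold exactly under the stated hypotheses.

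In this normalization your obstacle is harmless. If $2^kr_B\le 1$, the piece $b\mathbf 1_B$ gives $|L^Me^{-L}(b\mathbf 1_B)|\lesssim r_B^{2M}\mu(B)[V(x_B,1)]^{-1}\|\mathbf 1_B\|_X^{-1}$. Since $\mu(B)\le\mu(2^kB)\le V(x_B,1)$, its contribution to $\|S_I(e^{-L}m)\|_{L^p(U_k(B))}$ is $\lesssim r_B^{2M}[\mu(B)]^{1/p}\|\mathbf 1_B\|_X^{-1}\le 2^{-2Mk}[\mu(B)]^{1/p}\|\mathbf 1_B\|_X^{-1}$. No lower dimension is needed.

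A smaller point concerns the case $r_B<1$. Cutting the $t$-integral exactly at $r_B$ or at $2^kr_B$ leaves a range with no decay in $k$. Using only the factor $(r_B/t)^{2M}$ on $(r_B,2^kr_B)$ gives an $O(1)$ bound. The Gaussian factor is of size one for $t\sim 2^kr_B$, where $B(x,t)$ may even meet $B$. The paper cuts at $2^{k\delta}r_B$ with $\delta\in(0,1)$, obtaining $2^{-k(1-\delta)N}$ below the cut and $2^{-2Mk\delta}$ above it, and then takes $\delta$ close to $1$.
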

\begin{proof}[Proof of Theorem \ref{local-mo-at}]
By Theorems \ref{th1-1} and  \ref{recon}, we conclude that 
\begin{align*}
\widetilde{h}^{M,p}_{X,L,\mathrm{at}}
\subset
\widetilde{h}^{M,p}_{X,L,\mathrm{mol}}
\subset
h_{X,L}\cap L^2
\subset
\widetilde{h}^{M,p}_{X,L,\mathrm{at}}
\subset
\widetilde{h}^{M,p}_{X,L,\mathrm{mol}}
\end{align*}
and the spaces $\widetilde{h}^{M,p}_{X,L,\mathrm{at}}, \widetilde{h}^{M,p}_{X,L,\mathrm{mol}}$, and $h_{X,L}\cap L^2$ coincide with equivalent quasi-norms. From this and a density argument, it follows that the desired conclusions hold.
This finishes the proof of Theorem \ref{local-mo-at}.
\end{proof}
In the remainder of this subsection, we prove  Theorems \ref{th1-1} and \ref{recon}. To this end,  we need to do some preparations. We first recall the concept of the \emph{Hardy space} $H_{X,I}$, associated with the space $X$ and the identity operator $I$.
\begin{definition}
Let $X$ be a $\mathrm{BQBF}$ space. Then the \emph{ Hardy space} $H_{X,I}$, associated with  $X$ and $I$, is defined as the completion of the set
$$
H_{X,I}\cap L^2:=\left\{f \in L^2:\ \|f\|_{H_{X,I}}<\infty\right\}
$$
with respect to the \emph{quasi-norm}
\begin{align*}
\|f\|_{H_{X,I}}:=\left\|S_{I}(f)\right\|_{X}.
\end{align*}
\end{definition}
\begin{definition}
Let $M\in\mathbb{N}$, $p\in[1,\infty)$,
and $X$ be a $\mathrm{BQBF}$ space.
A function $a\in L^p$ is called an \emph{$(X,M,p)_{I}$-atom},
associated with  the ball $B\subset\mathcal{X}$, if 
$\mathrm{supp}\, (a)\subset B$ and
\begin{align}\label{934}
\left\|a\right\|_{L^p}
\leq
r_B^{2i}\left[\mu(B)\right]^{\frac{1}{p}}\|\mathbf{1}_{B}\|_{X}^{-1}
\end{align}
for any $i\in\{0,\ldots,M\}$.
\end{definition}
\begin{proposition}\label{945}
Let $X$ be a $\mathrm{BQBF}$ space space  satisfying Assumption \ref{vector1} for some $p_-\in(0,\infty)$. Let $p\in(1,\infty)$, $M\in \mathbb{N}\cap (\frac{n}{2}(\frac{1}{\min\{p,p_-\}}-\frac{1}{p}),\infty)$, and
$a$ be an $(X,M,p)_{I}$-atom, associated with the ball $B \subset \mathcal{X}$. Then there exists a positive constant $C$, depending only on $X$, such that
C$a$ is an $(X,M,p)_{I}$-atom, associated with a ball of radius greater than or equal than $1.$
\end{proposition}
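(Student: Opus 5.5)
The plan is to enlarge the supporting ball when it is small. If $r_B\in[1,\infty)$ there is nothing to do: among the conditions \eqref{934} for $i\in\{0,\ldots,M\}$, the one with $i=0$ is the most restrictive, since $r_B^{2i}\ge1$. So $a$ is already an $(X,M,p)_I$-atom associated with a ball of radius at least $1$, and $C=1$ works.

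For $r_B\in(0,1)$, let $B':=r_B^{-1}B$, the concentric ball of radius $1$. Then $\mathrm{supp}\,(a)\subset B\subset B'$ and $r_{B'}^{2i}=1$ for every $i$. Hence $Ca$ is an $(X,M,p)_I$-atom associated with $B'$ as soon as
\begin{align*}
C\|a\|_{L^p}\le[\mu(B')]^{\frac1p}\|\mathbf{1}_{B'}\|_X^{-1}.
\end{align*}
Using \eqref{934} with $i=M$, $\|a\|_{L^p}\le r_B^{2M}[\mu(B)]^{1/p}\|\mathbf{1}_B\|_X^{-1}$. It therefore suffices to prove
\begin{align*}
\|\mathbf{1}_{B'}\|_X\lesssim r_B^{-2M}\left[\frac{\mu(B')}{\mu(B)}\right]^{\frac1p}\|\mathbf{1}_B\|_X ,
\end{align*}
with an implicit constant depending only on $X$ and the doubling constant.

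The key step is a quantitative comparison of $\|\mathbf{1}_{B'}\|_X$ and $\|\mathbf{1}_B\|_X$ via Assumption \ref{vector1}. Fix $\theta\in(0,p_-)$ with $\frac n2(\frac1\theta-\frac1p)<M$, and additionally $\theta\le p$. Such a $\theta$ exists because $M>\frac n2(\frac1{\min\{p,p_-\}}-\frac1p)$: take $\theta$ slightly below $\min\{p,p_-\}$.

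For $x\in B'$ the ball $B'$ contains both $x$ and $B$, so $\mathcal{M}(\mathbf{1}_B)(x)\ge\mu(B)/\mu(B')$. Consequently
\begin{align*}
\mathbf{1}_{B'}\le\left[\frac{\mu(B')}{\mu(B)}\right]^{\frac1\theta}\left[\mathcal{M}(\mathbf{1}_B)\right]^{\frac1\theta}.
\end{align*}
Apply Assumption \ref{vector1} with exponent $\theta$ and a single function, so that $\mathcal{M}$ is bounded on $X^{1/\theta}$. Since $\|[\mathcal{M}(\mathbf{1}_B)]^{1/\theta}\|_X=\|\mathcal{M}(\mathbf{1}_B)\|_{X^{1/\theta}}^{1/\theta}$, this yields
\begin{align*}
\|\mathbf{1}_{B'}\|_X\lesssim\left[\frac{\mu(B')}{\mu(B)}\right]^{\frac1\theta}\|\mathbf{1}_B\|_X .
\end{align*}

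It remains to check that $[\mu(B')/\mu(B)]^{\frac1\theta-\frac1p}\lesssim r_B^{-2M}$. By \eqref{d1}, $\mu(B')/\mu(B)\le C_{(\mu)}r_B^{-n}$, and the exponent $\frac1\theta-\frac1p$ is nonnegative because $\theta\le p$. Hence the left-hand side is bounded by $r_B^{-n(\frac1\theta-\frac1p)}\le r_B^{-2M}$, where the last inequality uses $r_B<1$ and the choice of $\theta$. This gives the required inequality with a constant independent of $a$ and $B$.

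The main obstacle is this choice of $\theta$. The growth of $\|\mathbf{1}_{\lambda B}\|_X$ in $\lambda$ is controlled only through the maximal operator, and it is precisely the lower bound on $M$ that lets the factor $r_B^{2M}$ absorb the loss $r_B^{-n(1/\theta-1/p)}$. A secondary point is that $\theta$ must lie in $(0,p_-)$ so that Assumption \ref{vector1} applies, which is why $\min\{p,p_-\}$ appears in the hypothesis.
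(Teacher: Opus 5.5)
Your proposal is correct and is essentially the paper's argument. Both proofs enlarge $B$ to the concentric unit ball and use $\mathbf{1}_{B'}\le\frac{\mu(B')}{\mu(B)}\mathcal{M}(\mathbf{1}_B)$ together with the boundedness of $\mathcal{M}$ on $X^{1/\theta}$ (Assumption \ref{vector1}, with $\theta<\min\{p,p_-\}$ chosen so that $M>\frac n2(\frac1\theta-\frac1p)$); the factor $r_B^{2M}$ from \eqref{934} with $i=M$ then absorbs the loss $r_B^{-n(1/\theta-1/p)}$ coming from \eqref{d1}.
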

\begin{proof}
Indeed, if $r_B\in [1,\infty)$, then the desired conclusion holds  automatically. If $r_B\in (0,1)$, let $\widetilde{B}:=B(x_B,1)$, where $x_B$ denotes the center of $B$.
Let $r\in(0,\min\{p,p_-\})$ such that $M>\frac{n}{2}(\frac{1}{r}-\frac{1}{p}).$ 
Observe that, for any $x\in\mathcal{X},$
\begin{align*}
\mathbf{1}_{\widetilde{B}}(x)
\leq
\frac{\mu(\widetilde{B})}{\mu(B)}\mathcal{M}(\mathbf{1}_{B})(x).
\end{align*}
Using this and Assumption \ref{vector1},
we obtain
\begin{align*}
\|\mathbf{1}_{\widetilde{B}}\|_{X^{\frac{1}{r}}}
\lesssim
\frac{\mu(\widetilde{B})}{\mu(B)}\|\mathbf{1}_{B}\|_{X^{\frac{1}{r}}}.
\end{align*}
This, together with  \eqref{934} with $i:=M$ and  \eqref{d1}, implies that
\begin{align*}
\left\|a\right\|_{L^p}
&\leq
r_B^{2M}\left[\mu(B)\right]^{\frac{1}{p}}\|\mathbf{1}_{B}\|_{X}^{-1}
\lesssim
r_B^{2M}\left[\frac{\mu(\widetilde{B})}{\mu(B)}\right]^{\frac{1}{r}-\frac{1}{p}}\left[\mu(\widetilde{B})\right]^{\frac{1}{p}}\|\mathbf{1}_{\widetilde{B}}\|_{X}^{-1}\\
&\lesssim
r_B^{2M-n(\frac{1}{r}-\frac{1}{p})}\left[\mu(\widetilde{B})\right]^{\frac{1}{p}}\|\mathbf{1}_{\widetilde{B}}\|_{X}^{-1}
\leq
\left[\mu(\widetilde{B})\right]^{\frac{1}{p}}\|\mathbf{1}_{\widetilde{B}}\|_{X}^{-1}.
\end{align*}
This shows that the desired conclusion holds, which finishes the proof of Proposition \ref{945}.
\end{proof}

We have the following  atomic decomposition  theorem of $H_{X,I}$, which is useful in the proof of Theorem \ref{th1-1}.
\begin{theorem}\label{HI}
Let 
$X$ be a $\mathrm{BQBF}$ space satisfying Assumption \ref{vector1} for some $p_-\in(0,\infty)$. Let $M\in\mathbb{N}$ and 
$s\in (0,p_-]$. Then, for any $f\in H_{X,I}\cap L^2,$ there exists 
a sequence $\{\lambda_{j}\}_{j\in\mathbb{N}}\subset [0,\infty)$ and a sequence $\{a_j\}_{j\in\mathbb{N}}$ of $(X,M,2)_I$-atoms, associated, respectively, with balls $\{B_{j}\}_{j\in\mathbb{N}}$ such that
\begin{align}\label{huaiyi1}
f=\sum_{j\in\mathbb{N}}\lambda_ja_j\ \ \text{in}\ \ L^2
\end{align}
and
\begin{align}\label{huaiyi2}
\left\|\left\{\sum_{j\in\mathbb{N}}
\left[\frac{\lambda_j}{\|\mathbf{1}_{B_j}\|_{X}}\right]^{s}\mathbf{1}_{B_j}
\right\}^{\frac{1}{s}}\right\|_X
\lesssim\|f\|_{H_{X,I}},
\end{align}
where the implicit positive constant  is independent of $f$.
\end{theorem}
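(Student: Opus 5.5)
The plan is to treat $I$ as the operator $L$ in the existing global theory of Hardy spaces associated with operators and ball quasi-Banach function spaces. We then run the standard tent-space argument of Hofmann et al.\ \cite{hlmmy11}, adapted to $X$ as in the global $H_{X,L}$ theory. Since $I$ is bounded, non-negative and self-adjoint, and its functional calculus is trivial, the usual Calder\'on reproducing formula reads
\begin{align*}
f=C_M\int_0^\infty \left(t^2 I\right)^{M+1}e^{-t^2}\,t^2e^{-t^2}f\,\frac{dt}{t}
=C_M\left[\int_0^\infty t^{2M+4}e^{-2t^2}\,\frac{dt}{t}\right] f .
\end{align*}
This holds in $L^2$ with the normalizing constant $C_M$. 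So for $f\in H_{X,I}\cap L^2$ we set $F(y,t):=t^2e^{-t^2}f(y)$, and then $F$ lies in the tent space $T_X$, with $\|F\|_{T_X}=\|S_I(f)\|_X$.

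The first step is the atomic decomposition of $T_X$. Using Assumption \ref{vector1} with $s\le p_-$, and arguing via Whitney decompositions of the level sets $O_k:=\{S_I(f)>2^k\}$ and their tents, we write $F=\sum_j\lambda_jA_j$. Here each $A_j$ is a $(T_X,2)$-atom supported in $\widehat{B_j}$ with $\|A_j\|_{T^2_2}\le\mu(B_j)^{1/2}\|\mathbf 1_{B_j}\|_X^{-1}$, and $\lambda_j\sim2^k\|\mathbf 1_{B_j}\|_X$. The $s$-sum is controlled as
\begin{align*}
\left\|\left\{\sum_{j}\left[\frac{\lambda_j}{\|\mathbf 1_{B_j}\|_X}\right]^s\mathbf 1_{B_j}\right\}^{\frac1s}\right\|_X\lesssim\left\|\left[\sum_k 2^{ks}\mathbf 1_{O_k}\right]^{\frac1s}\right\|_X\lesssim\|S_I(f)\|_X .
\end{align*}
The first inequality uses $\mathbf 1_{B_j}\lesssim[\mathcal M(\mathbf 1_{O_k^*})]^{\theta}$ for a suitable $\theta$ together with Assumption \ref{vector1}; the second is the usual geometric summation. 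I expect this to be essentially a quotation of the known tent-space decomposition on $\mathcal X$ for BQBF spaces.

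The second step applies the projection
\begin{align*}
\pi_{M}(A):=C_M\int_0^\infty t^{2M+2}e^{-t^2}A(\cdot,t)\,\frac{dt}{t}
\end{align*}
and sets $a_j:=\pi_M(A_j)$. Since $I$ does not spread support, $a_j$ is supported in the projection of $\widehat{B_j}$, namely $B_j$ itself. Because $t\le r_{B_j}$ on $\widehat{B_j}$, Cauchy--Schwarz gives
\begin{align*}
\|a_j\|_{L^2}\lesssim r_{B_j}^{2M}\|A_j\|_{T^2_2}\lesssim r_{B_j}^{2M}\mu(B_j)^{\frac12}\|\mathbf 1_{B_j}\|_X^{-1} .
\end{align*}
The same estimate holds with $r_{B_j}^{2i}$ for every $i\le M$ when $r_{B_j}\le1$, and for $r_{B_j}>1$ only $i=0$ matters. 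Here I read \eqref{934} as requiring $\min_i$, or else I absorb the small-radius case in the spirit of Proposition \ref{945}. After multiplying by a harmless constant, the $a_j$ are therefore $(X,M,2)_I$-atoms.

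The third step is convergence. We have $\pi_M(F)=f$ in $L^2$, and $\pi_M$ is bounded from $T^2_2$ to $L^2$. The series $\sum_j\lambda_jA_j$ converges in $T^2_2$ when $F\in T^2_2$, which holds because $\|F\|_{T^2_2}\sim\|f\|_{L^2}$. Hence \eqref{huaiyi1} holds, and \eqref{huaiyi2} follows from the first step. The main obstacle is the first step, namely proving the $T_X$ atomic decomposition with the $s$-sum bound and simultaneous $T^2_2$ convergence on a general doubling space. There I would follow the global $H_{X,L}$ literature, checking that only Assumption \ref{vector1} with $s\le p_-$ is used.
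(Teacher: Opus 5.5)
Your proposal is correct and follows essentially the paper's route: the paper checks that $\{e^{-tI}\}_{t\in(0,\infty)}$ satisfies the Davies--Gaffney estimate and then quotes the global atomic decomposition \cite[Proposition 3.16]{lyyy26cms}, which rests on the same tent-space decomposition and projection argument that you unpack for $L=I$; since $I$ preserves supports, the off-diagonal step becomes trivial. One point to tighten: for $r_{B_j}>1$, the required $i=0$ bound $\|a_j\|_{L^2}\lesssim[\mu(B_j)]^{1/2}\|\mathbf{1}_{B_j}\|_X^{-1}$ comes from $\int_0^\infty t^{4M+4}e^{-2t^2}\,\frac{dt}{t}<\infty$ rather than from $t\le r_{B_j}$, and your reading of \eqref{934} as a minimum over $i$ is the correct one, so no appeal to Proposition \ref{945} is needed.
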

\begin{proof}
It is easy to check that, for any
$t\in (0,\infty)$, any  closed sets $E, F \subset \mathcal{X}$,
and $f \in L^2$ with $\mathrm{supp}\,(f)\subset E$,
the identity operator $I$ satisfies
\begin{align*}
\left\|e^{-tI}(f)\right\|_{L^2(F)} 
\leq 
\begin{cases}
0,&\ \ \text{if}\ \ \mathrm{dist}(E,F)\in (0,\infty);\\
e^{-t}\|f\|_{L^2(E)},&\ \ \text{if}\ \ \mathrm{dist}(E,F)=0.
\end{cases}
\end{align*}
Thus, $\{e^{-tI}\}_{t\in (0,\infty)}$ satisfies the Davies--Gaffney estimate. By this and
\cite[Proposition 3.16]{lyyy26cms},
we conclude that 
the desired conclusions hold. This
finishes the proof of Theorem \ref{HI}.
\end{proof}

Let us recall the concept of a unit cube structure on $\mathcal{X}$, which was introduced in \cite[Definition 4.1]{cmm11}.
\begin{definition}\label{unit}
A unit cube structure on $\mathcal{X}$ is a
countable collection $\mathcal{Q}$ of pairwise disjoint measurable sets that cover $\mathcal{X}$,
for which there exists $\delta\in (0,1]$ and a sequence of balls $\{B_j\}_{j\in\mathbb{N}}$ on $\mathcal{X}$ of radius equal to
1 such that $\delta B_j\subset Q_j\subset B_j$ for any $Q_j\in \mathcal{Q}.$
\end{definition}
\begin{remark}
By \cite[Lemma 4.1]{cmm11}, we find that such a unit cube structure exists.
\end{remark}
The following proposition is precisely \cite[Proposition 2.14]{syy22fm}. 
\begin{proposition}\label{pras}
Let $X$ be a $\mathrm{BQBF}$ space satisfying Assumption \ref{vector2} for some $s\in(0,\infty)$
and $q\in(s,\infty)$. Assume that $p\in[q,\infty)$. Then there exists a positive constant $C$
such that, for any $\theta\in[1,\infty)$, $\{\lambda_j\}_{j\in\mathbb{N}}\subset (0,\infty)$, and
$\{a_j\}_{j\in\mathbb{N}}\subset L^p$ satisfying both  $\|a_j\|_{L^p}\le
\lambda_j[\mu(B_j)]^{\frac{1}{p}}$ and $\mathrm{supp}\,(a_j)\subset \theta B_j$ for any $j\in\mathbb{N}$,
\begin{equation*}
\left\|\sum_{j\in\mathbb{N}}\left|a_j\right|^{s}\right\|_{X^{\frac{1}{s}}}
\le C\theta^{(1-\frac{s}{p})n}\left\|\sum_{j\in\mathbb{N}}\left|\lambda_j\right|^{s}
\mathbf{1}_{B_j}\right\|_{X^{\frac{1}{s}}}.
\end{equation*}
\end{proposition}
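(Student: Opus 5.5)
The plan is a duality argument. By Assumption \ref{vector2}, $Y:=X^{1/s}$ is a $\mathrm{BBF}$ space, so its associate space $Y'$ is available. I will use the Lorentz--Luxemburg-type identity $Y''=Y$ with equal norms, which holds for $\mathrm{BBF}$ spaces because of the Fatou property in Definition \ref{debb}(iii). It gives, for any non-negative measurable $F$,
$$\|F\|_{Y}=\sup\left\{\int_{\mathcal{X}}F g\,d\mu:\ g\ge0,\ \|g\|_{Y'}\le1\right\}.$$
So it suffices to fix such a $g$ and bound $\int_{\mathcal{X}}\sum_{j}|a_j|^s g\,d\mu$. Write $r:=(p/s)'$ and $r_0:=(q/s)'$. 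Since $p\ge q$, we have $r\le r_0$.

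\textbf{Step 1: local estimate for each $j$.} Use $\mathrm{supp}\,(a_j)\subset\theta B_j$ and H\"older's inequality with exponents $p/s$ and $r$:
$$\int_{\mathcal{X}}|a_j|^s g\,d\mu\le \|a_j\|_{L^p}^s\,\|g\mathbf{1}_{\theta B_j}\|_{L^{r}}\le \lambda_j^s[\mu(B_j)]^{\frac sp}\,\|g\mathbf{1}_{\theta B_j}\|_{L^{r}}.$$
For every $x\in B_j\subset\theta B_j$ we have $\|g\mathbf{1}_{\theta B_j}\|_{L^r}^r\le\mu(\theta B_j)\,\mathcal{M}(g^r)(x)$. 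Averaging this over $B_j$ and applying \eqref{d1} (so $\mu(\theta B_j)\lesssim\theta^n\mu(B_j)$) gives
$$\int_{\mathcal{X}}|a_j|^sg\,d\mu\lesssim \theta^{n(1-\frac sp)}\lambda_j^s\int_{B_j}\left[\mathcal{M}(g^r)\right]^{\frac1r}d\mu .$$
The powers of $\mu(B_j)$ cancel exactly, because $\frac sp+\frac1r=1$; this is where the factor $\theta^{(1-\frac sp)n}$ comes from.

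\textbf{Step 2: summation and the associate-space bound.} Summing over $j$ and using the H\"older inequality between $Y$ and $Y'$, we get
$$\int_{\mathcal{X}}\sum_j|a_j|^sg\,d\mu\lesssim\theta^{n(1-\frac sp)}\left\|\sum_j\lambda_j^s\mathbf{1}_{B_j}\right\|_{Y}\left\|\left[\mathcal{M}(g^r)\right]^{\frac1r}\right\|_{Y'} .$$
It remains to show $\|[\mathcal{M}(g^r)]^{1/r}\|_{Y'}\lesssim\|g\|_{Y'}\le1$. Since $r\le r_0$, Jensen's inequality gives the pointwise bound $[\mathcal{M}(g^r)]^{1/r}\le[\mathcal{M}(g^{r_0})]^{1/r_0}$. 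Therefore
$$\left\|\left[\mathcal{M}(g^{r_0})\right]^{\frac1{r_0}}\right\|_{Y'}=\left\|\mathcal{M}(g^{r_0})\right\|_{(Y')^{1/r_0}}^{\frac1{r_0}}\lesssim\left\|g^{r_0}\right\|_{(Y')^{1/r_0}}^{\frac1{r_0}}=\|g\|_{Y'},$$
where the middle step is exactly the boundedness of $\mathcal{M}$ on the $\frac{1}{(q/s)'}$-convexification of $(X^{1/s})'$ assumed in Assumption \ref{vector2}. Taking the supremum over $g$ finishes the proof. The constant depends only on $p$, $s$, $q$, $\mathcal{X}$ and the operator norm of $\mathcal{M}$ in that assumption, and not on $\theta$ except through $\theta^{(1-\frac sp)n}$.

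\textbf{Main obstacle.} The delicate point is the norming identity $\|F\|_Y=\sup_{\|g\|_{Y'}\le1}\int Fg\,d\mu$ for ball Banach function spaces on $\mathcal{X}$. Unlike classical Banach function spaces, indicators of arbitrary finite-measure sets need not belong to $Y$. I would handle this by the standard Lorentz--Luxemburg argument: truncate $F$ to $\min\{F,N\}\mathbf{1}_{B(x_0,N)}$, apply Hahn--Banach on the ball-restricted spaces (where Definition \ref{debb}(v) provides local $L^1$-control), and pass to the limit $N\to\infty$ via the Fatou property (iii). Everything else is routine H\"older and maximal-function bookkeeping.
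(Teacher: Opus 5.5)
Your proof is correct. The paper gives no argument of its own here: it quotes \cite[Proposition 2.14]{syy22fm}, whose proof is this same standard duality scheme. You norm $X^{1/s}$ through its associate space (Lorentz--Luxemburg), apply H\"older's inequality with exponents $p/s$ and $(p/s)'$ on each $\theta B_j$, and control $[\mathcal{M}(g^{(p/s)'})]^{1/(p/s)'}$ in $(X^{1/s})'$ via Jensen and the boundedness of $\mathcal{M}$ on the $\frac{1}{(q/s)'}$-convexification from Assumption \ref{vector2}. Your sketch of the norming identity is the right justification of the only nontrivial ingredient: truncate to bounded functions on balls, separate by Hahn--Banach in $L^1$ of a ball using Definition \ref{debb}(v), then pass to the limit with the Fatou property.
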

\begin{theorem}\label{dec-HI}
Let $\mathcal{Q}$ be a unit cube structure on $\mathcal{X}$.
Let 
$X$ be a $\mathrm{BQBF}$ space satisfying Assumptions \ref{vector1} and \ref{vector2} for some $p_-\in(0,\infty)$, $s\in (0,\min\{p_-,1\}]$, and  $q\in(s,2]$.
Then, for any $f\in H_{X,I}\cap L^2,$
\begin{align*}
\left\|\sum_{Q_i\in\mathcal{Q}}[\mu(Q_i)]^{-\frac{1}{2}}\|f\mathbf{1}_{Q_i}\|_{L^2}\mathbf{1}_{Q_i}\right\|_X
\lesssim
\|f\|_{H_{X,I}}.
\end{align*}
\end{theorem}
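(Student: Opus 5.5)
The plan is to combine the atomic decomposition of Theorem \ref{HI} with the $s$-triangle inequality in $X^{1/s}$ and Proposition \ref{pras}. Fix $f\in H_{X,I}\cap L^2$ and take the decomposition $f=\sum_j\lambda_ja_j$ in $L^2$ from Theorem \ref{HI}, with $(X,M,2)_I$-atoms $a_j$ supported in balls $B_j$ and $\lambda_j\geq 0$ satisfying \eqref{huaiyi2}. Convergence in $L^2$ gives, for each $Q_i\in\mathcal{Q}$, $\|f\mathbf{1}_{Q_i}\|_{L^2}\le\sum_j\lambda_j\|a_j\mathbf{1}_{Q_i}\|_{L^2}$. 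Since $s\le 1$, we obtain pointwise
\begin{align*}
\left[\sum_{i}[\mu(Q_i)]^{-\frac12}\|f\mathbf{1}_{Q_i}\|_{L^2}\mathbf{1}_{Q_i}\right]^{s}
\le\sum_j\left[\sum_i[\mu(Q_i)]^{-\frac12}\lambda_j\|a_j\mathbf{1}_{Q_i}\|_{L^2}\mathbf{1}_{Q_i}\right]^{s},
\end{align*}
where the $Q_i$ are disjoint, so the inner sum has exactly one nonzero term at each point. It then suffices to bound $\|\sum_j|F_j|^s\|_{X^{1/s}}$, where $F_j:=\lambda_j\sum_i[\mu(Q_i)]^{-1/2}\|a_j\mathbf{1}_{Q_i}\|_{L^2}\mathbf{1}_{Q_i}$, by $\|\sum_j[\lambda_j/\|\mathbf{1}_{B_j}\|_X]^s\mathbf{1}_{B_j}\|_{X^{1/s}}$. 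This last quantity is controlled by $\|f\|_{H_{X,I}}^s$ through \eqref{huaiyi2}.

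Next we reduce to large balls. By Proposition \ref{945}, applicable since $M$ may be taken large in Theorem \ref{HI} and $2>1$, each $a_j$ equals a constant multiple of an $(X,M,2)_I$-atom associated with a ball $\widetilde{B}_j$ of radius $\geq 1$ and the same center. The concern is that the coefficient comparison $\mathbf{1}_{\widetilde B_j}/\|\mathbf{1}_{\widetilde B_j}\|_X$ versus $\mathbf{1}_{B_j}/\|\mathbf{1}_{B_j}\|_X$ is not automatic in the $s$-sum. To avoid this, I would instead handle the two cases directly through Proposition \ref{pras} with $p=2\ge q$. We work with $F_j$, whose support lies in the union of those $Q_i$ meeting $B_j$.

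In the first case, $r_{B_j}\geq1$. Every $Q_i$ meeting $B_j$ lies in a unit ball, so $Q_i\subset 3B_j$ (radius $\ge1$). Hence $\mathrm{supp}\,F_j\subset 3B_j$, and
\begin{align*}
\|F_j\|_{L^2}^2=\lambda_j^2\sum_i\|a_j\mathbf{1}_{Q_i}\|_{L^2}^2=\lambda_j^2\|a_j\|_{L^2}^2\le \lambda_j^2\mu(B_j)\|\mathbf{1}_{B_j}\|_X^{-2}.
\end{align*}
Proposition \ref{pras} with $\theta=3$ then gives the required bound.

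In the second case, $r_{B_j}<1$. The support of $F_j$ lies in $3\widetilde B_j$ with $\widetilde B_j:=B(x_{B_j},1)$, and $\|F_j\|_{L^2}\le\lambda_j\|a_j\|_{L^2}\le\lambda_jr_{B_j}^{2M}\mu(B_j)^{1/2}\|\mathbf{1}_{B_j}\|_X^{-1}$. We write $\mu(B_j)^{1/2}\le\mu(\widetilde B_j)^{1/2}$ and apply Proposition \ref{pras} with ball $B_j$ and dilation $\theta=3/r_{B_j}$. This produces a loss $r_{B_j}^{-(1-s/2)n}$ and the requirement $\|F_j\|_{L^2}\le\lambda_j'\mu(B_j)^{1/2}$, which already holds with $\lambda_j':=\lambda_jr_{B_j}^{2M}\|\mathbf{1}_{B_j}\|_X^{-1}$. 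Summing over $j$, the total factor is $r_{B_j}^{2Ms-(1-s/2)n}\le1$ once $M$ is large.

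The main obstacle is that Proposition \ref{pras} has a single $\theta$ for all $j$. To handle this, I would split the indices dyadically by $r_{B_j}\in[2^{-k-1},2^{-k})$ and apply the proposition on each class with $\theta=3\cdot2^{k+1}$. Absorbing the gain $2^{-2Mks}$ into the coefficients leaves a class bound $2^{-k(2Ms-(1-s/2)n)}\|\sum_{j\in J_k}[\lambda_j/\|\mathbf{1}_{B_j}\|_X]^s\mathbf{1}_{B_j}\|_{X^{1/s}}$. We then sum in $k$ using the triangle inequality in the Banach space $X^{1/s}$, which Assumption \ref{vector2} provides. The geometric series converges because $M$ is chosen in Theorem \ref{HI} with $2Ms>(1-s/2)n$. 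Combining the two cases and taking $s$-th roots yields the claim.
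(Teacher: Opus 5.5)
Your proposal is correct, and its skeleton is the paper's: the decomposition of Theorem \ref{HI}, the pointwise $s$-inequality, the identity $\|F_j\|_{L^2}=\lambda_j\|a_j\|_{L^2}$ coming from the disjointness of $\mathcal{Q}$, the support in $3B_j$, and Proposition \ref{pras} with $p=2\ge q$.

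The difference is how small balls are handled.

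\textbf{The paper's reduction.} The paper says that, by Proposition \ref{945}, one may assume $r_{B_j}\ge 1$. This silently replaces $B_j$ by $\widetilde B_j=B(x_{B_j},1)$ inside the coefficient functional of \eqref{huaiyi2}. Proposition \ref{945}, as stated, also discards the factor $r_{B_j}^{2M-n(1/r-1/p)}$ obtained in its proof. Some such gain is needed to compare $\|\{\sum_j[\lambda_j/\|\mathbf{1}_{\widetilde B_j}\|_X]^s\mathbf{1}_{\widetilde B_j}\}^{1/s}\|_X$ with the original quantity. Without it, the two functionals are not comparable in general. For instance, take $X=L^r$ with $s<r$ and many tiny disjoint balls inside a fixed unit ball: the first functional can be arbitrarily larger than the second. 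So your concern is well founded.

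\textbf{Your route.} You keep the original balls throughout. On each dyadic class $r_{B_j}\in[2^{-k-1},2^{-k})$, the gain $r_{B_j}^{2M}\le 2^{-2Mk}$ absorbs the loss $\theta^{(1-s/2)n}$ from Proposition \ref{pras} with $\theta=3\cdot 2^{k+1}$. You then sum over $k$ using the countable triangle inequality in the BBF space $X^{1/s}$, which follows from the finite one together with monotonicity and the Fatou property. The only price is choosing $M>\frac n2(\frac1s-\frac12)$ in Theorem \ref{HI}, which is free since $M$ is arbitrary there.

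\textbf{Comparison.} Repairing the paper's route would need essentially the same dyadic bookkeeping: the discarded decay from Proposition \ref{945} played against the enlargement cost of Proposition \ref{prfs}. Your direct use of the explicit $\theta$-dependence in Proposition \ref{pras} is the cleaner and fully rigorous version of the argument.
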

\begin{proof}
Let $f\in H_{X,I}\cap L^2$.
By Theorem \ref{HI}, we find that 
there exists 
a sequence $\{\lambda_{j}\}_{j\in\mathbb{N}}\subset [0,\infty)$ and a sequence $\{a_j\}_{j\in\mathbb{N}}$ of $(X,M,2)_I$-atoms, associated, respectively, with balls $\{B_{j}\}_{j\in\mathbb{N}}$ such that \eqref{huaiyi1} and \eqref{huaiyi2} hold.
By Proposition \ref{945}, we can assume that $r_{B_j}\in[1,\infty)$ for any $j\in\mathbb{N}.$
Recall that, for any $\theta\in(0,1]$ and
$\{b_j\}_{j\in\mathbb{N}}\subset [0,\infty)$,
\begin{align}\label{jiben}
\sum_{j\in\mathbb{N}}b_j\leq\left[ \sum_{j\in\mathbb{N}}b_j^\theta\right]^{\frac{1}{\theta}}.
\end{align}
For any $j\in\mathbb{N}$, let $\mathrm{I}_j:=\{i\in\mathbb{N}:\ Q_i\cap B_j\not=\emptyset\}.$
Using \eqref{huaiyi1} and \eqref{jiben}, we find that
\begin{align*}
\left\|\sum_{Q_i\in\mathcal{Q}}[\mu(Q_i)]^{-\frac{1}{2}}\|f\mathbf{1}_{Q_i}\|_{L^2}\mathbf{1}_{Q_i}\right\|_X
&\leq
\left\|\sum_{Q_i\in\mathcal{Q}}\sum_{j\in\mathbb{N}}\lambda_j[\mu(Q_i)]^{-\frac{1}{2}}\|a_j\mathbf{1}_{Q_i}\|_{L^2}\mathbf{1}_{Q_i}\right\|_X\\
&\leq
\left\|\left\{\sum_{j\in\mathbb{N}}\left[\sum_{i\in\mathbf{I}_j}\lambda_j[\mu(Q_i)]^{-\frac{1}{2}}\|a_j\mathbf{1}_{Q_i}\|_{L^2}\mathbf{1}_{Q_i}\right]^s\right\}^{\frac{1}{s}}\right\|_X.
\end{align*}
For any $j\in\mathbb{N}, $ let 
\begin{align*}
A_j:=\sum_{i\in \mathrm{I}_j}\lambda_j[\mu(Q_i)]^{-\frac{1}{2}}\|a_j\mathbf{1}_{Q_i}\|_{L^2}\mathbf{1}_{Q_i}.
\end{align*}
By the fact  that $\mathcal{Q}=\{Q_i\}_{i\in\mathbb{N}}$ is disjoint and \eqref{934}, we have, for any $j \in \mathbb{N}$,
\begin{align}\label{19228}
\|A_j\|_{L^2}
=
\left\{\sum_{i\in \mathrm{I}_j}\lambda_j^2\|a_j\mathbf{1}_{Q_i}\|_{L^2}^2\right\}^{\frac{1}{2}}
=
\lambda_j\|a_j\|_{L^2}\leq \lambda_j[\mu(B_j)]^{\frac{1}{2}}\|\mathbf{1}_{B_j}\|_{X}^{-1}.
\end{align}
Since $a_j$ is supported in $B_j$ with $r_{B_j}\in [1,\infty)$ and $Q_i$ is contained in a ball with radius equal to 1, 
it follows that $A_j$ is supported in $3B_j$. From this, \eqref{19228}, Proposition \ref{pras}, and \eqref{huaiyi2}, it follows that
\begin{align*}
&\left\|\sum_{Q_i\in\mathcal{Q}}[\mu(Q_i)]^{-\frac{1}{2}}\|f\mathbf{1}_{Q_i}\|_{L^2}\mathbf{1}_{Q_i}\right\|_X\\
&\quad\leq
\left\|\left\{\sum_{j\in\mathbb{N}}[A_i]^s\right\}^{\frac{1}{s}}\right\|_X
\lesssim
\left\|\left\{\sum_{j\in\mathbb{N}}
\left[\frac{\lambda_j}{\|\mathbf{1}_{B_j}\|_{X}}\right]^{s}\mathbf{1}_{B_j}
\right\}^{\frac{1}{s}}\right\|_X
\lesssim \|f\|_{H_{X,I}}.
\end{align*}
This finishes the proof of Theorem \ref{dec-HI}.
\end{proof}
To prove Theorem \ref{th1-1}, we need the decomposition theorem of the \emph{local tent space} $t_X$. For any open set $O\subset\mathcal{X}$, define the  \emph{local tent $t(O)$} by setting
\begin{align*}
t(O):=\left\{(x,t) \in \mathcal{X} \times (0,1] :\ d(x,O^\complement)\geq t\right\}.
\end{align*} 
For any given $\mathrm{BQBF}$ space $X $, the \emph{local $X$-tent space} $t_X$ is defined to be the set of all $\mu$-measurable functions $f:\ \mathcal{X}\times(0,1] \to {\mathbb C}$ with the
following finite \emph{quasi-norm}
$$\|f\|_{t_X}:=\left\|{\mathcal A}_{\mathrm{loc}}(f)\right\|_{X },$$
where, for any $x\in\mathcal{X},$
\begin{align*}
\mathcal{A}_{\mathrm{loc}}(f)(x):=\left[\int_0^1\int_{B(x,t)}|f(y,t)|^2
\,\frac{d\mu(y)\,dt}{V(x,t)t}\right]^{\frac{1}{2}}.
\end{align*}
In particular,  when  $X:=L^p$  with $p\in (0,\infty)$, we denote $t_X$ by $t^p$.
\begin{definition}
Let $X $ be a $\mathrm{BQBF}$ space and $p\in(1,\infty)$. A measurable function $a:\ \mathcal{X}\times(0,1] \to
{\mathbb C}$ is said to be a \emph{$(t_X,p)$-atom} if there exists a ball $B\subset\mathcal{X}$ such that
\begin{enumerate}
\item[(i)] ${\rm supp}\,(a):=\{(x,t)\in\mathcal{X}\times(0,1]:\ a(x,t)\neq0\} \subset t(B);$
\item[(ii)] $\|a\|_{t^p}\le[\mu(B)]^{\frac{1}{p}}\|\mathbf{1}_B\|_{X }^{-1}$.
\end{enumerate}
Furthermore, if $a$ is a $(t_X,p)$-atom for any $p\in(1,\infty)$, then $a$ is called a \emph{$(t_X,\infty)$-atom}.
\end{definition}
We have the following atomic decomposition of the local tent space $t_X(\mathcal{X}\times(0,1])$; since its proof is similar to \cite[Theorem 3.9]{lyyy26cms}, we omit the details here.
\begin{theorem}\label{local-tent}
Assume that $X$ is a  $\mathrm{BQBF}$ space satisfying Assumption \ref{vector1} for some
$p_-\in(0,\infty)$. Let $f\in t_X(\mathcal{X}\times(0,1])$ and $s\in(0,p_-]$. Then there exist a sequence
$\{\lambda_j\}_{j\in\mathbb{N}}\subset[0,\infty)$ and a sequence $\{a_j\}_{j\in\mathbb{N}}$ of
$(t_X,\infty)$-atoms associated, respectively,  with the balls $\{B_j\}_{j\in\mathbb{N}}$ such that,
for almost every $(x,t)\in\mathcal{X}\times(0,\infty)$,
\begin{equation}\label{ad-tent-ae}
f(x,t)=\sum_{j\in\mathbb{N}} \lambda_j a_j(x,t)
\end{equation}
and
\begin{align*}
\left\|\left\{\sum_{j\in\mathbb{N}}
\left[\frac{\lambda_j}{\|\mathbf{1}_{B_j}\|_{X  }}\right]^{s}\mathbf{1}_{B_j}
\right\}^{\frac{1}{s}}\right\|_X
\lesssim\|f\|_{t_X},
\end{align*}
where the implicit positive constant is independent of $f$. Moreover,
if $f\in t^2\cap t_X,$ then \eqref{ad-tent-ae} holds true in $t^2.$
\end{theorem}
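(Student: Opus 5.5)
We follow the standard tent-space atomic decomposition scheme, as in \cite[Theorem 3.9]{lyyy26cms}, adapted to the truncated region $\mathcal{X}\times(0,1]$. Let $f\in t_X$. For each $k\in\mathbb{Z}$ set
\[
O_k:=\{x\in\mathcal{X}:\ \mathcal{A}_{\mathrm{loc}}(f)(x)>2^k\}.
\]
We fix $\gamma\in(0,1)$ close to $1$ and define the sets of $\gamma$-density
\[
O_k^\ast:=\{x\in\mathcal{X}:\ \mathcal{M}(\mathbf{1}_{O_k})(x)>1-\gamma\}.
\]
These are open and satisfy $O_k\subset O_k^\ast$, and $\mu(O_k^\ast)\lesssim\mu(O_k)$ whenever $\mu(O_k)<\infty$. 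If $\mu(O_k)=\infty$ for some $k$, a preliminary truncation handles it: we first treat $f$ times the indicator of a large bounded set and pass to the limit via the Fatou property (iii) of Definition~\ref{debb}.

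We then apply a Whitney decomposition of $O_k^\ast$ in the doubling metric space, giving balls $\{B_{k,j}\}_j$ with bounded overlap and $d(B_{k,j},(O_k^\ast)^\complement)\sim r_{B_{k,j}}$. Let $\widehat{B}_{k,j}$ be a fixed dilate of $B_{k,j}$, and let $\{\chi_{k,j}\}_j$ be the associated partition of $\mathbf{1}_{O_k^\ast}$. Define
\[
a_{k,j}:=\lambda_{k,j}^{-1}\,f\,\mathbf{1}_{t(O_k^\ast)\setminus t(O_{k+1}^\ast)}\,\chi_{k,j},\qquad
\lambda_{k,j}:=2^k\|\mathbf{1}_{\widehat{B}_{k,j}}\|_X .
\]
Since every $(y,t)$ in the support of $f$ with $\mathcal{A}_{\mathrm{loc}}(f)$ finite a.e.\ lies in some $t(O_k^\ast)\setminus t(O_{k+1}^\ast)$ (the usual covering lemma: a.e.\ point of $\mathcal{X}\times(0,1]$ lies in $\bigcup_k t(O_k^\ast)$ and not in $\bigcap_k t(O_k^\ast)$), we get $f=\sum_{k,j}\lambda_{k,j}a_{k,j}$ almost everywhere. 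The restriction to $t\in(0,1]$ changes nothing, since it is preserved by every cut-off. The support condition $\operatorname{supp}a_{k,j}\subset t(\widehat{B}_{k,j})$ follows from the distance property of Whitney balls.

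The key estimate, and the main obstacle, is the size bound $\|a_{k,j}\|_{t^p}\lesssim[\mu(\widehat{B}_{k,j})]^{1/p}\|\mathbf{1}_{\widehat{B}_{k,j}}\|_X^{-1}$ for all $p\in(1,\infty)$. This reduces to showing $\|f\mathbf{1}_{t(O_k^\ast)\setminus t(O_{k+1}^\ast)}\chi_{k,j}\|_{t^p}\lesssim 2^k[\mu(\widehat B_{k,j})]^{1/p}$. We would prove it by duality against $g\in t^{p'}$. Combined with the classical lemma
\[
\iint_{\widehat{R}}|F(y,t)|\,d\mu(y)\,dt\lesssim\int_{(O_{k+1})^\complement}\int_{\Gamma(x)}|F(y,t)|\,\frac{d\mu(y)\,dt}{V(x,t)}\,d\mu(x),
\]
valid for $\widehat R$ outside $t(O^\ast_{k+1})$ (points of density), this gives the bound
\[
\int_{\widehat B_{k,j}\setminus O_{k+1}}\mathcal{A}_{\mathrm{loc}}(f\mathbf{1}_{\cdots})\,\mathcal{C}(g)\,d\mu\lesssim 2^{k+1}\|g\|_{t^{p'}}[\mu(\widehat B_{k,j})]^{1/p}.
\]
Here we use $\mathcal{A}_{\mathrm{loc}}\le2^{k+1}$ off $O_{k+1}$, with the truncated cone $\Gamma(x)=\{(y,t):d(x,y)<t\le1\}$. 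Care is needed only to check that cones truncated at height $1$ still satisfy this averaging lemma; they do, since the lemma is pointwise in $t$.

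For the coefficient bound we use bounded overlap and the choice $s\le p_-$:
\[
\Big\|\Big\{\sum_{k,j}\big[\lambda_{k,j}\|\mathbf{1}_{\widehat B_{k,j}}\|_X^{-1}\big]^s\mathbf{1}_{\widehat B_{k,j}}\Big\}^{1/s}\Big\|_X
\lesssim\Big\|\Big\{\sum_k2^{ks}\mathbf{1}_{O_k^\ast}\Big\}^{1/s}\Big\|_X .
\]
Passing from $\widehat B_{k,j}$ to $B_{k,j}\subset O_k^\ast$ uses Assumption~\ref{vector1} through $\mathbf{1}_{\widehat B}\lesssim[\mathcal{M}(\mathbf{1}_B)]^{1/r}$ with $r<p_-$. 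Next, $\mathbf{1}_{O_k^\ast}\le[(1-\gamma)^{-1}\mathcal{M}(\mathbf{1}_{O_k})]^{1/r}$, so the Fefferman--Stein inequality of Assumption~\ref{vector1} on $X^{1/r}$ (with inner exponent $s/r>1$ after choosing $r<s$; if $s=p_-$, we pick $r$ slightly smaller and reindex) bounds this by $\|\{\sum_k2^{ks}\mathbf{1}_{O_k}\}^{1/s}\|_X\sim\|\mathcal{A}_{\mathrm{loc}}(f)\|_X$.

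Finally, if $f\in t^2\cap t_X$, the partial sums are $f$ times indicators of increasing sets, so the series converges in $t^2$ by dominated convergence.
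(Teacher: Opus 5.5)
Your construction (level sets of $\mathcal{A}_{\mathrm{loc}}(f)$, their $\gamma$-density enlargements, a Whitney-type covering, the averaging lemma over $(O_{k+1})^\complement$, and Fefferman--Stein for the coefficients) is the standard Coifman--Meyer--Stein scheme. This is the proof the paper intends when it defers to \cite[Theorem 3.9]{lyyy26cms}, and your support, size, coefficient and $t^2$-convergence steps work as set up. One notational point: in the duality step, Cauchy--Schwarz on the cone produces $\mathcal{A}_{\mathrm{loc}}(g)$, not a Carleson function. With $\mathcal{A}_{\mathrm{loc}}(g)$ and H\"older on $\widehat B_{k,j}$ you get exactly your bound.

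The step that fails as written is your handling of $\mu(O_k)=\infty$. This case does occur for admissible $X$: for Morrey spaces, $\mathcal{A}_{\mathrm{loc}}(f)$ can be $\gtrsim1$ on infinitely many sparse unit balls. Decomposing the truncations $f\mathbf{1}_{B(x_0,N)\times(0,1]}$ and ``passing to the limit via the Fatou property'' does not produce a decomposition of $f$. Property (iii) of Definition \ref{debb} controls quasi-norms of monotone limits, but the decompositions of different truncations come from different level sets and different coverings, so nothing converges to a decomposition of $f$.

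No truncation is needed, because finiteness of $\mu(O_k)$ enters in only two places.

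First, the claim $\bigcap_k t(O_k^\ast)=\emptyset$. Fix $r\in(0,p_-)$. Since $\mathbf{1}_{O_k^\ast}\le(1-\gamma)^{-1}\mathcal{M}(\mathbf{1}_{O_k})$, Assumption \ref{vector1} applied to a single function on $X^{1/r}$ gives $\|\mathbf{1}_{O_k^\ast}\|_X\lesssim\|\mathbf{1}_{O_k}\|_X\le 2^{-k}\|f\|_{t_X}$. If $(y,t)\in t(O_k^\ast)$ for every $k$, then $B(y,t)\subset O_k^\ast$ for every $k$. Hence $\|\mathbf{1}_{B(y,t)}\|_X=0$, contradicting Definition \ref{debb}(i).

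Second, the Whitney covering, which needs $O_k^\ast\neq\mathcal{X}$. In the local setting you can avoid this. Apply a Vitali-type selection to the balls centred at $x\in O_k^\ast$ with radius $\min\{c\,d(x,(O_k^\ast)^\complement),1\}$, where $d(x,\emptyset):=\infty$ and $c$ is small. Because this radius function is Lipschitz and $\mu$ is doubling, the selected balls have bounded overlap. Since $t\le1$, you still get $\operatorname{supp}(a_{k,j})\subset t(\widehat B_{k,j})$ for a fixed dilate $\widehat B_{k,j}$.

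With these two replacements your argument works for every $f\in t_X$. In the paper's only use of this result, Theorem \ref{th1-1}, one has $f\in t^2\cap t_X$, so $\mu(O_k)\le 4^{-k}\|f\|_{t^2}^2<\infty$ and the issue never arises.
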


In what follows, for any operator $T$, we denote its integral kernel by $K_T$. 
Let $L$ be an operator satisfying both Assumptions
\ref{l1} and \ref{l2}.
By \cite[Theorem 3.14]{cs08}, we find that there exists a positive constant $C$ such that, for any $t\in(0,\infty)$,
the kernel $K_T$ of the operator $T:=\cos\left(t\sqrt{L}\right)$ satisfies
\begin{align*}
\mathrm{supp}\,(K_T)\subset D_t:=\left\{(x,y)\in\mathcal{X}\times\mathcal{X}:\ d(x,y)\leq C t\right\}.
\end{align*}
Since the precise value of $C$ is inessential, we always assume that $C:=1.$
Let $\varphi\in C^\infty_{\rm{c}}(\mathbb{R})$ be an even function satisfying $\int_{\mathbb{R}}\varphi(x)\,dx=2\pi$ and $\mathrm{supp}\,(\varphi)\subset(-1,1)$.
Denotes the Fourier transform of $\varphi$
by $\hat{\varphi}$.
For any $\ell,j\in\mathbb{Z}_+$ and $\xi\in\mathbb{C},$ let
\begin{align}\label{psi}
\Psi^{(\ell)}(\xi):=\frac{d^\ell\hat{\varphi}(\xi)}{d\xi^\ell}
\ \ \text{and}\ \ 
\Psi^{(\ell)}_{j}(\xi):=\xi^{j}\Psi^{(\ell)}(\xi).
\end{align}

By \cite[Lemma 2.1]{gly13}, we have the following lemma.
\begin{lemma}\label{gj1}
Let $\varphi$ be as above and $j,\ell\in\mathbb{Z}_+$ with $j+\ell\in2 \mathbb{Z}_+$. Then there exists a positive constant $C$ such that, for any $t\in(0,\infty)$, the kernel $K_{\Psi_j^{(\ell)}\left(t\sqrt{L}\right)}$
satisfies
\begin{align*}
\mathrm{supp}\,\left(K_{\Psi_j^{(\ell)}\left(t\sqrt{L}\right)}\right)\subset\left\{(x,y)\in\mathcal{X}\times\mathcal{X}:\
d(x,y)\leq t\right\}
\end{align*}
and
\begin{align*}
K_{\Psi_j^{(\ell)}\left(t\sqrt{L}\right)}(x,y)
\leq
\frac{C}{V(x,t)}
\end{align*}
for any $x,y\in\mathcal{X}.$
\end{lemma}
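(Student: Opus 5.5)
The plan is to realize $\Psi_j^{(\ell)}(t\sqrt{L})$ through the cosine functional calculus and then read off both properties from finite propagation speed. Since $\varphi$ is even, real, smooth and supported in $(-1,1)$, its Fourier transform can be written as
$$\hat\varphi(\xi)=\int_{\mathbb{R}}\varphi(x)e^{-ix\xi}\,dx=\int_{-1}^1\varphi(x)\cos(x\xi)\,dx.$$
Differentiating $\ell$ times under the integral and multiplying by $\xi^j$ gives $\Psi_j^{(\ell)}(\xi)$. The parity condition $j+\ell\in2\mathbb{Z}_+$ ensures that $\Psi_j^{(\ell)}$ is even, so $\Psi_j^{(\ell)}(t\sqrt{L})$ is well defined by the spectral theorem.

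\textbf{Step 1 (representation).} I will remove the factor $\xi^j$ by integrating by parts in $x$. Concretely, $\xi^j\partial_\xi^\ell\cos(x\xi)$ can be rewritten as a combination of $\partial_x$-derivatives hitting $\cos(x\xi)$ or $\sin(x\xi)$, which are then transferred onto $x^\ell\varphi(x)$. This yields
$$\Psi_j^{(\ell)}(\xi)=\int_{-1}^1\eta_{j,\ell}(x)\cos(x\xi)\,dx,$$
where $\eta_{j,\ell}\in C_c^\infty((-1,1))$ is built from derivatives of $x^\ell\varphi(x)$. The parity condition guarantees that the sine terms cancel and only cosines remain. By the spectral theorem we then obtain
$$\Psi_j^{(\ell)}(t\sqrt{L})=\int_{-1}^1\eta_{j,\ell}(x)\cos(xt\sqrt{L})\,dx .$$

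\textbf{Step 2 (support).} The kernel of $\cos(xt\sqrt{L})$ is supported in $\{d(x,y)\le |x|t\}\subset\{d(x,y)\le t\}$ for $|x|<1$, by the finite propagation speed result \cite[Theorem 3.14]{cs08} with the normalization $C=1$. Pairing the representation of Step 1 against test functions with separated supports shows that the kernel of the integral is supported in the same set.

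\textbf{Step 3 (pointwise bound).} This is the main obstacle, since the cosine kernels are distributions rather than bounded functions. My first attempt is to factor
$$\Psi_j^{(\ell)}(t\sqrt{L})=\bigl[\Psi_j^{(\ell)}(t\sqrt{L})e^{t^2L}\bigr]\,e^{-t^2L}.$$
Here $\Psi_j^{(\ell)}(\xi)$ is entire of exponential type and decays rapidly on the real line, but it is not clear that $\Psi_j^{(\ell)}(\xi)e^{\xi^2}$ remains bounded, so this factorization may fail.

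If it does, I will instead combine the $L^2$ estimate with the Gaussian bound. The function $\Psi_j^{(\ell)}$ is a Schwartz function, so $\Psi_j^{(\ell)}(t\sqrt{L})$ has $L^2\to L^\infty$ norm controlled after localization. Writing
$$F(\xi)=\Psi_j^{(\ell)}(\xi)e^{\xi^2/2}\cdot e^{-\xi^2/2}$$
is equally problematic for the same reason. The cleaner route is the standard argument: a Schwartz even function $F$ of $t\sqrt{L}$, under Gaussian bounds, has kernel bounded by
$$\frac{C_N}{V(x,t)}\Bigl(1+\frac{d(x,y)}{t}\Bigr)^{-N},$$
for instance via Davies--Gaffney estimates together with $L^1\to L^2$ and $L^2\to L^\infty$ bounds of $e^{-t^2L}$ applied to $F(t\sqrt{L})=e^{-t^2L/2}\bigl[G(t\sqrt L)\bigr]e^{-t^2L/2}$ with $G$ bounded, after a Phragmén–Lindelöf-type splitting.

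With compact support in hand from Step 2, only the bound $C/V(x,t)$ is needed. I will obtain it as $\|e^{-t^2L}\|_{L^2\to L^\infty}\|\cdot\|_{L^2\to L^2}\|e^{-t^2L}\|_{L^1\to L^2}$ localized to balls of radius comparable to $t$, and close with the doubling property \eqref{d2}. I would rely on \cite[Lemma 2.1]{gly13} for the technical details of this last step.
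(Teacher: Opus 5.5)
The paper gives no argument of its own for this lemma; it only cites \cite[Lemma 2.1]{gly13}. Your Steps 1 and 2 are the standard proof of the support property and are correct. Concretely, $\Psi_j^{(\ell)}=\widehat{\eta}$ with $\eta:=(-1)^{(j+\ell)/2}\partial_x^j(x^\ell\varphi)$. Because $j+\ell$ is even, $\eta$ is real, even, and supported in $(-1,1)$. Hence $\Psi_j^{(\ell)}(t\sqrt L)=\int_{-1}^1\eta(x)\cos(xt\sqrt L)\,dx$, and finite propagation speed gives the support inclusion.

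The gap is in Step 3. Every concrete mechanism you write there factors $\Psi_j^{(\ell)}(t\sqrt L)$ through heat semigroups on both sides with a bounded middle operator. That requires $\Psi_j^{(\ell)}(\xi)e^{c\xi^2}$ to be bounded for some $c>0$, and this fails for every $c>0$. Indeed, $\Psi_j^{(\ell)}$ is the Fourier transform of a nonzero compactly supported function, and Hardy's uncertainty principle rules out Gaussian decay. So the ``cleaner route'' with $G$ bounded is the same failed factorization, and so is your final chain $\|e^{-t^2L}\|_{L^2\to L^\infty}\|\cdot\|_{L^2\to L^2}\|e^{-t^2L}\|_{L^1\to L^2}$: its middle factor $e^{t^2L}\Psi_j^{(\ell)}(t\sqrt L)e^{t^2L}$ is unbounded. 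Localizing to balls does not help, since the obstruction is spectral, not spatial. As written, the pointwise bound is simply deferred to the citation.

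The missing idea is to use factors with only polynomial decay in $\xi$. Take $R:=(I+t^2L)^{-N}$ with $N>n/4$. By subordination, $R=\Gamma(N)^{-1}\int_0^\infty s^{N-1}e^{-s}e^{-st^2L}\,ds$. The Gaussian bound gives $\|K_{e^{-st^2L}}(x,\cdot)\|_{L^2}\lesssim V(x,\sqrt s\,t)^{-1/2}$, and by \eqref{d1} this is $\lesssim\max\{1,s^{-n/4}\}V(x,t)^{-1/2}$. Minkowski's inequality then yields $\|K_R(x,\cdot)\|_{L^2}\lesssim V(x,t)^{-1/2}$, and by self-adjointness also $\|K_R(\cdot,y)\|_{L^2}\lesssim V(y,t)^{-1/2}$. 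Since $\Psi_j^{(\ell)}$ is a Schwartz function, $H(\xi):=(1+\xi^2)^{2N}\Psi_j^{(\ell)}(\xi)$ is bounded, and
\[
\Psi_j^{(\ell)}(t\sqrt L)=R\,H(t\sqrt L)\,R ,
\]
so
\[
\left|K_{\Psi_j^{(\ell)}(t\sqrt L)}(x,y)\right|\le\|H\|_{L^\infty}\|K_R(x,\cdot)\|_{L^2}\|K_R(\cdot,y)\|_{L^2}\lesssim[V(x,t)V(y,t)]^{-1/2}.
\]
On the support $d(x,y)\le t$ from Step 2, \eqref{d2} gives $V(y,t)\gtrsim V(x,t)$, which yields the bound $C/V(x,t)$.
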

The following lemma is exactly \cite[Lemma 2.2]{gly13}.
\begin{lemma}\label{SL}
Let $p \in (1,\infty)$ and $j,\ell \in \mathbb{N}$ with $j+\ell \in 2\mathbb{N}$. Then there exists a positive constant $C$ such that, for any $f \in L^p$,
\begin{align*}
\left\| \mathcal{A}\left(\Psi_j^{(\ell)}\left(t\sqrt{L}\right)(f)\right)\right\|_{L^{p}} \leq C\|f\|_{L^{p}},
\end{align*}
where, for any measurable function $F \in \mathcal{X}\times(0,\infty)$ and $x\in\mathcal{X},$
\begin{align*}
\mathcal{A}(F)(x):=\left[\int_0^\infty\int_{B(x,t)}|F(y,t)|^2
\,\frac{d\mu(y)\,dt}{V(x,t)t}\right]^{\frac{1}{2}}.
\end{align*}
\end{lemma}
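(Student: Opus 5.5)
The plan is to derive Lemma~\ref{SL} from the $L^2$ theory together with a vector-valued Calder\'on--Zygmund argument, treating $f\mapsto \Psi_j^{(\ell)}(t\sqrt L)(f)$ as an operator taking values in the Hilbert space $\mathcal H$ of functions on the cone $\Gamma(x):=\{(y,t):\ d(x,y)<t\}$ with measure $\frac{d\mu(y)\,dt}{V(x,t)t}$. With this notation, $\mathcal A(\Psi_j^{(\ell)}(t\sqrt L)f)(x)$ is the $\mathcal H$-norm at $x$.

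\textbf{Step 1 ($p=2$).} We use Fubini and the doubling property (so that $V(x,t)\sim V(y,t)$ when $d(x,y)<t$) to get
\[
\|\mathcal A(F)\|_{L^2}^2\sim\int_0^\infty\|F(\cdot,t)\|_{L^2}^2\,\frac{dt}{t}.
\]
Since $j\ge1$ and $\Psi^{(\ell)}$ is a Schwartz function (it is a derivative of the Fourier transform of a $C_c^\infty$ function), we have $\int_0^\infty|\Psi_j^{(\ell)}(t\lambda)|^2\frac{dt}{t}=C<\infty$ for every $\lambda>0$. The spectral theorem then gives $\int_0^\infty\|\Psi_j^{(\ell)}(t\sqrt L)f\|_{L^2}^2\frac{dt}{t}\le C\|f\|_{L^2}^2$.

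\textbf{Step 2 ($1<p<2$).} The route is a weak $(1,1)$ estimate followed by Marcinkiewicz interpolation. Take the Calder\'on--Zygmund decomposition $f=g+\sum_k b_k$ at height $\lambda$, where $b_k$ is supported in the ball $B_k$. Since there is no kernel regularity, we cannot use the classical H\"ormander condition. Instead we use the Duong--McIntosh device: write
\[
b_k=e^{-r_{B_k}^2L}b_k+(I-e^{-r_{B_k}^2L})b_k.
\]
\begin{itemize}
\item The first pieces are handled by Assumption~\ref{l2}: $\big\|\sum_k e^{-r_k^2L}b_k\big\|_{L^2}^2\lesssim\lambda\|f\|_{L^1}$, via the standard duality bound against $\mathcal M$.
\item For the second pieces, we estimate $\mathcal A$ off $\bigcup_k 4B_k$. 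The kernel of $\Psi_j^{(\ell)}(t\sqrt L)(I-e^{-r_k^2L})$ is analysed in two regimes. For $t\lesssim r_k$, Lemma~\ref{gj1} gives finite propagation speed, so the part coming from $\Psi_j^{(\ell)}(t\sqrt L)b_k$ is supported within distance $t$ of $B_k$. For $t\gtrsim r_k$, we use the smallness of $\Psi_j^{(\ell)}(t\lambda)(1-e^{-r_k^2\lambda^2})\lesssim (r_k/t)^2$ together with Gaussian-type off-diagonal decay.
\end{itemize}
Summing over dyadic annuli $U_i(B_k)$ should give $\int_{(4B_k)^\complement}\mathcal A(\cdots)\,d\mu\lesssim\|b_k\|_{L^1}$.

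\textbf{Step 3 ($p>2$).} We plan a duality argument: $\mathcal A(F)$ is controlled in $L^p$, $p>2$, by pairing with $L^{(p/2)'}$ functions. Alternatively, we run a sharp-maximal / good-$\lambda$ argument, again exploiting the compact support of the kernels from Lemma~\ref{gj1} (a local piece plus a tail). Concretely, $\Psi_j^{(\ell)}(t\sqrt L)=\Psi_{j}^{(\ell)}(t\sqrt L)(I-e^{-r^2L})^{N}+\cdots$, and we compare $\mathcal A$ with the Auscher-type maximal function $\mathcal M^\#_L$.

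\textbf{Expected obstacle.} The main difficulty is the off-diagonal estimate in Step 2 in the regime $t\gtrsim r_k$. There the support bound of Lemma~\ref{gj1} no longer separates $\Psi_j^{(\ell)}(t\sqrt L)b_k$ from $(4B_k)^\complement$. We need to combine the $L^2\to L^2$ off-diagonal (Davies--Gaffney) bounds for $\Psi_j^{(\ell)}(t\sqrt L)(I-e^{-r^2L})$, obtained by writing it as a Fourier integral of $\cos(s\sqrt L)$ against $e^{-r^2L}$, with the cone geometry. The gain factor $(r_k/t)^2$ must beat the doubling growth $2^{in}$, which may require raising $I-e^{-r^2L}$ to a power $N>n/4$.
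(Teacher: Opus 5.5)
The paper gives no argument for this lemma; it simply quotes \cite[Lemma 2.2]{gly13}. So your plan has to stand on its own, and its genuine gap is Step 3. For $p\in(2,\infty)$ you list two strategies but carry out neither. This range is really needed: in the proof of Lemma \ref{gj2-2}(ii) the estimate is used in $L^{p'}$ for every $p\in(1,\infty)$. With $F:=\Psi_j^{(\ell)}(t\sqrt L)f$, pairing with $0\le g\in L^{(p/2)'}$ and using Fubini and \eqref{d2} gives
\[
\int_{\mathcal X}\left[\mathcal A(F)\right]^2g\,d\mu\sim\int_0^\infty\int_{\mathcal X}|F(y,t)|^2H(y,t)\,\frac{d\mu(y)\,dt}{t},\qquad H(y,t):=\frac{1}{V(y,t)}\int_{B(y,t)}g\,d\mu .
\]
If you now only use $H\le\mathcal Mg$, you are left with the $L^p$ bound, $p>2$, for the vertical square function $(\int_0^\infty|\Psi_j^{(\ell)}(t\sqrt L)f|^2\frac{dt}{t})^{1/2}$. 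That is a statement of the same kind as the lemma, and Steps 1--2 do not provide it; the good-$\lambda$ route is only named. The missing idea is to localize by finite propagation speed. By Lemma \ref{gj1}, for any ball $B$ and $t\in(0,r_B)$ one has $\Psi_j^{(\ell)}(t\sqrt L)f=\Psi_j^{(\ell)}(t\sqrt L)(f\mathbf 1_{2B})$ on $B$, so Step 1 yields
\[
\int_0^{r_B}\int_B|F(y,t)|^2\,\frac{d\mu(y)\,dt}{t}\lesssim\int_{2B}|f|^2\,d\mu\le\mu(2B)\inf_{B}\mathcal M\left(|f|^2\right),
\]
i.e.\ $|F|^2\frac{d\mu\,dt}{t}$ is a Carleson measure with Carleson function $\lesssim\mathcal M(|f|^2)$. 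Now apply the standard Carleson embedding: take level sets of $N(H)(x):=\sup_{d(x,y)<t}H(y,t)\le\mathcal Mg(x)$ and a Whitney covering, which is exactly the Coifman--Meyer--Stein tent-space argument. This gives $\int_{\mathcal X}[\mathcal A(F)]^2g\,d\mu\lesssim\int_{\mathcal X}\mathcal Mg\,\mathcal M(|f|^2)\,d\mu\lesssim\|g\|_{L^{(p/2)'}}\|f\|_{L^p}^2$, which closes Step 3.

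Steps 1 and 2 are the standard $L^2$ plus Duong--McIntosh argument and do work, up to one omission and one unnecessary worry.

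\emph{The omission.} For $t\lesssim r_k$ you only treat $\Psi_j^{(\ell)}(t\sqrt L)b_k$. The other term $\Psi_j^{(\ell)}(t\sqrt L)e^{-r_k^2L}b_k$ is not localized, and since $\int_0^{r_k}\frac{dt}{t}=\infty$ you must use that $\Psi_j^{(\ell)}$ vanishes at the origin. Because $j\ge1$ and $j+\ell$ is even, $\Psi_j^{(\ell)}(\xi)=\xi^2G(\xi)$ with $G=\widehat h$ for some even $h\in C_c^\infty((-1,1))$, so the argument of Lemma \ref{gj1} applies to $G(t\sqrt L)$. Hence $\Psi_j^{(\ell)}(t\sqrt L)e^{-r_k^2L}=(t/r_k)^2G(t\sqrt L)\,r_k^2Le^{-r_k^2L}$ has kernel $\lesssim(t/r_k)^2[V(y,r_k)]^{-1}e^{-c\,d(y,z)^2/r_k^2}$ by \eqref{lzy}, which is enough.

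\emph{The unnecessary worry.} The regime $t\gtrsim r_k$ is not an obstacle, and it needs neither a power $(I-e^{-r^2L})^N$ nor Davies--Gaffney bounds. Write $I-e^{-r_k^2L}=\int_0^{r_k^2}Le^{-sL}\,ds$ and apply Lemma \ref{gj1} to $\Psi_{j+2}^{(\ell)}$ together with Assumption \ref{l2}. Then the kernel of $\Psi_j^{(\ell)}(t\sqrt L)(I-e^{-r_k^2L})$ is pointwise $\lesssim(r_k/t)^2[V(y,t)]^{-1}e^{-c\,d(y,z)^2/t^2}$. For $x\in U_i(B_k)$ and $r_k\le t\le2^{i-2}r_k$, the Gaussian factor absorbs the loss $[V(x,t)]^{-1}\lesssim(2^ir_k/t)^n[\mu(2^iB_k)]^{-1}$. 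For $t\ge2^{i-2}r_k$ one simply has $V(x,t)\gtrsim\mu(2^iB_k)$. Either way $\int_{U_i(B_k)}\mathcal A(\cdots)\,d\mu\lesssim4^{-i}\|b_k\|_{L^1}$, so there is no $2^{in}$ growth to beat.
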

Let  $M,j,\ell \in \mathbb{N}$ with $j+\ell \in 2\mathbb{N}.$
For any $F \in t^2(\mathcal{X}\times(0,1])$ and $x \in
\mathcal{X}$,  let
\begin{align*}
\Pi_{M,L}(F)(x):=\int_0^1 (t^2 L)^M \Psi_{j}^{(\ell)}\left(t\sqrt{L}\right)F(x,t) \,\frac{dt}{t}.
\end{align*}
\begin{lemma}\label{gj2-2}
Let $X$ be a $\mathrm{BQBF}$ space, $ M \in \mathbb{N},$ and $p \in (1,\infty)$. Then
\begin{enumerate}
\item[\rm{(i)}] $\Pi_{M,L}$ is bounded from $t^2$ to $L^2$;
\item [\rm{(ii)}] For any $(t_X, \infty)$-atom a, $\Pi_{M,L}(a)$ is a local $(X,M,p)_L$-atom up to a harmless positive constant multiple.
\end{enumerate}
\end{lemma}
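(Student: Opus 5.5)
For (i), I will use duality on $L^2(\mathcal{X}\times(0,1],d\mu\,dt/t)$. For $F\in t^2$ and $g\in L^2$, self-adjointness of $L$ and the fact that $\Psi_j^{(\ell)}(t\sqrt L)$ is a real even function of $t\sqrt L$ (because $j+\ell$ is even) give
\begin{align*}
\left|\langle \Pi_{M,L}(F),g\rangle\right|
=\left|\int_0^1\int_{\mathcal{X}}F(x,t)\,\overline{(t^2L)^M\Psi_j^{(\ell)}(t\sqrt L)g(x)}\,\frac{d\mu(x)\,dt}{t}\right|.
\end{align*}
Applying Cauchy--Schwarz bounds this by $\|F\|_{L^2(d\mu\,dt/t)}$ times the vertical square function of $g$. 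Averaging over balls and using Fubini together with the doubling property shows that $\|F\|_{L^2(d\mu\,dt/t)}\sim\|F\|_{t^2}$. The function $\xi\mapsto\xi^{2M}\Psi_j^{(\ell)}(\xi)$ is bounded and vanishes at $0$, and $\hat\varphi$ is Schwartz, so $\int_0^\infty|\xi^{2M+j}\Psi^{(\ell)}(\xi)|^2\,d\xi/\xi<\infty$. The spectral theorem then gives
\begin{align*}
\int_0^\infty\|(t^2L)^M\Psi_j^{(\ell)}(t\sqrt L)g\|_{L^2}^2\,\frac{dt}{t}\le C\|g\|_{L^2}^2.
\end{align*}
This proves (i).

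For (ii), let $a$ be a $(t_X,\infty)$-atom supported in $t(B)$, with $r_B$ arbitrary. Since $t\le 1$ and $t\le d(x,B^\complement)$, Lemma \ref{gj1} gives $\mathrm{supp}\,\Pi_{M,L}(a)\subset B$. I split into two cases.

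\emph{Case $r_B<1$.} I will set
\begin{align*}
b:=\int_0^1 t^{2M}\Psi_j^{(\ell)}(t\sqrt L)a(\cdot,t)\,\frac{dt}{t},
\end{align*}
so that $\Pi_{M,L}(a)=L^M b$ and, for $i\in\{0,\dots,M\}$,
\begin{align*}
(r_B^2L)^i b=r_B^{2i}\int_0^1 t^{2M-2i}(t^2L)^i\Psi_j^{(\ell)}(t\sqrt L)a(\cdot,t)\,\frac{dt}{t}.
\end{align*}
Each of these is supported in $B$ by the same finite-propagation argument. To bound the $L^p$ norm, I use that on $t(B)$ we have $t\le r_B$, so $t^{2M-2i}\le r_B^{2M-2i}$. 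I then pull that factor out and apply the $L^p$ boundedness of the operator $F\mapsto\int_0^1 (t^2L)^i\Psi_j^{(\ell)}(t\sqrt L)F(\cdot,t)\,dt/t$ from $t^p$ to $L^p$. This boundedness is the dual of Lemma \ref{SL}, with $\Psi$ replaced by $\xi^{2i}\Psi_j^{(\ell)}=\Psi_{j+2i}^{(\ell)}$, which still has $j+2i+\ell$ even. Pairing with $g\in L^{p'}$ gives
\begin{align*}
\left|\langle\cdot,g\rangle\right|\lesssim\|F\|_{t^p}\left\|\mathcal{A}\left(\Psi_{j+2i}^{(\ell)}(t\sqrt L)g\right)\right\|_{L^{p'}}
\end{align*}
by the standard tent-space duality $\int\!\!\int|FG|\,d\mu\,dt/t\lesssim\int\mathcal{A}(F)\mathcal{A}(G)\,d\mu$, and the last factor is controlled by Lemma \ref{SL}. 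Inserting $\|a\|_{t^p}\le\mu(B)^{1/p}\|\mathbf 1_B\|_X^{-1}$ yields \eqref{atom2} up to a constant.

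\emph{Case $r_B\ge1$.} Only the size condition is needed. It follows directly from the same $t^p\to L^p$ bound applied with $i=M$, and no cancellation is required.

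The main obstacle is justifying the $t^p\to L^p$ duality estimate. This requires the tent-space pairing inequality (Coifman--Meyer--Stein, valid on doubling spaces) and care about the truncation $t\le1$. I would handle the truncation by extending $F$ by zero for $t>1$, which turns it into a standard $T^p$ function with $\|F\|_{T^p}=\|F\|_{t^p}$.
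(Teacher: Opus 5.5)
Your proposal is correct and follows essentially the paper's argument: in (ii) the paper also sets $b:=\int_0^1 t^{2M}\Psi_j^{(\ell)}(t\sqrt L)a(\cdot,t)\,\frac{dt}{t}$, uses $t\le r_B$ on $t(B)$ to replace $r_B^{2k}t^{2M-2k}$ by $r_B^{2M}$, and bounds the $L^{p'}$ pairing with the averaging identity, H\"older's inequality, and Lemma \ref{SL} applied to $\Psi_{2k+j}^{(\ell)}$; your case split by $r_B$ is harmless, since the $k=M$ bound already gives the size condition when $r_B\ge1$. The only variation is in (i): you bound a vertical square function by the spectral theorem after identifying $\|F\|_{t^2}$ with $\|F\|_{L^2(d\mu\,dt/t)}$, whereas the paper passes through the conical function $\mathcal{A}_{\mathrm{loc}}$ and Lemma \ref{SL} with $p=2$, and the two routes are equivalent at the $L^2$ level.
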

\begin{proof}
We first prove (i). By \eqref{d2}, we obtain, for any $(y,t)\in \mathcal{X}\times(0,\infty),$
\begin{align}\label{3.10}
\int_{B(y,t)}\,\frac{d\mu(x)}{V(x,t)}\sim\int_{B(y,t)}\,\frac{d\mu(x)}{V(y,t)}=1.
\end{align}
From this, Fubini's theorem, the assumption that $L$ is a self-adjoint operator, H\"older's inequality, and Lemma \ref{SL}, we deduce that, for any $g \in L^2$,
\begin{align*}
&\left|\int_{\mathcal{X}} \Pi_{M,L}(F)(z)\overline{g(z)} \, d\mu(z)\right|\\
&\quad\leq
\int_{\mathcal{X}}\int_{0}^{1}\left|(t^2L)^M\Psi_{j}^{(\ell)}\left(t\sqrt{L}\right)(F(\cdot, t))(z)g(z)\,\frac{dt}{t}\right|\,d\mu(z)\\
&\quad\sim
\int_{0}^{1}\int_{\mathcal{X}}\int_{B(z,t)}\left|(t^2L)^M\Psi_{j}^{(\ell)}\left(t\sqrt{L}\right)(F(\cdot, t))(z)g(z)\right|\, \frac{d\mu(x)}{V(x,t)}\,d\mu(z)\, \frac{dt}{t}\\
&\quad= 
\int_{\mathcal{X}}\int_{0}^{1}\int_{B(x,t)}\left|F(z,t)\Psi_{2M+j}^{(\ell)}\left(t\sqrt{L}\right)(g)(z)\right|\, \frac{d\mu(z)\,dt}{V(x,t)t}\,d\mu(x)\\
&\quad\leq
\int_{\mathcal{X}}|\mathcal{A}_{loc}(F)(x)|\left| \mathcal{A}_{\rm{loc}}\left(\Psi_{2M+j}^{(\ell)}\left(t\sqrt{L}\right)g\right)(x)\right|\, d\mu(x)\\
&\quad\leq
\|F\|_{t^2}	\left\| \mathcal{A}_{\rm{loc}}\left(\Psi_{2M+j}^{(\ell)}\left(t\sqrt{L}\right)g\right)\right\|_{L^{2}}
\lesssim \|F\|_{t^2}\|g\|_{L^2}.
\end{align*}
By the duality of $L^2$ space, we conclude that $\Pi_{M,L}$ is bounded from $t^2$ to $L^2$. 

Now, we prove (ii). Let $a$ be a $(t_X,\infty)$-atom, associated with the ball $B$. Since $\mathrm{supp}\,(a) \subset t(B)$, 
it follows that $\mathrm{supp}\,(\Pi_{M,L}(a)) \subset B$. For any $x \in \mathcal{X}$, let 
\begin{align*}
b(x):=\int_{0}^{1}t^{2M}\Psi_{j}^{(\ell)}\left(t\sqrt{L}\right)a(\cdot,t)\,\frac{dt}{t}.
\end{align*}
Then we have $\Pi_{M,L}(a) = L^M(b)$. To show that $\Pi_{M,L}(a)$ is a local $(X,M,p)_L$-atom up to a harmless positive constant multiple, it remains to prove that, for any $k \in \{0,\ldots, M\}$, 
\begin{align}\label{1848}
\left\|(r_{B}^2L)^k(b)\right\|_{L^p} \lesssim r_{B}^{2M}[\mu(B)]^{\frac{1}{p}}\left\|\mathbf{1}_{B}\right\|_{X}^{-1}.
\end{align}
By Fubini's theorem, the assumption that $L$ is self-adjoint, and the functional calculus, we find that, for any $g \in L^{p'}$ ,
\begin{align*}
&\int_{\mathcal{X}} \left( r_{B}^2 L \right)^k b(z) \overline{g(z)} \, d\mu(z)\nonumber\\
&\quad= r_{B}^{2k} \int_0^1 \int_{\mathcal{X}} t^{2M-2k} \left( t^2 L \right)^{k} \Psi_{j}^{(\ell)}\left( t\sqrt{L} \right) \bigl( a(\cdot, t) \bigr)(z) \; \overline{g(z)} \; \frac{d\mu(z) \, dt}{t}\nonumber\\
&\quad= r_{B}^{2k} \int_0^1 \int_{\mathcal{X}} t^{2M-2k} a(z,t) \; \Psi_{2k+j}^{(\ell)}\left( t\sqrt{L} \right)(g)(z) \,\frac{d\mu(z) \, dt}{t}.
\quad\end{align*}
From this, Lemma \ref{SL}, \eqref{3.10}, the assumption that $a$ is a $(t_X, \infty)$-atom, H\"older's inequality, and Fubini's theorem, we deduce that
\begin{align*}
&\Bigl| \int_{\mathcal{X}} \left( r_{B}^2 L \right)^k b(z)  \overline{g(z)} \, d\mu(z) \Bigr|\\
&\quad\leq r_{B}^{2M} \int_0^1 \int_{\mathcal{X}} \left|a(z,t) \Psi_{2k+j}^{(\ell)}\left( t\sqrt{L} \right)(g)(z) \right| \, \frac{d\mu(z)\,dt}{t}\\
&\quad\sim r_{B}^{2M} \int_0^1 \int_{\mathcal{X}} \int_{B(z,t)}\left|a(z,t) \Psi_{2k+j}^{(\ell)}\left( t\sqrt{L} \right)(g)(z)\right| \,\frac{d\mu(x)\,d\mu(z) \, dt}{V(x,t)t}\\
&\quad= r_{B}^{2M} \int_{\mathcal{X}} \int_{0}^{1}\int_{B(x,t)}\left|a(z,t) \Psi_{2k+j}^{(\ell)}\left( t\sqrt{L} \right)(g)(z)\right| \, \frac{d\mu(z) \, dt}{V(x,t)t}d\mu(x)\\
&\quad\leq r_{B}^{2M} \int_{\mathcal{X}}\left|\mathcal{A}_{loc}(a)(x)\right|\left|\mathcal{A}_\mathrm{loc}\left(\Psi_{2k+j}^{(\ell)}\left( t\sqrt{L} \right)(g)\right)(x)\right| \,d\mu(x)\\
&\quad\leq r_{B}^{2M} \left\|\mathcal{A}_{loc}(a)\right\|_{L^p}
\left\|\mathcal{A}_\mathrm{loc}\left(\Psi_{2k+j}^{(\ell)}\left( t\sqrt{L} \right)(g)\right)\right\|_{L^{p'}}\\
&\quad\lesssim r_{B}^{2M}\left\|a\right\|_{t^p}\left\|g\right\|_{L^{p'}}
\leq r_{B}^{2M}[\mu(B)]^{\frac{1}{p}}\left\|\mathbf{1}_{B}\right\|_{X}^{-1}\left\|g\right\|_{L^{p'}}.
\end{align*}
This implies that, for any $k \in \{0,\ldots , M\}$, \eqref{1848} holds true, which completes the proof of Lemma \ref{gj2-2}.
\end{proof}

The following proposition is precisely \cite[Proposition 4.8(i)]{syy22jga}.
\begin{proposition}\label{prfs}
Let $X$ be a $\mathrm{BQBF}$ space satisfying Assumption \ref{vector1} for some $p_-\in(0,\infty)$.
Then, for any given $p\in(0,\infty)$ and $t\in(\max\{1,\frac{p}{p_-}\},\infty)$,
there exists a positive constant $C$ such that, for any $\tau\in[1,\infty)$, any $\{\lambda_j\}_{j\in\mathbb{N}}\in
[0,\infty)$, and any sequence $\{B_j\}_{j\in\mathbb{N}}$ of balls,
\begin{align*}
\left\|\sum_{j\in\mathbb{N}}\lambda_j\mathbf{1}_{\tau B_j}
\right\|_{X^{\frac{1}{p}}}\le C\tau^{tn }\left\|\sum_{j\in\mathbb{N}}
\lambda_j\mathbf{1}_{B_j}\right\|_{X^{\frac{1}{p}}}.
\end{align*}
\end{proposition}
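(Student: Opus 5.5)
The approach is a pointwise domination of each $\mathbf{1}_{\tau B_j}$ by a power of $\mathcal{M}(\mathbf{1}_{B_j})$, followed by the vector-valued maximal inequality of Assumption \ref{vector1} applied with a suitable convexification exponent.

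\emph{Step 1: pointwise bound.} Fix a ball $B$ with center $x_B$ and radius $r_B$, and let $x\in\tau B$. The ball $\tau B=B(x_B,\tau r_B)$ contains both $x$ and $B$. Hence, by \eqref{d1},
$$\mathcal{M}(\mathbf{1}_B)(x)\ge \frac{\mu(B)}{\mu(\tau B)}\ge C_{(\mu)}^{-1}\tau^{-n}.$$
Since the indicator takes only the values $0$ and $1$, raising this to the power $t>0$ gives, for every $x\in\mathcal{X}$,
$$\mathbf{1}_{\tau B}(x)\le C_{(\mu)}^{t}\,\tau^{tn}\,[\mathcal{M}(\mathbf{1}_B)(x)]^{t}.$$
Applying this to each $B_j$, multiplying by $\lambda_j$ and summing, the monotonicity property (ii) of Definition \ref{debb} yields
$$\Big\|\sum_j\lambda_j\mathbf{1}_{\tau B_j}\Big\|_{X^{1/p}}\lesssim \tau^{tn}\Big\|\sum_j \big[\mathcal{M}(\lambda_j^{1/t}\mathbf{1}_{B_j})\big]^{t}\Big\|_{X^{1/p}}.$$
Here we used that $\lambda_j[\mathcal{M}(\mathbf{1}_{B_j})]^t=[\mathcal{M}(\lambda_j^{1/t}\mathbf{1}_{B_j})]^t$.

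\emph{Step 2: change of convexification and the maximal inequality.} For any nonnegative $G$ the definition of convexification gives
$$\|G^t\|_{X^{1/p}}=\|G^{t/p}\|_X^{p}=\|G\|_{X^{t/p}}^{t}.$$
Take $G:=\{\sum_j[\mathcal{M}(\lambda_j^{1/t}\mathbf{1}_{B_j})]^t\}^{1/t}$. Note that $X^{t/p}=X^{1/(p/t)}$, and the hypothesis $t>\max\{1,p/p_-\}$ gives exactly the two conditions needed:
\begin{itemize}
\item[(a)] $p/t\in(0,p_-)$, which is the admissible convexification range in Assumption \ref{vector1};
\item[(b)] $r:=t\in(1,\infty)$, which is the admissible $\ell^r$ exponent there.
\end{itemize}
So Assumption \ref{vector1} applies with exponents $(p/t,\,t)$ and gives
$$\|G\|_{X^{t/p}}\lesssim \Big\|\Big\{\sum_j\lambda_j\mathbf{1}_{B_j}\Big\}^{1/t}\Big\|_{X^{t/p}}=\Big\|\sum_j\lambda_j\mathbf{1}_{B_j}\Big\|_{X^{1/p}}^{1/t}.$$
Raising to the power $t$ and combining with Step 1 gives the claim, with a constant independent of $\tau$, $\{\lambda_j\}$ and $\{B_j\}$.

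\emph{Main obstacle.} The only delicate point is the bookkeeping of the convexification exponents in Step 2. The choice $r=t$ must be made so that a single parameter both makes the pointwise bound of Step 1 usable (through the $t$-th power of $\mathcal{M}(\mathbf{1}_B)$) and lands in the range where Assumption \ref{vector1} holds. This is precisely why $t>\max\{1,p/p_-\}$ is required.
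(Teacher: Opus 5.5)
Your proof is correct and is essentially the standard argument behind this result. The paper does not reprove it and instead cites \cite[Proposition 4.8(i)]{syy22jga}. Your route is the usual one: the pointwise bound $\mathbf{1}_{\tau B}\lesssim \tau^{tn}[\mathcal{M}(\mathbf{1}_B)]^t$, followed by Assumption \ref{vector1} on $X^{t/p}$ with $r=t$. The paper uses this same mechanism in its proofs of Proposition \ref{945} and Theorem \ref{key}.
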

Now, we  prove Theorem \ref{th1-1}.
\begin{proof}[Proof of Theorem \ref{th1-1}]
Let $f\in h_{X,L}\cap L^2$ 
and let  $\varphi$ be the same in \eqref{psi}. By \cite[Lemma 3.9]{dhmmy13}, we have
\begin{align}\label{yindu}
f&=\sum_{\ell=0}^{2(M+1)} \sum_{i=0}^{\lfloor \frac{2(M+1)-\ell}{2} \rfloor } c_1(M,\ell,i) \int_0^1 (t^2 L)^M \Psi_{2M+2-2i-\ell}^{(\ell)}\left(t\sqrt{L}\right) t^2 L e^{-t^2 L} f \frac{dt}{t}\notag\\
&\quad-\sum_{m=0}^{2M+1} \sum_{\ell=0}^{2M+1-m}  \sum_{i=0}^{\lfloor \frac{2M+1-m-\ell}{2}\rfloor} c_2(M,m,\ell,i) \Psi_{4M+2-2m-2i-\ell}^{(\ell)}(\sqrt{L}) e^{-L} f\notag\\
&=:f_1-f_2\ \ \text{in}\ \ L^2.
\end{align}
We first deal with $f_1.$
For any $(y,t)\in\mathcal{X}\times(0,1)$, let $F(y,t):=\mathbf{1}_{(0,1)}(t)t^2Le^{-t^2L}f(y)$. From \cite[(3.13)]{lyyy26cms}, we deduce that $S_{L,\mathrm{loc}}$ is bounded on $L^2.$ Then $F\in t_X\cap t^2.$ By
this and Theorem \ref{local-tent}, we find that there exists a sequence $\{\lambda_j\}_{j\in\mathbb{N}}$ and a sequence
$\{a_j\}_{j\in\mathbb{N}}$ of $(t_X,\infty)$-atoms associated, respectively, with the balls $\{B_j\}_{j\in\mathbb{N}}$ such that 
\begin{align}\label{yindu2}
F=\sum_{j\in\mathbb{N}}\lambda_ja_j\ \ \text{in}\ \ t^2
\end{align}
and
\begin{align*}
\left\|\left\{\sum_{j\in\mathbb{N}}
\left[\frac{\lambda_j}{\|\mathbf{1}_{B_j}\|_{X  }}\right]^{s}\mathbf{1}_{B_j}
\right\}^{\frac{1}{s}}\right\|_X\lesssim\|F\|_{t_X}=\|S_{L,\mathrm{loc}}(f)\|_X.
\end{align*}
Using \eqref{yindu}, \eqref{yindu2}, and
Lemma \ref{gj2-2}(i), we obtain
\begin{align*}
f_1
&=\sum_{\ell=0}^{2(M+1)} \sum_{i=0}^{\lfloor \frac{2(M+1)-\ell}{2} \rfloor } c_1(M,\ell,i) \int_0^1 (t^2 L)^M \Psi_{2M+2-2i-\ell}^{(\ell)}\left(t\sqrt{L}\right)\left(\sum_{j\in\mathbb{N}}\lambda_ja_j\right)\,\frac{dt}{t}\\
&=\sum_{\ell=0}^{2(M+1)} \sum_{i=0}^{\lfloor \frac{2(M+1)-\ell}{2} \rfloor }\sum_{j\in\mathbb{N}}\lambda_{\ell,i,j}a_{\ell,i,j}\ \ \text{in}\ \ L^2,
\end{align*}
where, 
\begin{align*}
\lambda_{\ell,i,j}:=c_1(M,\ell,i)\lambda_{j}
\end{align*}
and
\begin{align*}
a_{\ell,i,j}:=\int_0^1 (t^2 L)^M \Psi_{2M+2-2i-\ell}^{(\ell)}\left(t\sqrt{L}\right)a_j \,\frac{dt}{t}.
\end{align*}
By Lemma \ref{gj2-2}(ii), we find that, for any $\ell, i, j \in \mathbb{Z}_+$, $a_{\ell, i, j}$ is a local $(X,M,p)_L$-atom , associated with $B_j$. This obtains the local atom $(X,M,p)_L$-representation of $f_1.$

Now, we deal with $f_2.$ 
By \eqref{yindu}, we have
\begin{align*}
f_2 = \sum_{m=0}^{2M+1} \sum_{\ell=0}^{2M+1-m}  \sum_{i=0}^{\lfloor \frac{2M+1-m-\ell}{2}\rfloor}
\sum_{j\in \mathbb{N}}a_{m,\ell,i,j}\lambda_{m,\ell,i,j}
\ \ \text{in} \ L^2,
\end{align*}
where,
\begin{align*}
a_{m,\ell,i,j}:=\mu(Q_j)\|\mathbf{1}_{Q_j}e^{-L}f\|_{L^1}^{-1}\|\mathbf{1}_{2B_j}\|_{X}^{-1}\Psi_{4M+2-2m-2i-\ell}^{(\ell)}\left(\sqrt{L}\right) \left(\mathbf{1}_{Q_j}e^{-L} f\right)
\end{align*}
and
\begin{align*}
\lambda_{m,\ell,i,j}:=c_2(M,m,\ell,i)
\mu(Q_j)^{-1}\|\mathbf{1}_{Q_j}e^{-L}f\|_{L^1}\|\mathbf{1}_{2B_j}\|_{X}.
\end{align*}
By the fact that  $Q_j$ is contained in a ball with radius equal to $1$ and  Lemma \ref{gj1},  we   find that
$\mathrm{supp}\,(a_{m,\ell,i,j})\subset 2B_j$,  
\begin{align*}
\left\|\Psi_{4M+2-2m-2i-\ell}^{(\ell)}(\sqrt{L}) (\mathbf{1}_{Q_j}e^{-L} f)\right\|_{L^p}^p
&\lesssim
\mu(2B_j)\|\mathbf{1}_{Q_j}e^{-L}f\|_{L^1}^p\sup_{x\in 2B_j}\frac{1}{V(x,1)^p}\\
&\lesssim
\mu(2B_j)\mu(Q_j)^{-p}\|\mathbf{1}_{Q_j}e^{-L}f\|_{L^1}^p,
\end{align*}
and
\begin{align*}
\|a_{m,\ell,i,j}\|_{L^p}
\lesssim
[\mu(2B_j)]^{\frac{1}{p}}\|\mathbf{1}_{2B_j}\|_{X}^{-1}.
\end{align*}
Thus, $a_{m,\ell,i,j}$ is a local $(X,M,p)_{L}$-atom.
Moreover, by the fact that  $\mathcal{Q}=\{Q_j\}_{j\in\mathbb{N}}$ is disjoint, Proposition \ref{prfs},
H\"older's inequality, and Theorem \ref{dec-HI},
we conclude that
\begin{align*}
&\left\|\left\{\sum_{m=0}^{2M+1} \sum_{\ell=0}^{2M+1-m}  \sum_{i=0}^{\lfloor \frac{2M+1-m-\ell}{2}\rfloor}
\sum_{j\in \mathbb{N}}
\left[\frac{\lambda_{m,\ell,i,j}}{\|\mathbf{1}_{2B_j}\|_{X  }}\right]^{s}\mathbf{1}_{2B_j}
\right\}^{\frac{1}{s}}\right\|_X\\
&\quad\lesssim
\left\|\left\{\sum_{j\in\mathbb{N}}
\left[\mu(Q_j)^{-1}\|\mathbf{1}_{Q_j}e^{-L}f\|_{L^1}\right]^{s}\mathbf{1}_{2B_j}
\right\}^{\frac{1}{s}}\right\|_X
\lesssim
\left\|\left\{\sum_{j\in\mathbb{N}}
\left[\mu(Q_j)^{-1}\|\mathbf{1}_{Q_j}e^{-L}f\|_{L^1}\right]^{s}\mathbf{1}_{Q_j}
\right\}^{\frac{1}{s}}\right\|_X\\
&\quad=
\left\|\sum_{j\in\mathbb{N}}
\mu(Q_j)^{-1}\|\mathbf{1}_{Q_j}e^{-L}f\|_{L^1}\mathbf{1}_{Q_j}
\right\|_X
\lesssim
\|S_I(e^{-L}f)\|_{X}.
\end{align*}
This establishes the local atom $(X,M,p)_L$-representation of $f_2,$
which completes the proof of Theorem \ref{th1-1}.
\end{proof}
In order to prove Theorem \ref{recon}, we need the following  lemmas.
\begin{lemma}\label{smol}
Let $X$ be a $\mathrm{BQBF}$ space, $s\in(0,\infty),$ $p\in(\max\{1,s\},\infty),$
$\epsilon\in (\frac{n}{s},\infty)$, and $ M\in(\frac{n}{2}(\frac{1}{s}-\frac{1}{p}),\infty)\cap\mathbb{N}$. Then 
there exist positive constants $C$ and $\eta\in (n(\frac{1}{s}-\frac{1}{p}),\infty)$ such that, 
for any local $(X,M,p,\epsilon)_L$-molecule $m$, associated with the ball $B$, and  any $i\in\mathbb{Z}_+$,
\begin{align}\label{guji2}
\|S_{L,\mathrm{loc}}(m)\|_{L^p(U_i(B))}
\leq
C2^{-\eta i}[\mu(B)]^{\frac{1}{p}}\left\|\mathbf{1}_B\right\|_{X}^{-1}.
\end{align}
\end{lemma}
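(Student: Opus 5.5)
We split according to the two kinds of molecules and, in each case, according to whether $i$ is small or large. Fix a molecule $m$ associated with $B=B(x_B,r_B)$ and normalize so that $[\mu(B)]^{1/p}\|\mathbf{1}_B\|_X^{-1}=1$. The basic tools are three facts. First, $S_{L,\mathrm{loc}}$ is bounded on $L^p$ for $p\in(1,\infty)$; this follows from the $L^p$ boundedness of the full area function $S_L$ under the Gaussian bound, since $S_{L,\mathrm{loc}}\le S_L$. Second, the Gaussian estimate \eqref{lzy} gives off-diagonal decay for the kernels of $t^2Le^{-t^2L}$ and $(t^2L)^{M+1}e^{-t^2L}$. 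Third, the doubling bounds \eqref{d1} and \eqref{d2} are used throughout. We will choose $\eta:=\min\{\epsilon-\frac{n}{p},\,2M+\frac{n}{p}-\frac{n}{2}\cdot\text{(loss)}\}$-type quantities and check that they exceed $n(\frac1s-\frac1p)$ by the hypotheses $\epsilon>\frac ns$ and $M>\frac n2(\frac1s-\frac1p)$.

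\emph{Case $r_B\ge1$.} For $i\le 2$ we use $L^p$ boundedness and the size bound, which gives $\|m\|_{L^p}\lesssim\sum_k2^{-k(\epsilon-n/p)}\lesssim1$. For $i\ge3$ we write $m=\sum_k m\mathbf{1}_{U_k(B)}$. For $|k-i|\le 2$ we again use $L^p$ boundedness, which yields $2^{-i(\epsilon-n/p)}$. For $k\le i-3$ the key observation is that only $t\in(0,1)$ enters the integral. For $x\in U_i(B)$, $y\in B(x,t)$ and $z\in U_k(B)$ we have $d(y,z)\gtrsim 2^ir_B\ge 2^i$, while $t<1$. Hence the Gaussian factor $e^{-c4^i r_B^2/t^2}$ supplies arbitrary decay in both $2^i$ and $2^i r_B/t$. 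That decay absorbs the volume ratios coming from \eqref{d2} and the measure $\mu(2^iB)^{1/p}$, and after Hölder on $U_k(B)$ it gives a bound $\lesssim 2^{-iN}$ for every $N$. For $k\ge i+3$ the same argument applies with the distance $\gtrsim 2^k r_B$, and summing over $k$ gives $2^{-i(\epsilon-n/p)}$. Since $\epsilon-\frac np>n(\frac1s-\frac1p)$, this case gives the claim with $\eta=\epsilon-\frac np$ (or anything slightly smaller).

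\emph{Case $r_B<1$.} Again, for $i\le2$ we use $L^p$ boundedness. For $i\ge3$ we split the $t$-integral at $t=r_B$, or better at $t=2^{i}r_B$ when that is below $1$.

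For the small scales $t<2^{i-2}r_B$ we write $t^2Le^{-t^2L}m$ and decompose $m$ over the annuli $U_k(B)$ as above. The pieces with $|k-i|\le2$ are handled by $L^p$ boundedness and give $2^{-i(\epsilon-n/p)}$. The far pieces are handled by Gaussian off-diagonal decay at distance $\gtrsim2^ir_B\gg t$.

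For the large scales $2^{i-2}r_B\le t<1$ we use the cancellation $m=L^Mb$, which gives
\[
t^2Le^{-t^2L}m=t^{-2M}(t^2L)^{M+1}e^{-t^2L}b.
\]
The kernel of $(t^2L)^{M+1}e^{-t^2L}$ obeys a Gaussian bound by \eqref{lzy}. We combine this with $\|b\|_{L^p(U_k(B))}\le 2^{-k(\epsilon-n/p)}r_B^{2M}$. The integral $\int_{2^{i-2}r_B}^{\infty}(r_B/t)^{4M}\,\frac{dt}{t}$ produces the factor $2^{-4Mi}$ in the square function, so the square function is $\lesssim(r_B/t)^{2M}$ times averages. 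Estimating the area integral over $x\in U_i(B)$ by $\mathcal{M}$-type averages of $|b|$, we expect a bound of the form $2^{-2Mi}\,[\mu(2^iB)/\mu(B)]^{1/p}\cdot\mu(B)^{-1/p}\|b\|_{L^1\text{-ish}}$. After Hölder this yields $2^{-i(2M-n(\frac1s-\frac1p)')}$-type decay: more precisely $2^{-i(2M+\frac np-\frac n2)}$ or similar, depending on how the $L^p\to L^2$ passage is done. We then need this exponent to exceed $n(\frac1s-\frac1p)$.

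\emph{Main obstacle.} The main obstacle is this large-scale bookkeeping. One must make sure the exponent $2M$, minus the losses from doubling (the $\mu(2^iB)/\mu(B)\le 2^{in}$ factors raised to $\frac1p$ and the change of measure $V(x,t)$ versus $V(x_B,t)$), still exceeds $n(\frac1s-\frac1p)$. The hypothesis $M>\frac n2(\frac1s-\frac1p)$ suggests the right route. We would do the estimate in $L^2$ on $U_i(B)$ via Hölder, using $\|\cdot\|_{L^p(U_i)}\le\mu(2^iB)^{\frac1p-\frac12}\|\cdot\|_{L^2(U_i)}$ when $p\le2$ (or the $L^p$ kernel bounds directly when $p>2$). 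We would then carefully track $r_B^{2M}$ against $t^{-2M}$ with $t\ge 2^{i-2}r_B$. If the crude count loses too much, we would first try a further splitting of $b$ into annular pieces and use the Gaussian decay to trade distance for the missing powers.
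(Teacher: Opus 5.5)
There is a genuine gap in the case $r_B<1$, in your small-scale regime. You split at $t\approx 2^{i-2}r_B$ and claim the far pieces of $m$ are killed by Gaussian decay "at distance $\gtrsim 2^ir_B\gg t$". For $t$ comparable to $2^{i-2}r_B$, however, you only have $2^ir_B/t\ge 4$. So $\exp\{-c(2^ir_B/t)^2\}$ is bounded below by a constant and gives no decay in $i$.

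For the piece $m\mathbf{1}_B$ (more generally $m\mathbf{1}_{2^{i-3}B}$), the best that a size-plus-Gaussian argument gives at those scales is
\[
|t^2Le^{-t^2L}(m\mathbf{1}_B)(y)|\lesssim \|m\mathbf{1}_B\|_{L^1}/\mu(2^iB).
\]
After taking the $L^p(U_i(B))$ norm this leaves the factor $[\mu(B)/\mu(2^iB)]^{1-\frac1p}$. On a general doubling space this need not decay at all. Even on $\mathbb{R}^n$ it equals $2^{-in(1-\frac1p)}$, which is not larger than $n(\frac1s-\frac1p)$ when $s\le 1$. Since a Gaussian upper bound cannot see cancellation, no refinement of the annular size estimates fixes this. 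On the intermediate range $t\in(2^{i\delta}r_B,2^{i-2}r_B)$ you must already use $m=L^M(b)$.

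The missing step, which is how the paper argues, is to put the split at $t=2^{i\delta}r_B$ with $\delta\in(0,1)$ rather than at $2^{i-2}r_B$.
\begin{itemize}
\item Below the split, $2^ir_B/t>2^{i(1-\delta)}$. The Gaussian factor then yields $2^{-i(1-\delta)(\frac N2-\frac{2n}{p})}$ for any $N$, and the local piece yields $2^{-i(\epsilon-\frac np)}$ via $L^p$-boundedness. For the finitely many $i$ with $i(1-\delta)<2$, plain $L^p$-boundedness suffices.
\item Above the split, use $t^2Le^{-t^2L}m=t^{-2M}(t^2L)^{M+1}e^{-t^2L}b$ globally, without localizing $b$:
\begin{align*}
\int_{U_i(B)}\left[\int_{2^{i\delta}r_B}^{1}\int_{B(x,t)}\left|t^2Le^{-t^2L}(m)(y)\right|^2\,\frac{d\mu(y)\,dt}{V(x,t)t}\right]^{\frac p2}d\mu(x)
\le (2^{i\delta}r_B)^{-2pM}\|S_{L,M+1}(b)\|_{L^p}^p
\lesssim 2^{-2pM\delta i}\mu(B)\|\mathbf{1}_B\|_X^{-p}.
\end{align*}
This uses the $L^p$-boundedness of $S_{L,M+1}$ and $\|b\|_{L^p}\lesssim r_B^{2M}[\mu(B)]^{\frac1p}\|\mathbf{1}_B\|_X^{-1}$, obtained by summing \eqref{pmol2} with $j=0$ over annuli.
\end{itemize}

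This also disposes of what you identified as the main obstacle. No annular splitting of $b$ is needed, and no passage to $L^2$ via H\"older, which would in any case require $L^2$ bounds on $b$ that the molecule does not provide. There are no doubling losses either. The large-scale decay is exactly $2^{-2M\delta i}$, and the hypothesis $M>\frac n2(\frac1s-\frac1p)$ is precisely what allows you to choose $\delta<1$ with $2M\delta>n(\frac1s-\frac1p)$; then take $N$ large. Your treatment of the case $r_B\ge 1$ is fine and agrees with the paper, since there $t<1\le r_B$ gives $2^ir_B/t\ge 2^i$.
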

\begin{proof}
By \cite[Theorem 2.13]{bckyy13}, we find that $S_{L,\mathrm{loc}}$ is bounded on $L^p$.
From this, we infer  that \eqref{guji2} holds  with $i\in\{0,1,2,3\}.$ Let $i\in\mathbb{N}\cap [4,\infty).$

Case 1) $r_B\in (0,1).$
Let $\delta\in(0,1)$ be determined later. Then we have
\begin{align*}
\notag&\|S_{L,\mathrm{loc}}(m)\|_{L^p(U_i(B))}^p\\ 
&\quad\lesssim
\int_{U_i(B)}\left[\int_0^{2^{i\delta}r_B}\int_{B(x,t)}
\left|t^2Le^{-t^2L}(m)(y)\right|^2\,\frac{d\mu(y)\,dt}{V(x,t)t}\right]^{\frac{p}{2}}\,d\mu(x)\\ \notag
&\quad\quad+\int_{U_i(B)}\int_{2^{i\delta}r_B}^\infty\int_{B(x,t)}\cdots=:\mathrm{I}+\mathrm{II}.
\end{align*}
For any given $k\in\mathbb{N}$ and any $g\in L^2,$ let
\begin{align*}
S_{L,k}(g)(x):=\left[\int_0^\infty\int_{B(x,t)}\left|(t^2L)^{k}e^{-t^2L}g(y)\right|^2\,\frac{d\mu(y)\,dt}{V(x,t)t}\right]^{\frac{1}{2}}.
\end{align*}
By \cite[Theorem 2.13]{bckyy13}, we find that $S_{L,k}$ is bounded  on $L^p.$
Since  $m$ is a local $(X,M,p,\epsilon)_L$-molecule, we deduce that
there exists a function $b\in \mathrm{Dom}\,(L^M)$ such that $m=L^M(b)$ and \eqref{pmol2} holds. From this, we infer that
\begin{align}\label{jiegong}
\mathrm{II}&=
\int_{U_i(B)}\left[\int_{2^{i\delta}r_B}^{\infty}\int_{B(x,t)}
t^{-4M}\left|\left(t^2L\right)^{M+1}e^{-t^2L}(b)(y)\right|^2\,\frac{d\mu(y)\,dt}{V(x,t)t}\right]^\frac{p}{2}\,d\mu(x)\notag\\
&\lesssim
(2^{i\delta}r_B)^{-2pM}\|S_{L,M+1}(b)\|_{L^p}^p
\lesssim
(2^{i\delta}r_B)^{-2pM}\|b\|_{L^p}^p
\lesssim
2^{-2pM\delta i}\mu(B)\|\mathbf{1}_{B}\|_{X}^{-p}.
\end{align}
Next, we estimate $\mathrm{I}.$
Let
\begin{align*}
S_i(B):=\left(2^{i+1}B\right)\setminus\left(2^{i-2}B\right)\ \text{and}\ \widetilde{S}_i(B):
=\left(2^{i+2}B\right)\setminus\left(2^{i-3}B\right).
\end{align*}
Then
\begin{align}\label{3888}
m=m\mathbf{1}_{\widetilde{S}_i(B)}+
m\mathbf{1}_{[\widetilde{S}_i(B)]^\complement}=:m_1+m_2
\end{align}
and
\begin{align}\label{heibang1}
\mathrm{I}
&\lesssim
\int_{U_i(B)}\left[\int_0^{2^{i\delta}r_B}\int_{B(x,t)}
\left|t^2Le^{-t^2L}(m_1)(y)
\right|^2\,\frac{d\mu(y)\,dt}{V(x,t)t}\right]^{\frac{p}{2}}\,d\mu(x)\notag\\ 
&\quad+\int_{U_i(B)}\left[\int_0^{2^{i\delta}r_B}\int_{B(x,t)}\left|t^2Le^{-t^2L}(m_2)(y)\right|^2\,\frac{d\mu(y)\,dt}{V(x,t)t}\right]^{\frac{p}{2}}\,d\mu(x)=:\rm{I}_1+\rm{I}_2.
\end{align}
Since $S_{L,1}$ is bounded on $L^p$, it follows that
\begin{align}\label{heibang2}
\mathrm{I}_1
\leq
\|S_{L,1}(m\mathbf{1}_{\widetilde{S}_i(B)})\|_{L^p}^p
\lesssim
\|m\|_{L^p(\widetilde{S}_i(B))}^p
\lesssim
2^{-i(p\epsilon-n)}\mu(B)\|\mathbf{1}_{B}\|_{X}^{-p}.
\end{align}
Observe that, if $t\in(0,2^{i\delta}r_B)$ and $x\in U_i(B)$, then $B(x,t)\subset S_i(B)$ and  
$\mathrm{dist}\,(S_i(B),[\widetilde{S}_i(B)]^\complement)\sim 2^ir_B.$  Using this and \eqref{lzy}, we conclude that,
for any $t\in(0,2^{i\delta}r_B)$, $x\in U_i(B)$, and $y\in B(x,t),$
\begin{align*}
\left|t^2Le^{-t^2L}(m_2)(y)\right|
&\lesssim
\int_{\mathcal{X}}\frac{1}{V(y,t)}\exp\left\{-c\frac{d(y,z)^2}{t^2}\right\}|m_2(z)|\,d\mu(z)\\
&\lesssim
\exp\left\{-\frac{c}{2}\left(\frac{2^ir_B}{t}\right)^2\right\}
\frac{1}{V(y,t)}\left[\int_{\mathcal{X}}\exp\left\{-\frac{cp'd(y,z)^2}{2t^2}\right\}\,d\mu(z)\right]^{\frac{1}{p'}}\|m\|_{L^p}\\
&\lesssim
\exp\left\{-\frac{c}{2}\left(\frac{2^ir_B}{t}\right)^2\right\}V(y,t)^{-\frac{1}{p}}\|m\|_{L^p}.
\end{align*}
This,  combined with \eqref{d2},  implies that, for any  $x\in U_i(B),$
\begin{align}\label{yinengjing1}
&\int_0^{2^{i\delta}r_B}\int_{B(x,t)}\left|t^2Le^{-t^2L}(m_2)(y)\right|^2\,\frac{d\mu(y)\,dt}{V(x,t)t}\notag\\
&\quad\lesssim
\|m\|_{L^p}^2\int_0^{2^{i\delta}r_B}\int_{B(x,t)}\exp\left\{-c\left(\frac{2^ir_B}{t}\right)^2\right\}V(y,t)^{-\frac{2}{p}}\,\frac{d\mu(y)\,dt}{V(x,t)t}\notag\\
&\quad\sim
\|m\|_{L^p}^2\int_0^{2^{i\delta}r_B}\exp\left\{-c\left(\frac{2^ir_B}{t}\right)^2\right\}V(x,t)^{-\frac{2}{p}}\,\frac{dt}{t}.
\end{align}
Notice that , for any $t\in (0,2^{i\delta}r_B)$ and $x\in U_i(B)$, 
\begin{align*}
\mu(2^iB)
\lesssim
\left(\frac{2^ir_B}{t}\right)^nV(x_B,t)
\lesssim
\left(\frac{2^ir_B}{t}\right)^{2n}V(x,t).
\end{align*}
By this and \eqref{yinengjing1}, we have, for any $x\in U_i(B),$
\begin{align*}
&\int_0^{2^{i\delta}r_B}\int_{B(x,t)}\left|t^2Le^{-t^2L}(m_2)(y)\right|^2\,\frac{d\mu(y)\,dt}{V(x,t)t}\notag\\
&\quad\lesssim
\|m\|_{L^p}^2[\mu(2^iB)]^{-\frac{2}{p}}\int_0^{2^{i\delta}r_B}\exp\left\{-c\left(\frac{2^ir_B}{t}\right)^2\right\}\left(\frac{2^ir_B}{t}\right)^{\frac{4n}{p}}\,\frac{dt}{t}\\
&\quad\lesssim
\|m\|_{L^p}^2[\mu(2^iB)]^{-\frac{2}{p}}
\int_0^{2^{i\delta}r_B}\left(\frac{t}{2^ir_B}\right)^{N-\frac{4n}{p}}\,\frac{dt}{t}
\lesssim
2^{-i(1-\delta)(N-\frac{4n}{p})}\|m\|_{L^p}^2[\mu(2^iB)]^{-\frac{2}{p}}.
\end{align*}
From this,  we deduce that
\begin{align*}
\mathrm{I}_2
\lesssim
2^{-i(1-\delta)(\frac{pN}{2}-2n)}\|m\|_{L^p}^p
\lesssim
2^{-i(1-\delta)(\frac{pN}{2}-2n)}
\mu(B)\|\mathbf{1}_{B}\|_{X}^{-p}.
\end{align*}
Using this, \eqref{heibang2}, and \eqref{heibang1}, we conclude   that
\begin{align}\label{heibang}
\mathrm{I}\lesssim
(2^{-i(p\epsilon-n)}+2^{-i(1-\delta)(\frac{pN}{2}-2n)})
\mu(B)\|\mathbf{1}_{B}\|_{X}^{-p}.
\end{align}
This,  combined with \eqref{jiegong}, implies  that 
\eqref{guji2} holds with
\begin{align*}
\eta:=\min\left\{2M\delta,\epsilon-\frac{n}{p},(1-\delta)\left(\frac{N}{2}-\frac{2n}{p}\right)\right\}.
\end{align*}
By the assumptions that $\epsilon\in(\frac{n}{s},\infty)$ and $M\in\mathbb{N}\cap(\frac{n}{2}(\frac{1}{s}
-\frac{1}{p}),\infty)$, we  choose $\delta\in(0,1)$ and $N\in\mathbb{N}$ large enough such that
$\eta\in (n(\frac{1}{s}-\frac{1}{p}),\infty)$.
This proves \eqref{guji2} 
in this case.

Case 2) $r_B\in [1,\infty).$
In this case, the proof of \eqref{guji2} is the same as $\mathrm{I}$ with $2^{i\delta}r_B$ replaced by $1;$
we omit the details here. This finishes the proof of Lemma \ref{smol}.
\end{proof}
\begin{lemma}\label{smol2}
Let $X$ be a $\mathrm{BQBF}$ space,
 $s\in(0,\infty),$
$p\in(\max\{1,s\},\infty)$,
$\epsilon\in (\frac{n}{s},\infty),$ and $ M\in(\frac{n}{2}(\frac{1}{s}-\frac{1}{p}),\infty)\cap\mathbb{N}$.  Then 
there exist positive constants $C$ and $\eta\in (n(\frac{1}{s}-\frac{1}{p}),\infty)$ such that, 
for any local $(X,M,p,\epsilon)_L$-molecule $m$, associated with the ball $B$ and  any $i\in\mathbb{Z}_+$,
\begin{align}\label{1030}
\|S_{I}(e^{-L}m)\|_{L^p(U_i(B))}
\leq 
C2^{-\eta i}[\mu(B)]^{\frac{1}{p}}\left\|\mathbf{1}_B\right\|_{X}^{-1}.
\end{align}
\end{lemma}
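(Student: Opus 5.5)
We follow the scheme of Lemma \ref{smol}. The aim is to show that the tail $\|S_I(e^{-L}m)\|_{L^p(U_i(B))}$ decays like $2^{-\eta i}$ with $\eta>n(\frac1s-\frac1p)$.

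\textbf{Step 1: pointwise form of $S_I$ and the small indices.} Since $t^2e^{-t^2}$ acts as a scalar multiplier, for any $g$ we have
\begin{align*}
S_I(g)(x)=\left[\int_0^\infty t^4e^{-2t^2}\frac{1}{V(x,t)}\int_{B(x,t)}|g(y)|^2\,d\mu(y)\,\frac{dt}{t}\right]^{\frac12}.
\end{align*}
Consequently $S_I(g)(x)\lesssim [\mathcal{M}(|g|^2)(x)]^{1/2}$ for $p>2$. For general $p\in(1,\infty)$ we instead use $S_I$ being bounded on $L^p$; this follows from \cite[Theorem 2.13]{bckyy13} applied to the operator $I$, or directly from Minkowski's inequality and averaging. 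Together with the $L^p$-boundedness of $e^{-L}$ (Gaussian bound), this gives \eqref{1030} for $i\in\{0,1,2,3\}$ from $\|m\|_{L^p}\lesssim[\mu(B)]^{1/p}\|\mathbf{1}_B\|_X^{-1}$. This last bound comes from summing \eqref{pmol1} (or \eqref{pmol2} with $j=M$, after noting that $r_B^{2M}\cdot r_B^{-2M}$ appears correctly) over $i$, since $\epsilon>n/p$.

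\textbf{Step 2: localization for $i\ge4$.} Split $S_I(g)=S_I^{(1)}(g)+S_I^{(2)}(g)$ according to $t\in(0,2^{i-3}r_B)$ and $t\ge 2^{i-3}r_B$. For the large-$t$ part the factor $e^{-2t^2}$ dominates. If $r_B\ge1$ we have $e^{-c4^ir_B^2}\lesssim 2^{-Ni}$; if $r_B<1$ we restrict $t$ to $t\ge\max\{2^{i-3}r_B,\cdot\}$, but the factor is only small once $2^ir_B\gtrsim1$. We therefore handle the large-$t$ part by the crude bound $\|S_I^{(2)}(e^{-L}m)\|_{L^p}\lesssim \sup_{t\geq 2^{i-3}r_B}t^2e^{-t^2}\cdots$. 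When $r_B<1$ this is wasteful, so in that case we exploit cancellation: write $e^{-L}m=e^{-L}L^Mb=L^Me^{-L}b$. By \eqref{lzy} with $t=1$, the operator $L^Me^{-L}$ has a kernel with Gaussian decay at scale $1$. Combining this with \eqref{pmol2} for $j=0$, namely $\|b\|_{L^p(U_k(B))}\le 2^{-k(\epsilon-n/p)}r_B^{2M}[\mu(B)]^{1/p}\|\mathbf{1}_B\|_X^{-1}$, yields an extra factor $r_B^{2M}$.

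\textbf{Step 3: the near and far parts of $e^{-L}m$.} For the small-$t$ part with $x\in U_i(B)$, the averaging ball $B(x,t)$ lies in $S_i(B)$. Decompose $e^{-L}m=\mathbf{1}_{\widetilde S_i(B)}e^{-L}m+\mathbf{1}_{[\widetilde S_i(B)]^\complement}e^{-L}m$; only the first piece contributes to $S_I^{(1)}$. We then estimate $\|e^{-L}m\|_{L^p(\widetilde S_i(B))}$ by splitting $m=m_1+m_2$ as in \eqref{3888}:
\begin{itemize}
\item The near part is bounded using $L^p$-boundedness of $e^{-L}$ and \eqref{pmol1} or \eqref{pmol2}, giving $2^{-i(\epsilon-n/p)}$.
\item The far part is bounded by Gaussian decay of $K_1$ at distance $\sim2^ir_B$. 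This is good when $2^ir_B\gtrsim1$.
\end{itemize}
When $2^ir_B\lesssim 1$ we instead use $e^{-L}m=L^Me^{-L}b$. The pointwise kernel bound $\lesssim V(x,1)^{-1}$, Hölder's inequality and doubling \eqref{d1} then give
\begin{align*}
|L^Me^{-L}b|\lesssim r_B^{2M}\left[\frac{\mu(B)}{V(x_B,1)}\right]^{\frac1p}\|\mathbf{1}_B\|_X^{-1}
\end{align*}
on $2^iB$. Integrating over $U_i(B)$ gives the bound $r_B^{2M}[\mu(2^iB)/\mu(B)]^{1/p}$ times the normalization, and this is at most $r_B^{2M}2^{in/p}$. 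Since $2^i\lesssim r_B^{-1}$, we get $r_B^{2M}\lesssim 2^{-2Mi}$, so the total decay is $2^{-i(2M-n/p)}$.

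\textbf{Step 4: choice of $\eta$ and the main obstacle.} Collecting the cases gives $\eta=\min\{\epsilon-\frac np,\,2M-\frac np,\,N\}$ for arbitrarily large $N$. The requirement $\eta>n(\frac1s-\frac1p)$ holds because $\epsilon>n/s$ and $2M>n(\frac1s-\frac1p)$. The second condition is only just enough: $2M-\frac np$ need not exceed $n(\frac1s-\frac1p)$. This is the step I expect to be the main obstacle. The fix is to avoid the lossy $2^{in/p}$ factor, estimating more carefully with $V(x,1)\sim V(x_B,1)\ge\mu(2^iB)$ when $2^ir_B\le1$. This gives $|L^Me^{-L}b|\lesssim r_B^{2M}[\mu(B)]^{1/p}\mu(2^iB)^{-1/p}\|\mathbf{1}_B\|_X^{-1}$, so the $L^p(U_i(B))$ norm is $\lesssim r_B^{2M}\le 2^{-2Mi}$ times the normalization. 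For $i$ with $2^ir_B\le1$ this produces decay $2^{-2Mi}$, which suffices since $2M>n(\frac1s-\frac1p)$. For $2^ir_B>1$ we use the Gaussian decay from Step 3.
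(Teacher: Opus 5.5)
You are right that for $r_B<1$ the cancellation $m=L^M(b)$ must be used. The paper writes out only $r_B\ge1$, where $(2^ir_B)^{-N}\le 2^{-Ni}$, and declares $r_B<1$ similar. Your treatment of the indices with $2^ir_B\le1$, via $V(x_B,1)\ge\mu(2^iB)$ and $r_B^{2M}\le 2^{-2Mi}$, is correct.

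\textbf{Gap 1: the range $r_B<1<2^ir_B$.} For the small-$t$ far part there you fall back (Step 3, and the last sentence of Step 4) on the Gaussian decay of $K_1$ applied to $m_2$, controlled only by $\|m\|_{L^p}$. That yields the factor $(2^ir_B)^{-N+2n/p}$, not $2^{-Ni}$. For $r_B=2^{-k}$ and $i=k+1$ it is of order $2^{-N}$ for every $k$, so there is no uniform decay in $i$. The entry $N$ in your $\eta$ is justified only when $r_B\ge1$.

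No size-only refinement can fix this. On $\mathbb{R}^n$ with $L=-\Delta$, the non-cancelling function $h:=\|\mathbf{1}_B\|_X^{-1}\mathbf{1}_B$ satisfies the size bounds in \eqref{pmol1}. Yet $\|S_I(e^{-L}h)\|_{L^p(U_i(B))}$ is of order $[\mu(B)]^{1/p'}\sim 2^{-in/p'}$ times $[\mu(B)]^{1/p}\|\mathbf{1}_B\|_X^{-1}$, and $n/p'\le n(\frac1s-\frac1p)$ when $s\le1$.

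The repair is to use the cancellation on the whole range $r_B<1$. Write $e^{-L}m=L^Me^{-L}b$ and split $b$, not $m$ (note that $m_2$ is not $L^M$ of a localized function), as $b\mathbf{1}_{\widetilde S_i(B)}+b\mathbf{1}_{[\widetilde S_i(B)]^\complement}$. By \eqref{lzy}, the kernel of $L^Me^{-L}$ has Gaussian decay at scale $1$. So on $S_i(B)$ the far piece is at most a constant times $r_B^{2M}(2^ir_B)^{-N+2n/p}[\mu(B)/\mu(2^iB)]^{1/p}\|\mathbf{1}_B\|_X^{-1}$. Moreover $r_B^{2M}(2^ir_B)^{-N+2n/p}\le 2^{-2Mi}$ as soon as $2^ir_B>1$ and $N\ge 2M+2n/p$. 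The near piece is controlled by $\|b\|_{L^p(\widetilde S_i(B))}\lesssim 2^{-i(\epsilon-n/p)}r_B^{2M}[\mu(B)]^{1/p}\|\mathbf{1}_B\|_X^{-1}$. The large-$t$ part is handled in the same way.

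\textbf{Gap 2: boundedness of $S_I$ for $p<2$.} Step 1 asserts that $S_I$ itself is bounded on $L^p$ for every $p\in(1,\infty)$; this is false for $p<2$. $S_I(g)(x)$ involves the averages $\frac{1}{V(x,t)}\int_{B(x,t)}|g|^2\,d\mu$ for all $t$, so any $g\in L^p$ that is not locally square integrable gives $S_I(g)\equiv\infty$. Also, \cite[Theorem 2.13]{bckyy13} does not apply, because $e^{-tI}=e^{-t}I$ has no kernel satisfying Assumption \ref{l2}.

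What does hold, and what the paper proves first, is that the composite $S_I(e^{-L}\cdot)$ is bounded. For $d(x,y)<t$ one has $|e^{-L}f(y)|\lesssim\max\{t^n,1\}\mathcal{M}(f)(x)$ (the paper's \eqref{xiyang2}), hence $S_I(e^{-L}f)\lesssim\mathcal{M}(f)$ pointwise. The same holds with $L^Me^{-L}$ in place of $e^{-L}$.

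Consequently, your Step 3, which truncates $e^{-L}m$ by $\mathbf{1}_{\widetilde S_i(B)}$ and then applies $S_I$, is unjustified for $p\in(1,2)$, a range the lemma covers. Truncate $m$ (or $b$) \emph{before} applying the semigroup, as the paper does with $m_1,m_2$.

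Similarly, in your $2^ir_B\le1$ step, bound $L^Me^{-L}b$ globally: $|L^Me^{-L}b(y)|\lesssim r_B^{2M}[\mu(B)]^{1/p}V(y,1)^{-1/p}\|\mathbf{1}_B\|_X^{-1}$. This is needed because $S_I(e^{-L}m)(x)$ involves $B(x,t)$ for all $t$, not only balls inside $2^iB$. The factor $(1+t)^{2n/p}$ coming from $V(y,1)^{-1}\lesssim(1+t)^nV(x,1)^{-1}$ is absorbed by $t^3e^{-2t^2}$.
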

\begin{proof}
We first show  that $S_{I}(e^{-L})$ is bounded on $L^p$. Let $f\in L^2\cap L^p.$
By \eqref{lzy}, we have, for any $t\in  (0,1]$ and $x,y\in\mathcal{X}$ with $d(x,y)<t\leq1$,
\begin{align}\label{xiyang}
|e^{-L}f(y)|
&\lesssim
\int_{\mathcal{X}}\frac{1}{V(y,1)}e^{-cd(y,z)^2}|f(z)|\,d\mu(z)\notag\\
&\sim
\frac{1}{V(y,1)}\left[\int_{B(y,1)}|f(z)|\,d\mu(z)
+\sum_{k\in\mathbb{N}}\int_{2^{k-1}<d(y,z)\leq 2^k}e^{-c4^k}|f(z)|\,d\mu(z)\right]\notag\\
&\lesssim
\mathcal{M}(f)(x)+\sum_{k\in\mathbb{N}}\frac{V(y,2^k)}{V(y,1)}e^{-c4^k}\mathcal{M}(f)(x)
\lesssim
\mathcal{M}(f)(x).
\end{align}
 Similarly, we also have,
for any  $t\in  (1,\infty)$ and $x,y\in\mathcal{X}$ with $d(x,y)<t$,
\begin{align*}
|e^{-L}f(y)|
&\lesssim
\frac{1}{V(y,1)}\left[\int_{B(y,t)}|f(z)|\,d\mu(z)+\sum_{k\in\mathbb{N}}\int_{2^{k-1}t<d(y,z)\leq 2^kt}e^{-c(2^kt)^2}|f(z)|\,d\mu(z)\right]\\
&\lesssim
\frac{V(y,t)}{V(y,1)}\mathcal{M}(f)(x)
+\sum_{k\in\mathbb{N}}\frac{V(y,2^kt)}{V(y,1)}e^{-c(2^kt)^2}\mathcal{M}(f)(x)\\
&\lesssim
(t^n+1)\mathcal{M}(f)(x)\lesssim
t^n\mathcal{M}(f)(x).
\end{align*}
From this and \eqref{xiyang}, it follows that, for any $t\in (0,\infty)$ and  $x,y\in\mathcal{X}$ with $d(x,y)<t$
\begin{align}\label{xiyang2}
|e^{-L}f(y)|\lesssim
\max\{t^n,1\}\mathcal{M}(f)(x).
\end{align}
Thus, for any $x\in\mathcal{X},$
\begin{align*}
S_I(e^{-L}f)(x)
\lesssim
\mathcal{M}(f)(x)\int_0^\infty t^3e^{-2t^2}\max\{t^n,1\}\,dt
\lesssim
\mathcal{M}(f)(x).
\end{align*}
This  implies that $S_{I}(e^{-L})$ is bounded on $L^p.$

Now, we prove \eqref{1030}. If
$i\in\{0,1,2,3\}$, then \eqref{1030} follows from the $L^p$-boundedness of $S_{I}(e^{-L})$.
In what follows,  let $i\in \mathbb{N}\cap [4,\infty).$
We first consider the case $r_B\in [1,\infty).$ 
Then we have
\begin{align*}
\|S_{I}(e^{-L}m)\|_{L^p(U_i(B))}^p
&=\int_{U_i(B)}\left[\int_0^{2^{i-2}r_B}\int_{B(x,t)}\left|t^2e^{-t^2}(e^{-L}m)(y)\right|^2\,\frac{d\mu(y)\,dt}{V(x,t)t}\right]^{\frac{p}{2}}\,d\mu(x)\\ 
&\quad\quad+\int_{U_i(B)}\int_{2^{i-2}r_B}^\infty\int_{B(x,t)}\cdots=:\mathrm{I}+\mathrm{II}.
\end{align*}
By \eqref{xiyang2}, we obtain
\begin{align*}
\mathrm{II}
\lesssim
\int_{\mathcal{X}}\left[\mathcal{M}(m)(x)\right]^p\int_{2^{i-2}r_B}^\infty t^{3+2n}e^{-2t^2}\,dt\,d\mu(x)
\lesssim
\|m\|_{L^p}^p(2^ir_B)^{-N}
\lesssim
2^{-iN}\mu(B)\|\mathbf{1}_{B}\|_X^{p}.
\end{align*}
Let $m_1$ and $m_2$ be as in \eqref{3888}.
Let $k\in\{1,2\}$ and 
$\mathrm{I}_k$ be as  $\mathrm{I}$ with $m$ replace by $m_k.$ 
Then we only need to deal with $\mathrm{I}_1$ and $\mathrm{I}_2$.
By the $L^p$-boundedness of 
$S_{I}(e^{-L})$ and  \eqref{pmol1}, we find that
\begin{align*}
\mathrm{I}_1
\leq
\|S_{I}(e^{-L}m_1)\|_{L^p}^p
\lesssim
\|m\|_{L^p(\widetilde{S}_i(B))}^p
\lesssim
2^{-i(p\epsilon-n)}\mu(B)\|\mathbf{1}_{B}\|_{X}^{-p}.
\end{align*}
Notice that if $t\in(0,2^{i-2}r_B)$ and $x\in U_i(B)$, then $B(x,t)\subset S_i(B).$ Moreover,
$\mathrm{dist}\,(S_i(B),[\widetilde{S}_i(B)]^\complement)\sim 2^ir_B.$
By  this and \eqref{lzy}, we conclude that,
for any $t\in(0,2^{i-2}r_B)$, $x\in U_i(B)$, and $y\in B(x,t),$
\begin{align*}
|e^{-L}m_2(y)|
&\lesssim
e^{-\frac{c(2^ir_B)^2}{2}}\int_{\mathcal{X}}\frac{1}{V(y,1)}e^{-\frac{cd(y,z)^2}{2}}|m_2(z)|\,d\mu(z)\\
&\lesssim
e^{-\frac{c(2^ir_B)^2}{2}}\|m\|_{L^p}\frac{1}{V(y,1)}\left[\int_{\mathcal{X}}e^{-\frac{p'cd(y,z)^2}{2}}\,d\mu(z)\right]^{\frac{1}{p'}}
\lesssim
e^{-\frac{c(2^ir_B)^2}{2}}\|m\|_{L^p}V(y,1)^{-\frac{1}{p}}.
\end{align*}
Using the fact that $2^ir_B>1$, \eqref{d1},
and \eqref{d2}, we obtain, for any $y\in S_i(B),$
\begin{align*}
\mu(2^iB)
\lesssim
(2^ir_B)^nV(x_B,1)
\lesssim
(2^ir_B)^{2n}V(y,1).
\end{align*}
This implies that,
for any $t\in(0,2^{i-2}r_B)$, $x\in U_i(B)$, and $y\in B(x,t),$
\begin{align*}
|e^{-L}m_2(y)|
&\lesssim
e^{-\frac{c(2^ir_B)^2}{2}}\|m\|_{L^p}(2^ir_B)^{\frac{2n}{p}}[\mu(2^iB)]^{-\frac{1}{p}}
\lesssim
(2^ir_B)^{-N+\frac{2n}{p}}\|m\|_{L^p}[\mu(2^iB)]^{-\frac{1}{p}}\\
&\lesssim
2^{-i(N-\frac{2n}{p})}\|m\|_{L^p}[\mu(2^iB)]^{-\frac{1}{p}},
\end{align*}
which further implies that
\begin{align*}
\mathrm{I}_2
&=
\int_{U_i(B)}\left[\int_0^{2^{i-2}r_B}\int_{B(x,t)}\left|t^2e^{-t^2}(e^{-L}m_2)(y)\right|^2\,\frac{d\mu(y)\,dt}{V(x,t)t}\right]^{\frac{p}{2}}\,d\mu(x)\\
&\lesssim
2^{-i(pN-2n)}\|m\|_{L^p}^p\int_0^\infty t^3e^{-2t^2}\,dt
\lesssim
2^{-i(pN-2n)}\mu(B)\|\mathbf{1}_{B}\|_{X}^{-p}.
\end{align*}
Combined all the above estimates,  we  conclude  that
\begin{align*}
\|S_{I}(e^{-L}m)\|_{L^p(U_i(B))}
\lesssim
2^{-i\eta}\mu(B)^{\frac{1}{p}}\|\mathbf{1}_{B}\|_{X}^{-1}
\end{align*}
with 
\begin{align*}
\eta:=\min\left\{\frac{N}{p},\epsilon-\frac{n}{p},N-\frac{2n}{p}\right\}.
\end{align*} 
When $N$ is sufficiently large, we have $\eta\in (n(\frac{1}{s}-\frac{1}{p}),\infty)$
This proves \eqref{1030}
in the case $r_B\in [1,\infty)$.  

The proof of \eqref{1030} in
the case $r_B\in (0,1)$ is similar; we omit the details here. This finishes the proof of  Lemma \ref{smol2}.
\end{proof}
The following proposition is precisely \cite[Proposition 3.19]{lyyy25jga}.
\begin{proposition}\label{pharve}
Let $X$ be a $\mathrm{BQBF}$ space satisfying Assumption \ref{vector2} for some $s\in(0,\infty)$ and $q\in(s,\infty)$.
Assume that $p\in[q,\infty)$. Then there exists a positive constant $C$ such that, for any $\delta\in(n(\frac{1}{s}-\frac{1}{p}),\infty)$,
$\{\lambda_j\}_{j\in\mathbb{N}}\subset (0,\infty)$, and
$\{a_j\}_{j\in\mathbb{N}}\subset L^p$ satisfying  $\|a_j\mathbf{1}_{U_i(B_j)}\|_{L^p}\le
2^{-i\delta}\lambda_j[\mu(B_j)]^{\frac{1}{p}}$
for any $j\in\mathbb{N}$ and $i\in\mathbb{Z}_+$,
\begin{equation*}
\left\|\sum_{j\in\mathbb{N}}\left|a_j\right|^{s}\right\|_{X^{\frac{1}{s}}}
\leq
C\left\|\sum_{j\in\mathbb{N}}
\left|\lambda_j\right|^{s}\mathbf{1}_{B_j}\right\|_{X^{\frac{1}{s}}}.
\end{equation*}
\end{proposition}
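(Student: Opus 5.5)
The plan is a duality argument: test $\sum_j|a_j|^s$ against the associate space $(X^{1/s})'$, and let Assumption~\ref{vector2} absorb the off-ball tails of each $a_j$ through a maximal function. Write $Y:=(X^{1/s})'$ and $r:=(q/s)'\in(1,\infty)$.

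\emph{Step 1 (duality).} Since $X^{1/s}$ is a $\mathrm{BBF}$ space, I use the Lorentz--Luxemburg identity $X^{1/s}=(X^{1/s})''$ with equal norms, available in the ball setting (e.g. as in \cite{shyy17,syy22jga}); I expect this to be the main point requiring care. It gives
\begin{align*}
\left\|\sum_{j\in\mathbb{N}}|a_j|^s\right\|_{X^{\frac1s}}
=\sup_{\|g\|_{Y}\le1}\int_{\mathcal{X}}\sum_{j\in\mathbb{N}}|a_j|^s|g|\,d\mu .
\end{align*}
So it suffices to fix $g$ with $\|g\|_Y\le1$ and bound this integral.

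\emph{Step 2 (local estimate on each annulus).} Fix $j$ and $i$, and split $a_j=\sum_{i}a_j\mathbf{1}_{U_i(B_j)}$. Since $p\ge q$, we have $(p/s)'\le (q/s)'=r$. Applying H\"older's inequality with exponent $p/s$, and then again to pass from exponent $(p/s)'$ to $r$, gives
\begin{align*}
\int_{U_i(B_j)}|a_j|^s|g|\,d\mu
\le \|a_j\mathbf{1}_{U_i(B_j)}\|_{L^p}^s\,[\mu(2^iB_j)]^{1-\frac sp}
\left[\frac{1}{\mu(2^iB_j)}\int_{2^iB_j}|g|^{r}\,d\mu\right]^{\frac1r}.
\end{align*}
The bracket is bounded by $[\mathcal{M}(|g|^r)(x)]^{1/r}=:R(g)(x)$ for every $x\in B_j$. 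Combining this with the hypothesis on $a_j$ and with \eqref{d1} in the form $[\mu(2^iB_j)]^{1-s/p}[\mu(B_j)]^{s/p}\lesssim 2^{in(1-s/p)}\mu(B_j)$, and then averaging over $x\in B_j$, I obtain
\begin{align*}
\int_{U_i(B_j)}|a_j|^s|g|\,d\mu\lesssim 2^{-is[\delta-n(\frac1s-\frac1p)]}\lambda_j^s\int_{B_j}R(g)\,d\mu .
\end{align*}

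\emph{Step 3 (summation).} Because $\delta>n(\frac1s-\frac1p)$, the geometric series in $i$ converges, and the resulting constant can be taken uniform once $\delta$ is bounded away from the threshold. Summing over $j$ and using the H\"older inequality between $X^{1/s}$ and its associate space gives
\begin{align*}
\int_{\mathcal{X}}\sum_{j\in\mathbb{N}}|a_j|^s|g|\,d\mu\lesssim\int_{\mathcal{X}}\sum_{j\in\mathbb{N}}\lambda_j^s\mathbf{1}_{B_j}R(g)\,d\mu
\le\left\|\sum_{j\in\mathbb{N}}\lambda_j^s\mathbf{1}_{B_j}\right\|_{X^{\frac1s}}\|R(g)\|_{Y}.
\end{align*}

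\emph{Step 4 (maximal function).} It remains to show $\|R(g)\|_Y\lesssim\|g\|_Y\le1$. By the definition of convexification, $\|R(g)\|_Y=\|\mathcal{M}(|g|^r)\|_{Y^{1/r}}^{1/r}$. Assumption~\ref{vector2} says exactly that $\mathcal{M}$ is bounded on $Y^{1/r}$, so this is at most $C\||g|^r\|_{Y^{1/r}}^{1/r}=C\|g\|_Y$. Inserting this into Step~3 and taking the supremum over $g$ in Step~1 yields the claimed inequality.
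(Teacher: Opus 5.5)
Your duality argument is correct and is the standard mechanism behind this estimate. The paper gives no proof of its own here and simply imports \cite[Proposition 3.19]{lyyy25jga}; inside the paper the same conclusion also follows by applying Proposition \ref{pras} with $\theta:=2^i$ and coefficients $2^{-i\delta}\lambda_j$ to $\{a_j\mathbf{1}_{U_i(B_j)}\}_{j\in\mathbb{N}}$, then summing over $i\in\mathbb{Z}_+$ via the triangle inequality (plus the Fatou property) in the normed space $X^{1/s}$, which is your Steps 2--4 with the duality hidden inside Proposition \ref{pras}.

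Your remark that the constant is uniform only for $\delta$ bounded away from $n(\frac1s-\frac1p)$ corrects the statement rather than exposing a defect in your proof. Take $X:=L^1$ on $\mathbb{R}^n$, so $s=1$ and $p=q=2$, and a single $a$ with $|a|$ constant on each $U_i(B)$ and equality in the hypothesis. Then the left-hand side is comparable to $\mu(B)\sum_{i\in\mathbb{Z}_+}2^{-i(\delta-\frac n2)}$, while the right-hand side equals $\mu(B)$, so $C$ must blow up as $\delta\downarrow\frac n2$. The paper only ever uses the estimate with a fixed $\delta$, so nothing downstream is affected.
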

\begin{lemma}\label{beike}
Let $X$ be a $\mathrm{BQBF}$ space satisfying Assumption \ref{vector2} for some $s\in(0,1]$ and 
$q\in(s,\infty)$.
Let $T$ be a bounded sublinear operator on $L^2$, $f\in L^2$, and 
$p\in(1,\infty)\cap[q,\infty)$.
Assume that
there exists a sequence $\{\lambda_j\}_{j\in\mathbb{N}}\subset [0,\infty)$ and a sequence $\{m_j\}_{j\in\mathbb{N}}\subset L^2$
, associated with balls $\{B_j\}_{j\in\mathbb{N}}$, such that
$f=\sum_{j\in\mathbb{N}}\lambda_jm_j$ in $L^2.$
Assume further that 
there exist positive constants $C_1$ and $\eta\in (n(\frac{1}{s}-\frac{1}{p}),\infty)$, independent of $f$,
such that, for any $j\in\mathbb{N}$ and $i\in\mathbb{Z}_+,$ it   holds 
\begin{align}\label{T}
\|T(m_j)\|_{L^p(U_i(B_j))}
\leq 
C_12^{-\eta i}[\mu(B_j)]^{\frac{1}{p}}\left\|\mathbf{1}_{B_j}\right\|_{X}^{-1}.
\end{align}
Then  there exists a  positive  constant $C_2$, independent of $f$, such that 
\begin{align}\label{mb}
\|T(f)\|_X
\leq
C_2\left\|\left\{\sum_{j\in \mathbb{N}}\left[\frac{\lambda_{j}}{\|\mathbf{1}_{B_j}\|_X}\right]^s\mathbf{1}_{B_j}\right\}^{\frac{1}{s}}\right\|_X.
\end{align}
\end{lemma}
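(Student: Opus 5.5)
The plan is to pass the sublinear operator $T$ through the $L^2$-convergent series, dominate $|T(f)|$ pointwise by $\sum_j\lambda_j|T(m_j)|$, use the $s$-triangle inequality to move into $X^{1/s}$, and then apply Proposition \ref{pharve} with the normalizations $a_j:=\lambda_jT(m_j)$.

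\textbf{Step 1: pointwise control.} Since $T$ is bounded and sublinear on $L^2$ and $f=\sum_{j}\lambda_jm_j$ in $L^2$, set $f_N:=\sum_{j=1}^N\lambda_jm_j$. Then
$$|T(f)|\le |T(f)-T(f_N)|+\sum_{j=1}^N\lambda_j|T(m_j)|\le |T(f-f_N)|+\sum_{j\in\mathbb{N}}\lambda_j|T(m_j)|.$$
Because $\|T(f-f_N)\|_{L^2}\lesssim\|f-f_N\|_{L^2}\to0$, a subsequence $T(f-f_{N_k})\to0$ almost everywhere. Hence
$$|T(f)|\le\sum_{j\in\mathbb{N}}\lambda_j|T(m_j)|\quad\mu\text{-a.e.}$$
This almost-everywhere limiting argument is the only delicate point. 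One should also note that sublinearity is used only in the form $|T(g+h)|\le|T(g)|+|T(h)|$ and $|T(\lambda g)|=|\lambda||T(g)|$.

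\textbf{Step 2: convexification.} Since $s\in(0,1]$, the inequality \eqref{jiben} gives $|T(f)|\le\{\sum_j[\lambda_j|T(m_j)|]^s\}^{1/s}$. By Definition \ref{debb}(ii) and the definition of $X^{1/s}$,
$$\|T(f)\|_X\le\left\|\sum_{j\in\mathbb{N}}\left[\lambda_j|T(m_j)|\right]^s\right\|_{X^{\frac1s}}^{\frac1s}.$$

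\textbf{Step 3: apply Proposition \ref{pharve}.} Put $a_j:=\lambda_jT(m_j)$ and $\widetilde\lambda_j:=C_1\lambda_j\|\mathbf{1}_{B_j}\|_X^{-1}$. Terms with $\lambda_j=0$ are discarded; if one insists on strictly positive coefficients, replace them by arbitrarily small positive numbers and let these tend to zero. By \eqref{T},
$$\|a_j\mathbf{1}_{U_i(B_j)}\|_{L^p}\le2^{-\eta i}\widetilde\lambda_j[\mu(B_j)]^{1/p}\quad\text{for every } i\in\mathbb{Z}_+,$$
with $\eta\in(n(\frac1s-\frac1p),\infty)$ and $p\in[q,\infty)$. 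Proposition \ref{pharve} then yields
$$\left\|\sum_j|a_j|^s\right\|_{X^{1/s}}\lesssim\left\|\sum_j\widetilde\lambda_j^s\mathbf{1}_{B_j}\right\|_{X^{1/s}}.$$
Taking the $1/s$ power and recalling $\|g\|_{X^{1/s}}^{1/s}=\||g|^{1/s}\|_X$ gives \eqref{mb}, with $C_2$ depending only on $C_1$ and the constant in Proposition \ref{pharve}, hence independent of $f$.

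The only step I expect to need care is Step 1, namely passing $T$ through the infinite sum via an almost-everywhere convergent subsequence. Everything else is a direct application of Proposition \ref{pharve}.
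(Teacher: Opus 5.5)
Your proposal is correct and is essentially the paper's proof: you dominate $|T(f)|\le\sum_{j}\lambda_j|T(m_j)|$, apply the $s$-inequality \eqref{jiben}, and then use Proposition \ref{pharve} with $a_j:=\lambda_jT(m_j)$; your Step 1 also supplies the a.e.\ subsequence argument that the paper leaves implicit. One small blemish: the intermediate bound $|T(f)-T(f_N)|\le|T(f-f_N)|$ need not hold for a general sublinear $T$ (it does hold when $T$ is linear or nonnegative-valued), but you do not need it, since $|T(f)|\le|T(f_N)|+|T(f-f_N)|$ follows directly from sublinearity.
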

\begin{proof}
Since $T$ is bounded on $L^2$, $f=\sum_{j\in\mathbb{N}}\lambda_jm_j$ in $L^2$, and $s\in (0,1]$, it follows that
\begin{align*}
|T(f)|
\leq
\sum_{j\in\mathbb{N}}\lambda_j|T(m_j)|
\leq
\left\{\sum_{j\in\mathbb{N}}\left[\lambda_j|T(m_j)|\right]^s\right\}^{\frac{1}{s}}.
\end{align*}
From this,  \eqref{T}, and Proposition \ref{pharve}, we deduce that
\begin{align*}
\|T(f)\|_X^s
\leq
\left\|\sum_{j\in \mathbb{N}}\left[\lambda_{j}T(m_j)\right ]^s\right\|_{X^{\frac{1}{s}}} 
\lesssim 
\left\|\sum_{j\in \mathbb{N}}\left[\frac{\lambda_{j}}{\|\mathbf{1}_{B_j}\|_X}\right]^s\mathbf{1}_{B_j}\right\|_{X^{\frac{1}{s}}}.
\end{align*}
Thus, \eqref{mb} holds. This finishes the proof of Lemma \ref{beike}.
\end{proof}
Now, we use Lemmas  \ref{smol}, \ref{smol2}, and \ref{beike} to prove Theorem \ref{recon}.
\begin{proof}[Proof of Theorem \ref{recon}]
By the definition of $\widetilde{h}^{M,p,\epsilon}_{X,L,\mathrm{mol}}$, we find that there exist
a sequence  $\{\lambda_j\}_{j\in\mathbb{N}}\subset [0,\infty)$
and a sequence $\{m_j\}_{j\in\mathbb{N}}$ of local
$(X,M,p,\epsilon)_L$-molecules
such that 
\begin{align*}
f=\sum_{j\in\mathbb{N}}\lambda_jm_j\ \ \text{in}\ \ L^2
\end{align*}
and
\begin{align*}
\left\|\left\{\sum_{j\in\mathbb{N}}
\left[\frac{\lambda_j}{\|\mathbf{1}_{B_j}\|_{X}}\right]^{s}
\mathbf{1}_{B_j}\right\}^{\frac{1}{s}}\right\|_{X}
\lesssim
\|f\|_{\widetilde{h}^{M,p,\epsilon}_{X,L,\mathrm{mol}}}.
\end{align*}
Using this and applying Lemma \ref{beike} with $T:=S_{L,\mathrm{loc}}$
and $T:=S_{I}(e^{-L})$, we find that 
\begin{align*}
\|f\|_{h_{X,L}}
&=
\|S_{L,\mathrm{loc}}(f)\|_{X}
+
\|S_{I}(e^{-L}f)\|_{X}\\
&\lesssim
\left\|\left\{\sum_{j\in\mathbb{N}}\left[\frac{\lambda_j}{\|\mathbf{1}_{B_j}\|_{X}}\right]^s\mathbf{1}_{B_j}\right\}^{\frac{1}{s}}\right\|_X
\lesssim
\|f\|_{\widetilde{h}^{M,p,\epsilon}_{X,L,\mathrm{mol}}}.
\end{align*}
This finishes the proof of Theorem \ref{recon}.
\end{proof}
\subsection{Proof of Theorem \ref{hH}}\label{guanxi}
In this subsection, we prove Theorem \ref{hH}. We need the following lemmas.
\begin{lemma}\label{smo22}
Let $X$ be a $\mathrm{BQBF}$ space, $s\in(0,2),$
$\epsilon\in (\frac{n}{s},\infty)$, and $ M\in(\frac{n}{2}(\frac{1}{s}-\frac{1}{2}),\infty)\cap\mathbb{N}.$
Let $m$ be a local $(X,M,2,\epsilon)_L$-molecule, associated with the ball $B$. 
\begin{itemize}
\item[$\mathrm{(i)}$] Assume  that $\mathrm{inf}\,\sigma (L) \in (0,\infty)$.  Then  there exist positive constants $C_1$ and $\eta_1\in (n(\frac{1}{s}-\frac{1}{2}),\infty)$, independent of $m$,  such that, for any $i\in\mathbb{Z}_+$,
\begin{align}\label{guji3}
\|S_{L}(m)\|_{L^2(U_i(B))} \leq C_12^{-\eta_1 i}[\mu(B)]^\frac{1}{2}\|\mathbf{1}_B\|_X^{-1}.
\end{align}
\item[$\mathrm{(ii)}$]
Let $ M\in(\frac{n}{2}(\frac{1}{s}-\frac{1}{2})+1,\infty)\cap\mathbb{N}$. Then 
there exist positive constants $C_2$ and $\eta_2\in (n(\frac{1}{s}-\frac{1}{2}),\infty)$, independent of $m$, such that, for  any $i\in\mathbb{Z}_+$,
\begin{align}\label{guji5}
\|S_{L+I}(m)\|_{L^2(U_i(B))} \leq 
C_22^{-\eta_2 i}[\mu(B)]^\frac{1}{2}\|\mathbf{1}_B\|_X^{-1}.
\end{align}
\end{itemize}
\end{lemma}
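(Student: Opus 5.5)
Both estimates will be proved along the lines of Lemma \ref{smol}. We split $\|S(m)\|_{L^2(U_i(B))}$ into small-$i$ and large-$i$ parts and the $t$-integral into $(0,2^{i\delta}r_B)$ and $(2^{i\delta}r_B,\infty)$ (or with the threshold $1$ when $r_B\ge1$). For $i\in\{0,1,2,3\}$ we use the $L^2$-boundedness of $S_L$ and $S_{L+I}$ (spectral theorem), together with $\|m\|_{L^2}\lesssim[\mu(B)]^{1/2}\|\mathbf{1}_B\|_X^{-1}$, which follows from summing \eqref{pmol1} or \eqref{pmol2} over $i$ using $\epsilon>n/2$. For $t<2^{i\delta}r_B$ we decompose $m=m_1+m_2$ as in \eqref{3888}. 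The $m_1$ piece is controlled by $L^2$ boundedness and the molecular decay $2^{-i(\epsilon-n/2)}$. The $m_2$ piece is controlled by the Gaussian bound \eqref{lzy} for $t^2Le^{-t^2L}$, and likewise for $t^2(L+I)e^{-t^2(L+I)}=e^{-t^2}[t^2Le^{-t^2L}+t^2e^{-t^2L}]$, which also has a Gaussian kernel with an extra factor $e^{-t^2}$. This gives decay $2^{-i(1-\delta)(N-\cdot)}$ exactly as in Lemma \ref{smol}, with $p=2$. The whole difficulty is the large-$t$ region $t\ge 2^{i\delta}r_B$, and there \emph{also} the case $r_B\ge1$, where no cancellation $m=L^Mb$ is available. (The local square function avoided this by cutting at $t\le1$.)

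For (i), assume $\lambda_0:=\inf\sigma(L)>0$. When $r_B<1$ we argue as in \eqref{jiegong}: $t^2Le^{-t^2L}m=t^{-2M}(t^2L)^{M+1}e^{-t^2L}b$, and boundedness of $S_{L,M+1}$ on $L^2$ gives the factor $(2^{i\delta}r_B)^{-2M}$. Since $r_B<1$ makes this weaker than needed, we instead write the bound as $r_B^{2M}(2^{i\delta}r_B)^{-2M}\cdot r_B^{-2M}\|b\|_{L^2}$, and \eqref{pmol2} with $j=0$ gives $\|b\|_{L^2}\lesssim r_B^{2M}[\mu(B)]^{1/2}\|\mathbf{1}_B\|_X^{-1}$. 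So the factor is $2^{-2Mi\delta}$, as required. When $r_B\ge1$ there is no $b$, so we create one: $m=L^M(L^{-M}m)$ with $\|L^{-M}\|_{L^2\to L^2}\le\lambda_0^{-M}$. Then $\int_T^\infty\|t^2Le^{-t^2L}m\|_{L^2}^2\,dt/t\le \|(t^2L)^{M+1}e^{-t^2L}\|^2\,T^{-4M}\lambda_0^{-2M}\|m\|_{L^2}^2$. Since $T=2^{i\delta}r_B\ge 2^{i\delta}$, this again gives $2^{-2Mi\delta}$.

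Next, the integral over $t\in(0,T)$ when $r_B\ge1$ is handled by the $m_1/m_2$ argument without any cancellation, because there $B(x,t)$ stays inside $S_i(B)$. Finally, we choose $\delta,N$ so that $\eta_1:=\min\{2M\delta,\epsilon-n/2,(1-\delta)(N/2-n)\}>n(1/s-1/2)$; this is possible since $M>\frac n2(\frac1s-\frac12)$.

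For (ii), the spectrum of $L+I$ lies in $[1,\infty)$, so the $r_B\ge1$ case follows exactly as in (i), with $(L+I)^{-M}$ of norm at most $1$. The case $r_B<1$ is the main obstacle. The molecule is $m=L^Mb$, not $(L+I)^Mb$, so we expand $L^M=\sum_k\binom Mk(-1)^{M-k}(L+I)^k$. This gives $t^2(L+I)e^{-t^2(L+I)}m=\sum_k c_k t^{-2k}(t^2(L+I))^{k+1}e^{-t^2(L+I)}b$. The terms with small $k$ have no decaying factor in $t$, and are only saved by the size $\|b\|_{L^2}\lesssim r_B^{2M}[\mu(B)]^{1/2}\|\mathbf{1}_B\|_X^{-1}$, which yields $r_B^{2M}T^{-2k}$. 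To turn powers of $r_B$ into powers of $2^{-i}$ we also use the time decay $e^{-t^2}$, which makes all $t\ge1$ harmless. For $t\in(2^{i\delta}r_B,1)$, each term is bounded by $(r_B/t)^{2M}(t^{2M-2k})\le(2^{-i\delta})^{2k}\cdot r_B^{2M-2k}$. The worst term, $k=0$, loses the cancellation entirely; this is where the extra unit in the hypothesis $M>\frac n2(\frac1s-\frac12)+1$ should enter.

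On that worst term we would first try the following. Use $L^{M}b=L\,(L^{M-1}b)$ and keep one genuine power of $L$, employing only the moment $(r_B^2L)^{M-1}b$ controlled by \eqref{pmol2}. We then apply the same spatial splitting $m_1/m_2$ to $(L+I)^{k}b$-type pieces, using their molecular decay \eqref{pmol2} in $U_i(B)$ rather than the global $L^2$ bound. This yields decay $2^{-i(\epsilon-n/2)}$ from the support, with $M-1$ still exceeding $\frac n2(\frac1s-\frac12)$ for the time decay. Collecting terms gives \eqref{guji5} with $\eta_2:=\min\{2(M-1)\delta,\epsilon-n/2,(1-\delta)(N/2-n)\}$.
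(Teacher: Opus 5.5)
Your part (i) is fine. So is the case $r_B\ge1$ of (ii). Writing $m=L^M(L^{-M}m)$ with $\|L^{-M}\|_{L^2\to L^2}\le\lambda_0^{-M}$ is a legitimate substitute for the decay $\|t^2Le^{-t^2L}\|_{L^2\to L^2}\lesssim e^{-ct^2}$ that the paper takes from \cite{k}.

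The gap is in (ii) for $r_B<1$. You single out the $k=0$ term $t^2(L+I)e^{-t^2(L+I)}b$ on $t\in(2^{i\delta}r_B,1)$ as the obstruction, but your last paragraph does not control it. That term contains no power of $L$, so it is unclear what ``keeping one genuine power of $L$'' does to it. The splitting $m=m_1+m_2$ with the annular bounds \eqref{pmol2} only serves the region $t<2^{i\delta}r_B$, where $B(x,t)\subset S_i(B)$ and the Gaussian factor $\exp\{-c(2^ir_B/t)^2\}$ is effective. For $t\ge2^{i\delta}r_B$ that factor no longer gives the needed decay. There, the $L^2(U_i(B))$-localization of $(L+I)^kb$ produces no factor $2^{-i(\epsilon-n/2)}$, so nothing you wrote justifies the stated $\eta_2$. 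This region must be handled by decay in $t$.

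The paper's fix is a pointwise reduction. Put $a:=L^{M-1}b$, so $m=La$ and
\[
t^2(L+I)e^{-t^2(L+I)}m=e^{-t^2}\left[t^2Le^{-t^2L}m+t^2Le^{-t^2L}a\right],
\]
hence $S_{L+I}(m)\le S_L(m)+S_L(a)$. Since $r_B<1$ gives $r_B^{2M}\le r_B^{2(M-1)}$, $a$ is a local $(X,M-1,2,\epsilon)_L$-molecule. The $r_B<1$ estimate for $S_L$ (your argument in (i), which does not use the spectral gap, i.e.\ \cite[(3.19)]{lyyy26cms}) then applies to both $m$ and $a$ and gives \eqref{guji5}. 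This is exactly where $M-1>\frac n2(\frac1s-\frac12)$ enters.

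Alternatively, your own expansion closes with one missing observation. The interval $(2^{i\delta}r_B,1)$ is non-empty only if $r_B<2^{-i\delta}$, so $r_B^{2M}\le2^{-2M\delta i}$. The $\log(1/T)$ from $\int_T^1dt/t$ is harmless, since $r_B^{4M}\log(1/T)=2^{-4M\delta i}T^{4M}\log(1/T)$ with $T=2^{i\delta}r_B<1$. More directly, $\sup_{\lambda\ge0}t^2(\lambda+1)e^{-t^2(\lambda+1)}\lambda^M\lesssim t^{-2M}$ for all $t>0$, so the argument of \eqref{jiegong} runs verbatim for $S_{L+I}$ with $m=L^Mb$. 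That route even shows the extra unit in the hypothesis on $M$ is unnecessary.
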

\begin{proof}
We  first show (i).
If $r_B \in (0,1)$, then 
\eqref{guji3} follows from \cite[(3.19)]{lyyy26cms}.
Now, we assume that $r_B \in [1,\infty)$. 
Let $i \in \mathbb{N}$ and  
$\delta\in(0,1)$ be determined later. 
Then we have 
\begin{align*}
\|S_{L}(m)\|_{L^2(U_i(B))}^2
&= \int_{U_i(B)}\int_{0}^{2^{i\delta}r_B}\int_{B(x,t)}|t^2Le^{-t^2L}(m)(y)|^2\,\frac{d\mu(y)\,dt}{V(x,t)t}\,d\mu(x)\\
&\quad+ \int_{U_i(B)}\int_{2^{i\delta}r_B}^{\infty}\int_{B(x,t)}\cdots
=: \mathrm{I} + \mathrm{II}.
\end{align*}
By \eqref{heibang}, we conclude that 
\begin{align}\label{hebing}
\mathrm{I}
\lesssim
(2^{-i(1-\delta)(N-2n)}+2^{-i(2\epsilon-n)})
\mu(B)\|\mathbf{1}_B\|_{X}^{-2},
\end{align}
where $N\in\mathbb{N}$.
By the assumption that $\mathrm{inf}\,\sigma (L) \in (0,\infty)$
and \cite[Proposition 3]{k}, 
we find  that, there exists a positive constant $c$ such that, for any $t \in (0,\infty)$,
\begin{align*}
\int_{\mathcal{X}}\left|t^2Le^{-t^2L}(m)(y)\right|^2\,d\mu(y)
\lesssim
e^{-ct^2}\|m\|_{L^2}^2.
\end{align*} 
Using this, \eqref{d1}, and Fubini's Theorem, we obtain 
\begin{align*}
\mathrm{II}
\notag&\sim
\int_{2^{i\delta}r_B}^{\infty}\int_{\mathcal{X}}\left|t^2Le^{-t^2L}(m)(y)\right|^2\,d\mu(y)\,\frac{dt}{t}\\
&\lesssim
\|m\|_{L^2}^2\int_{2^{i\delta}r_B}^{\infty}\frac{e^{-ct^2}}{t}\,dt
\lesssim
(2^{\delta i }r_B)^{-N}\mu(B)\|\mathbf{1}_B\|_X^{-2}
\leq
2^{-N\delta i}\mu(B)\|\mathbf{1}_B\|_X^{-2},
\end{align*}
where we used $r_B\in [1,\infty)$ in the last step.
From this and \eqref{hebing},   it follows  that  \eqref{guji3} holds with 
\begin{align*}
\eta_1:=\mathrm{min}\left\{ (1-\delta)\left(\frac{N}{2}-n\right), \epsilon-\frac{n}{2}, \frac{N\delta}{2} \right\}.
\end{align*}
By the assumption that $\epsilon\in(\frac{n}{s},\infty)$, we  choose $\delta\in(0,1)$ and $N\in\mathbb{N}$ large enough such that
$\eta_1\in (n(\frac{1}{s}-\frac{1}{2}),\infty)$. This proves (i).

Next, we prove (ii).  
If $r_B \in [1,\infty)$, 
it is easy to verify that,
$m$ is also a local $(X,M,2,\epsilon)_{L+I}$-molecule. Using this, the fact that $\sigma(L+I) \in(0,\infty),$ and replacing $L$  by $L+I$ in \eqref{guji3}, we find that \eqref{guji5} holds.
Let $r_B\in (0,1)$. Then there exists $b\in\mathrm{Dom}(L^M)$ such that $m=L^M(b).$ Let $a:=L^{M-1}(b).$ Then $a$ is local $(X,M-1,2,\epsilon)_L$-molecule.
From this and \cite[(3.19)]{lyyy26cms}, it follows that, for any $i\in\mathbb{Z}_+$,  
\begin{align*}
\|S_{L+I}(m)\|_{L^2(U_i(B))}
\lesssim
\|S_{L}(m)\|_{L^2(U_i(B))}+
\|S_{L}(a)\|_{L^2(U_i(B))}
\lesssim
2^{-\eta_2 i}[\mu(B)]^\frac{1}{2}\|\mathbf{1}_B\|_X^{-1},
\end{align*}
where $\eta_2\in (n(\frac{1}{s}-\frac{1}{2}),\infty)$. This proves (ii), which   completes  the proof of Lemma \ref{smo22}. 
\end{proof}
\begin{lemma}\label{smo5}
Let $X$ be a $\mathrm{BQBF}$ space, $s\in(0,\infty)$,
$\epsilon\in (\frac{n}{s},\infty)$, and $ M\in(\frac{n}{2}(\frac{1}{s}-\frac{1}{2}),\infty)\cap\mathbb{N}$. Then 
there exist positive constants $C$ and $\eta\in (n(\frac{1}{s}-\frac{1}{2}),\infty)$ such that, 
for any $(X,M,2,\epsilon)_{L+I}$-molecule $m$, associated with the ball $B$, and  any $i\in\mathbb{Z}_+$,
\begin{align}\label{02041}
\|S_{L,\mathrm{loc}}(m)\|_{L^2(U_i(B))}
\leq
C2^{-\eta i}[\mu(B)]^{\frac{1}{2}}\left\|\mathbf{1}_B\right\|_{X}^{-1}
\end{align}
and
\begin{align}\label{shixian}
\|S_{I}(e^{-L}m)\|_{L^2(U_i(B))}
\leq
C2^{-\eta i}[\mu(B)]^{\frac{1}{2}}\left\|\mathbf{1}_B\right\|_{X}^{-1}.
\end{align}
\end{lemma}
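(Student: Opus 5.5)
The plan is to follow the structure of the proofs of Lemmas \ref{smol} and \ref{smol2}, with $p:=2$, and to treat the two estimates \eqref{02041} and \eqref{shixian} separately. Throughout, $m$ is an $(X,M,2,\epsilon)_{L+I}$-molecule associated with $B$. In particular, when cancellation is available, $m=(L+I)^M(b)$ with
$$\bigl\|\bigl(r_B^2(L+I)\bigr)^k b\bigr\|_{L^2(U_j(B))}\le 2^{-j(\epsilon-\frac n2)}r_B^{2M}[\mu(B)]^{\frac12}\|\mathbf{1}_B\|_X^{-1},\quad k\in\{0,\ldots,M\}.$$
Since $(L+I)^k=\sum_{l\le k}\binom kl L^l$, I expect analogous bounds for $\|(r_B^2L)^l b\|_{L^2(U_j(B))}$ whenever $r_B\lesssim1$, at the cost of a constant. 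Indeed, $L^l b$ can be recovered from $\{(L+I)^k b\}_{k\le l}$ by a finite binomial inversion, and the powers $r_B^{2(l-k)}$ are at most $1$. For $i\in\{0,1,2,3\}$, both estimates follow from the $L^2$-boundedness of $S_{L,\mathrm{loc}}$ and of $S_I(e^{-L}\cdot)$, the latter established in the proof of Lemma \ref{smol2}. So only $i\ge4$ needs work.

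\emph{Estimate \eqref{02041}.} I split the $t$-integral at $\tau_i:=\min\{1,2^{i\delta}r_B\}$.
\begin{itemize}
\item For the part $t\in(0,\tau_i)$, I decompose $m=m_1+m_2$ as in \eqref{3888}. The term $\mathrm I_1$ uses $L^2$-boundedness of $S_{L,1}$ together with the size decay of $m$ on $\widetilde S_i(B)$. The term $\mathrm I_2$ uses the Gaussian bound \eqref{lzy} and the separation $\mathrm{dist}(S_i(B),[\widetilde S_i(B)]^\complement)\sim2^ir_B$. This part of the proof of Lemma \ref{smol} used only size and Gaussian bounds, not cancellation, so it applies verbatim and yields the bound \eqref{heibang} with $p=2$.
\item If $r_B\ge 2^{-i\delta}$, then $\tau_i=1$ and nothing remains.
\item Otherwise $r_B<1$, and I must treat $t\in(2^{i\delta}r_B,1)$. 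Here I expand
$$t^2Le^{-t^2L}m=\sum_{l=0}^M\binom Ml\, t^{-2l}(t^2L)^{l+1}e^{-t^2L}b.$$
Using $L^2$-boundedness of $S_{L,l+1}$ (as in \eqref{jiegong}), each term contributes at most a constant times $(2^{i\delta}r_B)^{-2l}\|b\|_{L^2}\lesssim 2^{-2l\delta i}r_B^{2(M-l)}[\mu(B)]^{\frac12}\|\mathbf{1}_B\|_X^{-1}$.
\item The $l=0$ term gives no decay in $i$, so it needs a separate argument. I would re-split it into $t<2^{-i\delta'}$ and $t\ge 2^{-i\delta'}$. Alternatively, I would note that $r_B^{2M}\le 2^{-2Mi\delta}$ in this regime, because $r_B<2^{-i\delta}$. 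This yields $2^{-2M\delta i}$ for every $l$.
\item Choosing $\delta\in(0,1)$ and $N$ as in Lemma \ref{smol} then gives some $\eta>n(\frac1s-\frac12)$.
\end{itemize}

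\emph{Estimate \eqref{shixian}.}
\begin{itemize}
\item When $r_B\ge1$, the proof of Lemma \ref{smol2} uses only the size condition, so it applies directly.
\item When $r_B<1$, write $e^{-L}m=e^{-L}(L+I)^M b$. By \eqref{lzy} with $t=1$, the operator $e^{-L}(L+I)^M$ has a kernel bounded by $V(x,1)^{-1}e^{-cd(x,y)^2}$. Hence the argument giving \eqref{xiyang2} yields $|e^{-L}m(y)|\lesssim\max\{t^n,1\}\mathcal M(b)(x)$, and more precisely the same off-diagonal decay as in Lemma \ref{smol2}, with $b$ in place of $m$.
\item For indices $i$ with $2^ir_B\ge1$, I redo the splitting $\mathrm I_1,\mathrm I_2,\mathrm{II}$ of Lemma \ref{smol2} with $b$ instead of $m$. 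This gains the extra factor $r_B^{2M}\le1$ and Gaussian decay in $2^ir_B$.
\item For indices $i$ with $2^ir_B<1$, the crude $L^2$ bound gives $\|S_I(e^{-L}m)\|_{L^2}\lesssim\|b\|_{L^2}\lesssim r_B^{2M}[\mu(B)]^{\frac12}\|\mathbf{1}_B\|_X^{-1}\le 2^{-2Mi}[\mu(B)]^{\frac12}\|\mathbf{1}_B\|_X^{-1}$.
\end{itemize}
Since $2M>n(\frac1s-\frac12)$, this gives the required decay in both ranges.

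The main obstacle I anticipate is the bookkeeping for small balls. The point is that the lack of decay at scales $t\sim1$ has to be paid for by the factor $r_B^{2M}$. This relies on the condition $M>\frac n2(\frac1s-\frac12)$ and on converting the $(L+I)$-molecule bounds into bounds on $L^lb$. I would check that conversion carefully first.
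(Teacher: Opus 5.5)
Your argument is correct, but it is organized differently from the paper's. For \eqref{02041} with $r_B<1$, the paper first writes $t^2Le^{-t^2L}=t^2(L+I)e^{-t^2(L+I)}+(1-e^{-t^2})t^2Le^{-t^2L}-t^2e^{-t^2}e^{-t^2L}$, which gives \eqref{uki}. It then treats $S_{L+I,\mathrm{loc}}(m)$ by Lemma \ref{smol} with $L$ replaced by $L+I$, where $m$ has genuine $(L+I)$-cancellation. The remainders $U(m)$ and $V(m)$ are each split at $2^{i\delta}r_B$. The small-$t$ part is handled by \eqref{heibang}, and the large-$t$ part by the bound $\|t^{2M}(L+I)^Me^{-t^2L/2}\|_{L^2\to L^2}\lesssim1$ for $t\le1$.

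You bypass the identity and estimate $S_{L,\mathrm{loc}}(m)$ directly. Your binomial expansion together with $r_B<2^{-i\delta}$ is the same mechanism as the paper's operator bound, since both amount to $t^{-2l}\le t^{-2M}$ for $t\le1$. In fact, once the paper discards the factor $1-e^{-t^2}\le1$, its estimate of $U(m)$ is literally an estimate of $\|S_{L,\mathrm{loc}}(m)\|_{L^2(U_i(B))}$. So your route is the paper's $U$-estimate without the detour, and it shows that the appeal to Lemma \ref{smol} for $L+I$ and the $V$-term are not needed.

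For \eqref{shixian}, the paper only says the proof is similar. Your argument fills this in: you use the scale-one Gaussian kernel of $e^{-L}(L+I)^M$ and split according to whether $2^ir_B\ge1$.

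Two minor remarks. First, in the range $2^ir_B\ge1>r_B$, the factor $r_B^{2M}\le1$ does not help by itself, because $2^ir_B$ can be of order one. What you actually need is $(2^ir_B)^{-N}r_B^{2M}\le 2^{-2Mi}$ for $N\ge2M$; your closing sentence suggests this is what you intend. Second, the binomial inversion giving bounds on $(r_B^2L)^lb$ is never used. Only two inputs enter: $\|b\|_{L^2}$, from the $k=0$ condition, and the size bound on $m$, from the $k=M$ condition divided by $r_B^{2M}$.
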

\begin{proof}
We first show \eqref{02041}.
When $r_B \in [1,\infty)$, 
\eqref{02041} directly follows from 
Lemma \ref{smol}.
Let $r_B \in (0,1).$  Observe that, for any $t \in (0,\infty)$,
\begin{align*}
t^2Le^{-t^2L}=t^2(L+I)e^{-t^2(L+I)}+(1-e^{-t^2})t^2Le^{-t^2L}-t^2e^{-t^2}e^{-t^2L}.
\end{align*}
From this, we infer that, for any $g \in L^2$,
\begin{align}\label{uki}
S_{L,\mathrm{loc}}(g)\leq S_{L+I,\mathrm{loc}}(g)+U(g)+V(g),
\end{align}
where, for any $x \in \mathcal{X}$,
\begin{align*}
U(g)(x):=\left[\int_{0}^{1}\int_{B(x,t)}\left|(1-e^{-t^2})t^2Le^{-t^2L}(g)(y)\right|^2\,\frac{d\mu(y)\,dt}{V(x,t)t}\right]^{\frac{1}{2}}
\end{align*}
and 
\begin{align*}
V(g)(x):=\left[\int_{0}^{1}\int_{B(x,t)}\left|t^2e^{-t^2}e^{-t^2L}(g)(y)\right|^2\,\frac{d\mu(y)\,dt}{V(x,t)t}\right]^{\frac{1}{2}}.
\end{align*}
Let $m$ be a $(X,M,2,\epsilon)_{L+I}$-molecule.
Applying
Lemma \ref{smol} with $L$ replaced by $L+I$, 
we conclude that  there exists $\eta_1 \in (n(\frac{1}{s}-\frac{1}{2}),\infty)$ such that, for any $i\in\mathbb{Z}_+$,
\begin{align}\label{S1}
\left\|S_{L+I,\mathrm{loc}}(m)\right\|_{L^2(U_i(B))}\lesssim 2^{-\eta_1 i}[\mu(B)]^{\frac{1}{2}}\|\mathbf{1}_B\|_X^{-1}.
\end{align}
Since $1-e^{-t^2}\leq t^2\leq 1$ for any $t\in (0,1]$, it follows  that,
\begin{align*}
\|U(m)\|_{L^2(U_i(B))}^2 
&\leq \int_{U_i(B)}\int_{0}^{2^{i\delta}r_B}\int_{B(x,t)}\left|t^2Le^{-t^2L}(m)(y)\right|^2\,\frac{d\mu(y)\,dt}{V(x,t)t}\,d\mu(x)\\
&\quad+ \int_{U_i(B)}\int_{2^{i\delta}r_B}^{1}\int_{B(x,t)}\cdots
=:\mathrm{I} + \mathrm{II},
\end{align*}
where $\delta\in (0,1)$ 
and 
$\mathrm{II}=0$ when $2^{i\delta}r_B\in (1,\infty)$.
By \eqref{heibang}, we  have
\begin{align}\label{02051}
\mathrm{I}
\lesssim
(2^{-i(1-\delta)(N-2n)}+2^{-i(2\epsilon-n)})
\mu(B)\|\mathbf{1}_B\|_{X}^{-2}.
\end{align}
Since $m$ is  a $(X,M,2,\epsilon)_{L+I}$-molecule,
we deuce that there exists $b\in \mathrm{Dom}((L+I)^M)$ such that $m=(L+I)^M(b)$ and 
\begin{align}\label{bolin}
\|b\|_{L^2}^2
\lesssim
r_B^{4M}\mu(B)\|\mathbf{1}_B\|_{X}^{-2}.
\end{align}
By \eqref{lzy}, we find that, for any $f \in L^2$ and $t\in (0,1]$,
\begin{align*}
\left\|t^{2M}(L+I)^Me^{-t^2L/2}f\right\|_{L^2} 
\lesssim
\sum_{k=0}^M\left\|t^{2M}L^ke^{-t^2L/2}f\right\|_{L^2}
\lesssim
\sum_{k=0}^Mt^{2M-2k}\|f\|_{L^2}\lesssim
\|f\|_{L^2}.
\end{align*}
By this and \eqref{bolin},
we obtain, for any $t\in (0,1],$
\begin{align*}
\left\|t^2Le^{-t^2L}(m)\right\|_{L^2}^2
&=t^{-4M}\left\|t^2Le^{-t^2L/2}t^{2M}(L+I)^Me^{-t^2L/2}(b)\right\|_{L^2}^2\\
&\lesssim
t^{-4M}\|b\|_{L^2}
\lesssim
t^{-4M}r_B^{4M}\mu(B)\|\mathbf{1}_B\|_{X}^{-2}.
\end{align*}
This, combined with Fubini's Theorem, implies that, for any $i\in\mathbb{Z}_+,$ 
\begin{align*}
\mathrm{II}
&\sim
\int_{2^{i\delta}r_B}^1
\left\|t^2Le^{-t^2L}(m)\right\|_{L^2}^2\,\frac{dt}{t}
\lesssim
2^{-i4M\delta}\mu(B)\|\mathbf{1}_B\|_{X}^{-2}.
\end{align*}
which, together with  \eqref{02051},  further implies  that
\begin{align}\label{U}
\|U(m)\|_{L^2(U_i(B))}\lesssim2^{-\eta_2 i}[\mu(B)]^{\frac{1}{2}}\|\mathbf{1}_B\|_X^{-1},
\end{align}
where $\eta_2:=\mathrm{min}\left\{(1-\delta)\left(\frac{N}{2}-n\right), \epsilon-\frac{n}{2}, 2M\delta\right\}$. Choosing suitable $\delta \in(0,1)$ and $N \in \mathbb{N}$, we  have $\eta_2 \in(n(\frac{1}{s}-\frac{1}{2}),\infty)$. By an argument similar to that used in the proof of \eqref{U}, we conclude that
\begin{align*}
\|V(m)\|_{L^2(U_i(B))}\lesssim2^{-\eta_2 i}[\mu(B)]^{\frac{1}{2}}\|\mathbf{1}_B\|_X^{-1}.
\end{align*} 
From this, \eqref{U}, \eqref{S1}, and \eqref{uki}, we deduce that \eqref{02041} holds. Since  the proof of \eqref{shixian}  similar; we omit the details here. This finishes  the proof of Lemma \ref{smo5}.
\end{proof}
Now, we recall the concept of the Muckenhoupt weight class $A_p$.
Denote by $\mathbb{B}$ the set of all balls in $\mathcal{X}$.
\begin{definition}
A locally integrable function $\omega:\ \mathcal{X}\to[0,\infty)$ is called the
$A_p $-\emph{weight} with $p\in[1,\infty)$ if
\begin{align*}
[w]_{A_p }:=\sup_{B\in\mathbb{B}}\left\{[\mu(B)]^{-p}\|\omega\|_{L^1(B)}
\left\|\omega^{-1}\right\|_{L^{\frac{1}{p-1}}(B)}\right\}<\infty,
\end{align*}
where $\frac{1}{p-1}:=\infty$ when $p=1.$ Moreover, let
$A_\infty :=\bigcup_{p\in[1,\infty)}A_p.$
\end{definition}
We have  the following  extrapolation theorem
of ball quasi-Banach function  space, whose proof  is similar to that of \cite[Theorem 4.6]{cmp}; we omits the details here.
\begin{lemma}\label{1616}
Let $X$ be a $\mathrm{BQBF}$ space. Assume that $p_0\in (0,\infty)$, $X^{1/p_0}$ is a $\mathrm{BBF}$ space, and the \emph{Hardy-Littlewood} maximal operator $\mathcal{M}$ is bounded on ${(X^{1/p_0})}'$. Let $\mathcal{F}$ be the set of all pairs $(F,G)$ of nonnegative measurable functions such that, for any given $\omega \in A_1$,
\begin{align*}
\int_{\mathcal{X}}[F(x)]^{p_0}\omega(x)\,d\mu(x)\leq C_{(p_0,[\omega]_{A_1})}\int_{\mathcal{X}}[G(x)]^{p_0}\omega(x)\,d\mu(x),
\end{align*}
where $C_{(p_0,[\omega] _{A_1})}$ is a positive constant independent of $(F,G)$, but depends on both $p_0$ and $[\omega]_{A_1}$. Then there exists a positive constant $C_0$ such that, for any $(F,G)\in \mathcal{F}$,
\begin{align*}
\|F\|_X \leq C_0\|G\|_X.
\end{align*}
\end{lemma}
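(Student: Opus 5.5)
The plan is to follow the Rubio de Francia extrapolation scheme, as in \cite[Theorem 4.6]{cmp}, transplanted to the ball quasi-Banach setting via the associate space of $X^{1/p_0}$. Fix $(F,G)\in\mathcal{F}$ with $\|G\|_X<\infty$ (otherwise there is nothing to prove). Writing $Y:=X^{1/p_0}$, we have $\|F\|_X^{p_0}=\|F^{p_0}\|_Y$ and $\|G\|_X^{p_0}=\|G^{p_0}\|_Y$. So it suffices to show
\[
\|F^{p_0}\|_Y\lesssim\|G^{p_0}\|_Y .
\]
Since $Y$ is a BBF space, the Lorentz--Luxemburg type duality gives $\|F^{p_0}\|_Y=\sup\{\int_{\mathcal{X}}F^{p_0}h\,d\mu:\ 0\le h,\ \|h\|_{Y'}\le1\}$. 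I would take this for granted in the ball setting; it holds by \cite[Lemma 2.6]{shyy17}-type arguments on doubling spaces. If needed, one only uses the direction $\|F^{p_0}\|_Y\le C\sup\cdots$, obtained through monotone convergence with truncations $F_k:=\min\{F,k\}\mathbf{1}_{B(x_0,k)}$.

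Next comes the Rubio de Francia construction on $Y'$. Let $\|\mathcal{M}\|:=\|\mathcal{M}\|_{Y'\to Y'}$ and, for $0\le h\in Y'$, set
\[
\mathcal{R}h:=\sum_{k=0}^\infty\frac{\mathcal{M}^k h}{(2\|\mathcal{M}\|)^k}.
\]
This has three properties. First, $h\le\mathcal{R}h$. Second, $\|\mathcal{R}h\|_{Y'}\le2\|h\|_{Y'}$, since $Y'$ is a Banach function norm and the series converges absolutely. Third, $\mathcal{M}(\mathcal{R}h)\le2\|\mathcal{M}\|\mathcal{R}h$, so $\mathcal{R}h\in A_1$ with $[\mathcal{R}h]_{A_1}\le2\|\mathcal{M}\|$, a bound independent of $h$. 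One must also check that $\mathcal{R}h$ is locally integrable, hence a genuine weight, and finite almost everywhere. Local integrability follows from $Y'\subset L^1_{\mathrm{loc}}$, which holds because $\mathbf{1}_B\in Y$ and Hölder's inequality for associate spaces gives $\int_B|g|\le\|\mathbf{1}_B\|_Y\|g\|_{Y'}$. If $\mathcal{R}h\equiv0$ the estimate is trivial.

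With $\omega:=\mathcal{R}h\in A_1$, the hypothesis on $\mathcal{F}$ gives a constant depending only on $p_0$ and the uniform bound on $[\omega]_{A_1}$. The main chain is then
\[
\int F^{p_0}h\le\int F^{p_0}\mathcal{R}h\le C\int G^{p_0}\mathcal{R}h\le C\|G^{p_0}\|_Y\|\mathcal{R}h\|_{Y'}\le2C\|G^{p_0}\|_Y,
\]
where the third step uses the Hölder inequality $\int|fg|\le\|f\|_Y\|g\|_{Y'}$, immediate from the definition of $Y'$. Taking the supremum over $h$ yields $\|F\|_X^{p_0}\lesssim\|G\|_X^{p_0}$, with $C_0$ depending only on $p_0$ and $\|\mathcal{M}\|_{Y'\to Y'}$.

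The main obstacle is the norming step $\|f\|_Y=\sup_{\|h\|_{Y'}\le1}\int|f|h$ for BBF spaces; it is the Lorentz--Luxemburg theorem $Y=Y''$. I would justify it as in \cite{shyy17}. For bounded $f$ supported in a ball this is a Hahn--Banach argument using properties (ii)--(v) of Definition \ref{debb}, and the general case follows by monotone convergence via Definition \ref{debb}(iii). A second, milder point is that in the chain above the integral $\int F^{p_0}\mathcal{R}h$ could be infinite. To avoid any $\infty\le\infty$ issue, I would take $G$ with $\|G\|_X<\infty$, so that the right-hand side is finite.
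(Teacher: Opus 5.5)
Your proposal is correct and follows the same route as the paper. The paper omits the proof and refers to the Rubio de Francia extrapolation argument of \cite[Theorem 4.6]{cmp}: norm $\|F^{p_0}\|_{X^{1/p_0}}$ through the Lorentz--Luxemburg duality for the $\mathrm{BBF}$ space $X^{1/p_0}$, then build uniformly controlled $A_1$ weights $\mathcal{R}h$ by iterating $\mathcal{M}$ on $(X^{1/p_0})'$. The ball-space details you supply are exactly what that omitted adaptation needs: the Hahn--Banach step on balls, local integrability of $\mathcal{R}h$ from $\mathbf{1}_B\in X^{1/p_0}$, and reading $C_{(p_0,[\omega]_{A_1})}$ as uniform over $[\omega]_{A_1}\le 2\|\mathcal{M}\|$.
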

Now, we prove Theorem \ref{hH} via Lemmas \ref{smo22}, \ref{smo5}, and \ref{1616}.
\begin{proof}[Proof of Theorem  \ref{hH}]
We  first prove (i). By the molecule characterizations of  $H_{X,L}$(see  \cite[Theorem 3.4]{lyyy26cms}) and $h_{X,L}$ (Theorem \ref{local-mo-at}), we immediately have
\begin{align}\label{yaohao}
H_{X,L}\cap L^2 \subset h_{X,L}\cap L^2,
\end{align}
where we do not use the assumption that  $\mathrm{inf}\,\sigma (L) \in (0,\infty)$ in this step.
Let $f\in h_{X,L}\cap L^2,$
$\epsilon\in (\frac{n}{s},\infty)$, and $ M\in(\frac{n}{2}(\frac{1}{s}-\frac{1}{2}),\infty)\cap\mathbb{N}.$
By the molecule  characterization of
$h_{X,L}$ (Theorem \ref{local-mo-at}), we find that there exist
a sequence  $\{\lambda_j\}_{j\in\mathbb{N}}\subset [0,\infty)$
and a sequence $\{m_j\}_{j\in\mathbb{N}}$ of local
$(X,M,2,\epsilon)_L$-molecules
such that 
\begin{align*}
f=\sum_{j\in\mathbb{N}}\lambda_jm_j\ \ \text{in}\ \ L^2
\end{align*}
and
\begin{align*}
\left\|\left\{\sum_{j\in\mathbb{N}}
\left[\frac{\lambda_j}{\|\mathbf{1}_{B_j}\|_{X}}\right]^{s}
\mathbf{1}_{B_j}\right\}^{\frac{1}{s}}\right\|_{X}
\lesssim
\|f\|_{h_{X,L}}.
\end{align*}
Using this and Lemma \ref{smo22}$\mathrm{(i)}$ and applying Lemma \ref{beike} with $T:=S_{L}$, we find that 
\begin{align}\label{yige}
\|f\|_{H_{X,L}}
=\|S_{L}(f)|
\lesssim
\left\|\left\{\sum_{j\in\mathbb{N}}\left[\frac{\lambda_j}{\|\mathbf{1}_{B_j}\|_{X}}\right]^s\mathbf{1}_{B_j}\right\}^{\frac{1}{s}}\right\|_X
\lesssim
\|f\|_{h_{X,L}}.
\end{align}
This implies that $h_{X,L}\cap L^2\subset H_{X,L}\cap L^2$, which, combined with \eqref{yaohao}, further implies that $h_{X,L}\cap L^2= H_{X,L}\cap L^2.$
From this and a density argument, we deduce that  $h_{X,L}$ and
$H_{X,L}$
coincide with equivalent quasi-norms. This shows (i). 

Next, we prove $\mathrm{(ii)}$.
By an argument similar to that used in \eqref{yige} and applying Lemma   \ref{beike}, respectively, with $T:=S_{L+I}$, $T:=S_{L,\mathrm{loc}}$, and $T:=S_{I}(e^{-L})$, we  conclude that
\begin{align}\label{xiede}
H_{X,L+I}=h_{X,L}.
\end{align}
By this fact that $\sigma(L+I)\in(0,\infty)$ and using (i) with $L$ replaced by $L+I$,  we have
\begin{align*}
h_{X,L+I}=H_{X,L+I}.
\end{align*}
From this and applying  \eqref{xiede} with $L$ replaced by $L+I$, we deduce that 
\begin{align*}
H_{X,L+2I}=h_{X,L+I}=H_{X,L+I}=h_{X,L}.
\end{align*} 
This shows that  the  spaces $H_{X,L+2I}$ and $h_{X,L}$ coincide with equivalent quasi-norms. By induction, we find that the  desired conclusion holds. This shows (ii).

Now, we prove $\mathrm{(iii)}$.
By an argument similar to that used in \eqref{yige} and applying Lemma   \ref{beike} with $T:=I$, we  conclude that
\begin{align}\label{002}
h_{X,L}\cap L^2\subset X\cap L^2.
\end{align} 
By \cite[Theorem 1.1$\mathrm{(a)}$]{gx12}, we find that, for any $\omega\in A_1$ and $f\in L^{p_0}\cap L^2,$
\begin{align*}
\int_{\mathcal{X}}[S_L(f)(x)]^p\omega(x)\,d\mu(x)
\lesssim
\int_{\mathcal{X}}|f(x)|^p\mathcal{M}\omega(x)\,d\mu(x)
\lesssim
[\omega]_{A_1}\int_{\mathcal{X}}|f(x)|^p\omega(x)\,d\mu(x).
\end{align*}
From this and  Lemma \ref{1616}, we infer that, for any $f\in X\cap L^2,$
\begin{align*}
\|f\|_{H_{X,L}}=\|S_L(f)\|_X \lesssim\|f\|_X.
\end{align*}
This implies that $X\cap L^2\subset H_{X,L}\cap L^2$. By this, \eqref{002}, and \eqref{yaohao}, we obtain
\begin{align*}
X\cap L^2 \subset H_{X,L}\cap L^2 \subset h_{X,L}\cap L^2 \subset X\cap L^2.
\end{align*}
This shows (iii), which completes the proof of Theorem \ref{hH}. 
\end{proof}  
\subsection{Proof of Theorems \ref{thm-maxi} and \ref{key}}\label{jida}
In this subsection, we prove Theorems \ref{thm-maxi} and \ref{key}.
Let $\alpha,\lambda\in(0,\infty)$. 
For any $F\in\mathcal{X}^+$ and $x\in\mathcal{X}$, let
\begin{align*}
F^\ast_{\alpha}(x):=\sup_{t\in(0,1)}\sup_{y\in B(x,\alpha t)}|F(y,t)|
\end{align*}
and
\begin{align*}
F^\ast_{\alpha,\lambda}(x)
:=\sup_{t\in(0,1)}\sup_{y\in\mathcal{X}}|F(y,t)|
\left(1+\frac{d(x,y)}{\alpha t}\right)^{-\lambda}.
\end{align*}
If $\alpha:=1$, we write $F^\ast$ instead of $F^\ast_1.$
\begin{proposition}\label{kenan1}
Let $X$ be a $\mathrm{BQBF}$ space satisfying Assumption \ref{vector1} for some $p_-\in (0,\infty).$ Let $\lambda\in (\frac{n}{p_-},\infty)$.
Then there exists a positive constant $C$ such that, for any $\alpha_1,\alpha_2\in(0,\infty)$ and any measurable function $F$ on $\mathcal{X}^+$, 
\begin{align*}
\|F^\ast_{\alpha_1}\|_{X}
\leq
C\left(1+\frac{\alpha_1}{\alpha_2}\right)^{\lambda}
\|F^\ast_{\alpha_2}\|_{X}.
\end{align*}
\end{proposition}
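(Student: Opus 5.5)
The plan is the classical change-of-aperture argument of Fefferman--Stein, with the $L^{p}$ bound replaced by Assumption \ref{vector1}. The case $\alpha_1\le\alpha_2$ is trivial, since then $F^\ast_{\alpha_1}\le F^\ast_{\alpha_2}$ pointwise and we may use Definition \ref{debb}(ii). So assume $\alpha_1>\alpha_2$ and set $\beta:=\alpha_1/\alpha_2>1$. Fix $r\in(0,p_-)$ with $\lambda>n/r$; this is possible because $\lambda>n/p_-$. The goal is the pointwise inequality
\begin{align*}
F^\ast_{\alpha_1}(x)\lesssim \beta^{\frac{n}{r}}\left[\mathcal{M}\left(\left[F^\ast_{\alpha_2}\right]^r\right)(x)\right]^{\frac1r},\qquad x\in\mathcal{X}.
\end{align*}

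To prove it, fix $\sigma>0$ and let $O_\sigma:=\{z\in\mathcal{X}:\ F^\ast_{\alpha_2}(z)>\sigma\}$. Take $t\in(0,1)$ and $y\in B(x,\alpha_1 t)$ with $|F(y,t)|>\sigma$. Then $B(y,\alpha_2 t)\subset O_\sigma$, and also $B(y,\alpha_2t)\subset B(x,2\alpha_1 t)$. By \eqref{d1} and \eqref{d2},
\begin{align*}
\mu(B(y,\alpha_2 t))\gtrsim \beta^{-n}\mu(B(x,2\alpha_1 t)).
\end{align*}
This gives $\mathcal{M}(\mathbf{1}_{O_\sigma})(x)\gtrsim\beta^{-n}$.

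The cleaner route is to avoid level sets and use the $r$-th power directly. For every $z\in B(y,\alpha_2t)$ we have $|F(y,t)|\le F^\ast_{\alpha_2}(z)$. Averaging over $z$ gives
\begin{align*}
|F(y,t)|^r\le \frac{1}{\mu(B(y,\alpha_2t))}\int_{B(y,\alpha_2t)}[F^\ast_{\alpha_2}(z)]^r\,d\mu(z)
\lesssim \beta^{n}\,\mathcal{M}\left([F^\ast_{\alpha_2}]^r\right)(x).
\end{align*}
Taking the supremum over $t$ and $y$ yields the pointwise inequality, with the factor $\beta^{n/r}\le(1+\beta)^\lambda$ because $\lambda>n/r$.

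Next apply Assumption \ref{vector1} with $p:=r$ and a single function, i.e.\ $f_1:=[F^\ast_{\alpha_2}]^r$ and $f_j:=0$ for $j\ge2$; the value of $r$ there is irrelevant. This says that $\mathcal{M}$ is bounded on $X^{1/r}$. Consequently,
\begin{align*}
\|F^\ast_{\alpha_1}\|_X
\lesssim\beta^{\frac nr}\left\|\mathcal{M}\left([F^\ast_{\alpha_2}]^r\right)\right\|_{X^{1/r}}^{\frac1r}
\lesssim\beta^{\frac nr}\left\|[F^\ast_{\alpha_2}]^r\right\|_{X^{1/r}}^{\frac1r}
=\beta^{\frac nr}\|F^\ast_{\alpha_2}\|_X.
\end{align*}

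Two technical points remain. The first is measurability of $F^\ast_{\alpha_2}$. For arbitrary measurable $F$, one would replace the supremum over $y$ by an essential supremum, or note that $F^\ast_\alpha$ is lower semicontinuous; indeed the super-level sets are open, since $z\in B(y,\alpha t)$ is an open condition in $z$. The second is uniformity of the constants. The implicit constant comes only from $C_{(\mu)}$ and the Assumption \ref{vector1} constant for the fixed $r$, so it is independent of $\alpha_1,\alpha_2$ and $F$. The main obstacle is just choosing $r<p_-$ with $n/r<\lambda$ and checking that the doubling comparison produces exactly the power $\beta^{n}$; both are routine.
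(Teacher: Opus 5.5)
Your proof is correct and follows essentially the paper's route: you average $|F(y,t)|^r$ over $B(y,\alpha_2 t)$ against $[F^\ast_{\alpha_2}]^r$, compare balls by doubling to get the pointwise bound $F^\ast_{\alpha_1}\lesssim (\alpha_1/\alpha_2)^{n/r}[\mathcal{M}([F^\ast_{\alpha_2}]^r)]^{1/r}$, and then use the boundedness of $\mathcal{M}$ on $X^{1/r}$ from Assumption \ref{vector1}. The differences are cosmetic. The paper takes $r=n/\lambda$ exactly and bounds the larger Peetre-type function $F^\ast_{\alpha_1,\lambda}$ (supremum over all $y\in\mathcal{X}$, weighted by $(1+d(x,y)/(\alpha_1 t))^{-\lambda}$), which covers $\alpha_1\le\alpha_2$ without a case split. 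You instead restrict to the cone, take $r\in(n/\lambda,p_-)$, and treat $\alpha_1\le\alpha_2$ separately.
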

\begin{proof}
Let $F$ be a measurable function on $\mathcal{X}^+$
and $\alpha_1,\alpha_2\in(0,\infty).$
By the definition of $F^\ast_{\alpha_2}$, we obtain,  for any $t\in(0,\infty)$,
$y\in\mathcal{X},$ and $z\in B(y,\alpha_2 t),$
\begin{align}\label{jiao}
|F(y,t)|
\leq
F^\ast_{\alpha_2}(z).
\end{align}
Observe that, for any $t\in(0,\infty)$ and $x,y\in\mathcal{X}$, $B(y, \alpha_2t)\subset B(x,d(x,y)+\alpha_2t)$, which, combined with \eqref{d2}, further implies that 
\begin{align*}
V(x,d(x,y)+\alpha_2t)
&\lesssim
\left(1+\frac{d(x,y)}{d(x,y)+\alpha_2t}\right)^nV(y,d(x,y)+\alpha_2t)\\
&\leq
\left(1+\frac{d(x,y)}{\alpha_2t}\right)^n
V(y,\alpha_2t)
\leq
\left(1+\frac{\alpha_1}{\alpha_2}\right)^n\left(1+\frac{d(x,y)}{\alpha_1t}\right)^n
V(y,\alpha_2t).
\end{align*}
From this and \eqref{jiao}, it follows that,
for any $t\in(0,\infty)$ and $x,y\in\mathcal{X}$,
\begin{align*}
|F(y,t)|^{\frac{n}{\lambda}}
&\leq
\frac{1}{V(y,\alpha_2t)}\int_{B(y,\alpha_2t)}|F^\ast_{\alpha_2}(z)|^{\frac{n}{\lambda}}\,d\mu(z)
\leq
\frac{1}{V(y,\alpha_2t)}
\int_{B(x,d(x,y)+\alpha_2t)}|F^{\ast}_{\alpha_2}(z)|^{\frac{n}{\lambda}}\,d\mu(z)\\
&\lesssim
\left(1+\frac{\alpha_1}{\alpha_2}\right)^n\left(1+\frac{d(x,y)}{\alpha_1t}\right)^n\mathcal{M}\left([F^\ast_{\alpha_2}]^{\frac{n}{\lambda}}\right)(x).
\end{align*} 
By this, we conclude that, for any $x\in\mathcal{X},$
\begin{align*}
F^\ast_{\alpha_1,\lambda}(x)
\lesssim
\left(1+\frac{\alpha_1}{\alpha_2}\right)^\lambda\mathcal{M}^{(\frac{n}{\lambda})}(F^\ast_{\alpha_2})(x).
\end{align*}
Using this and Assumption \ref{vector1}, we obtain
\begin{align*}
\|F^\ast_{\alpha_1}\|_{X}
\leq
2^\lambda\|F^\ast_{\alpha_1,\lambda}\|_{X}
\lesssim
\left(1+\frac{\alpha_1}{\alpha_2}\right)^{\lambda}
\left\|\mathcal{M}^{(\frac{n}{\lambda})}(F^\ast_{\alpha_2})\right\|_{X}
\lesssim
\left(1+\frac{\alpha_1}{\alpha_2}\right)^{\lambda}
\|F^\ast_{\alpha_2}\|_{X}.
\end{align*}
This finishes the proof of Proposition \ref{kenan1}.
\end{proof}
\begin{proposition}\label{kenan2}
Let $X$ be a $\mathrm{BQBF}$ space satisfying Assumption \ref{vector1} for some $p_-\in (0,\infty).$ Let $\lambda\in (\frac{n}{p_-},\infty)$.
Then there exists a positive constant $C$ such that, for any measurable function $F$
on $\mathcal{X}^+,$
\begin{align*}
\|F^\ast_{1,\lambda}\|_{X}
\leq
C\|F^\ast\|_{X}.
\end{align*}
\end{proposition}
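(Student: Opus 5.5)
The plan is to obtain a pointwise domination of $F^\ast_{1,\lambda}$ by a powered Hardy--Littlewood maximal function of $F^\ast$, and then to apply Assumption \ref{vector1}. Fix a measurable $F$ on $\mathcal{X}^+$, a point $x\in\mathcal{X}$, a scale $t\in(0,1)$, and a point $y\in\mathcal{X}$. Choose $r\in(0,p_-)$ with $\lambda>n/r$, which is possible because $\lambda>n/p_-$. By the definition of $F^\ast$, we have $|F(y,t)|\le F^\ast(z)$ for every $z\in B(y,t)$. Averaging the $r$-th power over $B(y,t)$ then gives
\begin{align*}
|F(y,t)|^{r}\le \frac{1}{V(y,t)}\int_{B(y,t)}[F^\ast(z)]^{r}\,d\mu(z).
\end{align*}

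Next, I would enlarge the ball to $B(x,d(x,y)+t)$, which contains $B(y,t)$ and also contains $x$. Using \eqref{d2} and \eqref{d1}, as in the proof of Proposition \ref{kenan1} with $\alpha_1=\alpha_2=1$, we get $V(x,d(x,y)+t)\lesssim(1+d(x,y)/t)^{n}V(y,t)$. This yields
\begin{align*}
|F(y,t)|^{r}\lesssim \left(1+\frac{d(x,y)}{t}\right)^{n}\mathcal{M}\left([F^\ast]^{r}\right)(x).
\end{align*}
Taking $r$-th roots and multiplying by $(1+d(x,y)/t)^{-\lambda}$ leaves the factor $(1+d(x,y)/t)^{n/r-\lambda}\le1$. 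Taking the supremum over $t\in(0,1)$ and $y\in\mathcal{X}$ then gives the pointwise bound
\begin{align*}
F^\ast_{1,\lambda}(x)\lesssim \left[\mathcal{M}\left([F^\ast]^{r}\right)(x)\right]^{\frac1r},
\end{align*}
with an implicit constant independent of $F$.

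Finally, I would rewrite $\|[\mathcal{M}([F^\ast]^r)]^{1/r}\|_X=\|\mathcal{M}([F^\ast]^r)\|_{X^{1/r}}^{1/r}$. Applying Assumption \ref{vector1} with $p:=r<p_-$ to the single-function sequence (take $f_1:=[F^\ast]^r$ and $f_j:=0$ for $j\ge2$) shows that $\mathcal{M}$ is bounded on $X^{1/r}$. Hence this quantity is at most a constant times $\|[F^\ast]^r\|_{X^{1/r}}^{1/r}=\|F^\ast\|_X$.

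The only delicate point is choosing $r$ strictly below $p_-$ while still having $n/r<\lambda$. The proof of Proposition \ref{kenan1} implicitly used the exponent $n/\lambda$, which equals the borderline case. Choosing $r\in(n/\lambda,p_-)$ removes this issue and makes the powered factor harmless.
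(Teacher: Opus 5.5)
Your proof is correct, but it takes a different route from the paper's proof of this proposition.

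\textbf{The paper's route.} It uses the classical annular decomposition \eqref{tilian1},
$F^\ast_{1,\lambda}\le F^\ast+\sum_{k\in\mathbb{Z}_+}2^{-k\lambda}F^\ast_{2^{k+1}}$.
It then applies the aperture-change estimate of Proposition \ref{kenan1} with an auxiliary exponent $\lambda_1\in(\frac{n}{p_-},\lambda)$, which gives $\|F^\ast_{2^{k+1}}\|_X\lesssim 2^{k\lambda_1}\|F^\ast\|_X$. Finally, it sums the geometric series in $2^{-k\beta(\lambda-\lambda_1)}$ using the Aoki--Rolewicz $\beta$-power triangle inequality for $\|\cdot\|_X$.

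\textbf{Your route.} You prove the one-step pointwise bound $F^\ast_{1,\lambda}\lesssim[\mathcal{M}([F^\ast]^r)]^{1/r}$ and then use only the scalar boundedness of $\mathcal{M}$ on $X^{1/r}$. This avoids Proposition \ref{kenan1}, the decomposition, and the Aoki--Rolewicz summation. It is also essentially the pointwise inequality already derived inside the paper's proof of Proposition \ref{kenan1}, specialized to $\alpha_1=\alpha_2=1$. So your argument is the shorter one, and the paper's proof of this proposition is something of a detour.

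\textbf{What each buys.} The paper's approach is more modular. It uses only the norm-level aperture estimate as a black box, so it would still work where one has such an estimate with growth of order less than $\lambda$ but no pointwise maximal domination. Your approach is more elementary.

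\textbf{A correction to your last paragraph.} There is no real borderline difficulty at $r=n/\lambda$. Since $\lambda>\frac{n}{p_-}$ strictly, we have $n/\lambda<p_-$ strictly, so Assumption \ref{vector1} applies with $p:=n/\lambda$. The leftover factor $(1+d(x,y)/t)^{n/r-\lambda}$ is then identically $1$, which is harmless. Choosing $r\in(n/\lambda,p_-)$ is fine but not needed.
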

\begin{proof}
Let  $F$ be a measurable function on $\mathcal{X}^+.$
It is easy to verify that, for any $x\in\mathcal{X},$
\begin{align}\label{tilian1}
F^\ast_{1,\lambda}(x)
\leq
F^\ast(x)+\sum_{k\in\mathbb{Z}_+}2^{-k\lambda}F^\ast_{2^{k+1}}(x).
\end{align}
Let $\lambda_1\in (\frac{n}{p_-}, \lambda).$
Using Proposition \ref{kenan1} with $\alpha_1:=2^{k+1}$ and $\alpha_2:=1$, we find that, for any $k\in\mathbb{Z}_+,$
\begin{align}\label{tilian2}
\left\|F^\ast_{2^{k+1}}\right\|_X
\lesssim
(1+2^{k+1})^{\lambda_1}\|F^\ast\|_X
\lesssim
2^{k\lambda_1}\|F^\ast\|_X.
\end{align}
By the Aoki--Rolewicz theorem (see, for instance, \cite[Exercises 1.4.6]{g14c}), we find that, there exists $\beta\in (0,1]$ such that, for any
$\{f_i\}_{i\in\mathbb{Z}_+}\subset \mathscr{M}$,
\begin{align*}
\left\|\sum_{i\in\mathbb{Z}_+}f_i\right\|^\beta_X
\leq
\sum_{i\in\mathbb{Z}_+}4\|f_i\|_{X}^\beta.
\end{align*}
From this, \eqref{tilian1}, and \eqref{tilian2}, we infer that
\begin{align*}
\|F^\ast_{1,\lambda}\|_{X}^\beta
\lesssim
\|F^\ast\|_{X}^\beta+\sum_{k\in\mathbb{Z}_+}2^{-k\beta\lambda}\left\|F^\ast_{2^{k+1}}\right\|_X^\beta
\lesssim
\sum_{k\in\mathbb{Z}_+}2^{-k\beta(\lambda-\lambda_1)}\|F^\ast\|_X^\beta
\sim 
\|F^\ast\|_X^\beta.
\end{align*}
This finishes the proof of Proposition \ref{kenan2}.
\end{proof}
Let 
$\varphi\in\mathcal{S}(\mathbb{R})$ be an even function with $\varphi(0)=1$ and $\lambda\in(0,\infty)$.
For any $f\in L^2$, the \emph{local auxiliary maximal function} $M^\ast_{L,\varphi,\lambda}(f)$, associated
with $\varphi$ and $\lambda$, is defined by setting, for any $x\in\mathcal{X},$
\begin{align*}
M^\ast_{L,\varphi, \lambda}(f)(x):=
\sup_{t\in(0,1)}\sup_{y\in\mathcal{X}}\left|\varphi\left(t\sqrt{L}\right)(f)(y)\right|\left(1+\frac{d(x,y)}{t}\right)^{-\lambda}.
\end{align*}
\begin{proposition}\label{kenan3}
Let $X$ be a $\mathrm{BQBF}$ space satisfying Assumption \ref{vector1} for some $p_-\in(0,\infty).$ Let $\alpha_1,\alpha_2\in (0,\infty)$ and $\varphi_1,\varphi_2\in\mathcal{S}(\mathbb{R})$ be even functions satisfying $\varphi_1(0)=1$ and $\varphi_2(0)=0.$ Then there exists a positive constant $C$ such that, for any $f\in L^2,$
\begin{align}\label{1112}
\left\|(\varphi_2)^\ast_{L,\alpha_2}(f)\right\|_{X}
\leq
C\left\|(\varphi_1)^\ast_{L,\alpha_1}(f)\right\|_{X}.
\end{align}
\end{proposition}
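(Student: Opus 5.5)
Our plan follows the classical Fefferman--Stein argument in its spectral form (as in Bui--Duong--Ly \cite{bdl18} and the weighted/variable versions), reducing everything to the auxiliary maximal function $M^\ast_{L,\varphi_1,\lambda}$ with $\lambda\in(\frac{n}{p_-},\infty)$. By Proposition \ref{kenan1} we may replace $\alpha_1,\alpha_2$ by $1$, at the cost of constants depending on $\alpha_1,\alpha_2$. Since $(\varphi_2)^\ast_{L,1}(f)(x)\le 2^\lambda M^\ast_{L,\varphi_2,\lambda}(f)(x)$, and Proposition \ref{kenan2} gives $\|M^\ast_{L,\varphi_1,\lambda}(f)\|_X\lesssim\|(\varphi_1)^\ast_{L,1}(f)\|_X$ (take $F(y,t):=\varphi_1(t\sqrt L)f(y)$), it suffices to prove the pointwise bound
\begin{align*}
M^\ast_{L,\varphi_2,\lambda}(f)(x)\lesssim M^\ast_{L,\varphi_1,\lambda}(f)(x)+\mathcal{M}^{(r)}\bigl(\text{a local term}\bigr)(x)
\end{align*}
for a suitable $r<p_-$ and a local term already controlled by $(\varphi_1)^\ast_{L,1}(f)$.

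The key step is a Calderón-type reproducing formula adapted to $t\in(0,1)$. Choose an even $\psi\in\mathcal{S}(\mathbb{R})$ with compact support away from zero (or with $\psi(\xi)=O(\xi^{2N})$ at $0$) so that $\varphi_1(\xi)\psi(\xi)$ generates a partition $\sum_{k\ge0}\varphi_1(2^{-k}\xi)\psi(2^{-k}\xi)+\Phi(\xi)=1$, where $\Phi$ is the low-frequency remainder, which is itself a Schwartz multiple of $\varphi_1(\xi)$ near $0$ (possible because $\varphi_1(0)=1$, so $\varphi_1$ is nonvanishing near $0$). For $t\in(0,1)$ we then write
\begin{align*}
\varphi_2(t\sqrt L)f=\sum_{k\ge0}\varphi_2(t\sqrt L)\psi(2^{-k}t\sqrt L)\,\varphi_1(2^{-k}t\sqrt L)f+\varphi_2(t\sqrt L)\widetilde\Phi(t\sqrt L)\,\varphi_1(t\sqrt L)f .
\end{align*}
All scales $2^{-k}t$ lie in $(0,1)$, so every term only involves the local maximal function. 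We use the finite-propagation/Gaussian kernel bounds for $\eta(s\sqrt L)$ with $\eta$ Schwartz (as in Lemma \ref{gj1} and \eqref{lzy}). Because $\varphi_2(0)=0$ and the functions are even, the kernel of $\varphi_2(t\sqrt L)\psi(2^{-k}t\sqrt L)$ decays like $2^{-kN}\,V(y,2^{-k}t)^{-1}(1+d(y,z)/(2^{-k}t))^{-N'}$ for arbitrarily large $N,N'$. This decay comes from the vanishing of $\varphi_2$ at $0$ at order $2$ together with $\psi$ being supported where $|\xi|\sim 2^k/t$; to make it arbitrarily strong, choose $\psi$ to vanish to high order at zero and exploit the mismatch of scales.

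Inserting $|\varphi_1(2^{-k}t\sqrt L)f(z)|\le M^\ast_{L,\varphi_1,\lambda}(f)(x)(1+d(x,z)/(2^{-k}t))^\lambda$ and integrating against these kernel bounds gives a geometric series $\sum_k 2^{-k(N-\lambda)}$ times $(1+d(x,y)/t)^{\lambda}$. This yields $M^\ast_{L,\varphi_2,\lambda}(f)\lesssim M^\ast_{L,\varphi_1,\lambda}(f)$ pointwise, and the remainder term is handled identically at scale $t$. The genuine obstacle is that $M^\ast_{L,\varphi_1,\lambda}(f)$ might be infinite, which would make the pointwise argument circular in the usual Peetre-inequality route. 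We avoid it precisely by never needing the reverse Peetre bound: Proposition \ref{kenan2} already controls $M^\ast_{\varphi_1}$ by the non-tangential function in $X$, and for $f\in L^2$ all quantities are finite a.e. The technical heart is therefore the almost-orthogonality kernel estimate for $\varphi_2(t\sqrt L)\psi(2^{-k}t\sqrt L)$, which we would prove via the Fourier representation $\eta(s\sqrt L)=\frac1{2\pi}\int\hat\eta(\tau)\cos(\tau s\sqrt L)\,d\tau$ and finite propagation speed, as in \cite{bdl18}.
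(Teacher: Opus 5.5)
Your argument follows the paper's proof. You use Proposition \ref{kenan1} to reduce to $\alpha_1=\alpha_2=1$, then a pointwise bound by $M^\ast_{L,\varphi_1,\lambda}(f)$ with $\lambda\in(\frac{n}{p_-},\infty)$, then Proposition \ref{kenan2} with $F(y,t):=\varphi_1(t\sqrt{L})f(y)$; the paper simply imports the pointwise bound from the proof of \cite[Proposition 4.3]{bdl18}, whereas you sketch it via a dyadic spectral partition adapted to $\varphi_1$.

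One correction to your sketch: the $2^{-kN}$ decay of the kernel of $\varphi_2(t\sqrt{L})\psi(2^{-k}t\sqrt{L})$ does not come from $\varphi_2(0)=0$. On the support of $\psi(2^{-k}t\,\cdot)$ one has $|t\xi|\sim 2^k$, so the decay comes from the rapid decay of $\varphi_2$ at infinity: every $C^m$-norm of $u\mapsto\varphi_2(2^ku)\psi(u)$ is $O(2^{-kN})$. Your argument therefore never actually uses the hypothesis $\varphi_2(0)=0$.
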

\begin{proof}
By Proposition \ref{kenan1}, we can assume that $\alpha_1=\alpha_2=1.$ Let $\lambda\in (\frac{n}{p_-},\infty)$.
By the proof of \cite[Proposition 4.3]{bdl18}, we obtain, for any $x\in\mathcal{X}, $ $(\varphi_2)_{L}^\ast (f)(x)
\lesssim
M^\ast_{L,\varphi_1,\lambda}(f)(x).$
Using this and Proposition \ref{kenan2}, we find that
\eqref{1112} holds. This finishes the proof of Proposition \ref{kenan3}.
\end{proof}
\begin{proposition}\label{kenan4}
Let $X$ be a $\mathrm{BQBF}$ space satisfying Assumption \ref{vector1} for some $p_-\in(0,\infty)$. 
Let 
$\varphi\in\mathcal{S}(\mathbb{R})$ be even function satisfying $\varphi(0)=1$,
$\lambda\in (\frac{n}{p_-},\infty),$
and $\alpha\in(0,\infty).$
Then there exists a positive constant $C$
such that, for any $f\in h_{X,L,\mathrm{rad}}^\varphi\cap L^2,$
\begin{align}\label{zhanyi}
(1+\alpha)^{-\lambda}\|f\|_{h_{X,L,\mathrm{max}}^{\varphi,\alpha}}
\leq
\left\|M^\ast_{L,\varphi, \lambda}(f)\right\|_{X}
\leq
C\|f\|_{h_{X,L,\mathrm{rad}}^\varphi}.
\end{align}
\end{proposition}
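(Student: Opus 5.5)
The left inequality in \eqref{zhanyi} is pointwise. Fix $x\in\mathcal{X}$, $t\in(0,1)$ and $y\in B(x,\alpha t)$. Then $1+d(x,y)/t\le 1+\alpha$, so
\begin{align*}
\left|\varphi\left(t\sqrt L\right)(f)(y)\right|\le (1+\alpha)^{\lambda}\left|\varphi\left(t\sqrt L\right)(f)(y)\right|\left(1+\frac{d(x,y)}{t}\right)^{-\lambda}
\le(1+\alpha)^{\lambda}M^\ast_{L,\varphi,\lambda}(f)(x).
\end{align*}
Taking the supremum over $t$ and $y$ gives $\varphi^\ast_{L,\alpha}(f)\le(1+\alpha)^\lambda M^\ast_{L,\varphi,\lambda}(f)$. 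Applying Definition \ref{debb}(ii) then yields the first inequality.

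For the right inequality I will follow the Peetre-type argument of \cite[Proposition 4.4]{bdl18}. The goal is a pointwise estimate
\begin{align*}
M^\ast_{L,\varphi,\lambda}(f)(x)\lesssim \left\{\mathcal{M}\left(\left[\varphi_L^+(f)\right]^{r}\right)(x)\right\}^{\frac1r},
\end{align*}
valid for some $r\in(0,p_-)$ with $\lambda>n/r$. Choose $r$ so that $n/\lambda<r<p_-$, which is possible because $\lambda>n/p_-$. Once this estimate holds, Assumption \ref{vector1} applied with a single function (that is, boundedness of $\mathcal{M}$ on $X^{1/r}$) gives
\begin{align*}
\|M^\ast_{L,\varphi,\lambda}(f)\|_X\lesssim\left\|\mathcal{M}\left([\varphi_L^+(f)]^r\right)\right\|_{X^{1/r}}^{\frac1r}\lesssim\|\varphi_L^+(f)\|_X.
\end{align*}

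To obtain the pointwise estimate I will use the discrete Calderón-type reproducing formula of \cite{bdl18}. Pick $\psi$ with a sufficiently high-order zero at the origin such that, for $t\in(0,1)$,
\begin{align*}
\varphi\left(t\sqrt L\right)=\varphi\left(t\sqrt L\right)\varphi\left(\sqrt L\right)+\sum_{k\ge1}\left[\psi\left(2^{-k}t\sqrt L\right)\varphi\left(2^{-k}t\sqrt L\right)\right]
\end{align*}
in $L^2$, up to a harmless modification of $\varphi$ in the first term. The kernels of $\psi(2^{-k}t\sqrt L)$ have $L$-adapted almost-orthogonality decay, which follows from finite propagation speed and the Gaussian bounds (Lemma \ref{gj1}-type estimates). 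This decay yields a Peetre inequality for $t\in(0,1)$:
\begin{align*}
|\varphi(t\sqrt L)f(y)|\left(1+\frac{d(x,y)}{t}\right)^{-\lambda}\le C_N\sum_{k\ge0}2^{-kN}\left[\int_{\mathcal X}\frac{|\varphi(2^{-k}t\sqrt L)f(z)|^r}{V(z,2^{-k}t)\,(1+2^kd(x,z)/t)^{\lambda r}}\,d\mu(z)\right]^{1/r}\left[M^\ast_{L,\varphi,\lambda}(f)(x)\right]^{1-r}\cdot(\text{const}).
\end{align*}
The restriction to small scales is harmless because the scales $2^{-k}t$ also lie in $(0,1)$.

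In the resulting integral, bound each factor by $[\varphi^+_L(f)(z)]^r$. Because $\lambda r>n$, the $z$-integral is controlled by $\mathcal{M}([\varphi^+_L f]^r)(x)$, with powers of $2^k$ absorbed by $2^{-kN}$. This gives
\begin{align*}
M^\ast\le C\left[\mathcal{M}\left((\varphi^+_L f)^r\right)(x)\right]^{1/r}\cdot\left[\mathcal{M}\left((\varphi^+_L f)^r\right)(x)\right]^{(1-r)/r}\,\cdot[M^\ast]^{1-r}.
\end{align*}
Dividing by $[M^\ast]^{1-r}$ then gives the pointwise bound.

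\textbf{The main obstacle} is this division, which requires $M^\ast_{L,\varphi,\lambda}(f)(x)<\infty$. For $f\in L^2$, the Gaussian/Schwartz bounds only give $|\varphi(t\sqrt L)f(y)|\lesssim V(y,t)^{-1/2}\|f\|_{L^2}$, and this blows up as $t\to0$. I would handle it as \cite{bdl18} does. Introduce a truncated version with $\sup_{t\in(\varepsilon,1)}$ and an extra factor $(1+d(x,y))^{-N}$ to ensure finiteness. Run the argument above for this truncated quantity, with constants uniform in $\varepsilon$ and $N$ large. Finally let $\varepsilon\to0$ using Definition \ref{debb}(iii).
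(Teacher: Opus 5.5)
Your argument is the same as the paper's. The left inequality follows from the pointwise bound $\varphi^\ast_{L,\alpha}(f)\le(1+\alpha)^{\lambda}M^\ast_{L,\varphi,\lambda}(f)$ and the lattice property of $X$. The right one follows from the pointwise estimate $M^\ast_{L,\varphi,\lambda}(f)\lesssim\{\mathcal{M}([\varphi^+_L(f)]^{r})\}^{1/r}$ with $r\in(n/\lambda,p_-)$, taken from the proof of \cite[Proposition 4.4]{bdl18}, together with the boundedness of $\mathcal{M}$ on $X^{1/r}$ given by Assumption \ref{vector1}.

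Since you, like the paper, take that estimate from \cite{bdl18}, your own sketch of it is not needed, and it should not be used as written. First, the Peetre step should read $M^\ast_{L,\varphi,\lambda}(f)(x)\lesssim[M^\ast_{L,\varphi,\lambda}(f)(x)]^{1-r}\mathcal{M}([\varphi^+_L(f)]^{r})(x)$, with no power $1/r$ on the integral; as you wrote it, dividing gives the wrong bound. Second, the splitting $|F|=|F|^r|F|^{1-r}$ only works for $r\le1$. If $\lambda\le n$, then $r>1$ is forced, and you should use H\"older's inequality instead, which needs no division. Third, the finiteness issue you raise is automatic. Division is only needed when $r\le1$, which forces $\lambda>n$, and then the Schwartz decay of the kernel of $\varphi(t\sqrt L)$ gives $M^\ast_{L,\varphi,\lambda}(f)\lesssim\mathcal{M}(f)<\infty$ almost everywhere for $f\in L^2$. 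Your proposed hard cutoff $t\in(\varepsilon,1)$ would in any case not survive passing to the smaller scales $2^{-k}t$.
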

\begin{proof}
Let $f\in h_{X,L,\mathrm{rad}}^\varphi\cap L^2$ and
$\theta\in(0,p_-)$ satisfying $\frac{n}{p_-}<\frac{n}{\theta}<\lambda.$
By the proof of \cite[Proposition 4.4]{bdl18}, we obtain, for any $x\in\mathcal{X},$
\begin{align*}
M^\ast_{L,\varphi, \lambda}(f)(x)
\lesssim
\left[\mathcal{M}\left(\left[\varphi^+_L(f)\right]^\theta\right)\right]^{\frac{1}{\theta}}(x).
\end{align*}
From this, the definitions of $M^\ast_{L,\varphi, \lambda}(f)$ and $(\varphi)^\ast_{L,\alpha}$, and Assumption \ref{vector1}, it follows that \eqref{zhanyi} holds. This finishes the proof of Proposition \ref{kenan4}.
\end{proof}
The following proposition is precisely
\cite[Proposition 3.6]{bdl18}.
\begin{proposition}\label{gj3}
Let $\varphi$ be as in Lemma \ref{gj1}. Let $\psi\in C^\infty_{\rm{c}}(\mathbb{R})$ be an even function satisfying $\int_{\mathbb{R}}\psi(x)\,dx=2\pi$ and $\mathrm{supp}\,(\psi)\subset(-1,1)$.
Denotes the Fourier transform of $\psi$
by $\hat{\psi}$.
For any $\ell,j\in\mathbb{Z}_+$ and $\xi\in\mathbb{C},$ let
\begin{align*}
\Psi^{(\ell)}(\xi):=\frac{d^\ell\hat{\psi}(\xi)}{d\xi^\ell}
\ \ \text{and}\ \ 
\Psi^{(\ell)}_{j}(\xi):=\xi^{j}\Psi^{(\ell)}(\xi).
\end{align*}
Then, for any $M\in\mathbb{N}$ and $f\in L^2,$ 
\begin{align*}
f
&=\sum_{\ell=0}^{M+1}c(M,\ell)\int_0^{\frac{1}{2}}(t^2L)^M\Phi^{(2\ell)}_{1}(t\sqrt{L})\Psi^{(2M-2\ell+2)}_{1}(t\sqrt{L})(f)\,\frac{dt}{t}\\
&\quad+\sum_{\ell=0}^{M}c(M,\ell)\int_0^{\frac{1}{2}}(t^2L)^M\Phi^{(2\ell+1)}_{2}(t\sqrt{L})\Psi^{(2M-2\ell+1)}_2(t\sqrt{L})(f)\,\frac{dt}{t}\\
&\quad+\sum_{\ell=1}^{2M+2}\sum_{j=0}^\ell
c(M,\ell,j)\Phi^{(j)}_j(2^{-1}\sqrt{L})\Psi^{(\ell-j)}_{\ell-j}(2^{-1}\sqrt{L})(f)\ \ \mathrm{in} \ L^2.
\end{align*}
\end{proposition}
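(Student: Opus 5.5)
Since this is quoted from \cite[Proposition 3.6]{bdl18}, the plan is simply to re-derive it: prove a scalar identity on $[0,\infty)$ and transfer it to $L$ by the spectral theorem. Here $\Phi^{(\ell)}_j$ denotes the functions built from $\hat\varphi$ as in \eqref{psi}. Let
$$F(\xi):=\hat\varphi(\xi)\hat\psi(\xi),\qquad \xi\in\mathbb{R}.$$
Since $\varphi,\psi\in C^\infty_{\rm c}(\mathbb{R})$ are even with integral $2\pi$, the function $F$ is even, entire, rapidly decreasing on $\mathbb{R}$, and satisfies $F(0)=1$ (with the Fourier normalization implicit in \eqref{psi}).

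\textbf{Scalar identity.} For $\xi\ge0$ and $a:=\frac12$, I would integrate by parts $2M+2$ times in
$$\int_0^{a}(t\xi)^{2M+2}F^{(2M+2)}(t\xi)\,\frac{dt}{t}.$$
Each step lowers the derivative order by one and the power of $t$ by one. The boundary terms at $t=0$ vanish except the last one, which produces a nonzero constant multiple of $F(0)=1$. The terms at $t=a$ are of the form $(a\xi)^kF^{(k)}(a\xi)$ with $1\le k\le 2M+2$. Dividing by that nonzero constant gives
$$1=c\int_0^{1/2}(t\xi)^{2M+2}F^{(2M+2)}(t\xi)\,\frac{dt}{t}+\sum_{k=1}^{2M+2}c_k\,(\xi/2)^kF^{(k)}(\xi/2).$$

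\textbf{Expansion by Leibniz.} Next I expand $F^{(2M+2)}=\sum_m\binom{2M+2}{m}\hat\varphi^{(m)}\hat\psi^{(2M+2-m)}$ and split by the parity of $m$:
\begin{itemize}
\item for $m=2\ell$, write $s^{2M+2}=s^{2M}\cdot s\cdot s$, which gives the terms $(t^2\xi^2)^M\Phi^{(2\ell)}_1\Psi^{(2M-2\ell+2)}_1$;
\item for $m=2\ell+1$, write $s^{2M+2}=s^{2M}\cdot s^2\cdot s^2$, which gives the terms $(t^2\xi^2)^M\Phi^{(2\ell+1)}_2\Psi^{(2M-2\ell+1)}_2$.
\end{itemize}
In both cases the last two factors have the same parity as their derivative order plus index, so each factor is an even function of $\xi$. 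This is exactly what is needed for the factors to be functions of $\xi^2$ and hence well-defined functions of $L$ via $\xi=\sqrt{L}$. Similarly, $(\xi/2)^kF^{(k)}(\xi/2)=\sum_{j=0}^k\binom kj\Phi^{(j)}_j\Psi^{(k-j)}_{k-j}(\xi/2)$, which gives the last sum with $\ell=k$.

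\textbf{Transfer to $L$.} Each term is a bounded Borel function of $\xi\in[0,\infty)$, so the identity passes to $L$ through the functional calculus. For the $t$-integrals I would check the uniform bound
$$\int_0^{1/2}\left|(t\xi)^{2M+2}\hat\varphi^{(m)}(t\xi)\hat\psi^{(2M+2-m)}(t\xi)\right|\frac{dt}{t}\le\int_0^\infty|s^{2M+2}G(s)|\,\frac{ds}{s}<\infty,$$
where $G=\hat\varphi^{(m)}\hat\psi^{(2M+2-m)}$ is a Schwartz function. This bound holds uniformly in $\xi$. With it, Fubini's theorem and dominated convergence in the spectral measure of $f$ show that $\int_0^{1/2}\cdots\,\frac{dt}{t}$ converges in $L^2$, as an $L^2$-valued Bochner integral, and equals the operator $\left[\int_0^{1/2}\cdots\,\frac{dt}{t}\right](\sqrt L)$ applied to $f$.

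\textbf{Main obstacle.} The analysis is routine; the delicate part is the bookkeeping. One must confirm that the repeated integration by parts leaves a nonzero coefficient in front of $F(0)$, so that the constants $c(M,\ell)$ and $c(M,\ell,j)$ exist, and that the parity split really produces even factors. Both can be checked directly by comparing with the computation in \cite[Lemma 3.9]{dhmmy13}.
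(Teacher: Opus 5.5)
Your route (a scalar identity from repeated integration by parts, a Leibniz split, then the spectral theorem) is the standard one behind \cite[Lemma 3.9]{dhmmy13} and \cite[Proposition 3.6]{bdl18}, which the paper simply quotes. However, the bookkeeping you postponed is exactly where the proposal fails, and three of the steps you assert are false.

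\emph{First}, the range of the boundary terms is wrong. With $A:=\xi/2$, for $k\ge2$ one has $\int_0^A s^{k-1}F^{(k)}(s)\,ds=A^{k-1}F^{(k-1)}(A)-(k-1)\int_0^A s^{k-2}F^{(k-1)}(s)\,ds$. The last step, $\int_0^A F'(s)\,ds=F(A)-F(0)$, produces $F(A)$ as well as $F(0)$. Hence, with $F(0)=1$,
\begin{align*}
1=\frac{1}{(2M+1)!}\int_0^{\frac12}(t\xi)^{2M+2}F^{(2M+2)}(t\xi)\,\frac{dt}{t}+\sum_{k=0}^{2M+1}\frac{(-1)^k}{k!}\left(\frac{\xi}{2}\right)^kF^{(k)}\left(\frac{\xi}{2}\right).
\end{align*}
The upper-limit terms run over $0\le k\le 2M+1$, not $1\le k\le 2M+2$. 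Your displayed identity is false: at $\xi=0$ its right-hand side vanishes. The $k=0$ term $\hat\varphi(\xi/2)\hat\psi(\xi/2)$ is indispensable.

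The same test shows that the printed range $\ell\in\{1,\dots,2M+2\}$ in the last sum of the statement cannot be right, for any choice of constants, once $0\in\sigma(L)$ (e.g.\ $L=-\Delta$ on $\mathbb{R}^n$). Every term on the right tends to $0$ as $\xi\to0$. Indeed, the proof of Proposition \ref{pro-max-dec} displays $\sum_{\ell=0}^{2M+2}$.

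\emph{Second}, the power count is off: $s^{2M}\cdot s^2\cdot s^2=s^{2M+4}$, not $s^{2M+2}$. For odd $m=2\ell+1$ the Leibniz term is therefore $s^{2M}\Phi^{(2\ell+1)}_1(s)\Psi^{(2M-2\ell+1)}_1(s)$. Nothing in your computation produces the $\Phi_2\Psi_2$ terms of the statement.

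\emph{Third}, the parity claim is false. The function $\xi^j\hat\varphi^{(m)}(\xi)$ has the parity of $j+m$, so $\Phi^{(2\ell)}_1$ and $\Phi^{(2\ell+1)}_2$ are odd, not even. Parity is in any case irrelevant for defining $g(t\sqrt L)$, since the spectral theorem applies to every bounded Borel $g$ on $[0,\infty)$. It matters only later, for the finite propagation speed in Lemma \ref{gj1}, which requires $j+\ell$ even.

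Carried out correctly, your argument proves
\begin{align*}
f&=\frac{1}{(2M+1)!}\sum_{m=0}^{2M+2}\binom{2M+2}{m}\int_0^{\frac12}(t^2L)^M\Phi^{(m)}_1(t\sqrt L)\Psi^{(2M+2-m)}_1(t\sqrt L)(f)\,\frac{dt}{t}\\
&\quad+\sum_{k=0}^{2M+1}\sum_{j=0}^{k}\frac{(-1)^k}{k!}\binom{k}{j}\Phi^{(j)}_j(2^{-1}\sqrt L)\Psi^{(k-j)}_{k-j}(2^{-1}\sqrt L)(f).
\end{align*}
Its even-$m$ part is the first sum of the statement, but its odd-$m$ part and its final sum differ from the printed ones. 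You should have derived this and flagged the discrepancy, rather than adjusting the arithmetic to fit the printed formula.

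A smaller point concerns the transfer to $L$. The $t$-integrals are in general not Bochner integrals in $L^2$ over $(0,\frac12)$ with respect to $\frac{dt}{t}$. The norm integral $\int_0^{1/2}\|\cdot\|_{L^2}\,\frac{dt}{t}$ can diverge for an $f\in L^2$ whose spectral mass is spread over infinitely many dyadic scales. What your uniform bound does give, via dominated convergence in the spectral measure of $f$, is $L^2$-convergence of $\int_\varepsilon^{1/2}$ as $\varepsilon\to0$. That is the correct meaning of the identity.
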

The following lemma is a part of \cite[Lemma 3.13]{lyyy26cms}.
\begin{lemma}\label{tent-ge}
Let $\Omega$ be a proper open subset of $\mathcal{X}$. Then there exists a sequence
$\{x_j\}_{j\in\mathbb{N}}\subset \Omega$ such that
\begin{enumerate}
\item[\rm{(i)}]
$\Omega=\bigcup_{j\in\mathbb{N}}B(x_j,r_j),$
where $r_j:=2^{-5}\mathrm{dist}\,(x_j,\Omega^\complement)$ for any $j\in\mathbb{N}$;
\item[\rm{(ii)}] $\{B(x_j,5^{-1}r_j)\}_{j\in\mathbb{N}}$ is
pairwise disjoint.

\noindent Moreover, there exists a family $\{\phi_j\}_{j \in \mathbb{N}}$ of non-negative functions on $\mathcal{X}$ such that
\item[\rm{(iii)}] 
for any $j \in \mathbb{N}$, $\operatorname{supp}(\phi_j) \subset B(x_j, 2r_j)$;
\item[\rm{(iv)}] 
$\sum_{j \in \mathbb{N}} \phi_j = 1_{\Omega}$.
\end{enumerate}
\end{lemma}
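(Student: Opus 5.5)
The plan is a Whitney-type covering built from a maximal disjoint family of small balls, followed by a partition of unity made of normalized Lipschitz bump functions. Throughout, write $\delta(x):=\mathrm{dist}\,(x,\Omega^\complement)$ for $x\in\mathcal{X}$. Since $\Omega$ is open and proper, $\delta(x)\in(0,\infty)$ for every $x\in\Omega$. Since $d$ is a metric, $\delta$ is $1$-Lipschitz: $|\delta(x)-\delta(y)|\le d(x,y)$. For $x\in\Omega$ put $r(x):=2^{-5}\delta(x)$.

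\emph{Step 1 (selection of centers).} By Zorn's lemma, choose a maximal family $\{x_j\}\subset\Omega$ such that the balls $B(x_j,5^{-1}r_j)$, with $r_j:=r(x_j)$, are pairwise disjoint. This gives (ii) directly. The family is countable. Indeed, $\mathcal{X}=\bigcup_{k\in\mathbb{N}}B(x_0,k)$ with $\mu(B(x_0,k))<\infty$, and every ball has positive measure. Hence each $B(x_0,k)$ can meet only countably many of these disjoint balls with positive measure, so only countably many balls occur in total. I deliberately avoid the classical Vitali lemma, which needs bounded radii, because $\sup_\Omega\delta$ may be infinite. Maximality together with the Lipschitz property of $\delta$ replaces it.

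\emph{Step 2 (covering, (i)).} Let $x\in\Omega$. By maximality, $B(x,5^{-1}r(x))$ meets some $B(x_j,5^{-1}r_j)$. Then
\[
d(x,x_j)<5^{-1}\bigl(r(x)+r_j\bigr)=\frac{\delta(x)+\delta(x_j)}{160}.
\]
Since $\delta(x)\le\delta(x_j)+d(x,x_j)$, this yields $\delta(x)\le\frac{161}{159}\delta(x_j)$. Consequently
\[
d(x,x_j)<\frac{1}{160}\Bigl(1+\frac{161}{159}\Bigr)\delta(x_j)<2^{-5}\delta(x_j)=r_j,
\]
so $x\in B(x_j,r_j)$. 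Moreover each $B(x_j,r_j)\subset\Omega$, so (i) holds.

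\emph{Step 3 (bounded overlap).} Let $y\in B(x_j,2r_j)$. Then $|\delta(y)-\delta(x_j)|<2^{-4}\delta(x_j)$, so $\delta(y)\sim\delta(x_j)$ and hence $r_j\sim\delta(y)$. Therefore all such balls $B(x_j,5^{-1}r_j)$ are disjoint, lie inside $B(y,C\delta(y))$, and each contains $B(x_j,c\,\delta(y))$. By \eqref{d2} and \eqref{d1}, each has measure $\gtrsim\mu(B(y,C\delta(y)))$. Hence the number of indices $j$ with $y\in B(x_j,2r_j)$ is bounded by a constant $N$ depending only on $C_{(\mu)}$.

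\emph{Step 4 (partition of unity, (iii)--(iv)).} Define
\[
\psi_j(x):=\max\Bigl\{0,\ \min\bigl\{1,\ 3-2d(x,x_j)/r_j\bigr\}\Bigr\}.
\]
Then $\psi_j=1$ on $B(x_j,r_j)$, $0\le\psi_j\le1$, and $\mathrm{supp}\,\psi_j\subset\overline{B(x_j,\tfrac32r_j)}\subset B(x_j,2r_j)$. Set $\Psi:=\sum_j\psi_j$. By Step 3 the sum is locally finite and $\Psi\le N$. By Step 2, $\Psi\ge1$ on $\Omega$. Since $B(x_j,2r_j)\subset\Omega$, we have $\Psi=0$ off $\Omega$. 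Finally let $\phi_j:=\psi_j/\Psi$ on $\Omega$ and $\phi_j:=0$ off $\Omega$. These are non-negative, satisfy (iii), and $\sum_j\phi_j=\mathbf{1}_\Omega$, which is (iv).

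The main obstacle is Step 1: radii may be unbounded, so the usual $5r$-covering argument does not apply. The comparability estimate in Step 2, which uses only that $\delta$ is $1$-Lipschitz and the small factor $2^{-5}/5$, is what removes that difficulty. Step 3 is routine doubling bookkeeping.
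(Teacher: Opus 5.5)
Your proof is correct. The paper gives no argument for this lemma; it quotes it as part of \cite[Lemma 3.13]{lyyy26cms}. So your proposal is a self-contained substitute for that citation rather than a variant of an in-paper proof.

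Your route is the classical Whitney construction:
\begin{itemize}
\item a maximal (Zorn) family of disjoint balls $B(x_j,5^{-1}r_j)$;
\item the comparability $\delta(x)<\frac{161}{159}\delta(x_j)$ for touching small balls, which uses only that $\mathrm{dist}\,(\cdot,\Omega^\complement)$ is $1$-Lipschitz;
\item doubling for bounded overlap;
\item normalized Lipschitz bumps for (iii) and (iv).
\end{itemize}
The comparability step replaces the bounded-radius Vitali lemma, so no hypothesis such as $\mu(\Omega)<\infty$ is needed. The numerics check out: $\frac{2}{159}<2^{-5}$, $B(x_j,2r_j)\subset\Omega$, and $\Psi\ge1$ on $\Omega$.

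Quoting the lemma buys brevity. Your argument shows exactly which structure is used and gives more than the statement asks: bounded overlap of $\{B(x_j,2r_j)\}_j$ and a partition of unity by Lipschitz-type bumps. In fact, since (iii) and (iv) only require non-negative functions, once (i) is known the indicators $\phi_j:=\mathbf{1}_{B(x_j,r_j)\setminus\bigcup_{k<j}B(x_k,r_k)}$ would already suffice. Step 3 is therefore unnecessary for the lemma as stated; it is needed only for your normalization, to guarantee $\Psi<\infty$.

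Two minor points:
\begin{itemize}
\item The countability step is cleanest if you note that each $B(x_j,5^{-1}r_j)$ lies in some $B(x_0,k)$. Since $\mu(B(x_0,k))<\infty$, each such ball can contain only countably many disjoint balls of positive measure.
\item A maximal family may be finite. For instance, take $\mathcal{X}:=[0,1]\cup[3,\infty)$ with Lebesgue measure and $\Omega:=[0,1]$; all points of $\Omega$ have $\mathrm{dist}\,(\cdot,\Omega^\complement)\ge2$, so only finitely many disjoint balls $B(x_j,5^{-1}r_j)$ fit. The index set $\mathbb{N}$ in the statement should thus be read as ``countable''. This is a looseness in the statement, not in your argument.
\end{itemize}
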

\begin{proposition}\label{pro-max-dec}
Let $X$ be a $\mathrm{BQBF}$ space satisfying Assumption \ref{vector1} for some $p_-\in (0,\infty)$ and $M\in\mathbb{N}.$ Then there  exists a positive constant $C$ such that, for any $f\in h_{X,L,\max}\cap L^2,$ $f\in \widetilde{h}_{X,L,\mathrm{at}}^{M,\infty}$ and 
\begin{align*}
\|f\|_{h_{X,L,\mathrm{at}}^{M,\infty}}
\leq
C\|f\|_{h_{X,L,\max}}.
\end{align*}
\end{proposition}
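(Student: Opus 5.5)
We follow the standard route of the maximal-function atomic decomposition (as in \cite{bdl18} and \cite{lyyy26cms}), using the Calder\'on reproducing formula of Proposition \ref{gj3}. Fix $f\in h_{X,L,\max}\cap L^2$ and write $f=f_1+f_2+f_3$ according to the three sums in Proposition \ref{gj3}.

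\textbf{Inhomogeneous parts.} In the first two integral terms, put $F_\ell(y,t):=\Phi^{(\cdot)}_{\cdot}(t\sqrt L)f(y)$ for $t\in(0,1/2]$. The corresponding Schwartz functions vanish at $0$ or are covered by Proposition \ref{kenan3}, so the local non-tangential maximal function of each $F_\ell$ is controlled in $X$ by $\|f_L^\ast\|_X=\|f\|_{h_{X,L,\max}}$.

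\textbf{Stopping-time decomposition.} For $k\in\mathbb{Z}$ define $\Omega_k:=\{x:\ \mathcal{N}(f)(x)>2^k\}$, where $\mathcal{N}(f)$ is the sum of the local non-tangential maximal functions of the $F_\ell$ with a fixed aperture. Each $\Omega_k$ is open, and since $\mu(\mathcal X)=\infty$ and $f\in L^2$, it is a proper subset of $\mathcal X$. Apply the Whitney-type covering of Lemma \ref{tent-ge} to $\Omega_k$, obtaining balls $B_{k,j}=B(x_{k,j},r_{k,j})$. Let $T_{k,j}$ be the part of $\widehat{\Omega_k}\setminus\widehat{\Omega_{k+1}}$, restricted to $t\le1/2$, attached to $B_{k,j}$; here $\widehat{\Omega}:=\{(y,t):\ d(y,\Omega^\complement)\ge t\}$, and we cut with the disjoint selector built from the Whitney balls. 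Off $\bigcup_k\widehat{\Omega_k}$ the functions $F_\ell$ vanish almost everywhere. Set
\[
\lambda_{k,j}:=2^k\|\mathbf 1_{B_{k,j}'}\|_X,\qquad a_{k,j}:=\lambda_{k,j}^{-1}\int_0^{1/2}(t^2L)^M\Psi(t\sqrt L)\bigl(\mathbf 1_{T_{k,j}}F_\ell\bigr)(\cdot,t)\,\frac{dt}{t},
\]
where $B_{k,j}'$ is a fixed dilate of $B_{k,j}$ with radius $\lesssim\min\{r_{k,j},1\}$. Large Whitney balls are truncated by covering them with unit-size balls; this is possible because $t\le 1/2$.

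\textbf{Atom estimates.} On $T_{k,j}$ we have $|F_\ell|\lesssim 2^{k+1}$, since the points are outside $\widehat{\Omega_{k+1}}$ and so lie within the aperture of a point where $\mathcal N(f)\le2^{k+1}$. Together with the finite propagation of Lemma \ref{gj1}, this gives $\operatorname{supp}((r^2L)^i b_{k,j})\subset B'_{k,j}$. Duality as in Lemma \ref{gj2-2}(ii), combined with Lemma \ref{SL}, then yields the $L^{p}$ bounds for every $p<\infty$. For $p=\infty$ we instead use the pointwise kernel bound $|K|\lesssim V(x,t)^{-1}$ from Lemma \ref{gj1} and integrate $dt/t$ over the tent, which contributes at most a logarithm-free factor because the region is a Whitney tent. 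When $r_{B'}<1$ we write $a=L^Mb$ and obtain the bounds \eqref{atom2}. When the radius is $\ge1$ the cancellation is not needed.

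\textbf{Coefficient bound.} Because the Whitney balls have bounded overlap,
\[
\Bigl\|\Bigl\{\sum_{k,j}\bigl[\lambda_{k,j}\|\mathbf 1_{B'_{k,j}}\|_X^{-1}\bigr]^s\mathbf 1_{B'_{k,j}}\Bigr\}^{1/s}\Bigr\|_X\lesssim\Bigl\|\Bigl(\sum_k2^{ks}\mathbf 1_{\Omega_k}\Bigr)^{1/s}\Bigr\|_X\lesssim\|\mathcal N(f)\|_X .
\]
The first inequality uses Proposition \ref{prfs} to pass from $B'_{k,j}$ to the disjoint $\frac15B_{k,j}$ and then to $\Omega_k$. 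Convergence of $\sum\lambda a$ in $L^2$ follows from Lemma \ref{gj2-2}(i) and dominated convergence in $t^2$.

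\textbf{Term $f_3$ (main obstacle).} The term $f_3=\sum c\,\Phi(2^{-1}\sqrt L)\Psi(2^{-1}\sqrt L)f$ is handled as the $f_2$ term in the proof of Theorem \ref{th1-1}. Let $g:=\Phi^{(j)}_j(2^{-1}\sqrt L)f$. Decompose $g=\sum_i g\mathbf 1_{Q_i}$ over the unit cube structure and take atoms
\[
\Psi(2^{-1}\sqrt L)(g\mathbf 1_{Q_i})\big/\bigl(\|g\mathbf 1_{Q_i}\|_{L^\infty}\|\mathbf 1_{2B_i}\|_X\bigr).
\]
These are supported in $2B_i$ of radius $\ge1$ and bounded in $L^\infty$ by Lemma \ref{gj1}. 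Their coefficients are controlled through
\[
\sup_{Q_i}|g|\le\inf_{Q_i}(\varphi)^\ast_{L,c}(f)\quad(t=1/2\in(0,1)),
\]
after which Propositions \ref{prfs} and \ref{kenan1} give the bound by $\|f_L^\ast\|_X$.

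The hardest step is the $L^\infty$ control of the atoms together with the precise support bookkeeping for large Whitney balls. In particular one must ensure that truncating to $t\le1/2$ keeps every support within a ball of radius comparable to $\min\{r,1\}$, so that the small-ball cancellation requirement is met only when it is needed.
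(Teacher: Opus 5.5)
Your architecture is the paper's: the reproducing formula of Proposition \ref{gj3}, a stopping time on level sets of a local non-tangential maximal function, Whitney pieces of the tents via Lemma \ref{tent-ge}, the unit cube structure for the scale-$\frac12$ term, and the coefficient bound through the disjoint balls $\frac15B_{k,j}$ and Proposition \ref{prfs}. The genuine gap is in the atom size estimate, the step you yourself flag as hardest.

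\textbf{Why the size estimate fails.} The only information you keep on $T_{k,j}$ is $|F_\ell|\lesssim 2^{k+1}$, and that cannot control the synthesis integral. Take a base point $y$ assigned to $B_{k,j}$ with $y\in\Omega_k\setminus\Omega_{k+1}$; in general such points form a set of positive measure. Then $(y,t)\notin\widehat{\Omega_{k+1}}$ holds for every $t$, so the tent piece contains every scale $t\in(0,\min\{c\,r_{k,j},\frac12\}]$. The Whitney structure bounds $t$ from above by $\sim r_{k,j}$, but not from below. Your pointwise route therefore only gives
\[
|a_{k,j}(x)|\lesssim\frac{2^k}{\lambda_{k,j}}\int_0^{\min\{c\,r_{k,j},1/2\}}\frac{\mu(\{y\in B(x,t):\ (y,t)\in T_{k,j}\})}{V(x,t)}\,\frac{dt}{t}.
\]
For a.e. $x$ in that set, the integrand is bounded below by a quantity tending to $1$ as $t\to0$, by Lebesgue differentiation. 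So the bound is $+\infty$, and there is no ``logarithm-free'' factor.

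The duality route fails for the same reason. Since $\iint_{T_{k,j}}d\mu(y)\,\frac{dt}{t}=\infty$, the bound $|F_\ell|\lesssim2^{k+1}$ gives no $t^p$ estimate of the form $2^k[\mu(B'_{k,j})]^{1/p}$, and Lemma \ref{gj2-2}(ii) cannot be invoked. For $(r^2L)^ib_{k,j}$ with $i<M$ the crude bound does work, because of the factor $t^{2M-2i}$ and $\int_0^{cr}t^{2M-2i}\frac{dt}{t}\sim r^{2M-2i}$. The failure is exactly at $i=M$, that is, for $a_{k,j}$ itself.

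\textbf{The missing idea.} What is needed is cancellation in $t$. The paper, following \cite[(26)]{bdl18}, sets
\[
\eta_\ell(\xi):=\int_0^1(t^2\xi^2)^M\Phi^{(2\ell)}_1(t\xi)\Psi^{(2M-2\ell+2)}_1(t\xi)\,\frac{dt}{t},
\]
so that $\int_{s_1}^{s_2}(\tau^2L)^M\Phi^{(2\ell)}_1(\tau\sqrt L)\Psi^{(2M-2\ell+2)}_1(\tau\sqrt L)f\,\frac{d\tau}{\tau}=\eta_\ell(s_2\sqrt L)f-\eta_\ell(s_1\sqrt L)f$. It then builds $|\eta_\ell(t\sqrt L)f|$ into the stopping-time function $N^\ast_L$, with aperture $5$; this is controlled through Proposition \ref{kenan3}, since $\eta_\ell(0)=0$. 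For fixed $x$, the $t$-integral telescopes into values of $\eta_\ell(\cdot\sqrt L)f$. Each such value is at most $2^{k+1}$, because a point outside $O_{k+1}$ lies within the aperture. Only non-divergent error terms are left for the kernel bound. Your $\mathcal N(f)$ contains only the maximal functions of the $F_\ell$, so your stopping time cannot produce the $L^\infty$ estimate, nor indeed any $L^p$ estimate, for the atoms.

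Two minor points. Truncating large Whitney balls is unnecessary: atoms of radius $\ge1$ carry no cancellation requirement, and the paper simply uses $3B_{i,k}$. In the $f_3$ term you need Proposition \ref{kenan3}, not only Proposition \ref{kenan1}, to pass from $\Phi^{(j)}_j$ to the heat kernel.
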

\begin{proof}
Let $f\in h_{X,L,\max}\cap L^2$.  By Proposition \ref{gj3}, we obtain
\begin{align*}
f
&=\sum_{\ell=0}^{M+1}c(M,\ell)\int_0^{\frac{1}{2}}(t^2L)^M\Phi^{(2\ell)}_{1}(t\sqrt{L})\Psi^{(2M-2\ell+2)}_1(t\sqrt{L})(f)\,\frac{dt}{t}\\
&\quad+\sum_{\ell=0}^{M}c(M,\ell)\int_0^{\frac{1}{2}}(t^2L)^M\Phi^{(2\ell+1)}_{2}(t\sqrt{L})\Psi^{(2M-2\ell+1)}_2(t\sqrt{L})(f)\,\frac{dt}{t}\\
&\quad+\sum_{\ell=0}^{2M+2}\sum_{j=0}^\ell
c(M,\ell,j)\Psi^{(j)}_j(2^{-1}\sqrt{L})\Psi^{(\ell-j)}_{\ell-j}(2^{-1}\sqrt{L})(f)\\
&=:\sum_{\ell=0}^{M+1}f_{\ell,1}
+\sum_{\ell=0}^{M}f_{\ell,2}
+\sum_{\ell=1}^{2M+2}\sum_{j=0}^\ell g_{\ell,j}\ \ \text{in} \ L^2.
\end{align*}
Let $\ell\in\{1,\ldots,2M+2\}$ and $j\in\{0,\ldots,\ell\}$. We first  deal with $g_{\ell,j}.$ Let $\{Q_k\}_{k\in\mathrm{I}}$ and $\{B_k\}_{k\in\mathrm{I}}$ be as in Definition  \ref{unit}.
Then $g_{\ell,j}=\sum_{k\in\mathrm{I}}\lambda_ka_k$ in
$L^2,$ where, for any $k\in\mathrm{I},$
\begin{align*}
\lambda_k:=\left\|\mathbf{1}_{\frac{3}{2}B_k}\right\|_{X}\sup_{z\in Q_k}\left|\Psi^{(\ell-j)}_{\ell-j}(2^{-1}\sqrt{L})(f)(z)\right|
\end{align*} 
and
\begin{align*}
a_k:=\frac{c(M,\ell,j)}{\lambda_k}\Phi^{(j)}_j(2^{-1}\sqrt{L})\left[\Psi^{(\ell-j)}_{\ell-j}(2^{-1}\sqrt{L})(f)\mathbf{1}_{Q_k}\right].
\end{align*}
By Lemma \ref{gj1}, we find that $\mathrm{supp}\, (a_k)\subset \frac{3}{2}B_k$ and
\begin{align*}
\|a_k\|_{L^\infty}
\lesssim
\left\|\mathbf{1}_{\frac{3}{2}B_k}\right\|_{X}^{-1}\mu(Q_k)\sup_{x\in \frac{3}{2}B_k}\frac{1}{V(x,2^{-1})}
\lesssim
\left\|\mathbf{1}_{\frac{3}{2}B_k}\right\|_{X}^{-1}.
\end{align*}
This proves that $a_k$ is an $(X,M,\infty)_L$-atom up to a harmless positive constant multiple.
Moreover, observe that, for any $x\in Q_k$,
$Q_k\subset B(x,2)$. This implies that, for almost every $x\in\mathcal{X},$ 
\begin{align*}
\frac{\lambda_k}{\left\|\mathbf{1}_{\frac{3}{2}B_k}\right\|_{X}}\mathbf{1}_{Q_k}(x)
&\leq
\sup_{y\in B(x,2)}\left|\Psi^{(\ell-j)}_{\ell-j}(2^{-1}\sqrt{L})(f)(y)\right|\mathbf{1}_{Q_k}(x)\\
&\leq
\sup_{t\in(0,1)}\sup_{y\in B(x,4t)}\left|\Psi^{(\ell-j)}_{\ell-j}(t\sqrt{L})(f)(y)\right|\mathbf{1}_{Q_k}(x)
=:\left[\Psi^{(\ell-j)}_{\ell-j}\right]^\ast_{L,4}(f)(x)\mathbf{1}_{Q_k}(x).
\end{align*}
Since $\{Q_k\}_{k\in\mathrm{I}}$ is pairwise disjoint, it follows that, for almost every $x\in\mathcal{X},$ 
\begin{align*}
\sum_{k\in\mathrm{I}}\frac{\lambda_k}{\|\mathbf{1}_{\frac{3}{2}B_k}\|_X}\mathbf{1}_{Q_k}(x)
\leq
\sum_{k\in\mathrm{I}}\left[\Psi^{(\ell-j)}_{\ell-j}\right]^\ast_{L,4}(f)(x)\mathbf{1}_{Q_k}(x)
=\left[\Psi^{(\ell-j)}_{\ell-j}\right]^\ast_{L,4}(f)(x).
\end{align*}
This, combined with the fact that $\{Q_k\}_{k\in\mathrm{I}}$ is  unit cube structure on $\mathcal{X}$
and Propositions \ref{prfs} and \ref{kenan3},  implies that
\begin{align*}
&\left\|\left\{\sum_{k\in\mathrm{I}}\left[\frac{\lambda_k}{\|\mathbf{1}_{\frac{3}{2}B_k}\|_X}\right]^s\mathbf{1}_{\frac{3}{2}B_k}\right\}^{\frac{1}{s}}\right\|_X\\
&\quad\lesssim
\left\|\left\{\sum_{k\in\mathrm{I}}\left[\frac{\lambda_k}{\|\mathbf{1}_{\frac{3}{2}B_k}\|_X}\right]^s\mathbf{1}_{\delta B_k}\right\}^{\frac{1}{s}}\right\|_X
\lesssim
\left\|\left\{\sum_{k\in\mathrm{I}}\left[\frac{\lambda_k}{\|\mathbf{1}_{\frac{3}{2}B_k}\|_X}\right]^s\mathbf{1}_{Q_k}\right\}^{\frac{1}{s}}\right\|_X\\
&\quad=
\left\|\sum_{k\in\mathrm{I}}\frac{\lambda_k}{\|\mathbf{1}_{\frac{3}{2}B_k}\|_X}\mathbf{1}_{Q_k}\right\|_X
\leq
\left\|\left[\Psi^{(\ell-j)}_{\ell-j}\right]^\ast_{L,4}(f)\right\|
\lesssim
\|f\|_{h_{X,L,\mathrm{max}}}.
\end{align*}
This proves that $g_{\ell,j}$
has a local atomic $(X,M,\infty)_{L}$-representation.

Let $\ell\in\{0,1,\ldots,M+1\}.$
Now, we deal with $f_{\ell,1}.$ For any $\xi\in\mathbb{R},$ let
\begin{align*}
\eta_\ell(\xi):=\int_0^1(t^2\xi^2)^M\Phi^{(2\ell)}_1(t\xi)\Psi^{(2M-2\ell+2)}_1(t\xi)\,\frac{dt}{t}.
\end{align*}
Then it is easy to verify that
$\eta_\ell\in\mathcal{S}(\mathbb{R})$ and $\eta_{\ell}(0)=0.$ For any $x\in\mathcal{X},$
let
\begin{align*}
N^\ast_L(f)(x):=\sup_{t\in(0,1)}\sup_{y\in B(x,5t)}\left[|\eta_\ell(t\sqrt{L})(f)|+|\Psi^{(2M-2\ell+2)}(t\sqrt{L})(f)(y)|\right].
\end{align*}
By Proposition \ref{kenan3}, we obtain
\begin{align}\label{1044}
\|N^\ast_{L}(f)\|_{X}\lesssim \|f\|_{h_{X,L,\max}}.
\end{align}
For any $i\in\mathbb{Z}$, let $O_i:=\{x\in\mathcal{X}:\ N^\ast_L(f)(x)>2^i\}$ and
$\widehat{O}_i:=\{(x,t)\in\mathcal{X}\times(0,1):\ B(x,4t)\subset O\}.$
Then
\begin{align*}
\mathcal{X}\times(0,1)=\bigcup_{i\in\mathbb{Z}}\widehat{O}_i\setminus \widehat{O}_{i+1}=:\bigcup_{i\in\mathbb{Z}}T_i.
\end{align*}
For any given $i\in\mathbb{Z},$ applying Lemma
\ref{tent-ge} with $\Omega:=O_i$, we obtain a  family of balls $\{B_{i,k}\}_{k\in\mathbb{N}}$ satisfied
the properties in Lemma \ref{tent-ge}.
For any $i\in\mathbb{Z}$
and $k\in\mathbb{Z}$, let $R(B_{i,0}):=\emptyset$ and
$R(B_{i,k}):=\{(x,t)\in\mathcal{X}\times (0,1):\ d(x,B_{i,k})<t\}.$
Moreover, define
\begin{align*}
T_{i,k}:=T_i\cap \left(R(B_{i,k})\setminus \bigcup_{j=0}^{k-1}R(B_{i,j})\right).
\end{align*}
It is easy to verify that $\{T_{i,k}\}_{i\in\mathbb{Z},k\in\mathbb{Z}_+}$ is disjoint and
\begin{align*}
X\times (0,1)=\bigcup_{i\in\mathbb{Z}}T_i=\bigcup_{i\in\mathbb{Z}}\bigcup_{k\in\mathbb{Z}_+}T_{i,k}.
\end{align*}
This implies that
$f_{\ell,1}
=\sum_{i\in\mathbb{Z}}\sum_{k\in\mathbb{Z}_+}\lambda_{i,k}a_{i,k}$ in $L^2,$
where, for any $i\in\mathbb{Z}$ and $k\in\mathbb{Z}_+$,
$\lambda_{i,k}:=2^i\|\mathbf{1}_{3B_{i,k}}\|_{X}$ and
\begin{align*}
a_{i,k}:=\frac{c(M,\ell)}{\lambda_{i,k}}\int_0^{\frac{1}{2}}(t^2L)^M\Phi^{(2\ell)}_{1}(t\sqrt{L})\left[\Psi^{(2M-2\ell+2)}_1(t\sqrt{L})(f)\mathbf{1}_{T_{i,k}}\right]\,\frac{dt}{t}.
\end{align*}
By the proof of \cite[(26)]{bdl18}, we  conclude that, 
for any $i\in\mathbb{Z}$ and $k\in\mathbb{Z}_+$, $a_{i,k}$ is a $(X,M,\infty)_L$-atom,
associated with $3B_{i,k}$ up to
a harmless positive constant multiple.
By Proposition \ref{prfs}, Lemma \ref{tent-ge}(ii),  the definition of $O_i$, and \eqref{1044}, we conclude that 
\begin{align*}
&\left\|\left\{\sum_{i\in\mathbb{Z}}\sum_{k\in\mathbb{Z}_+}\left[\frac{\lambda_{i,k}}{\|\mathbf{1}_{3B_{i,k}}\|_{X}}\right]^s\mathbf{1}_{3B_{i,k}}\right\}^{\frac{1}{s}}\right\|_{X}\\
&\quad\lesssim
\left\|\left\{\sum_{i\in\mathbb{Z}}\sum_{k\in\mathbb{Z}_+}2^{is}\mathbf{1}_{5^{-1}B_{i,k}}\right\}^{\frac{1}{s}}\right\|_{X}
\leq
\left\|\left\{\sum_{i\in\mathbb{Z}}2^{is}\mathbf{1}_{O_i}\right\}^{\frac{1}{s}}\right\|_{X}
\sim
\left\|\left\{\sum_{i\in\mathbb{Z}}2^{is}\mathbf{1}_{O_i\setminus O_{i+1}}\right\}^{\frac{1}{s}}\right\|_{X}\\
&\quad\sim
\left\|\left\{\sum_{i\in\mathbb{Z}}[N^\ast_{L}(f)]^s\mathbf{1}_{O_i\setminus O_{i+1}}\right\}^{\frac{1}{s}}\right\|_{X}
=\|N^\ast_{L}(f)\|_X
\lesssim
\|f\|_{h_{X,L,\max}}.
\end{align*}
Thus,  $f_{\ell,1}$
has a local atomic $(X,M,\infty)_{L}$-representation.
Similarly, $f_{\ell,2}$
also has a local atomic $(X,M,\infty)_{L}$-representation.
This finishes the proof of Proposition \ref{pro-max-dec}.
\end{proof}
\begin{proposition}\label{oneside}
Let $X$ be a $\mathrm{BQBF}$ space satisfying Assumptions \ref{vector1} and  \ref{vector2} for some $p_-\in(0,\infty),$ $s\in(0,1],$ and $q\in (s,\infty)$.
Let $p\in(1,\infty)\cap[q,\infty)$ and $M\in\mathbb{N}\cap(\frac{n}{2\min\{s,p_-\}},\infty)$.
Then there exists a positive constant $C$
such that, for any $f\in \widetilde{h}^{M,p}_{X,L,\mathrm{at}},$
$f\in h_{X,L,\mathrm{max}}\cap L^2$ and
\begin{align}\label{tuozhezou}
\|f\|_{h_{X,L,\mathrm{max}}}
\leq
C\|f\|_{h^{M,p}_{X,L,\mathrm{at}}}.
\end{align}
\end{proposition}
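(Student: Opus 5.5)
The plan is to reduce \eqref{tuozhezou} to a uniform estimate on single atoms and then sum with Lemma \ref{beike}. Let $f\in\widetilde{h}^{M,p}_{X,L,\mathrm{at}}$ with $f=\sum_{j}\lambda_ja_j$ in $L^2$, where each $a_j$ is a local $(X,M,p)_L$-atom associated with $B_j$ and $\Lambda(\{\lambda_ja_j\}_j)\lesssim\|f\|_{h^{M,p}_{X,L,\mathrm{at}}}$. Take $T:=f^\ast_L$, the local non-tangential maximal function with $\phi(x)=e^{-x^2}$. It is sublinear, and it is bounded on $L^2$ (indeed on every $L^r$, $r\in(1,\infty)$), since by the Gaussian bound \eqref{lzy} and \eqref{d2} one has $f^\ast_L(g)\lesssim\mathcal{M}(g)$. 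By Lemma \ref{beike} it therefore suffices to find $\eta\in(n(\frac1s-\frac1p),\infty)$ such that, for every local atom $a$ associated with $B$ and every $i\in\mathbb{Z}_+$,
\begin{align*}
\|f^\ast_L(a)\|_{L^p(U_i(B))}\lesssim 2^{-\eta i}[\mu(B)]^{\frac1p}\|\mathbf{1}_B\|_X^{-1}.
\end{align*}
The case $p=\infty$ is handled by first viewing $\infty$-atoms as $\tilde p$-atoms for some large finite $\tilde p\ge q$. This costs no generality, because $\infty$-atoms are $\tilde p$-atoms and the restriction on $M$ does not involve $p$.

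For $i\le 3$ the estimate follows from the $L^p$-boundedness of $f^\ast_L$ and the size condition of the atom. For $i\ge4$, $x\in U_i(B)$, $t\in(0,1)$ and $y\in B(x,t)$, I split into two cases.

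\emph{Case $r_B\ge1$.} There is no cancellation, but locality helps. Since $t<1\le r_B$, every $y\in B(x,t)$ satisfies $d(y,B)\gtrsim 2^ir_B$. The Gaussian bound and H\"older's inequality then give
\begin{align*}
|e^{-t^2L}a(y)|\lesssim e^{-c(2^ir_B)^2}V(y,t)^{-1/p}\|a\|_{L^p}.
\end{align*}
Because $t<1\le r_B$, the factor $V(y,t)^{-1/p}$ is controlled by $(2^ir_B/t)^{2n/p}[\mu(2^iB)]^{-1/p}$ via \eqref{d1} and \eqref{d2}. This is harmless for $t$ near $1$, but for small $t$ it is absorbed by $e^{-c(2^ir_B/t)^2}$, keeping the $t$-dependence in the exponent. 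The result is decay $2^{-iN}$ for any $N$.

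\emph{Case $r_B<1$.} Write $a=L^Mb$. If $t\le r_B$, the same off-diagonal argument works with the exponent $e^{-c(2^ir_B/t)^2}$. If $r_B<t<1$, I use cancellation: $e^{-t^2L}a=t^{-2M}(t^2L)^Me^{-t^2L}b$. The kernel of $(t^2L)^Me^{-t^2L}$ has Gaussian decay by \eqref{lzy}, so
\begin{align*}
|e^{-t^2L}a(y)|\lesssim (r_B/t)^{2M}e^{-c(2^ir_B)^2/t^2}\,V(y,t)^{-1}\mu(B)^{1-1/p}\|\mathbf{1}_B\|_X^{-1},
\end{align*}
using $\|b\|_{L^1}\le\mu(B)^{1-1/p}\|b\|_{L^p}\le r_B^{2M}\mu(B)\|\mathbf{1}_B\|_X^{-1}$.

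\textbf{Main obstacle.} The hardest step is optimizing over $t\in(r_B,1)$ in the case $r_B<1$. Set $u:=2^ir_B/t$ and bound $V(y,t)^{-1}\lesssim u^n\mu(2^iB)^{-1}$. The pointwise bound becomes
\begin{align*}
2^{-2Mi}u^{2M+n}e^{-cu^2}\,\mu(B)\mu(2^iB)^{-1}\|\mathbf{1}_B\|_X^{-1}\lesssim 2^{-i(2M+\kappa)}\|\mathbf{1}_B\|_X^{-1},
\end{align*}
where $\kappa\ge0$ is a reverse-doubling exponent; $\kappa=0$ is allowed, since it is not needed. Taking the $L^p(U_i(B))$ norm gives
\begin{align*}
2^{-2Mi}[\mu(2^iB)]^{1/p}\|\mathbf{1}_B\|_X^{-1}\lesssim 2^{-i(2M-n/p)}[\mu(B)]^{1/p}\|\mathbf{1}_B\|_X^{-1}.
\end{align*}
So $\eta=2M-\frac np$ works, provided $2M-\frac np>n(\frac1s-\frac1p)$, that is, $M>\frac n{2s}$. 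This is exactly where the hypothesis $M>\frac{n}{2\min\{s,p_-\}}$ enters.

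The bookkeeping for the $t$-range where $u<1$ (that is, $t>2^ir_B$) needs care. There the exponential gives nothing, but the factor $(r_B/t)^{2M}\le 2^{-2Mi}$ still delivers the same decay. With the atom estimate established, Lemma \ref{beike} yields $\|f^\ast_L(f)\|_X\lesssim\Lambda(\{\lambda_ja_j\}_j)$. Finiteness of $\|f\|_{h_{X,L,\max}}$ for $f\in L^2$ is then immediate, which gives \eqref{tuozhezou}.
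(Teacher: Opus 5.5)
Your proof is correct, but it sums the atoms by a different mechanism than the paper.

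The paper splits $\sum_j\lambda_j(a_j)^\ast_L$ into two pieces. The pieces on $4B_j$ are controlled by the $L^p$-boundedness of $\mathcal{M}$ together with Proposition \ref{pras}. For the tail, the paper proves the pointwise bound $a^\ast_L(x)\lesssim[r_B/d(x,x_B)]^{\delta}\|\mathbf{1}_B\|_X^{-1}$ on $(4B)^\complement$, with $\delta\in(\frac{n}{\min\{s,p_-\}},2M)$. It then sums over the annuli using the dilation estimate of Proposition \ref{prfs}, which rests on Assumption \ref{vector1}. That is exactly where the paper needs $M>\frac{n}{2\min\{s,p_-\}}$.

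You use the same kernel bounds: Gaussian off-diagonal decay for $t\le r_B$, and $e^{-t^2L}a=t^{-2M}(t^2L)^Me^{-t^2L}b$ for $r_B<t<1$. But you integrate them over each $U_i(B)$ and feed the resulting $L^p$ decay into Lemma \ref{beike}, that is, Proposition \ref{pharve}, exactly as in the proof of Theorem \ref{recon}. This treats the near and far parts uniformly and uses only Assumption \ref{vector2} in this step. It also needs only $M>\frac{n}{2s}$. In fact, if you keep the factor $\mu(B)/\mu(2^iB)$ instead of discarding it, the $L^p(U_i(B))$ norm is at most $2^{-2Mi}[\mu(B)]^{1/p}(\mu(B)/\mu(2^iB))^{1-1/p}\|\mathbf{1}_B\|_X^{-1}$. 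Then $\eta=2M$, and $M>\frac n2(\frac1s-\frac1p)$ suffices. So your remark that this is \emph{exactly} where $M>\frac{n}{2\min\{s,p_-\}}$ enters overstates what your argument actually uses.

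What the paper's pointwise bound buys is reuse. The same structure drives the proof of Theorem \ref{key}, where cancellation is replaced by \eqref{A3} and the extra factor $\mu(B)/V(x_B,d(x,x_B))$ is exploited through Assumption \ref{vector1}.

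A few small slips:
\begin{itemize}
\item In your display for $r_B<t<1$, the factor should be $\mu(B)$, not $\mu(B)^{1-1/p}$, since $t^{-2M}\|b\|_{L^1}\le(r_B/t)^{2M}\mu(B)\|\mathbf{1}_B\|_X^{-1}$. You use the correct factor later.
\item The factor $e^{-c(2^ir_B)^2/t^2}$ holds there only after shrinking $c$. When $t\gtrsim 2^ir_B$, points $y\in B(x,t)$ need not be far from $B$, but then the exponential is comparable to $1$ anyway.
\item The $p=\infty$ discussion is unnecessary, since the proposition assumes $p<\infty$.
\end{itemize}
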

\begin{proof}
Let $f\in\widetilde{h}^{M,p}_{X,L,\mathrm{at}}$. Then there exists a sequence
$\{\lambda_j\}_{j\in\mathbb{N}}$ in $[0,\infty)$
and a sequence $\{a_{j}\}_{j\in\mathbb{N}}$
of local $(X,M,p)_L$-atoms associated, respectively, with balls $\{B_j\}_{j\in\mathbb{N}}$ such that
\begin{align}\label{suomei1}
\left\|\left\{\sum_{j\in\mathbb{N}}
\left[\frac{\lambda_j}{\|\mathbf{1}_{B_j}\|_{X}}\right]^{s}
\mathbf{1}_{B_j}\right\}^{\frac{1}{s}}\right\|_{X}
\lesssim
\|f\|_{h_{X,L,\mathrm{at}}^{M,p}}
\end{align}
and $f=\sum_{j\in\mathbb{N}}\lambda_j a_j$ in $L^2.$ This implies  that
\begin{align}\label{suomei2}
\|f\|_{h_{X,L,\mathrm{max}}}
\notag&\leq
\left\|\sum_{j\in\mathbb{N}}\lambda_j(a_j)^\ast_L\right\|_{X}
\lesssim
\left\|\sum_{j\in\mathbb{N}}\lambda_j(a_j)^\ast_L\mathbf{1}_{4B_j}\right\|_{X}
\notag\\
&\quad+\left\|\sum_{j\in\mathbb{N}}\sum_{i=3}^\infty\lambda_j(a_j)^\ast_L\mathbf{1}_{U_i(B_j)}\right\|_{X}=:\mathrm{I}+\mathrm{II}.
\end{align}
By the $L^p$-boundedness of $\mathcal{M}$
and the definition of  local $(X,M,p)_L$-atoms, we obtain, for any $j\in\mathbb{N},$
\begin{align*}
\|\lambda_j (a_j)^\ast_{L}\|_{L^p}
\lesssim
\|\lambda_j \mathcal{M}(a_j)\|_{L^p}
\lesssim
\|\lambda_j a_j\|_{L^p}
\leq
\lambda_j[\mu(B_j)]^{\frac{1}{p}}\|\mathbf{1}_{B_j}\|_{X}^{-1}.
\end{align*}
This, combined with Proposition \ref{pras} and \eqref{jiben},
implies that
\begin{align}\label{suomei3}
\mathrm{I}
\leq
\left\|\left\{\sum_{j\in\mathbb{N}}\left[\lambda_j(a_j)^\ast_L\mathbf{1}_{4B_j}\right]^s\right\}^{\frac{1}{s}}\right\|_{X}
\lesssim
\left\|\left\{\sum_{j\in\mathbb{N}}
\left[\frac{\lambda_j}{\|\mathbf{1}_{B_j}\|_{X}}\right]^{s}
\mathbf{1}_{B_j}\right\}^{\frac{1}{s}}\right\|_{X}.
\end{align}
Let  $\delta\in(\frac{n}{\min\{s,p_-\}},2M)$. We first show that there exists a positive constant $C$ such that,
for any  local $(X,M,p)_L$-atom $a$, associated with ball $B:=B(x_B,r_B),$
and any $x\in (4B)^\complement,$
we claim that
\begin{align}\label{zidongs}
a^\ast_{L}(x)
\lesssim
\left[\frac{r_B}{d(x,x_B)}\right]^{\delta}\|\mathbf{1}_{B}\|_{X}^{-1}.
\end{align}
For the moment, we take this claim for granted, and proceed with the proof of the present proposition. Applying \eqref{zidongs} with $a:=a_j$, we conclude  that, for any $i\in\mathbb{N}\cap[3,\infty)$,
\begin{align*}
(a_j)^\ast_L\mathbf{1}_{U_i(B_j)}
\lesssim
2^{-\delta i}\|\mathbf{1}_{B_j}\|_{X}^{-1}.
\end{align*}
By this, \eqref{jiben}, and using Proposition \ref{prfs} with $p:=s$, we find that
\begin{align*}
\mathrm{II}^s
&\lesssim
\left\|\sum_{j\in\mathbb{N}}\sum_{i=3}^\infty
2^{-\delta i}\lambda_j\|\mathbf{1}_{B_j}\|_{X}^{-1}\mathbf{1}_{U_i(B_j)}\right\|_{X}^s
\lesssim
\left\|\sum_{i=3}^\infty2^{-s\delta i}\sum_{j\in\mathbb{N}}\left[\frac{\lambda_j}{\|\mathbf{1}_{B_j}\|_{X}}\right]^s\mathbf{1}_{U_i(B_j)}\right\|_{X^{\frac{1}{s}}}\\
&\lesssim
\sum_{i=3}^\infty2^{-s\delta i+nti}
\left\|\sum_{j\in\mathbb{N}}\left[\frac{\lambda_j}{\|\mathbf{1}_{B_j}\|_{X}}\right]^s\mathbf{1}_{B_j}\right\|_{X^{\frac{1}{s}}}
\sim\left\|\sum_{j\in\mathbb{N}}\left[\frac{\lambda_j}{\|\mathbf{1}_{B_j}\|_{X}}\right]^s\mathbf{1}_{B_j}\right\|_{X^{\frac{1}{s}}},
\end{align*}
where $t\in (\max\{1,\frac{s}{p_-}\}, \frac{\delta s}{n})$ is a constant.
From this, \eqref{suomei1}, \eqref{suomei2}, and \eqref{suomei3}, we deduce that \eqref{tuozhezou} holds.

Now, we finish the proof of \eqref{zidongs}. Let the conditions be as in \eqref{zidongs}. For any $x\in\mathcal{X},$ let 
\begin{align*}
a^{\ast,1}_{L}(x):=\sup_{t\in(0,r_B)}\sup_{y\in B(x,t)}\left|e^{-t^2L}(a)(y)\right|
\end{align*}
and
\begin{align}\label{jihua}
a^{\ast,2}_{L}(x):=\sup_{t\in[r_B,1)}\sup_{y\in B(x,t)}\left|e^{-t^2L}(a)(y)\right|.
\end{align}
By the definition of local $(X,M,p)_L$-atom and H\"older's inequality, we obtain
\begin{align*}
[\mu(B)]^{-1}\|a\|_{L^1}
\leq
[\mu(B)]^{-\frac{1}{p}}\|a\|_{L^p}
\leq
\|\mathbf{1}_{B}\|_{X}^{-1}.
\end{align*}
Observe that, for any  $x\in (4B)^\complement$,
$t\in(0,\infty)$, $y\in B(x,t)$, and $z\in B$,
\begin{align}\label{20253}
d(y,z)
\geq d(x,z)-d(x,y)
\geq d(x,x_B)-d(x_B,z)-t
\geq d(x,x_B)-r_B-t
> \frac{d(x,x_B)}{2}-t
\end{align}
and, for any  $x\in (4B)^\complement$ and 
$t\in(0,r_B)$,
\begin{align*}
\frac{1}{V(x,t)}
\lesssim
\frac{1}{V(x,r_B)}\left(\frac{r_B}{t}\right)^n
\lesssim
\frac{1}{\mu(B)}\left[\frac{d(x,x_B)}{t}\right]^n.
\end{align*}
Using this, \eqref{d2}, and \eqref{20253}, we have, for any $x\in(4B)^\complement,$ 
\begin{align}\label{1637}
a^{\ast,1}_{L}(x)
\notag&\lesssim
\sup_{t\in(0,r_B)}\sup_{y\in B(x,t)}
\int_B\frac{1}{V(x,t)}\exp\left\{-\frac{[d(y,z)]^2}{ct^2}\right\}|a(z)|\,d\mu(z)\\
&\lesssim
\sup_{t\in(0,r_B)}
\frac{1}{V(x,t)}\exp\left\{-\frac{1}{c}\left(\frac{d(x,x_B)}{2t}-1\right)^2\right\}\|a\|_{L^1}\notag\\
&\lesssim
\sup_{t\in(0,r_B)}\frac{1}{\mu(B)}\left[\frac{d(x,x_B)}{t}\right]^n\left[\frac{t}{d(x,x_B)}\right]^{n+\delta}\|a\|_{L^1}
\lesssim
\left[\frac{r_B}{d(x,x_B)}\right]^{\delta}\|\mathbf{1}_{B}\|_{X}^{-1}.
\end{align}

Next, we estimate $a^{\ast,2}_{L}.$
In this case, we must have $r_{B}\in(0,1).$
By the definition of local $(X,M,p)_L$-atom,
there exists a function $b$ such that $a=L^M(b)$ and \eqref{atom2} holds.
By the fact that $b$ is supported in $B$,
H\"older's inequality, and \eqref{atom2}, we obtain
\begin{align*}
[\mu(B)]^{-1}\|b\|_{L^1}
\leq
[\mu(B)]^{-\frac{1}{p}}\|b\|_{L^p}
\leq
r_{B}^{2M}\|\mathbf{1}_{B}\|_{X}^{-1}.
\end{align*}
From this, \eqref{lzy}, \eqref{d1}, and 
\eqref{20253}, 
we deduce that, for any $x\in (4B)^\complement,$
\begin{align}\label{ying}
a^{\ast,2}_{L}(x)
\notag&=\sup_{t\in[r_B,1)}\sup_{y\in B(x,t)}
t^{-2M}\left|(t^2L)^Me^{-t^2L}(b)(y)\right|\\
&\lesssim
\sup_{t\in[r_B,1)}
t^{-2M}\frac{1}{V(x,t)}
\exp\left\{-\frac{1}{c}\left(\frac{d(x,x_B)}{2t}-1\right)^2\right\}\|b\|_{L^1}.
\end{align}
By  \eqref{d1} and \eqref{d2}, we find that,
for any  $x\in (4B)^\complement$
and $t\in[r_B,1)$,
\begin{align*}
\mu(B)
\leq
V(x_B,t)
\lesssim
\left(1+\frac{d(x,x_B)}{t}\right)^nV(x,t).
\end{align*}
From this and \eqref{ying}, it follows that, for any $x\in (4B)^\complement$,
\begin{align*}
a^{\ast,2}_{L}(x)
&\lesssim
\sup_{t\in[r_B,1)}
\left(\frac{r_B}{t}\right)^{2M}
\left(1+\frac{d(x,x_B)}{t}\right)^n\exp\left\{-\frac{1}{c}\left(\frac{d(x,x_B)}{2t}-1\right)^2\right\}\|\mathbf{1}_{B}\|_{X}^{-1}\\
&\lesssim
\sup_{t\in[r_B,1)}\left(\frac{r_B}{t}\right)^{2M}\left(\frac{t}{d(x,x_B)}\right)^{\delta}\|\mathbf{1}_{B}\|_{X}^{-1}
\lesssim
\left(\frac{r_B}{d(x,x_B)}\right)^{\delta}\|\mathbf{1}_{B}\|_{X}^{-1},
\end{align*}
where we used $\delta-2M\in(-\infty,0)$
in the last step. This, combined with
\eqref{1637}, implies that \eqref{zidongs}
holds, which completes the proof of Proposition \ref{oneside}.
\end{proof}
\begin{proof}[Proof of Theorem \ref{thm-maxi}]
By Propositions \ref{kenan3} and \ref{kenan4}, we conclude that the spaces 
$h_{X,L,\mathrm{max}}^{\varphi,\alpha}\cap L^2$,
$h_{X,L,\mathrm{rad}}^{\varphi}\cap L^2,$
$h_{X,L,\mathrm{max}}\cap L^2$,
and
$h_{X,L,\mathrm{rad}}\cap L^2$
coincide with equivalent quasi-norms.
On the other hand, 
by Propositions \ref{oneside} and \ref{pro-max-dec}, we find that
\begin{align*}
\widetilde{h}^{M,p}_{X,L,\mathrm{at}}
\subset
h_{X,L,\mathrm{max}}\cap L^2
\subset
\widetilde{h}^{M,\infty}_{X,L,\mathrm{at}}
\subset
\widetilde{h}^{M,p}_{X,L,\mathrm{at}},
\end{align*}
which implies that the spaces
$\widetilde{h}^{M,p}_{X,L,\mathrm{at}}$
and
$h_{X,L,\mathrm{max}}\cap L^2$
coincide with equivalent quasi-norms.
Thus, the spaces
$\widetilde{h}^{M,p}_{X,L,\mathrm{at}}$,
$h_{X,L,\mathrm{max}}^{\varphi,\alpha}\cap L^2$,
$h_{X,L,\mathrm{rad}}^{\varphi}\cap L^2,$
$h_{X,L,\mathrm{max}}\cap L^2$,
and
$h_{X,L,\mathrm{rad}}\cap L^2$
coincide with equivalent quasi-norms.
From this and a density argument, it follows that the desired conclusions hold.
This finishes the proof of Theorem \ref{thm-maxi}.
\end{proof}
Finally, we prove Theorem \ref{key}.
\begin{proof}[Proof of Theorem \ref{key}]
Let  $M\in\mathbb{N}$.  By the proof of \cite[Lemma 9.1]{hlmmy11}, we conclude that,  for any  local $(X,M,p)_L$-atom $a$, $a$
is also  local $(X,p)$-atom. This implies that
\begin{align}\label{huaji2}
h_{X,L,\mathrm{at}}^{M,p}\cap L^2\subset h_{X,\mathrm{at}}\cap L^2.
\end{align}

Next, we prove that 
\begin{align}\label{huaji}
h_{X,\mathrm{at}}\cap L^2\subset
h_{X,L,\mathrm{max}}\cap L^2.
\end{align}
Following the proof of Proposition \ref{oneside},  we only need to prove that 
\begin{align}\label{siqu}
\left\|\sum_{j\in\mathbb{N}}\sum_{i=3}^\infty
\lambda_{j}(a_j)_L^{\ast,2}\mathbf{1}_{U_i(B_j)}\right\|_{X}^s
\lesssim
\left\|\sum_{j\in\mathbb{N}}\left(\frac{\lambda_j}{\|\mathbf{1}_{B_j}\|_{X}}\right)^s\mathbf{1}_{B_j}\right\|_{X^{\frac{1}{s}}}.
\end{align}
We first show that
there exists a positive constant $C$ such that,
for any  local $(X,p)$-atom $a,$ associated with ball $B:=B(x_B,r_B),$
and any $x\in (4B)^\complement,$
\begin{align}\label{1700}
a_L^{\ast,2}(x)
\lesssim
\frac{\mu(B)}{V(x_B,d(x,x_B))}\left[\frac{r_B}{d(x,x_B)}\right]^\delta\|\mathbf{1}_{B}\|_{X}^{-1},
\end{align}
where $a_{L}^{\ast,2}$ is the same as in \eqref{jihua}.
From the definition of $a$, we infer that, for any $x\in(4B)^{\complement}$,
\begin{align*}
a_L^{\ast,2}(x)
&=\sup_{t\in[r_B,1)}\sup_{y\in B(x,t)}\left|\int_BK_{t^2}(y,z)a(z)\,d\mu(z)\right|\\
&=\sup_{t\in[r_B,1)}\sup_{y\in B(x,t)}\left|\int_B\left(K_{t^2}(y,z)-K_{t^2}(y,x_B)\right)a(z)\,d\mu(z)\right|.
\end{align*}
For any $t\in[r_B,1)$, $x\in(4B)^{\complement}$,  $y\in B(x,t)$, and $z\in B$, we have
\begin{align*}
t+d(x_B,y)\geq t+d(x_B,x)-d(x,y)>d(x,x_B)> 2d(x_B,z).
\end{align*}
By this, \eqref{A3}, and \eqref{d1}, we obtain, for any $t\in[r_B,1)$, $x\in(4B)^{\complement}$,  $y\in B(x,t)$, and $z\in B$,
\begin{align*}
&\left|K_{t^2}(y,z)-K_{t^2}(y,x_B)\right|\\
&\quad\lesssim\left(\frac{d(z,x_B)}{t}\right)^{\delta}\frac{1}{V(x_B,t)}e^{-d(y,x_B)^2/ct^2}
\lesssim\left(\frac{r_B}{t}\right)^{\delta}\frac{1}{V(x_B,t)}e^{-d(x,x_B)^2/ct^2}\\
&\quad\lesssim
\left(\frac{r_B}{t}\right)^{\delta}\frac{1}{V(x_B,d(x,x_B))}\left[\frac{t}{d(x,x_B)}\right]^\delta
=
\frac{1}{V(x_B,d(x,x_B))}\left[\frac{r_B}{d(x,x_B)}\right]^\delta.
\end{align*}
This implies that \eqref{1700} holds. 
Let $r\in (1,\infty)$ satisfy 
$s\delta>(r-s)n>(1-s)n$.
By \eqref{1700}, we have, for any $j,i\in\mathbb{N}$ with $i\geq 3,$
\begin{align*}
(a_j)_L^{\ast,2}\mathbf{1}_{U_i(B_j)}
\lesssim
2^{-\delta i}\frac{\mu(B_j)}{\mu(2^iB_j)}\|\mathbf{1}_{B_j}\|_{X}^{-1}.
\end{align*}
From this and Assumption \ref{vector1}, we deduce that 
\begin{align*}
&\left\|\sum_{j\in\mathbb{N}}\sum_{i=3}^\infty
\lambda_{j}(a_j)_L^{\ast,2}\mathbf{1}_{U_i(B_j)}\right\|_{X}^s\\
&\quad\lesssim
\left\|\sum_{j\in\mathbb{N}}\sum_{i=3}^\infty\left[2^{-\delta i}\frac{\lambda_j}{\|\mathbf{1}_{B_j}\|_{X}}\frac{\mu(B_j)}{\mu(2^iB_j)}\right]^s\mathbf{1}_{U_i(B_j)}\right\|_{X^{\frac{1}{s}}}\\
&\quad\lesssim
\sum_{i=3}^\infty2^{-s\delta i}
\left\|\sum_{j\in\mathbb{N}}\left(\frac{\lambda_j}{\|\mathbf{1}_{B_j}\|_X}\right)^s\left[\frac{\mu(2^iB_j)}{\mu(B_j)}\right]^{r-s}[\mathcal{M}(\mathbf{1}_{B_j})]^r\right\|_{X^{\frac{1}{s}}}\\
&\quad\lesssim
\sum_{i=3}^\infty2^{-s\delta i+(r-s)ni}\left\|\sum_{j\in\mathbb{N}}\left(\frac{\lambda_j}{\|\mathbf{1}_{B_j}\|_{X}}\right)^s\mathbf{1}_{B_j}\right\|_{X^{\frac{1}{s}}}
\sim
\left\|\sum_{j\in\mathbb{N}}\left(\frac{\lambda_j}{\|\mathbf{1}_{B_j}\|_{X}}\right)^s\mathbf{1}_{B_j}\right\|_{X^{\frac{1}{s}}}.
\end{align*}
This proves \eqref{siqu}.

By \eqref{huaji2}, \eqref{huaji},  Theorem \ref{thm-maxi}, and a density argument, we find that
the spaces $h_{X,L,\mathrm{max}}$ and $h_{X,\mathrm{at}}^p$ coincide with equivalent quasi-norms.  
This finishes the proof of Theorem \ref{key}.
\end{proof}
\section{Applications to Specific Function Spaces}\label{apply}
In this section, we apply the main results obtained in Section \ref{main} to
some specific function spaces, including  local Hardy spaces (see Subsection \ref{Lp}), local Orlicz--Hardy spaces
(see Subsection \ref{19961}), local weighted Hardy spaces (see Subsection \ref{19962}), and local variable
Hardy spaces (see Subsection \ref{19963}). 
These examples prove that the results obtained in this article
have wide generality and flexibility, which are obviously possible to be applied
to some newfound  function spaces.
\subsection{Local Hardy Spaces}\label{Lp}
By using an argument similar to that used in  the proof of \cite[Theorem 3.7]{lyyy25jga} and applying Theorems \ref{local-mo-at}, \ref{hH}, \ref{thm-maxi}, and \ref{key} and  with $X:=L^{r}$, we have the following conclusions; we omit the details here.
\begin{theorem}\label{Lr}
\begin{enumerate}[(i)]
\item  Let $r\in (0,2)$, $p\in (1,\infty)\cap ({r},\infty)$, $M\in\mathbb{N}\cap (\frac{n}{2}(\frac{1}{\min\{{r},1\}}-\frac{1}{p}),\infty)$, and $\epsilon\in (\frac{n}{\min\{{r},1\}},\infty)$. Then Theorem \ref{local-mo-at} and Theorem \ref{hH}(i) and (ii) hold with $X:=L^{r}.$
\item Let $r\in(1,2).$  Theorem \ref{hH}(iii) holds with $X:=L^{r}.$
\item Let $p\in (1,\infty]\cap ({r},\infty]$ and  $M\in\mathbb{N}\cap (\frac{n}{2\min\{r,1\}},\infty)$. Then Theorem \ref{thm-maxi} holds with $X:=L^{r}.$
\item Let 
$\delta\in (0,\infty)$, $r\in (\frac{n}{n+\delta},\infty)$, and 
$p\in (1,\infty]\cap ({r},\infty]$. Then Theorem \ref{key} holds with $X:=L^{r}.$ 
\end{enumerate}
\end{theorem}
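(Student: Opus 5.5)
The plan is to reduce each part of Theorem \ref{Lr} to verifying Assumptions \ref{vector1} and \ref{vector2} (and, for (ii), the hypotheses of Theorem \ref{hH}(iii)) for $X:=L^r$ with suitable parameters, and then read off the index ranges. This follows the pattern of \cite[Theorem 3.7]{lyyy25jga}. First, $L^r$ is a BQBF space on $\mathcal{X}$: (i)--(iii) of Definition \ref{debb} are standard, and (iv) holds because every ball has finite measure. For Assumption \ref{vector1}, take $p_-:=r$. For any $p\in(0,r)$ we have $(L^r)^{1/p}=L^{r/p}$ with $r/p>1$, so the required estimate is exactly the Fefferman--Stein vector-valued inequality for $\mathcal{M}$ on $L^{r/p}(\ell^{\tilde r})$, $\tilde r\in(1,\infty)$, which holds on spaces of homogeneous type.

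For Assumption \ref{vector2}, choose $s\in(0,\min\{r,1\})$ arbitrarily close to $\min\{r,1\}$ (or equal to it when $r\le 1$ allows), and $q\in(r,2]$ with $q$ as close to $r$ as needed. Then $X^{1/s}=L^{r/s}$ is a BBF space since $r/s\ge1$. Its associate space is $L^{(r/s)'}$. Its $\frac{1}{(q/s)'}$-convexification is $L^{(r/s)'/(q/s)'}$, and $\mathcal{M}$ is bounded there precisely when $(r/s)'>(q/s)'$, i.e.\ $q>r$. With these choices the hypotheses of Theorem \ref{local-mo-at} and of Theorem \ref{hH}(i),(ii) hold whenever $r<2$ (this is what makes $q\in(r,2]$ possible) and $p\in(1,\infty)\cap(r,\infty)$ (so that one may take $q\le p$). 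The conditions $M>\frac n2(\frac1s-\frac1p)$ and $\epsilon>\frac ns$ follow from the stated strict inequalities with $\min\{r,1\}$ by taking $s$ close enough to $\min\{r,1\}$. The same choices give (iii) via Theorem \ref{thm-maxi}, using $\min\{s,p_-\}=s\to\min\{r,1\}$. For (iv) we need additionally $s>\frac{n}{n+\delta}$, which is possible since $r>\frac n{n+\delta}$ and $1>\frac n{n+\delta}$; this also requires $p>q>r$. Here $q$ may exceed $2$, which Theorem \ref{key} permits.

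For (ii), with $r\in(1,2)$, take $p_0\in(1,r)$. Then $X^{1/p_0}=L^{r/p_0}$ is a BBF space with associate space $L^{(r/p_0)'}$, on which $\mathcal{M}$ is bounded since $(r/p_0)'\in(1,\infty)$. The other hypotheses of Theorem \ref{hH} were checked above, so Theorem \ref{hH}(iii) applies.

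The main obstacle is the parameter bookkeeping. Each theorem requires strict inequalities in $s$ (for example $s\le\min\{p_-,1\}$ with $p_-=r$ coming from an open condition), and this must be reconciled with the endpoint-free ranges stated in Theorem \ref{Lr}. When $r\le1$ one cannot take $p_-=r$ in the strict sense needed for $s=r$. I would handle this by taking $p_-$ slightly larger than any fixed $s<r$, checking in each case that the strict inequalities in $M$, $\epsilon$, and $\delta$ leave room for such an $s$. I would also verify that constants, and hence the resulting quasi-norm equivalences, do not depend on this choice beyond fixed parameters.
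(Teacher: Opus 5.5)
Your proposal is correct and follows the route the paper indicates; the paper omits the details and cites the analogous verification in \cite{lyyy25jga}. That route is to check that $L^r$ satisfies Assumption~\ref{vector1} with $p_-=r$ and Assumption~\ref{vector2} with $s\le\min\{r,1\}$ and any $q>r$, and then read off the index ranges in Theorems~\ref{local-mo-at}, \ref{hH}, \ref{thm-maxi}, and \ref{key}.

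Two small corrections to your bookkeeping:
\begin{enumerate}
\item The endpoint $s=\min\{r,1\}$ is in fact admissible, so there is no real obstacle there. Assumption~\ref{vector1} only involves $p<p_-$, and the theorems allow $s\le\min\{p_-,1\}$. For $s=r\le 1$ one has $X^{1/s}=L^1$ with $(L^1)'=L^\infty$, on which $\mathcal{M}$ is bounded.
\item In (iii) with $r\ge 2$ you cannot reuse $q\in(r,2]$. Instead take any finite $q\in(r,p]$, which Theorem~\ref{thm-maxi} permits.
\end{enumerate}
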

\begin{remark}
\begin{itemize}
\item [$\mathrm{(i)}$] 
In  the case where $X:=L^1,$ we denote $h_{X,L}$ by $h_{L}^1.$
We point out that
Gong, Li and Yan \cite{gly13}
obtained the atomic characterization of the local Hardy space  $h_{L}^1$
under a slight different
definition of $h_{L}^1.$
\item [$\mathrm{(ii)}$] We  point out  that, in the  case where $r\in (0,1]$,  
Theorem \ref{Lr}(iii) and (iv) 
is a part of \cite[Theorems 2.4 and 2.7]{bdl18}; in the other case, Theorem \ref{Lr}(iii) and (iv) is new.
\end{itemize}
\end{remark}
\subsection{Local Orlicz--Hardy Spaces}\label{19961}
A function $\Phi:\ [0,\infty)\to[0,\infty)$ is called an \emph{Orlicz function} if $\Phi$ is non-decreasing, $\Phi(0)=0$, $\Phi(t)>0$
for any $t\in(0,\infty)$, and $\lim_{t\to\infty}\Phi(t)=\infty.$ Moreover, an Orlicz function $\Phi$ is
said to be of \emph{lower} [resp. \emph{upper}] \emph{type} $r$ for some $r\in\mathbb{R}$ if there exists
a positive constant $C_{r}$ such that, for any $t\in[0,\infty)$ and $s\in(0,1)$ [resp. $s\in[1,\infty)$],
$\Phi(st)\le C_{r} s^r\Phi(t).$
In what follows, we \emph{always} assume that $\Phi:\ [0,\infty)\to[0,\infty)$ is an Orlicz function with
positive lower type $r_{\Phi}^-$ and positive upper type $r_{\Phi}^+$.

\begin{definition}
The \emph{Orlicz space $L^\Phi$} is defined to be the set of all the $\mu$-measurable functions $f$ on $\mathcal{X}$
with the following finite \emph{quasi-norm}
\begin{align*}
\|f\|_{L^\Phi}:=\inf\left\{\lambda\in(0,\infty):\ \int_{\mathcal{X}}\Phi\left(\frac{|f(x)|}
{\lambda}\right)\,d\mu(x)\le1\right\}.
\end{align*}
\end{definition}

\begin{remark}
As pointed out in \cite[Subsection 8.3]{syy22jga}, the Orlicz space $L^\Phi$ is a quasi-Banach function space and hence
a ball quasi-Banach function space.
\end{remark}
By using an argument similar to that used in  the proof of \cite[Theorem 5.3]{lyyy25jga} and applying Theorems \ref{local-mo-at}, \ref{hH}, \ref{thm-maxi}, and \ref{key} and  with $X:=L^{\Phi}$, we have the following conclusions; we omit the details here.
\begin{theorem}\label{or-mol}
Let $\Phi:\ [0,\infty)\to[0,\infty)$ be an Orlicz function with
positive lower type $r_{\Phi}^-$ and positive upper type $r_{\Phi}^+$. Assume that $0<r_{\Phi}^-\leq r_{\Phi}^+<\infty$.
\begin{enumerate}[(i)]
\item  Let ${r}_{\Phi}^+\in (0,2)$, $p\in (1,\infty)\cap ({r}_{\Phi}^+,\infty)$, $M\in\mathbb{N}\cap (\frac{n}{2}(\frac{1}{\min\{{r}_{\Phi}^-,1\}}-\frac{1}{p}),\infty)$, and $\epsilon\in (\frac{n}{\min\{{r}_{\Phi}^-,1\}},\infty)$. Then Theorem \ref{local-mo-at} and Theorem \ref{hH}(i) and (ii) hold with $X:=L^{\Phi}.$
\item Let $1<{r}_{\Phi}^-\leq {r}_{\Phi}^+<2.$  Theorem \ref{hH}(iii) holds with $X:=L^{\Phi}.$
\item Let 
$p\in (1,\infty]\cap ({r}_{\Phi}^+,\infty]$ and  $M\in\mathbb{N}\cap (\frac{n}{2\min\{{r}_{\Phi}^-,1\}},\infty)$. Then Theorem \ref{thm-maxi} holds with $X:=L^{\Phi}.$
\item Let 
$\delta\in (0,\infty)$,
${r}_{\Phi}^-\in (\frac{n}{n+\delta},\infty),$
and
$p\in (1,\infty]\cap ({r}_{\Phi}^+,\infty]$. Then Theorem \ref{key} holds with $X:=L^{\Phi}.$ 
\end{enumerate}
\end{theorem}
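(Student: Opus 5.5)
Theorem \ref{or-mol} is a specialization of Theorems \ref{local-mo-at}, \ref{hH}, \ref{thm-maxi}, and \ref{key} to $X:=L^\Phi$. The only real work is to check Assumptions \ref{vector1} and \ref{vector2} (and, for (ii), the hypothesis of Theorem \ref{hH}(iii)) with admissible parameters $p_-$, $s$, $q$, $p_0$, following \cite[Theorem 5.3]{lyyy25jga}.

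First I handle Assumption \ref{vector1}. For any $\theta\in(0,\infty)$, $(L^\Phi)^{1/\theta}=L^{\Phi_\theta}$ with $\Phi_\theta(t):=\Phi(t^\theta)$. This $\Phi_\theta$ has lower type $r_\Phi^-/\theta$ and upper type $r_\Phi^+/\theta$. When $r_\Phi^-/\theta>1$, the Fefferman--Stein vector-valued inequality holds on $L^{\Phi_\theta}$ for every $r\in(1,\infty)$. This is the Orlicz-space Fefferman--Stein inequality on spaces of homogeneous type: interpolate weighted $L^p(\ell^r)$ bounds, or use the modular inequality $\int\Phi(\mathcal M f)\lesssim\int\Phi(|f|)$ for $\Phi$ of lower type $>1$. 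Hence Assumption \ref{vector1} holds with $p_-:=r_\Phi^-$.

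Next I handle Assumption \ref{vector2}. Take $s\in(0,\min\{r_\Phi^-,1\})$, arbitrarily close to $\min\{r_\Phi^-,1\}$. Then $X^{1/s}=L^{\Phi_s}$ is a BBF space, since $\Phi_s$ has lower type $r_\Phi^-/s>1$ and is equivalent to a convex Orlicz function. Its associate space is $L^{\Psi}$, where $\Psi$ is the complementary function, with lower type $(r_\Phi^+/s)'$ and upper type $(r_\Phi^-/s)'$. Take $q\in(r_\Phi^+,\infty)$, close to $r_\Phi^+$. Then $((X^{1/s})')^{1/(q/s)'}$ is an Orlicz space with lower type $(r_\Phi^+/s)'/(q/s)'$, which is $>1$ precisely when $q>r_\Phi^+$. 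So $\mathcal M$ is bounded there.

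This fixes the parameter ranges:
\begin{enumerate}[(a)]
\item Theorem \ref{local-mo-at} and Theorem \ref{hH} need $q\le2$, hence $r_\Phi^+<2$, and $p\ge q$ becomes $p>r_\Phi^+$.
\item The conditions $M>\frac n2(\frac1s-\frac1p)$ and $\epsilon>\frac ns$ become the stated strict bounds with $\min\{r_\Phi^-,1\}$, using the freedom to choose $s$ close to $\min\{r_\Phi^-,1\}$. This closeness argument is why the hypotheses are open conditions.
\item For (iii) the same choice works with $q$ unrestricted.
\item For (iv), $s>\frac n{n+\delta}$ is possible exactly when $r_\Phi^-\wedge1>\frac n{n+\delta}$, which the hypothesis $r_\Phi^-\in(\frac n{n+\delta},\infty)$ gives.
\end{enumerate}

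For (ii), pick $p_0\in(1,r_\Phi^-)$. Then $X^{1/p_0}=L^{\Phi_{p_0}}$ is a BBF space. Its associate space has lower type $(r_\Phi^+/p_0)'>1$ because $r_\Phi^+<\infty$, so $\mathcal M$ is bounded on it. The condition $r_\Phi^+<2$ is still needed to feed (i).

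The main obstacle is the associate-space computation: identifying $(L^{\Phi_s})'$ with the Orlicz space of the complementary function up to equivalent norms, and tracking its types exactly. If this becomes delicate, I would instead reduce to weighted inequalities by extrapolation (Lemma \ref{1616}-type arguments), as in \cite{lyyy25jga}.
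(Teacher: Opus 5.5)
Your proposal is correct and follows the paper's own argument. The paper just says to verify Assumptions \ref{vector1} and \ref{vector2} (and the extra hypothesis of Theorem \ref{hH}(iii)) for $X:=L^\Phi$ as in \cite[Theorem 5.3]{lyyy25jga}, then apply Theorems \ref{local-mo-at}, \ref{hH}, \ref{thm-maxi}, and \ref{key}; you supply the omitted details ($p_-=r_\Phi^-$, $s$ just below $\min\{r_\Phi^-,1\}$, $q$ just above $r_\Phi^+$, $p_0\in(1,r_\Phi^-)$). One small slip: $(L^\Phi)^{1/\theta}$ is the Orlicz space of $t\mapsto\Phi(t^{1/\theta})$, not $\Phi(t^\theta)$, although the types $r_\Phi^\pm/\theta$ you then use are the correct ones.
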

\begin{remark}
To the best of our knowledge, Theorem \ref{or-mol} is new.
\end{remark}
\subsection{Local Weighted Hardy Spaces}\label{19962}
\begin{definition}
Let $r\in(0,\infty)$ and $\omega\in A_\infty$. The \emph{weighted Lebesgue space}
$L^r_\omega $ is defined to be the set of
all  $\mu$-measurable functions $f$ on $\mathcal{X}$ with the following finite \emph{quasi-norm}
\begin{align*}
\|f\|_{L^r_\omega }:=\left[\int_\mathcal{X} |f(z)|^r\omega(z)\,d\mu(z)\right]^{\frac{1}{r}}.
\end{align*}
\end{definition}

\begin{remark}
It is worth pointing out that, for any given $r\in(0,\infty)$ and $\omega\in A_\infty$, $L^r_\omega$ is a ball quasi-Banach
function space, but not necessarily a quasi-Banach function space
(see, for example, \cite[Subsection 7.1]{shyy17} for the Euclidean space case).
\end{remark}
By using an argument similar to that used in  the proof of \cite[Theorem 5.15]{dlyyz23} and applying Theorems \ref{local-mo-at}, \ref{thm-maxi}, and \ref{key} and Theorem \ref{hH}(i) and (ii) with $X:=L^r_\omega$, we have the following conclusions; we omit the details here.
\begin{theorem}\label{weighed-mol}
Let $\omega\in A_{1}$ and $r\in (0,1].$
\begin{enumerate}[(i)]
\item  Let 
$p\in (1,\infty)$, $M\in(\frac{n}{2}(\frac{1}{r}-\frac{1}{p}),\infty)\cap\mathbb{N}$, and $\epsilon\in (\frac{n}{r},\infty)$. 
Then Theorem \ref{local-mo-at} and Theorem \ref{hH}(i) and (ii) hold with $X:=L_{\omega}^{r}.$
\item Let $p\in (1,\infty]$ and  $M\in(\frac{n}{2r},\infty)\cap\mathbb{N}$.
Then Theorem \ref{thm-maxi} holds with $X:=L_{\omega}^{r}.$
\item Let $\delta\in (0,\infty)$, $r\in (\frac{n}{n+\delta},1]$, and $p\in (1,\infty].$
Then Theorem \ref{key} holds with $X:=L_{\omega}^{r}.$ 
\end{enumerate}
\end{theorem}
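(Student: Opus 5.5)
The plan is to verify, for $X:=L^r_\omega$ with $\omega\in A_1$ and $r\in(0,1]$, the hypotheses of Theorems \ref{local-mo-at}, \ref{hH}(i)--(ii), \ref{thm-maxi} and \ref{key}. The indices will be chosen as $s:=r$ and $p_-:=r$ (or any number slightly above $r$). Each statement then follows by substituting into these general theorems. First, $L^r_\omega$ is a BQBF space: properties (i)--(iii) of Definition \ref{debb} are standard, and (iv) holds because $\omega\in A_1\subset A_\infty$ is locally integrable.

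For Assumption \ref{vector1} we use $(L^r_\omega)^{1/p}=L^{r/p}_\omega$. The condition is therefore the weighted Fefferman--Stein vector-valued inequality on $L^{r/p}_\omega(\ell^\tau)$ for $\tau\in(1,\infty)$. For $p\in(0,r)$ we have $r/p>1$, and since $A_1\subset A_{r/p}$, the inequality holds on spaces of homogeneous type by the weighted vector-valued maximal theorem (Grafakos--Liu--Yang, or via Rubio de Francia extrapolation from the scalar $A_{r/p}$ bound). Hence Assumption \ref{vector1} holds with $p_-:=r$.

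For Assumption \ref{vector2}, take $s\in(0,r]$, so that $X^{1/s}=L^{r/s}_\omega$ with $r/s\ge1$. This is a BBF space: the local integrability (v) follows from H\"older's inequality together with $\omega^{-1}\in L^\infty_{\mathrm{loc}}$, the latter coming from the $A_1$ condition. Its associate space is $L^{(r/s)'}_{\omega^{1-(r/s)'}}$. Consequently, boundedness of $\mathcal{M}$ on the $\frac{1}{(q/s)'}$-convexification reduces to boundedness of $\mathcal{M}$ on $L^{(r/s)'/(q/s)'}_{\omega^{1-(r/s)'}}$. I will fix $q\in(r,\infty)$ (for Theorems \ref{local-mo-at}, \ref{hH} we also need $q\le2$, and may take $q$ close to $r$). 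Then the exponent $(r/s)'/(q/s)'$ exceeds $1$, and one must check that $\omega^{1-(r/s)'}\in A_{(r/s)'/(q/s)'}$. This check is the main obstacle. When $s=r$ the exponent $(r/s)'=\infty$, and I would handle this case through the duality $\omega\in A_1 \iff \omega^{-1}\cdot$(bounded) and the Euclidean argument of \cite[Theorem 5.15]{dlyyz23}. That argument uses the reverse H\"older property of $A_1$ weights, which is available on spaces of homogeneous type, so that one may take $s$ slightly less than $r$ if needed. A reduction to $s<r$ is harmless for the ranges of $M$ and $\epsilon$ because all of them are open conditions in $s$. If a direct verification fails, I would fall back on choosing $q$ very close to $s$, where the required weight condition becomes $\omega^{1-(r/s)'}\in A_{\text{large}}$, which follows from $\omega\in A_1$.

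With these choices I substitute into the general theorems. For part (i), $\min\{p_-,1\}=r$, and $p\in(1,\infty)$ automatically satisfies $p\ge q$ when $q\in(r,\min\{2,p\}]$ is chosen; the conditions on $M$ and $\epsilon$ become those stated. For part (ii), Theorem \ref{thm-maxi} requires $M>\frac{n}{2\min\{s,p_-\}}=\frac{n}{2r}$, with $p\in(1,\infty]$ and $q$ close to $r$. For part (iii), Theorem \ref{key} requires $s=r\in(\frac{n}{n+\delta},1]$ and $p>q$, which holds after taking $q\in(r,p)$. Since Theorem \ref{hH}(iii) needs $X^{1/p_0}$ to be BBF with $p_0>1\ge r$, it is omitted, as in the statement.
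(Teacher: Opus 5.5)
Your route is the paper's route. The paper's proof is just the substitution $X:=L^r_\omega$ into Theorems \ref{local-mo-at}, \ref{hH}, \ref{thm-maxi}, \ref{key}, with the verification omitted. Three of your checks are correct: $L^r_\omega$ is a BQBF space, $X^{1/s}=L^{r/s}_\omega$ is a BBF space for $s\le r$, and Assumption \ref{vector1} holds with $p_-:=r$.

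The gap is in Assumption \ref{vector2}. You correctly reduce it to $\omega^{1-(r/s)'}\in A_v$ with $v:=(r/s)'/(q/s)'=\frac{r(q-s)}{q(r-s)}$, but you then move $q$ the wrong way. Using $\sigma\in A_v\iff\sigma^{1-v'}\in A_{v'}$, for $s<r$ the condition is equivalent to
\[
\omega^{\frac{q}{q-r}}\in A_{\frac{r(q-s)}{s(q-r)}}.
\]
For $s=r$, where $\|g\|_{(L^1_\omega)'}=\|g\omega^{-1}\|_{L^\infty}$, it is equivalent to $\omega^{q/(q-r)}\in A_1$. In either case it forces $\omega\in L^{q/(q-r)}_{\mathrm{loc}}$. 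This is a reverse H\"older requirement whose exponent does not depend on $s$, so taking $s$ slightly below $r$ buys nothing. The exponent also blows up as $q\downarrow r$, so ``$q$ close to $r$'' is the worst possible choice.

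Your fallback ``$q$ close to $s$'' also fails. There $(q/s)'\to\infty$, so $v\to0$; in fact $v<1$ as soon as $q<r$, and $\mathcal{M}$ is unbounded. No large-index $A_v$ condition arises. What is true is the following: if $\omega^{1+\gamma}\in A_1$ (reverse H\"older for $A_1$ weights), then Assumption \ref{vector2} holds with $s=r$ for every $q\ge r(1+\gamma)/\gamma$. So it holds only for \emph{large} $q$.

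This clashes with the index constraints of the theorems you substitute into. Theorems \ref{local-mo-at} and \ref{hH} require $q\le\min\{2,p\}$. Theorem \ref{thm-maxi} requires $q\le p$ and Theorem \ref{key} requires $q<p$.

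For a concrete failure, take $\mathbb{R}^n$, $r:=1$ and $\omega(x):=|x|^{-a}$ with $a\in(0,n)$, so $\omega\in A_1$. Then Assumption \ref{vector2} forces $q>n/(n-a)$. For $a\ge n/2$ no $q\in(1,2]$ is admissible. For $p\le n/(n-a)$ no $q\le p$ is admissible in any of the four parts.

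So the substitution, done correctly, yields the statement only under an extra restriction. For instance, $p\ge q\ge r(1+\gamma)/\gamma$ suffices, together with $r(1+\gamma)/\gamma\le 2$ for part (i), where $\gamma$ depends on $\omega$. The paper's argument is the same substitution, so this limitation is inherent in the route, not just a slip in your write-up. Covering all $\omega\in A_1$ and all $p\in(1,\infty)$ would need an argument that avoids Assumption \ref{vector2} at small $q$.
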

\begin{remark}
\begin{itemize}
\item [$\mathrm{(i)}$] 
In  the case where $X:=L^1_{\omega},$ we denote $h_{X,L}$ by $h_{\omega,L}^1.$
We point out that
Gong, Song and Xie \cite[Theorem 1.3]{gsx}
obtained the atomic characterization of the local weighted Hardy space  $h_{\omega,L}^1$
under a slight different
definition of $h_{\omega, L}^1.$
\item [$\mathrm{(ii)}$] We  point out that Theorem \ref{weighed-mol}(ii) and (iii) is new.
\end{itemize}
\end{remark}
\subsection{Local Variable Hardy Spaces}\label{19963}
We first recall the concept of variable Lebesgue spaces on $\mathcal{X}$.
Let $r(\cdot):\ \mathcal{X}\to(0,\infty)$ be a $\mu$-measurable function. Define
$$
\widetilde{r}_-:=\underset{x\in\mathcal{X}}{\operatorname{ess\,inf}}\,r(x)\ \text{and}\ \
\widetilde{r}_+:=\underset{x\in\mathcal{X}}{\operatorname{ess\,sup}}\,r(x).
$$
\begin{definition}
The \emph{variable Lebesgue space $L^{r(\cdot)}$}, associated with the function $r:\ \mathcal{X}
\to(0,\infty)$, is defined to be the set of all  $\mu$-measurable functions $f$ on $\mathcal{X}$ with
the following finite \emph{quasi-norm}
$$
\|f\|_{L^{r(\cdot)}}:=\inf\left\{\lambda\in(0,\infty):\ \int_{\mathcal{X}}\left[\frac{|f(x)|}
{\lambda}\right]^{r(x)}\,d\mu(x)\le1\right\}.
$$
\end{definition}

\begin{remark}
As pointed out in \cite[Remark 2.7(iv)]{yhyy23}, for any $r(\cdot):\ \mathcal{X}\to(0,\infty)$ with
$0<\widetilde{r}_-\leq \widetilde{r}_+<\infty$, the variable Lebesgue space $L^{r(\cdot)}$
is a quasi-Banach function space and hence a ball quasi-Banach function space.
\end{remark}
The variable exponent $r(\cdot)$ is said to be
\emph{locally log-H\"older continuous} if there exists a positive constant $c_{\rm{log}}$ such that,
for any $x,y\in\mathcal{X}$,
$$
|r(x)-r(y)|\le \frac{c_{\rm{log}}}{\log(e+1/d(x,y))}
$$
and that $r(\cdot)$ is said to satisfy the \emph{log-H\"older decay condition} with a basepoint $x_r\in\mathcal{X}$
if there exist $r_\infty\in\mathbb{R}$ and a positive constant $c_\infty$ such that, for any $x\in\mathcal{X},$
\begin{align*}
|r(x)-r_\infty|\leq \frac{c_\infty}{\log(e+d(x,x_r))}.
\end{align*}
The variable exponent $r(\cdot)$ is said to be \emph{log-H\"older continuous} if $r(\cdot)$ satisfies both the locally
log-H\"older continuous condition and the log-H\"older decay condition. By using an argument similar to that used in  the proof of \cite[Theorem 5.21]{lyyy25jga} and applying Theorems \ref{local-mo-at}, \ref{hH}, \ref{thm-maxi}, and \ref{key} with $X:=L^{r(\cdot)}$, we have the following conclusions; we omit the details here.
\begin{theorem}\label{vai-mol}
Let
$r:\ \mathcal{X}\to(0,\infty)$ be log-H\"older continuous  with $0<\widetilde{r}_-\leq \widetilde{r}_+<\infty.$
\begin{enumerate}[(i)]
\item  Let $\widetilde{r}_+\in (0,2)$, $p\in (1,\infty)\cap (\widetilde{r}_+,\infty)$, $M\in\mathbb{N}\cap (\frac{n}{2}(\frac{1}{\min\{\widetilde{r}_-,1\}}-\frac{1}{p}),\infty)$, and $\epsilon\in (\frac{n}{\min\{\widetilde{r}_-,1\}},\infty)$. Then Theorem \ref{local-mo-at} and Theorem \ref{hH}(i) and (ii) hold with $X:=L^{r(\cdot)}.$
\item Let $1<\widetilde{r}_-\leq \widetilde{r}_+<2.$ Then Theorem \ref{hH}(iii) holds with $X:=L^{r(\cdot)}.$
\item Let $p\in (1,\infty]\cap (\widetilde{r}_+,\infty]$ and  $M\in\mathbb{N}\cap (\frac{n}{2\min\{\widetilde{r}_-,1\}},\infty)$. Then Theorem \ref{thm-maxi} holds with $X:=L^{r(\cdot)}.$
\item Let $\delta\in (0,\infty)$, $\widetilde{r}_-\in (\frac{n}{n+\delta},\infty)$, and
$p\in (1,\infty]\cap (\widetilde{r}_+,\infty]$. Then Theorem \ref{key} holds with $X:=L^{r(\cdot)}.$ 
\end{enumerate}
\end{theorem}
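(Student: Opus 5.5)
The statement to be proved is Theorem \ref{vai-mol}. Each of its four parts will follow by checking that $X:=L^{r(\cdot)}$ satisfies the hypotheses of Theorems \ref{local-mo-at}, \ref{hH}, \ref{thm-maxi}, and \ref{key} with suitable parameters, and then invoking those theorems. The structure is the same as in \cite[Theorem 5.21]{lyyy25jga}.

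First, since $0<\widetilde r_-\le\widetilde r_+<\infty$, $L^{r(\cdot)}$ is a BQBF space. The substantive input is two known facts for log-H\"older continuous exponents on spaces of homogeneous type:
\begin{itemize}
\item[(a)] the Fefferman--Stein vector-valued inequality for $\mathcal M$ on $L^{r(\cdot)/p}$ whenever $p<\widetilde r_-$, with $\ell^u$-norms for $u\in(1,\infty)$;
\item[(b)] the boundedness of $\mathcal M$ on $L^{\theta(\cdot)}$ whenever $\theta(\cdot)$ is log-H\"older continuous with $\widetilde\theta_->1$.
\end{itemize}
From (a) we obtain Assumption \ref{vector1} with $p_-:=\widetilde r_-$, because $(L^{r(\cdot)})^{1/p}=L^{r(\cdot)/p}$.

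For Assumption \ref{vector2}, take $s\in(0,\min\{\widetilde r_-,1\})$ as close to $\min\{\widetilde r_-,1\}$ as needed, and take $q\in(\widetilde r_+,\min\{p,2\})$ when $p<\infty$; this is where the hypotheses $\widetilde r_+<2$ and $p>\widetilde r_+$ are used. Then $X^{1/s}=L^{r(\cdot)/s}$ is a BBF space, since $r(\cdot)/s\ge1$. Its associate space is $L^{(r(\cdot)/s)'}$ up to equivalent norms. The $\frac1{(q/s)'}$-convexification of that space is $L^{\theta(\cdot)}$ with $\theta(\cdot):=(r(\cdot)/s)'/(q/s)'$. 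A short computation shows $\widetilde\theta_->1$ exactly when $q>\widetilde r_+$. Since $\theta(\cdot)$ inherits log-H\"older continuity, (b) gives Assumption \ref{vector2}.

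I expect this verification, in particular the identification of the convexified associate space and the condition $\widetilde\theta_->1$, to be the main technical point. I will take it from the literature rather than reprove it.

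With these parameters, the restrictions on $M$ and $\epsilon$ in Theorem \ref{local-mo-at} become those stated in (i), after letting $s\uparrow\min\{\widetilde r_-,1\}$, which is possible because the conditions are open. So (i) follows from Theorems \ref{local-mo-at} and \ref{hH}(i)--(ii).

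For (iii), apply Theorem \ref{thm-maxi} with the same choices of $s$ and $p_-$; the condition $M>\frac n{2\min\{s,p_-\}}$ matches the stated condition on $M$.

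For (iv), we additionally need $s>\frac n{n+\delta}$. This is possible since $\widetilde r_->\frac n{n+\delta}$, and it is compatible with $s<\min\{\widetilde r_-,1\}$; then Theorem \ref{key} applies.

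For (ii), take $p_0\in(1,\widetilde r_-)$. Then $X^{1/p_0}=L^{r(\cdot)/p_0}$ is a BBF space with associate space $L^{(r(\cdot)/p_0)'}$. This exponent is log-H\"older continuous with ess-infimum greater than $1$, because $\widetilde r_+<\infty$, so $\mathcal M$ is bounded on it by (b). The assumptions of (i), which hold since $\widetilde r_+<2$, also hold here, so Theorem \ref{hH}(iii) applies.
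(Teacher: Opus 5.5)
Your proposal is correct and follows the route the paper indicates via \cite[Theorem 5.21]{lyyy25jga}: verify Assumptions \ref{vector1} and \ref{vector2} for $L^{r(\cdot)}$ with $p_-=\widetilde r_-$, $s$ slightly below $\min\{\widetilde r_-,1\}$ and $q>\widetilde r_+$, using the maximal-function and Fefferman--Stein facts for log-H\"older exponents, and then invoke the main theorems. One small fix: in (iii) and (iv) you cannot reuse the choice $q<2$, since $\widetilde r_+<2$ is not assumed there; instead take any $q\in(\widetilde r_+,p)$, with $p=\infty$ allowed, which your computation of $\widetilde\theta_->1$ already covers and which is all Theorems \ref{thm-maxi} and \ref{key} require.
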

\begin{remark}
\begin{itemize}
\item  [$\mathrm{(i)}$]
We point out that,  in the  case where $\mathcal{X}:=\mathbb{R}^n$ and $p:=2$, 
\cite[Theorems 1.1, 1.5, and 1.6 and Corollaries 1.4 and 1.7 ]{abdr20} is a part of Theorem \ref{vai-mol}(i) and (ii);  in the other case, Theorem \ref{vai-mol}(i) and (ii) is new. 
\item [$\mathrm{(ii)}$] We point out that,  in the  case where $\widetilde{r}_+\in (0,1]$, Theorem \ref{vai-mol}(iii) and (iv) is a part of \cite[Theorems 1.6 and 4.3]{hl25};  in the other case, Theorem \ref{vai-mol}(iii) and (iv) is new.
\end{itemize}
\end{remark}

\section*{Declaration of Competing Interest}

There is no competing interest.

\section*{Data Availability}

No data was used for the research described in the article.

\bigskip

\noindent Xiaosheng Lin (Corresponding author) and Xiaotian Zhu

\medskip

\noindent School of Mathematical Sciences, Jimei University,
Xiamen 361005, The People's Republic of China

\smallskip

\smallskip

\noindent {\it E-mails}: \texttt{xslin@jmu.edu.cn} (X. Lin)

\noindent\phantom{{\it E-mails:} }\texttt{202221391022@jmu.edu.cn} (X. Zhu)


\begin{thebibliography}{10}

\bibitem{abdr20}
V. Almeida, J. J. Betancor, E. Dalmasso and L. Rodr\'iguez-Mesa,
Local Hardy spaces with variable exponents associated with non-negative self-adjoint operators satisfying Gaussian estimates,
J. Geom. Anal. 30 (2020), 3275--3330.

\vspace{-0.3cm}

\bibitem{ans21}
R. Arai, E. Nakai and Y. Sawano,
Generalized fractional integral operators on Orlicz--Hardy spaces,
Math. Nachr. 294 (2021),  224--235.


\vspace{-0.3cm}

\bibitem{adm05}
P. Auscher, X. T. Duong and A. McIntosh,
Boundedness of Banach space valued singular integral operators and Hardy spaces,
Unpublished manuscript, 2005.

\vspace{-0.3cm}

\bibitem{bs88}
C. Bennett and R. Sharpley,
Interpolation of Operators,
Pure and Applied Mathematics 129, Academic Press, Boston, MA, 1988.

\vspace{-0.3cm}

\bibitem{b14}
T. A. Bui,
Weighted Hardy spaces associated to discrete Laplacians on graphs and applications,
Potential Anal. 41 (2014), 817--848.

\vspace{-0.3cm}

\bibitem{bckyy13}
T. A. Bui, J. Cao, L. D. Ky, D. Yang and S. Yang,
Musielak--Orlicz--Hardy spaces associated with operators satisfying reinforced off-diagonal estimates,
Anal. Geom. Metr. Spaces 1 (2013), 69--129.

\vspace{-0.3cm}

\bibitem{bckyy131}
T. A. Bui, J. Cao, L. D. Ky, D. Yang and S. Yang,
Weighted Hardy spaces associated with operators satisfying reinforced off-diagonal estimates,
Taiwanese J. Math. 17 (2013), 1127--1166.

\vspace{-0.3cm}

\bibitem{bdl18}
T. A. Bui, X. T. Duong and F. K. Ly,
Maximal function characterizations for new local Hardy-type spaces on spaces of homogeneous type,
Trans. Amer. Math. Soc. 370 (2018), 7229--7292.

\vspace{-0.3cm}

\bibitem{bl11}
T. A. Bui and J. Li,
Orlicz--Hardy spaces associated to operators satisfying bounded $H^\infty$ functional calculus and Davies--Gaffney estimates,
J. Math. Anal. Appl. 373 (2011), 485--501.

\vspace{-0.3cm}

\bibitem{c77}
A.-P. Calder\'on,
Cauchy integrals on Lipschitz curves and related operators,
Proc. Natl. Acad. Sci. USA 74 (1977), 1324--1327.

\vspace{-0.3cm}

\bibitem{ccfjr78}
A.-P. Calder\'on, C. P. Calder\'on, E. Fabes, M. Jodeit and N. M. Rivi\`ere,
Applications of the Cauchy integral on Lipschitz curves,
Bull. Amer. Math. Soc. 84 (1978), 287--290.

\vspace{-0.3cm}

\bibitem{cmy17}
J. Cao, S. Mayboroda and D. Yang,
Local Hardy spaces associated with inhomogeneous higher order elliptic operators,
Anal. Appl. (Singap.) 15 (2017), 137--224.

\vspace{-0.3cm}

\bibitem{cmm11}
A. Carbonaro, A. McIntosh and A. J. Morris,
Local Hardy spaces of differential forms on Riemannian manifolds,
J. Geom. Anal. 23 (2013), 106--169.

\vspace{-0.3cm}

\bibitem{cw77}
R. R. Coifman and G. Weiss,
Extensions of Hardy spaces and their use in analysis,
Bull. Amer. Math. Soc. 83 (1977), 569--645.

\vspace{-0.3cm}

\bibitem{cs08}
T. Coulhon and A. Sikora,
Gaussian heat kernel upper bounds via the Phragm\'en--Lindel\"of theorem,
Proc. Lond. Math. Soc. (3) 96 (2008), 507--544.

\vspace{-0.3cm}

\bibitem{cmp}
D. V. Cruz-Uribe, J. M. Martell and C. P\'erez,
Weights, Extrapolation and the Theory of Rubio de Francia,
Operator Theory: Advances and Applications 215,
Birkh\"auser/Springer, Basel, 2011.

\vspace{-0.3cm}

\bibitem{dlyyz23}
F. Dai, X. Lin, D. Yang, W. Yuan and Y. Zhang,
Brezis--Van Schaftingen--Yung formulae in ball Banach function spaces with applications to fractional Sobolev and Gagliardo--Nirenberg inequalities,
Calc. Var. Partial Differential Equations 62 (2023), Paper No. 56, 73 pp.

\vspace{-0.3cm}

\bibitem{dfmn21}
R. del Campo, A. Fern\'andez, F. Mayoral and F. Naranjo,
Orlicz spaces associated to a quasi-Banach function space: applications to vector measures and interpolation,
Collect. Math. 72 (2021), 481--499.

\vspace{-0.3cm}

\bibitem{dhmmy13}
X. T. Duong, S. Hofmann, D. Mitrea, M. Mitrea and L. Yan,
Hardy spaces and regularity for the inhomogeneous Dirichlet and Neumann problems,
Rev. Mat. Iberoam. 29 (2013), 183--236.

\vspace{-0.3cm}

\bibitem{dy05jams}
X. T. Duong and L. Yan,
Duality of Hardy and BMO spaces associated with operators with heat kernel bounds,
J. Amer. Math. Soc. 18 (2005), 943--973.

\vspace{-0.3cm}

\bibitem{dy05cpam}
X. T. Duong and L. Yan,
New function spaces of BMO type, the John--Nirenberg inequality, interpolation, and applications,
Comm. Pure Appl. Math. 58 (2005), 1375--1420.

\vspace{-0.3cm}

\bibitem{fs72}
C. Fefferman and E. M. Stein,
$H^p$ spaces of several variables,
Acta Math. 129 (1972), 137--193.

\vspace{-0.3cm}

\bibitem{g79}
D. Goldberg,
A local version of real Hardy spaces,
Duke Math. J. 46 (1979), 27--42.

\vspace{-0.3cm}

\bibitem{gly13}
R. Gong, J. Li and L. Yan,
A local version of Hardy spaces associated with operators on metric spaces,
Sci. China Math. 56 (2013), 315--330.

\vspace{-0.3cm}

\bibitem{gsx}
R. Gong, L. Song and P. Xie,
Weighted local Hardy spaces associated with operators,
Proc. Indian Acad. Sci. Math. Sci. 128 (2018), Paper No. 24, 20 pp.

\vspace{-0.3cm}

\bibitem{gx12}
R. Gong and P. Xie,
Weighted $L^p$ estimates for the area integral associated with self-adjoint operators on homogeneous space,
J. Math. Anal. Appl. 393 (2012), 590--604.

\vspace{-0.3cm}

\bibitem{g14c}
L. Grafakos,
Classical Fourier Analysis,
Third edition, Graduate Texts in Mathematics 249,
Springer, New York, 2014.

\vspace{-0.3cm}

\bibitem{hl25}
Y. He and J. Liao,
Maximal function characterizations for local variable Hardy spaces associated to non-negative self-adjoint operators on spaces of homogeneous type,
Complex Anal. Oper. Theory 19 (2025), Paper No. 70, 20 pp.

\vspace{-0.3cm}

\bibitem{h13}
K.-P. Ho,
Atomic decompositions of weighted Hardy--Morrey spaces,
Hokkaido Math. J. 42 (2013), 131--157.

\vspace{-0.3cm}

\bibitem{h15}
K.-P. Ho,
Atomic decomposition of Hardy--Morrey spaces with variable exponents,
Ann. Acad. Sci. Fenn. Math. 40 (2015), 31--62.

\vspace{-0.3cm}

\bibitem{h17scm}
K.-P. Ho,
Atomic decompositions and Hardy's inequality on weak Hardy--Morrey spaces,
Sci. China Math. 60 (2017), 449--468.

\vspace{-0.3cm}

\bibitem{h17tmj}
K.-P. Ho,
Atomic decompositions of weighted Hardy spaces with variable exponents,
Tohoku Math. J. (2) 69 (2017), 383--413.

\vspace{-0.3cm}

\bibitem{hlmmy11}
S. Hofmann, G. Lu, D. Mitrea, M. Mitrea and L. Yan,
Hardy spaces associated to non-negative self-adjoint operators satisfying Davies--Gaffney estimates,
Mem. Amer. Math. Soc. 214 (2011), no. 1007, vi+78 pp.

\vspace{-0.3cm}

\bibitem{hmm11}
S. Hofmann, S. Mayboroda and A. McIntosh,
Second order elliptic operators with complex bounded measurable coefficients in $L^p$, Sobolev and Hardy spaces,
Ann. Sci. \'Ecole Norm. Sup. (4) 44 (2011), 723--800.

\vspace{-0.3cm}

\bibitem{ins22}
M. Izuki, E. Nakai and Y. Sawano,
Atomic and wavelet characterization of Musielak--Orlicz Hardy spaces for generalized Orlicz functions,
Integral Equ. Oper. Theory 94 (2022), Paper No. 3, 33 pp.

\vspace{-0.3cm}

\bibitem{inns23}
M. Izuki, T. Nogayama, T. Noi and Y. Sawano,
Weighted local Hardy spaces with variable exponents,
Math. Nachr. 296 (2023), 5710--5785.

\vspace{-0.3cm}

\bibitem{jy10}
R. Jiang and D. Yang,
New Orlicz--Hardy spaces associated with divergence form elliptic operators,
J. Funct. Anal. 258 (2010), 1167--1224.

\vspace{-0.3cm}

\bibitem{jy11}
R. Jiang and D. Yang,
Orlicz--Hardy spaces associated with operators satisfying Davies--Gaffney estimates,
Commun. Contemp. Math. 13 (2011), 331--373.

\vspace{-0.3cm}

\bibitem{k}
M. Kemppainen,
A note on local Hardy spaces,
Forum Math. 29 (2017), 941--949.

\vspace{-0.3cm}

\bibitem{k80}
C. E. Kenig,
Weighted $H^p$ spaces on Lipschitz domains,
Amer. J. Math. 102 (1980), 129--163.

\vspace{-0.3cm}

\bibitem{lyyy25jga}
X. Lin, D. Yang, S. Yang and W. Yuan,
Weak Hardy spaces associated with operators and ball quasi-Banach function spaces on doubling metric measure spaces,
J. Geom. Anal. 35 (2025), Paper No. 310, 54 pp.

\vspace{-0.3cm}

\bibitem{lyyy26cms}
X. Lin, D. Yang, S. Yang and W. Yuan,
Hardy spaces associated with non-negative self-adjoint operators and ball quasi-Banach function spaces on doubling metric measure spaces and their applications,
Commun. Math. Stat. 14 (2026), 141--193.

\vspace{-0.3cm}

\bibitem{lt79}
J. Lindenstrauss and L. Tzafriri,
Classical Banach Spaces. II: Function Spaces,
Ergebnisse der Mathematik und ihrer Grenzgebiete,
Springer-Verlag, Berlin--New York, 1979.

\vspace{-0.3cm}

\bibitem{ls13}
S. Liu and L. Song,
An atomic decomposition of weighted Hardy spaces associated to self-adjoint operators,
J. Funct. Anal. 265 (2013), 2709--2723.

\vspace{-0.3cm}

\bibitem{ln20231}
E. Lorist and Z. Nieraeth,
Banach function spaces done right,
Indag. Math. (N.S.) 35 (2024), 247--268.

\vspace{-0.3cm}

\bibitem{ns12}
E. Nakai and Y. Sawano,
Hardy spaces with variable exponents and generalized Campanato spaces,
J. Funct. Anal. 262 (2012), 3665--3748.

\vspace{-0.3cm}

\bibitem{ns14}
E. Nakai and Y. Sawano,
Orlicz--Hardy spaces and their duals,
Sci. China Math. 57 (2014), 903--962.

\vspace{-0.3cm}

\bibitem{ny97}
E. Nakai and K. Yabuta,
Pointwise multipliers for functions of weighted bounded mean oscillation on spaces of homogeneous type,
Math. Japon. 46 (1997), 15--28.

\vspace{-0.3cm}

\bibitem{O05}
E. M. Ouhabaz,
Analysis of Heat Equations on Domains,
London Mathematical Society Monographs Series 31,
Princeton University Press, Princeton, NJ, 2005.

\vspace{-0.3cm}

\bibitem{s13}
Y. Sawano,
Atomic decompositions of Hardy spaces with variable exponents and its application to bounded linear operators,
Integral Equ. Oper. Theory 77 (2013), 123--148.

\vspace{-0.3cm}

\bibitem{s14}
Y. Sawano,
The Riesz potential operator $I_\alpha$ of constant order from the Hardy spaces $H^{p(\cdot)}(\mathbb{R}^n)$ with variable exponent to generalized Campanato spaces $\mathcal{L}_{\psi,1,\alpha}(\mathbb{R}^n)$,
In: Banach and Function Spaces IV (ISBFS 2012), pp. 231--250,
Yokohama Publishers, Yokohama, 2014.

\vspace{-0.3cm}

\bibitem{shyy17}
Y. Sawano, K.-P. Ho, D. Yang and S. Yang,
Hardy spaces for ball quasi-Banach function spaces,
Dissertationes Math. 525 (2017), 1--102.

\vspace{-0.3cm}

\bibitem{sw60}
E. M. Stein and G. Weiss,
On the theory of harmonic functions of several variables. I. The theory of $H^p$-spaces,
Acta Math. 103 (1960), 25--62.

\vspace{-0.3cm}

\bibitem{syy22fm}
J. Sun, D. Yang and W. Yuan,
Molecular characterization of weak Hardy spaces associated with ball quasi-Banach function spaces on spaces of homogeneous type with its applications to Littlewood--Paley function characterizations,
Forum Math. 34 (2022), 1539--1589.

\vspace{-0.3cm}

\bibitem{syy22jga}
J. Sun, D. Yang and W. Yuan,
Weak Hardy spaces associated with ball quasi-Banach function spaces on spaces of homogeneous type: Decompositions, real interpolation, and Calder\'on--Zygmund operators,
J. Geom. Anal. 32 (2022), Paper No. 191, 85 pp.

\vspace{-0.3cm}

\bibitem{y08}
L. Yan,
Classes of Hardy spaces associated with operators, duality theorem and applications,
Trans. Amer. Math. Soc. 360 (2008), 4383--4408.

\vspace{-0.3cm}

\bibitem{yhyy23}
X. Yan, Z. He, D. Yang and W. Yuan,
Hardy spaces associated with ball quasi-Banach function spaces on spaces of homogeneous type: Characterizations of maximal functions, decompositions, and dual spaces,
Math. Nachr. 296 (2023), 3056--3116.

\vspace{-0.3cm}

\bibitem{yzz18}
D. Yang, J. Zhang and C. Zhuo,
Variable Hardy spaces associated with operators satisfying Davies--Gaffney estimates,
Proc. Edinb. Math. Soc. (2) 61 (2018), 759--810.

\vspace{-0.3cm}

\bibitem{yz16}
D. Yang and C. Zhuo,
Molecular characterizations and dualities of variable exponent Hardy spaces associated with operators,
Ann. Acad. Sci. Fenn. Math. 41 (2016), 357--398.

\vspace{-0.3cm}

\bibitem{yy14} D. Yang and S. Yang,  Musielak--Orlicz--Hardy spaces associated with operators and their
applications, J. Geom. Anal. 24 (2014), 495--570.

\vspace{-0.3cm}

\end{thebibliography}
\end{document}